\title{Martin's measurable dilator}
\author{Hanul Jeon}
\email{ \href{mailto:hj344@cornell.edu}{hj344@cornell.edu}}
\urladdr{ \href{https://hanuljeon95.github.io}{https://hanuljeon95.github.io} }
\address{Department of Mathematics, Cornell University, Ithaca, NY 14853}
\thanks{The research presented in this paper is supported in part by NSF grant DMS–2153975.}
\subjclass[2020]{03E15, 03E55, 03E60, 03F15}
\newcommand{\lag}{\langle}
\newcommand{\rag}{\rangle}
\newcommand{\lr}{\leftrightarrow}
\newcommand{\crit}{\operatorname{crit}}
\newcommand{\Ult}{\operatorname{Ult}}
\newcommand{\rank}{\operatorname{rank}}
\newcommand{\field}{\operatorname{field}}
\newcommand{\arity}{\operatorname{arity}}
\newcommand{\pred}{\operatorname{pred}}
\newcommand{\supp}{\operatorname{supp}}
\newcommand{\Diag}{\operatorname{Diag}}
\newcommand{\Cell}{\operatorname{Cell}}
\newcommand{\Dec}{\operatorname{Dec}}
\newcommand{\lh}{\operatorname{lh}}
\newcommand{\term}{\operatorname{term}}
\newcommand{\Ord}{\mathrm{Ord}}
\newcommand{\WO}{\mathsf{WO}}
\newcommand{\LO}{\mathsf{LO}}
\newcommand{\Dil}{\mathsf{Dil}}
\newcommand{\SDil}{\mathsf{SDil}}
\newcommand{\KB}{\mathsf{KB}}
\newcommand{\LV}{\mathsf{LV}}
\newcommand{\Emb}{\mathsf{Emb}}
\newcommand{\RFN}[1][]{\ifthenelse{\equal{#1}{}}{}{#1\mhyphen}\mathsf{RFN}}
\newcommand{\ZF}{\mathsf{ZF}}
\newcommand{\ZFC}{\mathsf{ZFC}}
\newcommand{\en}{\operatorname{en}}
\newcommand{\Tr}{\operatorname{Tr}}
\newcommand{\hmu}{\hat{\mu}}
\newcommand{\hnu}{\hat{\nu}}
\def\cyrillic{\usefont{T2A}{antt}{m}{n}}
\newcommand{\cyrDe}{{\text{\cyrillic \char"C4}}}
\def\bfcyrillic{\usefont{T2A}{antt}{eb}{n}}
\newcommand{\bfcyrDe}{{\text{\bfcyrillic \char"C4}}}
\begin{document}
\maketitle
\begin{abstract}
    Martin's remarkable proof \cite{Martin1980InfiniteGames} of $\bfPi^1_2$-determinacy from an iterable rank-into-rank embedding highlighted the connection between large cardinals and determinacy.
    In this paper, we isolate a large cardinal object called a \emph{measurable dilator} from Martin's proof of $\bfPi^1_2$-determinacy, which captures the structural essence of Martin's proof of $\bfPi^1_2$-determinacy.
\end{abstract}

\section{Introduction}

Stanis\l aw Mazur raised a game-theoretic problem in the \emph{Scottish book} \cite[Problem 43]{TheScottishBook}, whose following generalization is formulated by Stanis\l aw Ulam:
\begin{quote}
    Given a set $E$ of reals, Player I and II give in turn the digits 0 or 1. If the resulting real is in $E$, then Player I wins, and Player II wins otherwise. For which $E$ does one of the players have a winning strategy?
\end{quote}
Under the standard set-theoretic tradition, we identify a real with an infinite sequence of natural numbers, so we may think of $E$ as a set of infinite binary sequences, which is called a \emph{payoff set}.
In 1953, David Gale and Frank M. Stewart \cite{GaleStewart1953InfiniteGames} studied a generalization of Ulam's problem by considering an infinite game over an arbitrary set $A$. Gale and Stewart proved that if a payoff set $E\subseteq A^\omega$ is \emph{open or closed}, then either one of the players has a winning strategy in the corresponding game.
They also showed from the axiom of choice that there is a payoff set $E\subseteq 2^\omega$ in which no players have a winning strategy.
It raises the following question: \emph{Does one of the players have a winning strategy for a `reasonably definable' payoff set?}

For a class $\Gamma$ of sets of reals, \emph{$\Gamma$-determinacy} is the assertion that for a payoff set $E\in \Gamma$, the game given by $E$ is \emph{determined} in the sense that either one of the players has a winning strategy.
After some partial results from other mathematicians, Martin \cite{Martin1975BorelDeterminacy} proved Borel determinacy. Then what can we say about determinacy for larger classes? It turns out that $\Gamma$-determinacy for a larger $\Gamma$ is closely related to large cardinal axioms.
In 1968, Martin \cite{Martin1969MeasurableAnalyticDet} proved the $\bfPi^1_1$-determinacy from a measurable cardinal. Later in 1978, Harrington proved that if $\bfPi^1_1$-determinacy holds, then every real has a sharp \cite{Harrington1978AnalyticDetandSharp}. Indeed, $\bfPi^1_1$-determinacy and the existence of sharps for reals are equivalent.

For a class larger than $\bfPi^1_1$, there was no significant progress for years. But in 1980, Martin \cite{Martin1980InfiniteGames} proved the $\bfPi^1_2$-determinacy from a rank-into-rank large cardinal called an \emph{iterable cardinal}.
A proof of $\bfPi^1_2$-determinacy and projective determinacy (determinacy for $\bfPi^1_n$-sets for every natural $n$) from a near-optimal hypothesis appeared in 1989 by Martin and Steel \cite{MartinSteel1989ProofPD}; Namely, we have $\bfPi^1_n$-determinacy from $(n-1)$ many Woodin cardinals and a measurable above.
The optimal strength of $\bfPi^1_n$-determinacy requires $M_n^\sharp(x)$, a sharp for a canonical inner model with $n$ many Woodin cardinals.

From a completely different side, Girard developed a notion of \emph{dilator} for his $\Pi^1_2$-logic.
To motivate Girard's $\Pi^1_2$-logic, let us briefly review ordinal analysis: Ordinal analysis gauges the strength of a theory $T$ by looking at its \emph{proof-theoretic ordinal}
\begin{equation*}
    |T|_{\Pi^1_1} = \sup \{\alpha\mid \text{$\alpha$ is recursive and } T\vdash\text{$\alpha$ is well-ordered}\}.
\end{equation*}
$|T|_{\Pi^1_1}$ gauges the $\Pi^1_1$-consequences of a theory in some sense; One of the main reasons comes from Kleene normal form theorem, stating that for every $\Pi^1_1$-statement $\phi(X)$, we can find an $X$-recursive linear order $\alpha(X)$ such that $\phi(X)$ holds iff $\alpha(X)$ is a well-order.%
\footnote{See \autoref{Lemma: A continuous family of finite linear orders} for its refined version. For a more discussion between the proof-theoretic ordinal and the $\Pi^1_1$-consequences of a theory, see \cite{Walsh2023characterizations} or \cite[\S 1]{Jeon2024HigherProofTheoryI}.} Girard wanted to analyze $\Pi^1_2$-consequences of a theory, requiring an object corresponding to $\Pi^1_2$-statements like well-orders correspond to $\Pi^1_1$-statements.

One way to explain a dilator is by viewing it as a representation of a class ordinal: There is no transitive class isomorphic to $\Ord+\Ord$ or $\Ord^2$, but we can still express their ordertype.
In the case of $\Ord+\Ord$, we can think of it as the collection of $(i,\xi)$ for $i=0,1$ and $\xi\in\Ord$, and compare them under the lexicographic order.
Interestingly, the same construction gives not only the ordertype $\Ord+\Ord$, but also that of $X+X$ for every linear order $X$:
That is, $X+X$ is isomorphic to the collection of $(i,\xi)$ for $i=0,1$ and $\xi\in X$ endowed with the lexicographic order.
The uniform construction $X\mapsto X+X$ is an example of a dilator.

It turns out that \emph{dilators} correspond to $\Pi^1_2$-statements: 
A \emph{semidilator} is an autofunctor over the category of linear orders preserving direct limits and pullbacks, and a \emph{dilator} is a semidilator preserving well-orderedness; That is, a semidilator $D$ is a dilator if $D(X)$ is a well-order for every well-order $X$.
Semidilators and dilators look gigantic, but it is known that we can recover the full (semi)dilator from its restriction over the category of natural numbers with strictly increasing maps so that we can code them as a set. We can also talk about how a given (semi)dilator is recursive by saying there is a recursive code for the restriction of a (semi)dilator to the category of natural numbers.%
\footnote{However, we will not use the definition of (semi)dilators as functors preserving direct limits and pullbacks. See \autoref{Section: Dilators} for a precise definition.}
Like well-orders are associated with $\Pi^1_1$-statements, dilators are associated with $\Pi^1_2$-statements: Girard proved that for a given $\Pi^1_2$-statement $\phi(X)$, we could find an $X$-recursive predilator $D(X)$ such that $\phi(X)$ holds iff $D(X)$ is a dilator. (See \autoref{Lemma: A continuous family of finite dilators} for its refined version.)
Girard pointed out the connection between dilators and descriptive set theory in \cite[\S 9]{Girard1985IntroPi12Logic}, and Kechris \cite{KechrisUnpublishedDilators} examined a connection between dilators, \emph{ptykes}\footnote{Ptykes (sing. \emph{ptyx}) is a generalization of a dilator corresponding to $\Pi^1_n$-formulas. We will not introduce its definition since this paper will not use general ptykes. See \cite{Girard1982Logical, GirardRessayre1985} for more details about ptykes.} and descriptive set theory. Kechris introduced a notion of \emph{measurable dilator} as a dilator version of a measurable cardinal and stated that the existence of a measurable dilator implies $\bfPi^1_2$-determinacy.

Going back to the determinacy side, a proof of $\bfPi^1_1$-determinacy from a measurable cardinal shows a curious aspect that most of its proofs use a well-order characterization of a $\bfPi^1_1$-statement in any form: See \autoref{Subsection: Measurable cardinal and Pi 1 1 Det} for the proof of $\bfPi^1_1$-determinacy from a measurable cardinal;
Many proofs of the $\bfPi^1_1$-determinacy from a large cardinal axiom use the fact that $\bfPi^1_1$-sets are $\kappa$-Suslin for an uncountable regular cardinal $\kappa$, and choose a large $\kappa$ so we get a homogeneously Suslin tree representation.
However, these proofs implicitly use a well-order characterization of $\bfPi^1_1$-sentences. Every proof of the Susliness of a $\bfPi^1_1$ set the author knows goes as follows: Start from a well-order representation $\alpha$ of a $\bfPi^1_1$-set (usually taking the form of a tree over $\omega\times\omega$, which is a linear order under the Kleene-Brouwer order), and construct a predilator $D$ trying to construct an embedding from $\alpha$ to $\kappa$.
$D(\kappa)$ corresponds to the $\kappa$-Suslin representation of the $\bfPi^1_1$-set, and we may think of $D$ as an `effective part' and $\kappa$ a `large cardinal part' of the Suslin representation.
This type of idea is implicit in the proof of \autoref{Lemma: A continuous family of finite dilators}.

We may ask if a proof of $\bfPi^1_2$-determinacy from a large cardinal assumption also uses a dilator characterization of a $\bfPi^1_2$-statement.
That is, we can ask if we can decompose a proof of $\bfPi^1_2$-determinacy into the following two steps:
\begin{enumerate}
    \item Starting from a large cardinal assumption, construct a measurable dilator.
    \item From a measurable dilator, prove $\bfPi^1_2$-determinacy.
\end{enumerate}
We will illustrate in \autoref{Subsection: Measurable dilator and Pi 1 2 Det} that the second step indeed holds. The main goal of this paper is to extract a construction of a measurable dilator from Martin's proof \cite{Martin1980InfiniteGames} of $\bfPi^1_2$-determinacy from an iterable cardinal, thus establishing the first step.

Suppose a rank-into-rank embedding $j\colon V_\lambda\to V_\lambda$ with $\kappa=\crit j$ iterable such that $\lambda = \sup_{n<\omega} j^n(\kappa)$.
Martin \cite[\S 4]{Martin1980InfiniteGames} used $\lambda$-Suslin tree structure for a $\bfPi^1_2$-set to prove $\bfPi^1_2$-determinacy. Martin also used a measure family given by an iteration of measures along a $\lambda$-Suslin tree.
To extract dilator-related information from Martin's proof, we need a tree-like structure of a dilator. Girard \cite[\S6]{Girard1981Dilators} presented a notion of \emph{dendroid}, expressing a dilator as a functorial family of trees. Dendroids themselves are not enough to translate Martin's proof into a language of dilator due to some terminological incoherence.%
\footnote{The main technical issue the author confronted is that there is no obvious dendroid-counterpart of $\varrho(\sigma,\tau)$-like function in \cite[Lemma 4.1]{Martin1980InfiniteGames}. In terms of a dendrogram, $\varrho$ corresponds to the parameter parts.}
Hence, we introduce a tree structure named \emph{dendrogram}, which codes a dendroid as a single tree. We will iterate measures along a dendrogram to get a measure family of a measurable dilator we construct.

\setcounter{tocdepth}{2}
\tableofcontents

\section{Elementary embeddings} \label{Section: Elementary embeddings}
In this section, we review facts about rank-into-rank embedding. We mostly focus on notions introduced by Martin \cite{Martin1980InfiniteGames} with additional details from \cite{Dimonte2018I0rank}.
\emph{We will avoid using the full axiom of choice in the rest of the paper unless specified,} although we may use its weaker variant, like the axiom of countable or dependent choice.

\subsection{Rank-into-rank embedding}
Let $j\colon V_\lambda\to V_\lambda$ be an $\rmI_3$-embedding such that $\lambda=\sup_{n<\omega} \kappa_n$. Let us define the following notions:
\begin{definition}
    \begin{enumerate}
        \item $M_0 = V_\lambda$, $j_0=j$.
        \item $M_{\alpha+1} = \bigcup_{\xi\in \Ord^{M_\alpha}}j_\alpha(V_\xi^{M_\alpha})$, $j_{\alpha+1}=j_\alpha\cdot j_\alpha$.
        \item $j_{\alpha,\alpha}$ is the identity, $j_{\alpha,\beta+1}=j_\beta\circ j_{\alpha,\beta}$ for $\alpha\le\beta$.
        \item If $\alpha>0$ is a limit ordinal, define $((M_\alpha, j_\alpha), j_{\beta,\alpha})_{\beta<\alpha}$ is the direct limit of $((M_\beta,j_\beta),j_{\beta,\gamma})_{\beta\le\gamma<\alpha}$.; More precisely,
        \begin{equation*} \textstyle
            M_\alpha = \bigcup_{\beta<\alpha} \{(\beta,x)\mid \beta<\alpha, x\in M_\beta\}/\sim,
        \end{equation*}
        where $(\beta,x)\sim(\gamma,y)$ iff there is $\delta<\alpha$ such that $\beta,\gamma\le\delta$ and $j_{\beta,\delta}(x)=j_{\gamma,\delta}(y)$. Then for $x\in M_\beta$,
        \begin{equation*}
            j_{\beta,\alpha}(x) := [\beta,x]_\sim, \quad 
            j_\alpha([\beta,x]_\sim) = [\beta,j_\beta(x)]_\sim = j_{\beta,\alpha}(j_\beta(x)).
        \end{equation*}
        where $[\beta,x]_\sim$ is a $\sim$-equivalence class given by $(\beta,x)$.
        We also take $M_{\alpha+n} = M_\alpha$.
    \end{enumerate}

    $M_\alpha$ may not be well-founded for a limit $\alpha$. If $M_\alpha$ is well-founded, then we say $j$ is \emph{$\alpha$-iterable}. We identify $M_\alpha$ with its transitive collapse if $M_\alpha$ is well-founded.
\end{definition}

\begin{lemma} \label{Lemma: Basic facts on iterating elementary embeddings}
    Let $\alpha, \beta$ be an ordinal and $n<\omega$.
    \begin{enumerate}
        \item $j_{\alpha,\alpha+n} = j_\alpha^n:= \underbrace{j_\alpha\circ\cdots\circ j_\alpha}_{\text{$n$ times}}$.
        \item $j_\alpha\cdot j_{\alpha+n} = j_{\alpha+n+1}$.
        \item $j_\alpha\circ j_{\alpha+n} = j_{\alpha+n+1}\circ j_\alpha$.
        \item $j_{\alpha,\beta} = j_{\alpha+1,\beta}\circ j_\alpha$ for $\alpha<\beta$.
        \item $j_{\alpha,\beta}\circ j_{\alpha+n} = j_{\beta+n}\circ j_{\alpha,\beta}$ for $\alpha\le\beta$.
        \item $j_\alpha$ is well-defined elementary embedding from $(M_\alpha,j_\alpha)$ to $(M_\alpha,j_{\alpha+1})$ and $M_\alpha=M_{\alpha+1}$.
    \end{enumerate}
\end{lemma}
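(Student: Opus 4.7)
The plan is to prove the six items by simultaneous transfinite induction on the ordinal indices. Two algebraic identities about rank-into-rank embeddings drive every successor-level computation: the \emph{Kunen identity} $(j\cdot k)\circ j = j\circ k$, which follows from elementarity of $j$ applied to $k$ restricted to a sufficiently high rank initial segment, and the \emph{left self-distributive law} $j\cdot(k\cdot l) = (j\cdot k)\cdot(j\cdot l)$. Both are standard for the application $\cdot$ in the $\mathrm{I}_3$ setting and will be used without further comment.

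At successor stages the six items unwind as follows. Item (2) is handled by induction on $n$: the case $n=0$ is the definition of $j_{\alpha+1}$, and the inductive step rewrites $j_\alpha\cdot j_{\alpha+n} = j_\alpha\cdot(j_{\alpha+n-1}\cdot j_{\alpha+n-1})$ and then applies self-distributivity together with the induction hypothesis to collapse this to $j_{\alpha+n}\cdot j_{\alpha+n} = j_{\alpha+n+1}$. Item (3) follows from (2) by applying Kunen's identity to the pair $(j_\alpha, j_{\alpha+n})$. Item (1) is then proved by induction on $n$: the recursion $j_{\alpha,\alpha+n+1} = j_{\alpha+n}\circ j_\alpha^n$ collapses to $j_\alpha^{n+1}$ by repeatedly using (3) to pull the $j_\alpha$ factors past the $j_{\alpha+n}$. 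Item (4) is straightforward transfinite induction on $\beta$ from the defining recursion $j_{\alpha,\beta+1} = j_\beta\circ j_{\alpha,\beta}$, and its limit step is handled by the universal property of the direct limit. Item (5) at a successor $\beta = \gamma+1$ is obtained by chaining the induction hypothesis with (3):
\[
j_{\alpha,\gamma+1}\circ j_{\alpha+n} = j_\gamma\circ j_{\alpha,\gamma}\circ j_{\alpha+n} = j_\gamma\circ j_{\gamma+n}\circ j_{\alpha,\gamma} = j_{(\gamma+1)+n}\circ j_{\alpha,\gamma+1}.
\]

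The substantive work is at limit stages, which is where item (6) really bites. For limit $\alpha$, the definition of $j_\alpha$ on representatives $[\beta,x]_\sim$ must first be shown to be well-defined: if $(\beta,x)\sim(\gamma,y)$ as witnessed by some $\delta<\alpha$, one must verify $j_\delta(j_{\beta,\delta}(x)) = j_\delta(j_{\gamma,\delta}(y))$, which reduces to the commutation $j_{\beta,\delta}\circ j_\beta = j_\delta\circ j_{\beta,\delta}$ provided by item (5) with $n=0$ at stages strictly below $\alpha$. Elementarity of $j_\alpha\colon M_\alpha\to M_\alpha$ then follows by the usual pointwise argument on a direct limit, since every formula on finitely many representatives can be pulled back to some $M_\beta$ where it is preserved by $j_\beta$; the equality $M_\alpha = M_{\alpha+1}$ drops out because $j_\alpha$ maps $M_\alpha$ cofinally into itself. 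The limit case $\beta = \alpha$ of (5) is then obtained by factoring through the cofinal maps $j_{\gamma,\alpha}$ for $\gamma < \alpha$ and invoking the successor-stage version of the identity already supplied by the induction hypothesis.

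The main obstacle is precisely this limit-stage verification of (6): coherence of the direct limit requires (5) to already be available strictly below $\alpha$, so the induction must be arranged so that (5) is proved before (6) at each limit level, avoiding circularity. Once (6) is in place at a limit $\alpha$, every finite-index claim about $j_{\alpha+n}$ and $j_{\alpha,\alpha+n}$ reduces to the same algebraic manipulations as at the successor stages.
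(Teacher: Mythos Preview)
Your proposal is correct and follows essentially the same route as the paper: a simultaneous transfinite induction on the ordinal index, with (2) proved via left self-distributivity, (3) via the Kunen identity $(j\cdot k)\circ j = j\circ k$, the successor case of (5) by chaining (3), and the limit case of (6) driven by the commutation in (5) at earlier stages. The paper organizes the limit case of (6) into six explicit substeps (well-definedness, ordinal-cofinality, elementarity over the expanded language via amenability, elementarity of $j_{0,\alpha}$, $M_\alpha=M_{\alpha+1}$, and elementarity of $j_{\alpha+1}$), whereas you compress these into ``the usual pointwise argument on a direct limit'' plus cofinality; but the content and the ordering of dependencies are the same.
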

\begin{proof} 
    We prove it by induction on $\max(\alpha,\beta)$.
    \begin{enumerate}[leftmargin=0pt]
        \item We can prove it by induction on $n$.
        \item The case $n=0$ is clear by definition. For the successor case,
        \begin{equation*}
            j_\alpha\cdot j_{\alpha+n+1} = (j_\alpha\cdot j_{\alpha+n})\cdot(j_\alpha\cdot j_{\alpha+n}) = j_{\alpha+n+1}\cdot j_{\alpha+n+1} = j_{\alpha+n+2}.
        \end{equation*}
        
        \item $(j_\alpha\circ j_{\alpha+n})(x) = j_\alpha(j_{\alpha+n}(x)) = (j_\alpha\cdot j_{\alpha+n})(j_\alpha(x)) = (j_{\alpha+n+1}\circ j_\alpha)(x)$.

        \item We can prove it by induction on $\beta$. 
        
        \item The case $\alpha=\beta$ is trivial. Also,
        \begin{equation*}
            j_{\alpha,\beta+1}\circ j_{\alpha+n} = j_\beta\circ j_{\alpha,\beta}\circ j_{\alpha+n} = j_\beta\circ j_{\beta+n}\circ j_{\alpha,\beta}
            = j_{\beta+n+1}\circ j_\beta\circ j_{\alpha+\beta} = j_{\beta+n+1}\circ j_{\alpha,\beta+1}.
        \end{equation*}
        For a limit $\beta$, let us prove it by induction on $n$: 
        \begin{equation*} 
            j_{\alpha,\beta}(j_{\alpha+n+1}(\xi)) = j_{\alpha,\beta}((j_\alpha\cdot j_{\alpha+n})(\xi)) = (j_{\alpha,\beta}\cdot (j_\alpha\cdot j_{\alpha+n}))(j_{\alpha,\beta}(\xi))
        \end{equation*}
        and
        \begin{multline*} \textstyle
            j_{\alpha,\beta}\cdot (j_\alpha\cdot j_{\alpha+n}) = j_{\alpha,\beta} \left(j_\alpha\left(\bigcup_{\eta\in\Ord^{M_\alpha}} j_{\alpha+n}\restriction V_\eta^{M_\alpha}\right)\right) = j_{\beta}\left(j_{\alpha,\beta}\left(\bigcup_{\eta\in\Ord^{M_\alpha}} j_{\alpha+n}\restriction V_\eta^{M_\alpha}\right)\right)
            \\ \textstyle = j_{\beta}\left(\bigcup_{\eta\in\Ord^{M_\alpha}} j_{\alpha,\beta}(j_{\alpha+n}\restriction V_\eta^{M_\alpha})\right)
            = j_{\beta}\left(\bigcup_{\eta\in\Ord^{M_\beta}} j_{\beta+n}\restriction V_{\eta}^{M_\beta}\right) = j_\beta\cdot j_{\beta+n} = j_{\beta+n+1}.
        \end{multline*}
        
        \item We prove it in the following order:
        \begin{enumerate}
            \item $j_\alpha$ is well-defined.
            \item $j_{\alpha}$ is ordinal-cofinal: For every $\xi\in \Ord^{M_\alpha}$ there is $\eta\in \Ord^{M_\alpha}$ such that $M_\alpha \vDash \xi < j_{\alpha}(\eta)$.
            \item If $j_\alpha\colon M_\alpha\to M_\alpha$ is elementary for formulas over the language $\{\in\}$, then $j_\alpha\colon (M_\alpha,j_\alpha)\to (M_\alpha,j_{\alpha+1})$.
            \item $j_{0,\alpha}\colon (V_\lambda,j_0)\to (M_\alpha, {j_\alpha})$ is elementary.
            \item $M_\alpha = M_{\alpha+1}$.
            \item $j_{\alpha+1} \colon M_\alpha\to M_\alpha$ is elementary.
        \end{enumerate}
        
        \begin{enumerate}[wide, labelwidth=!, labelindent=0pt]
            \item First, $j_\alpha$ is clearly well-defined if $\alpha=0$ or $\alpha=\gamma+1$ for some $\gamma<\alpha$.
            For a limit $\alpha$, the issue is if $(\gamma,x)\sim(\delta,y)$ for $\gamma,\delta<\alpha$, $x\in M_\gamma$, $y\in M_\delta$ ensures $j_\alpha([\gamma,x]_\sim) = j_\alpha([\delta,y]_\sim)$ as we defined $j_\alpha([\gamma,x]_\sim) = [\gamma,j_\gamma(x)]_\sim$.
            Fix $\zeta<\alpha$ such that $\gamma,\delta<\zeta$ and $j_{\gamma,\zeta}(x) = j_{\delta,\zeta}(y)$.
            \begin{multline*}
                 j_\alpha([\gamma,x]_\sim) = [\gamma,j_\gamma(x)]_\sim 
                 = [\zeta, j_{\gamma,\zeta}\circ j_\gamma(x)]_\sim
                 = [\zeta, j_\zeta \circ j_{\gamma,\zeta}(x)]_\sim \\ 
                 = [\zeta, j_\zeta \circ j_{\delta,\zeta}(y)]_\sim 
                 = [\zeta, j_{\delta,\zeta} \circ j_\delta (y)]_\sim 
                 = [\delta,j_\delta(x)]_\sim = j_\alpha([\delta,y]_\sim).
            \end{multline*}
            Thus $j_\alpha$ is always well-defined.

            \item The case $\alpha=0$ follows from the assumption $\lambda = \sup_{n<\omega} \kappa_n$. If $\alpha = \gamma+1$, the inductive hypothesis gives $M_\gamma=M_{\gamma+1}$. Fix $\xi\in \Ord^{M_\gamma} = \Ord^{M_\alpha}$, then we can find $\eta\in \Ord^{M_\alpha}$ such that $\xi<j_\gamma(\eta)$.
            Hence 
            \begin{equation*}
                \xi \le j_\gamma(\xi) < j_\gamma(j_\gamma(\eta)) = j_{\gamma+1}(j_\gamma(\eta)) = j_\alpha(j_\gamma(\eta)), 
            \end{equation*}
            as desired.
            If $\alpha$ is limit, then every ordinal in $M_\alpha$ has the form $[\gamma,\xi]_\sim$ for some $\gamma<\alpha$ and $\xi \in \Ord^{M_\gamma}$.
            We can find $\eta\in \Ord^{M_\gamma}$ such that $M_\gamma\vDash \xi<j_\gamma(\eta)$, so
            \begin{equation*}
                [\gamma,\xi]_\sim < [\gamma, j_\gamma(\eta)]_\sim = j_\alpha([\gamma, \eta]_\sim).
            \end{equation*}

            \item  Now suppose that $j_\alpha\colon M_\alpha\to M_\alpha$ is elementary for formulas over the language $\{\in\}$. 
            Then let us employ the following general fact: 
            \begin{lemma} 
                Suppose that $N$ is a model of $\mathsf{Z}$ + $\Sigma_1$-Collection + `$\xi\mapsto V_\xi$ is well-defined,' $j\colon N\to N$ is $\Delta_0$-elementary. If $A\subseteq N$ is \emph{amenable}, i.e., $x\cap A\in N$ for every $x\in N$, then $j\colon (N,A)\to (N,j[A])$ is $\Delta_0$-elementary over the language $(\in,A)$, where $j[A] = \bigcup_{\xi\in\Ord^N}j(A\cap V_\xi^N)$. 
            \end{lemma}
             Its proof follows from the proof of \cite[Lemma 4.14(1)]{JeonMatthews2022}. Since $j_\alpha\colon M_\alpha\to M_\alpha$ is ordinal cofinal, we can prove that $j_\alpha\colon (M_\alpha,j_\alpha)\to (M_\alpha,j_{\alpha+1})$ is fully elementary by induction on the quantifier complexity of a formula as presented in \cite[Lemma 4.14(2)]{JeonMatthews2022}.

            \item The case $\alpha=0$ is easy, and the successor case follows from the induction hypothesis and the previous item. The limit case follows from the definition of $(M_\alpha,j_\alpha,j_{\beta,\alpha})_{\beta<\alpha}$.

            \item $j_\alpha(V_\xi^{M_\alpha})\subseteq M_\alpha$ gives $M_{\alpha+1}\subseteq M_\alpha$. 
            For $M_\alpha\subseteq M_{\alpha+1}$, observe that
            \begin{equation*}
                V_\lambda \vDash \forall \xi\in \Ord [j_0(V_\xi) = V_{j_0(\xi)}].
            \end{equation*}
            Since $j_{0,\alpha}\colon (V_\lambda,j_0)\to (M_\alpha, {j_\alpha})$ is elementary, we have
            \begin{equation*}
                M_\alpha \vDash \forall \xi\in \Ord [j_\alpha(V_\xi) = V_{j_\alpha(\xi)}].
            \end{equation*}
            Furthermore, in $M_\alpha$, for each $x$ we can find $\xi\in\Ord$ such that $\rank x< j_\alpha(\xi)$. Hence $x\in V_{j_\alpha(\xi)} = j_\alpha(V_\xi)$. It proves $M_\alpha\subseteq M_{\alpha+1}$.

            \item  For the elementarity of $j_{\alpha+1}$, observe that for a given formula $\phi$ we have
            \begin{equation*}
                M_\alpha \vDash \forall\xi\in\Ord \forall \vec{x}\in V_\xi [\phi(\vec{x})\lr \phi((j\restriction V_\xi)(\vec{x}))].
            \end{equation*}
            Fix $\xi$ and apply $j_\alpha$. Then we get
            \begin{equation*}
                M_\alpha \vDash \forall \vec{x}\in j_\alpha(V_\xi) [\phi(\vec{x})\lr \phi(j_\alpha(j\restriction V_\xi)(\vec{x}))].
            \end{equation*}
            Since $\xi$ is arbitrary, we have
            \begin{equation*}
                M_\alpha \vDash \forall \vec{x}\in M_{\alpha+1} [\phi(\vec{x})\lr \phi(j_{\alpha+1}(\vec{x}))]. \qedhere 
            \end{equation*}
        \end{enumerate}
    \end{enumerate}
\end{proof}

The following is an easy corollary of the previous proposition:
\begin{corollary} \pushQED{\qed}
    $j_{\alpha,\beta}\circ j_{\alpha+n,\alpha+m} = j_{\beta+n,\beta+m}\circ j_{\alpha,\beta}$ for $\alpha\le\beta$ and $n\le m<\omega$. \qedhere 
\end{corollary}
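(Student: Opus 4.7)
The plan is to reduce this to a repeated application of part (5) of \autoref{Lemma: Basic facts on iterating elementary embeddings}, namely $j_{\alpha,\beta}\circ j_{\alpha+n} = j_{\beta+n}\circ j_{\alpha,\beta}$, after first rewriting the ``short'' iterates $j_{\alpha+n,\alpha+m}$ and $j_{\beta+n,\beta+m}$ as finite compositions.

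First I would use part (1) of the lemma, applied with $\alpha$ replaced by $\alpha+n$ (and respectively $\beta+n$), together with the identity $(\alpha+n)+(m-n)=\alpha+m$, to write
\begin{equation*}
    j_{\alpha+n,\alpha+m}=j_{\alpha+n}^{\,m-n},\qquad j_{\beta+n,\beta+m}=j_{\beta+n}^{\,m-n}.
\end{equation*}
So the corollary reduces to showing
\begin{equation*}
    j_{\alpha,\beta}\circ j_{\alpha+n}^{\,k}=j_{\beta+n}^{\,k}\circ j_{\alpha,\beta}
\end{equation*}
for every $k<\omega$ (with $k=m-n$).

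Next I would prove this by a straightforward induction on $k$. The base case $k=0$ is trivial. For the inductive step, using the induction hypothesis and part (5) of the lemma,
\begin{equation*}
    j_{\alpha,\beta}\circ j_{\alpha+n}^{\,k+1}
    =\bigl(j_{\alpha,\beta}\circ j_{\alpha+n}^{\,k}\bigr)\circ j_{\alpha+n}
    =j_{\beta+n}^{\,k}\circ\bigl(j_{\alpha,\beta}\circ j_{\alpha+n}\bigr)
    =j_{\beta+n}^{\,k}\circ j_{\beta+n}\circ j_{\alpha,\beta}
    =j_{\beta+n}^{\,k+1}\circ j_{\alpha,\beta}.
\end{equation*}

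There is no real obstacle here; the only thing worth being careful about is matching indices correctly when invoking part (1), since one must verify $(\alpha+n)+(m-n)=\alpha+m$ (and likewise for $\beta$) in order to identify the shifted iterate $j_{\alpha+n,\alpha+m}$ with a power of $j_{\alpha+n}$. Once that bookkeeping is done, the result is an immediate induction from part (5).
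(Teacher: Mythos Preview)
Your proof is correct and is exactly the intended argument: the paper states this as an immediate corollary without proof, and the natural way to see it is precisely to combine part (1) to rewrite $j_{\alpha+n,\alpha+m}$ and $j_{\beta+n,\beta+m}$ as $(m-n)$-fold iterates, and then apply part (5) repeatedly (i.e., by induction on $m-n$).
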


We will use the convention $\kappa_\alpha = j_{0,\alpha}(\kappa_0)$ for a general $\alpha$. It is an ordinal if $j$ is $\alpha$-iterable, but it can be ill-founded otherwise. Now, let us state a lemma about critical points whose proof is straightforward:
\begin{lemma} \pushQED{\qed}
    Let $\alpha$, $\beta$ be ordinals and $n<\omega$.
    \begin{enumerate}
        \item $\crit j_\alpha = \kappa_\alpha$.
        \item $j_{\alpha,\beta}(\kappa_{\alpha+n}) = \kappa_{\beta+n}$ for $\alpha\le \beta$. \qedhere 
    \end{enumerate}
\end{lemma}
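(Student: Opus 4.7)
The plan is to prove (1) by transfinite induction on $\alpha$, and then deduce (2) from (1) together with the preceding corollary.

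For (1), the base case $\alpha=0$ is immediate from the choice of $\kappa_0=\crit j_0$. At a successor $\alpha+1$, part (6) of \autoref{Lemma: Basic facts on iterating elementary embeddings} tells us that $j_\alpha\colon (M_\alpha,j_\alpha)\to(M_\alpha,j_{\alpha+1})$ is elementary, so applying $j_\alpha$ to the statement ``$\crit j_\alpha=\kappa_\alpha$'' yields ``$\crit j_{\alpha+1}=j_\alpha(\kappa_\alpha)$'', and $j_\alpha(\kappa_\alpha)=j_\alpha(j_{0,\alpha}(\kappa_0))=j_{0,\alpha+1}(\kappa_0)=\kappa_{\alpha+1}$ using part (4).

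At a limit $\alpha$, every ordinal of $M_\alpha$ has the form $[\beta,\eta]_\sim$ for some $\beta<\alpha$ and $\eta\in\Ord^{M_\beta}$, and by definition $j_\alpha([\beta,\eta]_\sim)=[\beta,j_\beta(\eta)]_\sim$. Note $\kappa_\alpha=j_{0,\alpha}(\kappa_0)=[0,\kappa_0]_\sim$. Using the comparison rule of the direct limit, $[\beta,\eta]_\sim<[0,\kappa_0]_\sim$ holds iff for some $\gamma\ge\beta$ one has $j_{\beta,\gamma}(\eta)<j_{0,\gamma}(\kappa_0)=\kappa_\gamma$; but iterated (4) gives $\kappa_\gamma=j_{\beta,\gamma}(\kappa_\beta)$, so this is equivalent to $\eta<\kappa_\beta=\crit j_\beta$ by the inductive hypothesis, in which case $j_\beta(\eta)=\eta$ and $[\beta,\eta]_\sim$ is fixed by $j_\alpha$. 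On the other hand, $j_0(\kappa_0)>\kappa_0$ witnesses that $\kappa_\alpha$ itself is moved. Hence $\crit j_\alpha=\kappa_\alpha$.

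For (2), iterated application of part (4) of the preceding lemma gives $\kappa_\gamma=j_{0,\gamma}(\kappa_0)=j_{\alpha,\gamma}(j_{0,\alpha}(\kappa_0))=j_{\alpha,\gamma}(\kappa_\alpha)$ for every $\gamma\ge\alpha$; in particular $j_{\alpha,\alpha+n}(\kappa_\alpha)=\kappa_{\alpha+n}$, $j_{\alpha,\beta}(\kappa_\alpha)=\kappa_\beta$, and $j_{\beta,\beta+n}(\kappa_\beta)=\kappa_{\beta+n}$. Plugging $\kappa_\alpha$ into the $(n,m)=(0,n)$ instance of the preceding corollary $j_{\alpha,\beta}\circ j_{\alpha,\alpha+n}=j_{\beta,\beta+n}\circ j_{\alpha,\beta}$ yields $j_{\alpha,\beta}(\kappa_{\alpha+n})=j_{\beta,\beta+n}(\kappa_\beta)=\kappa_{\beta+n}$.

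The only nonroutine point is the limit step of (1), where one must carefully unfold the equivalence-class presentation of the direct limit to identify the ordinals below $[0,\kappa_0]_\sim$ with the fixed points of $j_\alpha$ lying there; everything else is a mechanical application of (4) and (6) of the preceding lemma.
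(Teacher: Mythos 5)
Your proof is correct and is exactly the routine argument the paper omits (the lemma is stated with the proof declared straightforward): induction on $\alpha$ using the elementarity of $j_\alpha\colon(M_\alpha,j_\alpha)\to(M_\alpha,j_{\alpha+1})$ at successors and the direct-limit presentation at limits, then the commutation corollary for part (2). The only cosmetic slip is attributing $j_{0,\alpha+1}=j_\alpha\circ j_{0,\alpha}$ to part (4) of the preceding lemma when it is really the definitional clause $j_{\alpha,\beta+1}=j_\beta\circ j_{\alpha,\beta}$ (together with the standard coherence $j_{0,\gamma}=j_{\beta,\gamma}\circ j_{0,\beta}$ of the iteration system), which does not affect correctness.
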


We will use the following large cardinal notion to construct a measurable dilator:
\begin{definition}
    Let $j\colon V_\lambda\to V_\lambda$ be an $\rmI_3$-embedding. We say $j$ is \emph{iterable} if $M_\alpha$ defined from $j$ is well-founded for every $\alpha$. We say $\lambda$ is \emph{iterable} if it has an iterable embedding $j\colon V_\lambda\to V_\lambda$.
\end{definition}
Although unnecessary in this paper, it is worthwhile to note that an $\rmI_3$-embedding is iterable if and only if $M_\alpha$ is well-founded for every $\alpha<\omega_1$. (See \cite{Dimonte2018I0rank} for the details.) We may compare it with the fact that a countable semidilator $D$ is a dilator if and only if $D(\alpha)$ is well-founded for every $\alpha<\omega_1$. It gives a clue that elementary embedding iteration may have the structure of a dilator.

\subsection{\texorpdfstring{$\beta$}{Beta}-embedding}

We will consider a measure over $\kappa_n$ for an iterable embedding $j\colon V_\lambda\to V_\lambda$. The measure will be a projection of a measure over the set of \emph{$\beta$-embeddings} defined as follows: 
\begin{definition}
    An embedding $k\colon V_{\alpha+\beta}\to V_{\alpha'+\beta}$ is a \emph{$\beta$-embedding} if $\crit k=\alpha>\beta$. 
\end{definition}
Then we can see that 
\begin{lemma}
    If $k\colon V_{\alpha+\beta}\to V_{\alpha'+\beta}$ is a $\beta$-embedding, then $k(\alpha)+\beta=\alpha'+\beta$ and $k(\alpha)\le \alpha'$
\end{lemma}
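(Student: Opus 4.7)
My plan is to prove both claims by elementarity, exploiting that $k$ fixes every ordinal below $\beta$ (since $\beta<\alpha=\crit k$), so that $k(\alpha+\xi)=k(\alpha)+\xi$ for every $\xi<\beta$ by elementarity applied to ordinal addition.

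For the equality $k(\alpha)+\beta=\alpha'+\beta$, I would establish both inequalities separately. For the $\leq$ direction, each $k(\alpha+\xi)=k(\alpha)+\xi$ lies in $V_{\alpha'+\beta}$, giving $k(\alpha)+\xi<\alpha'+\beta$ for every $\xi<\beta$; taking the supremum when $\beta$ is a limit (or using the case $\xi=\gamma$ together with adding $1$ on both sides when $\beta=\gamma+1$ is a successor) then yields $k(\alpha)+\beta\leq\alpha'+\beta$. For the $\geq$ direction, the sentence ``every ordinal $\eta$ satisfies $\eta<\alpha$ or $\eta=\alpha+\xi$ for some $\xi<\beta$'' is true in $V_{\alpha+\beta}$ with parameters $\alpha,\beta$. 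Transferring via $k$ and using $k(\beta)=\beta$, I obtain the corresponding statement in $V_{\alpha'+\beta}$ with $k(\alpha)$ in place of $\alpha$; hence every ordinal $\eta<\alpha'+\beta$ is bounded by $k(\alpha)+\beta$, which gives $\alpha'+\beta\leq k(\alpha)+\beta$.

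For the inequality $k(\alpha)\leq\alpha'$, the plan is to use that $\alpha=\crit k$ is an inaccessible cardinal by standard critical-point arguments, hence additively indecomposable. The sentence $\forall\gamma,\delta<\alpha\,[\gamma+\delta<\alpha]$ is true in $V_{\alpha+\beta}$, so by elementarity $\forall\gamma,\delta<k(\alpha)\,[\gamma+\delta<k(\alpha)]$ is true in $V_{\alpha'+\beta}$. If we had $k(\alpha)>\alpha'$, writing $k(\alpha)=\alpha'+\eta$ with $\eta>0$ and using $k(\alpha)<\alpha'+\beta$ gives $\eta<\beta$; then $\eta<\beta<\alpha<k(\alpha)$ together with $\alpha'<k(\alpha)$ lets me apply additive indecomposability of $k(\alpha)$ to the pair $(\alpha',\eta)$, yielding $\alpha'+\eta<k(\alpha)$, contradicting $\alpha'+\eta=k(\alpha)$.

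The main delicate point I expect is the second claim: naively one might hope for $k(\alpha)=\alpha'$, but once $\beta$ is infinite there is genuine slack --- distinct values of $k(\alpha)$ can yield the same $k(\alpha)+\beta$, as in $k(\alpha)=\alpha'+1$ with $\beta=\omega$ --- so one really needs an intrinsic property of $k(\alpha)$, and additive indecomposability is what pins $k(\alpha)$ on the correct side of $\alpha'$.
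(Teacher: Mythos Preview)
Your proof is correct and follows essentially the same approach as the paper: both use the elementarity of the sentence ``every ordinal is below $\alpha$ or of the form $\alpha+\eta$ for some $\eta<\beta$'' to obtain $k(\alpha)+\beta=\alpha'+\beta$, and both use inaccessibility of $k(\alpha)$ (you invoke additive indecomposability explicitly, the paper uses it implicitly) to rule out $\alpha'<k(\alpha)$. Your write-up is simply more explicit about separating the two inequalities and about the indecomposability step.
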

\begin{proof}
    Observe that
    \begin{equation*}
        V_{\alpha+\beta}\vDash \forall\xi\in\Ord (\xi<\alpha\lor \exists \eta<\beta (\xi=\alpha+\eta)).
    \end{equation*}
    Hence by elementarity,
    \begin{equation*}
        V_{\alpha'+\beta}\vDash \forall\xi\in\Ord (\xi<k(\alpha)\lor \exists \eta<\beta (\xi=k(\alpha)+\eta)).
    \end{equation*}
    This shows $\alpha'+\beta = k(\alpha)+\beta$.
    Since $\alpha$ is a critical point, it is inaccessible. This shows $k(\alpha)$ is also inaccessible, so if $\alpha'<k(\alpha)$ then $\alpha'+\beta < k(\alpha)$, a contradiction.
\end{proof}
It is not true that $k(\alpha)=\alpha'$ holds (It fails when, for example, $\alpha' = k(\alpha)+1$ and $\beta=\omega$.) However, by replacing $\alpha'$ if necessary, we may assume that $\alpha' = k(\alpha)$, and we will assume throughout this paper that every $\beta$-embedding $k\colon V_{\alpha+\beta}\to V_{\alpha'+\beta}$ maps $\alpha$ to $\alpha'$.

We can define a measure for $\beta$-embeddings:
\begin{definition}
    Let $k$ be a $\beta$-embedding and $\gamma+1<\beta$. Define
    \begin{equation*}
        \Emb^k_\gamma = \{k'\colon V_{\crit k'+\gamma}\to V_{k'(\crit k')+\gamma} \mid \crit k'< \crit k,\, k'(\crit k') = \crit k\}
    \end{equation*}
    and a measure $\mu^k_\gamma$ by
    \begin{equation*}
        X \in \mu^k_\gamma \iff k\restriction V_{\crit k+\gamma}\in k(X).
    \end{equation*}
\end{definition}
It can be easily shown that $\Emb^k_\gamma \in \mu^k_\gamma$, so we can think of $\mu^k_\gamma$ as a measure over $\Emb^k_\gamma$.
Also, the following facts are easy to verify:
\begin{lemma} \pushQED{\qed}
    Let $k$ be a $\beta_1$-embedding and $\gamma+1<\beta_0\le\beta_1$.
    Then $\Emb^k_\gamma = \Emb^{k\restriction V_{\crit k+\beta_0}}_\gamma$ and $\mu^k_\gamma = \mu^{k\restriction V_{\crit k+\beta_0}}_\gamma$. \qedhere 
\end{lemma}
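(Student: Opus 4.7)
The plan is to verify the two equalities separately. Set $\alpha:=\crit k$, so that $k\colon V_{\alpha+\beta_1}\to V_{k(\alpha)+\beta_1}$. I first note that $k\restriction V_{\alpha+\beta_0}$ is itself a $\beta_0$-embedding from $V_{\alpha+\beta_0}$ to $V_{k(\alpha)+\beta_0}$, with $\crit(k\restriction V_{\alpha+\beta_0})=\alpha$ and $(k\restriction V_{\alpha+\beta_0})(\alpha)=k(\alpha)$. The image computation uses the elementarity of $k$ together with $k(\beta_0)=\beta_0$ (valid because $\beta_0<\alpha=\crit k$), which yields $k(V_{\alpha+\beta_0})=V_{k(\alpha)+\beta_0}$; the critical point is unchanged because restricting $k$ to any transitive set properly containing $V_{\alpha+1}$ preserves exactly which ordinals get moved. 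This ensures that the symbols $\Emb^{k\restriction V_{\alpha+\beta_0}}_\gamma$ and $\mu^{k\restriction V_{\alpha+\beta_0}}_\gamma$ are well-defined from the hypothesis $\gamma+1<\beta_0$.

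For the equality $\Emb^k_\gamma=\Emb^{k\restriction V_{\alpha+\beta_0}}_\gamma$, the defining formula of $\Emb^{(\cdot)}_\gamma$ references its parameter only through the critical point. Since both $k$ and $k\restriction V_{\alpha+\beta_0}$ have critical point $\alpha$, the two sets are specified by the same condition and therefore coincide on the nose.

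For the equality $\mu^k_\gamma=\mu^{k\restriction V_{\alpha+\beta_0}}_\gamma$, I plan to unfold the defining biconditional on each side and check the two component identities
\begin{equation*}
    k\restriction V_{\alpha+\gamma}=(k\restriction V_{\alpha+\beta_0})\restriction V_{\alpha+\gamma}\quad\text{and}\quad k(X)=(k\restriction V_{\alpha+\beta_0})(X).
\end{equation*}
The first is immediate from $\gamma<\beta_0$. The second reduces to confirming that $X\in V_{\alpha+\beta_0}$, for once $X$ lies in the domain of $k\restriction V_{\alpha+\beta_0}$ the two values automatically coincide. The one mildly delicate step, and the main bookkeeping obstacle in the proof, is to verify that any $X\subseteq\Emb^k_\gamma$ relevant to the measure actually sits in $V_{\alpha+\beta_0}$: elements of $\Emb^k_\gamma$ are functions $V_{\crit k'+\gamma}\to V_{\alpha+\gamma}$ coded by Kuratowski pairs, so their rank is only a small finite step above $\alpha+\gamma$, and the hypothesis $\gamma+1<\beta_0$ is precisely what supplies this headroom. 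Combining the two identities with the equality $\Emb^k_\gamma=\Emb^{k\restriction V_{\alpha+\beta_0}}_\gamma$ already established then yields $X\in\mu^k_\gamma\iff X\in\mu^{k\restriction V_{\alpha+\beta_0}}_\gamma$, completing the proof.
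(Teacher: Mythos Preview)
The paper omits the proof entirely (the lemma is stated with an immediate \qedhere, signalling a routine verification), and your proposal is exactly the straightforward unfolding the paper has in mind: both equalities follow because the defining data of $\Emb^{(\cdot)}_\gamma$ and $\mu^{(\cdot)}_\gamma$ depend only on $\crit k$ and on the values of $k$ on sets of rank below $\crit k+\beta_0$. One minor quibble: your claim that ``$\gamma+1<\beta_0$ is precisely what supplies this headroom'' is a slight overstatement, since the Kuratowski coding pushes the rank of $X\subseteq\Emb^k_\gamma$ a few finite steps above $\alpha+\gamma$, not just one; the paper is equally loose on this point, and in every application the relevant ordinals are limits, so the issue is cosmetic.
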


\begin{lemma} \label{Lemma: Measure projection lemma} \pushQED{\qed}
    Let $k$ be a $\beta$-embedding and $\gamma_0\le \gamma_1<\gamma_1+1<\beta$.
    Consider the projection map $\pi^k_{\gamma_0,\gamma_1}\colon \Emb^k_{\gamma_1}\to\Emb^k_{\gamma_0}$, $\pi^k_{\gamma_0,\gamma_1}(k') = k'\restriction V_{\crit k'+\gamma_0}$. Then
    \begin{equation*}
        X\in \mu^k_{\gamma_0} \iff 
        (\pi^k_{\gamma_0,\gamma_1})^{-1}[X] = \{z\in \Emb^k_{\gamma_1}\mid z\restriction V_{\crit z+\gamma_0}\in X\}\in \mu^k_{\gamma_1}.
    \end{equation*}
    Also,
    \begin{equation*}
        Y\in \mu^k_{\gamma_1} \implies \pi^k_{\gamma_0,\gamma_1}[Y]\in \mu^k_{\gamma_0}.
        \qedhere 
    \end{equation*}
\end{lemma}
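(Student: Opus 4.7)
The plan is to unwind both sides of the biconditional against the definition $X\in\mu^k_\gamma \iff k\restriction V_{\crit k+\gamma}\in k(X)$ and use the elementarity of $k$ on the target $V_{\alpha'+\beta}$. Write $\alpha = \crit k$ and $\alpha' = k(\alpha)$. The first observation is that the set $\Emb^k_{\gamma_1}$ and the projection $\pi^k_{\gamma_0,\gamma_1}$ depend on $k$ only through $\alpha$, so the defining formula
\begin{equation*}
    (\pi^k_{\gamma_0,\gamma_1})^{-1}[X] = \{z\mid \crit z<\alpha,\ z(\crit z)=\alpha,\ z\colon V_{\crit z+\gamma_1}\to V_{\alpha+\gamma_1},\ z\restriction V_{\crit z+\gamma_0}\in X\}
\end{equation*}
is a $\Delta_0$-formula with parameters $\alpha$ and $X$, hence is transported by $k$ to the analogous set built from $\alpha'$ and $k(X)$.

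With this computation in hand, I would check that $k\restriction V_{\alpha+\gamma_1}$ is an admissible witness for membership in $k((\pi^k_{\gamma_0,\gamma_1})^{-1}[X])$: its critical point is $\alpha<\alpha'$, it sends $\alpha$ to $\alpha'$, and its restriction $(k\restriction V_{\alpha+\gamma_1})\restriction V_{\alpha+\gamma_0} = k\restriction V_{\alpha+\gamma_0}$ lies in $k(X)$ precisely when $X\in\mu^k_{\gamma_0}$. Thus $(\pi^k_{\gamma_0,\gamma_1})^{-1}[X]\in \mu^k_{\gamma_1}$ is equivalent to $X\in\mu^k_{\gamma_0}$, which is the first claim.

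For the second claim, I would use that $\mu^k_{\gamma_1}$ is an ultrafilter, hence upward closed, together with the set-theoretic containment $Y\subseteq (\pi^k_{\gamma_0,\gamma_1})^{-1}[\pi^k_{\gamma_0,\gamma_1}[Y]]$. If $Y\in\mu^k_{\gamma_1}$, then the larger set $(\pi^k_{\gamma_0,\gamma_1})^{-1}[\pi^k_{\gamma_0,\gamma_1}[Y]]$ also belongs to $\mu^k_{\gamma_1}$, and the already-proved biconditional then yields $\pi^k_{\gamma_0,\gamma_1}[Y]\in\mu^k_{\gamma_0}$.

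The only delicate step is the elementarity computation: I need to make sure that $(\pi^k_{\gamma_0,\gamma_1})^{-1}[X]$ genuinely lives in $V_{\alpha+\beta}$ (which it does because $\gamma_1+1<\beta$ forces $\Emb^k_{\gamma_1}\in V_{\alpha+\beta}$), and that the image of its defining formula under $k$ matches the set whose membership test produces $k\restriction V_{\alpha+\gamma_0}\in k(X)$ on the nose. Once the bookkeeping of critical points and target ranks is done correctly, both statements reduce to the basic definition of $\mu^k_\gamma$.
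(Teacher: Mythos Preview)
Your proof is correct and is exactly the natural argument; the paper itself states this lemma without proof (note the \verb|\pushQED{\qed}| / \verb|\qedhere| pattern), treating both claims as routine verifications from the definition of $\mu^k_\gamma$. Your unwinding via elementarity for the biconditional and the use of $Y\subseteq(\pi^k_{\gamma_0,\gamma_1})^{-1}[\pi^k_{\gamma_0,\gamma_1}[Y]]$ together with upward closure for the second claim are precisely what one expects.
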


The following proof is a modification of Schlutzenberg's answer on MathOverflow:
\begin{lemma} \label{Lemma: Schlutzenberg's lemma for value comparison}
    Let $k\colon V_{\alpha}\to V_{\alpha'}$ and $\xi<\delta<\alpha$.
    If $\xi\notin \ran k$, then $k(k\restriction V_\delta)(\xi) < k(\xi)$.
\end{lemma}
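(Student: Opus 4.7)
The plan is to evaluate $k(\ell)(\xi)$, where $\ell := k\restriction V_\delta$, by transporting via elementarity of $k$ a simple ordertype computation from $V_\alpha$ to $V_{\alpha'}$.

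First I would set $\eta^\ast := \min\{\eta < \delta : k(\eta) \geq \xi\}$, which is well-defined since $k(\xi) \geq \xi$ and $\xi < \delta$. One checks immediately that $\eta^\ast \leq \xi < \delta$, and the hypothesis $\xi \notin \ran k$ forces the strict inequality $k(\eta^\ast) > \xi$ (otherwise $\xi = k(\eta^\ast)$ would lie in $\ran k$).

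The heart of the argument is the following observation inside $V_\alpha$: since $\ell(\eta) = k(\eta)$ for each ordinal $\eta < \delta$ and $k$ is strictly increasing on ordinals, the set $\{\eta < \delta : \ell(\eta) < \xi\}$ coincides with the initial segment of $\delta$ of length $\eta^\ast$ and so has ordertype $\eta^\ast$. This is a first-order statement in $V_\alpha$ with parameters $\ell,\xi,\eta^\ast$, so applying $k$ yields in $V_{\alpha'}$ that $\{\eta < k(\delta) : k(\ell)(\eta) < k(\xi)\}$ has ordertype $k(\eta^\ast)$. Elementarity also tells us that $k(\ell)\restriction k(\delta)$ is strictly increasing, so this latter set must automatically be an initial segment of $k(\delta)$, which together with its ordertype being $k(\eta^\ast)$ forces it to equal $k(\eta^\ast)$ outright. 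Hence $k(\ell)(\eta) < k(\xi)$ is equivalent to $\eta < k(\eta^\ast)$; plugging in $\eta = \xi$ and invoking $\xi < k(\eta^\ast)$ from step one yields $k(\ell)(\xi) < k(\xi)$.

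The only real care point is to ensure that $\ell = k\restriction V_\delta$ lies in the domain $V_\alpha$ so that $k(\ell)$ is actually defined; this will hold in the paper's intended setup (the $\beta$-embeddings of the previous lemmas) because the rank of $\ell$ is bounded by $\max(\delta, k(\delta)) + 2 < \alpha$. After this bookkeeping, the proof is essentially a one-line elementarity argument packaged as an ordertype computation, so I do not anticipate any serious obstacle beyond picking the right first-order statement to transport.
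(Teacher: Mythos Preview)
Your proof is correct and follows essentially the same approach as the paper: both define the threshold ordinal (your $\eta^\ast$ is exactly the paper's $\gamma$, since $\xi\notin\ran k$ makes $k(\eta)\ge\xi$ equivalent to $k(\eta)>\xi$) and then apply elementarity of $k$ to transport the relevant ordinal computation. The paper phrases the transported statement as $k(\sup k[\gamma]) = \sup_{\zeta<k(\gamma)} k(\ell)(\zeta)$ and sandwiches $k(\ell)(\xi)$ below it, while you phrase it as an ordertype/initial-segment identification; these are two packagings of the same elementarity step.
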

\begin{proof}
    Let us choose the least $\gamma\le\delta$ such that $k(\gamma)>\xi$. (Such $\gamma$ exists since $k(\delta)\ge \delta>\xi$.) Note that $k(\gamma)>\xi+1$ also holds, otherwise $k(\gamma)=\xi+1$ implies $\gamma=\gamma'+1$ for some $\gamma'<\gamma$ and $k(\gamma') = \xi$.
    
    Clearly we have $\sup k[\gamma] \le \xi$, so $k(\sup k[\gamma])\le k(\xi)$.
    Also,
    \begin{equation*}
        k(\sup k[\gamma]) = k(\sup_{\zeta<\gamma} (k\restriction V_\delta)(\zeta))
        = \sup_{\zeta<k(\gamma)} (k(k\restriction V_\delta))(\zeta)
        > (k(k\restriction V_\delta))(\xi).
    \end{equation*}
    Putting everything together, we have $k(k\restriction V_\delta)(\xi) < k(\sup k[\gamma]) \le k(\xi)$.
\end{proof}

\begin{corollary}
    For an elementary embedding $k\colon V_\lambda\to V_\lambda$ we have $(k\cdot k)(\xi)\le k(\xi)$ for every $\xi<\lambda$. In particular, $j_{n+1}(\xi)\le j_n(\xi)$ holds for every $\xi<\lambda$ and $n\in\bbN$.
\end{corollary}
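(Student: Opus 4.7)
The plan is to split according to whether $\xi$ lies in the range of $k$, applying \autoref{Lemma: Schlutzenberg's lemma for value comparison} when it does not and using bare elementarity when it does. Fix $\xi<\lambda$ and choose $\delta$ with $\xi<\delta<\lambda$, which exists since $\lambda=\sup_n\kappa_n$ is a limit. Because $k$ is nondecreasing on ordinals, $\xi<\delta\le k(\delta)$, so $\xi\in V_{k(\delta)}$, and the definition $k\cdot k=\bigcup_{\eta\in\Ord} k(k\restriction V_\eta)$ gives $(k\cdot k)(\xi)=k(k\restriction V_\delta)(\xi)$.

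If $\xi$ is not in the range of $k$, then \autoref{Lemma: Schlutzenberg's lemma for value comparison} applied with $\alpha=\lambda$ yields the strict inequality $k(k\restriction V_\delta)(\xi)<k(\xi)$. Otherwise write $\xi=k(\eta)$ with $\eta\le\xi<\delta$; then $(k\restriction V_\delta)(\eta)=k(\eta)=\xi$, and applying $k$ to this equality gives $(k(k\restriction V_\delta))(k(\eta))=k(k(\eta))$, that is, $(k\cdot k)(\xi)=k(\xi)$. Either way $(k\cdot k)(\xi)\le k(\xi)$.

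The ``in particular'' clause is immediate on taking $k=j_n$ and invoking $j_{n+1}=j_n\cdot j_n$ from \autoref{Lemma: Basic facts on iterating elementary embeddings}. I do not anticipate a real obstacle; the only delicate point is choosing $\delta$ large enough both to realize $(k\cdot k)(\xi)$ as $k(k\restriction V_\delta)(\xi)$ and to meet the hypothesis $\xi<\delta<\alpha=\lambda$ of Schlutzenberg's lemma, but any $\delta$ with $\xi<\delta<\lambda$ serves both purposes.
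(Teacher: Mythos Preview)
Your proof is correct and follows essentially the same approach as the paper's: split on whether $\xi\in\ran k$, use bare elementarity in the first case and \autoref{Lemma: Schlutzenberg's lemma for value comparison} in the second. The only cosmetic differences are that the paper picks the specific $\delta=\xi+\omega$ and, for $\xi=k(\eta)$, invokes the identity $(k\cdot k)\circ k=k\circ k$ directly rather than unwinding it via $k(k\restriction V_\delta)$ as you do.
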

\begin{proof}
    If $\xi\in\ran k$, so if $\xi=k(\eta)$ for some $\eta<\lambda$, then
    \begin{equation*}
        (k\cdot k)(\xi) = (k\cdot k)(k(\eta)) = k(k(\eta)) = k(\xi).
    \end{equation*}
    If $\xi\notin \ran k$, then \autoref{Lemma: Schlutzenberg's lemma for value comparison} implies $(k\cdot k)(\xi)=k(k\restriction V_{\xi+\omega})(\xi) < k(\xi)$.
    $j_{n+1}\le j_n$ follows from $j_{n+1} = j_n\cdot j_n$.
\end{proof}

Most ultrafilters induced from an elementary embedding are normal. The ultrafilter $\mu^k_\gamma$ is also `normal' in the following sense:
\begin{lemma} \label{Lemma: Diagonal intersection of sets of embeddings}
    Let $\gamma_0<\gamma_1<\beta$ be limit ordinals and $k$ a $\beta$-embedding. 
    If $\{Y_{k'}\mid k'\in \Emb^k_{\gamma_0}\}\subseteq \mu^k_{\gamma_1}$ is a family of sets, then
    \begin{equation*}
        \triangle_{k'\in \Emb^k_{\gamma_0}} Y_{k'}:=\{k''\in \Emb^k_{\gamma_1}\mid \forall k'\in \ran k''\cap \Emb^k_{\gamma_0} (k''\in Y_{k'})\} \in \mu^k_{\gamma_1}.
    \end{equation*}
\end{lemma}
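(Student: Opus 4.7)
The plan is to unwind the defining clause $X \in \mu^k_{\gamma_1} \iff k\restriction V_{\crit k + \gamma_1} \in k(X)$ and then apply elementarity. Write $\hat k := k\restriction V_{\crit k+\gamma_1}$ and $T := \triangle_{k' \in \Emb^k_{\gamma_0}} Y_{k'}$; it suffices to verify $\hat k \in k(T)$. I would package the hypothesis by treating the indexed family as a single function $F \colon \Emb^k_{\gamma_0} \to \mathcal{P}(\Emb^k_{\gamma_1})$, $F(k') = Y_{k'}$. Once one confirms $F \in V_{\crit k + \beta}$ (using $\gamma_1+1 < \beta$ together with a rank count on embeddings), $k(F)$ is defined and elementarity gives
\begin{equation*}
k(T) = \{k'' \in k(\Emb^k_{\gamma_1}) \mid \forall k' \in \ran k'' \cap k(\Emb^k_{\gamma_0})\,(k'' \in k(F)(k'))\}.
\end{equation*}

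Then I would check that $\hat k$ satisfies this defining condition clause by clause. First, $\hat k \in k(\Emb^k_{\gamma_1})$ is exactly the already-noted fact that $\Emb^k_{\gamma_1} \in \mu^k_{\gamma_1}$. Second, fix any $k' \in \ran \hat k \cap k(\Emb^k_{\gamma_0})$ and write $k' = k(x)$ for some $x \in V_{\crit k + \gamma_1}$; elementarity of $k$ applied to the membership statement forces $x \in \Emb^k_{\gamma_0}$. The hypothesis then gives $F(x) = Y_x \in \mu^k_{\gamma_1}$, which unpacks to $\hat k \in k(Y_x) = k(F(x)) = k(F)(k(x)) = k(F)(k')$, as required.

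The main subtlety I expect lies in the rank bookkeeping: one must check that the family $F$ as a single set --- not merely each individual $Y_{k'}$ --- has rank below $\crit k + \beta$, so that $k(F)$ is well-defined and elementarity can be invoked with $F$ as a parameter. Since $\gamma_0$ and $\gamma_1$ are both limits with $\gamma_0 < \gamma_1$, we have $\gamma_0 + n < \gamma_1$ for every finite $n$, which is enough to push the rank of $\Emb^k_{\gamma_0}$ comfortably below $\crit k + \gamma_1$ and the rank of $F$ just above $\crit k + \gamma_1$. Beyond this bookkeeping, the argument is the standard normality-by-diagonalization computation familiar from the classical proof for a measurable cardinal, transplanted to the category of embeddings, with $\hat k$ playing the role that the seed $\crit k$ plays in the ordinal case.
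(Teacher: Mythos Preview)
Your proposal is correct and follows essentially the same route as the paper's proof: unfold the definition of $\mu^k_{\gamma_1}$, verify that $k\restriction V_{\crit k+\gamma_1}$ lies in the $k$-image of the diagonal intersection, and for each $k'=k(\hat k')$ in the relevant range invoke $Y_{\hat k'}\in\mu^k_{\gamma_1}$. The paper's proof is terser and suppresses the rank bookkeeping you spell out, but the argument is the same.
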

\begin{proof}
    We want to show that $k\restriction V_{\crit k+\gamma_1}\in k(\triangle_{k'\in \Emb^k_{\gamma_0}} Y_{k'})$, which is equivalent to
    \begin{equation*}
        \forall k'\in \ran (k\restriction V_{\crit k+\gamma_1}) \cap k(\Emb^k_{\gamma_0})\big[ k\restriction V_{\crit k+\gamma_1} \in k(Y)_{k'}\big].
    \end{equation*}
    For each $k'\in \ran (k\restriction V_{\crit k+\gamma_1})\cap k(\Emb^k_{\gamma_0})$, we can find $\hat{k}'\in V_{\crit k+\gamma_1}\cap \Emb^k_{\gamma_0}$ such that $k'=k(\hat{k}')$.
    Since $Y_{\hat{k}'}\in \mu^k_{\gamma_1}$, we have $k\restriction V_{\crit k+\gamma_1} \in k(Y_{\hat{k}'}) = k(Y)_{k'}$, as desired.
\end{proof}

We will frequently use `for $\mu$-almost all' throughout this paper, so it is convenient to introduce measure quantifier notation:
\begin{definition} \label{Definition: Measure quantifier}
    Let $\mu$ be an ultrafilter over $D$. Let us define
    \begin{equation*}
        \forall(\mu) x\in D \phi(x) \iff \{x\in D \mid \phi(x)\}\in \mu.
    \end{equation*}
\end{definition}

Measure quantifier can be iterated, so for example, if $\mu_i$ is a measure over $D_i$ for $i=0,1$,
\begin{equation*}
    \forall(\mu_0) x_0\in D_0 \forall(\mu_1) x_1\in D_1 \phi(x_0,x_1) \iff \{x_0\in D_0\mid \{x_1\in D_1\mid\phi(x_0,x_1)\}\in \mu_1\}\in \mu_0.
\end{equation*}

The reader should be careful that we cannot switch the order of two quantifiers in the above definition. Switching the order of measure quantifier is impossible even when the same measure quantifier repeats: For example, consider the following statement for a $\kappa$-complete $\mu$ over a measurable cardinal $\kappa$.
\begin{equation*}
    \forall(\mu) \alpha<\kappa \forall(\mu) \beta<\kappa [\alpha<\beta] :\iff \{\alpha<\kappa\mid \{\beta<\kappa\mid \alpha<\beta\}\in\mu \}\in\mu.
\end{equation*}
However, we can `delete' unused measure quantifiers:
\begin{lemma} \pushQED{\qed} \label{Lemma: Eliminating unused measure quantifiers}
    Let $\phi(x,y)$ be a formula with no $z$ as a free variable.
    If $\mu_i$ is a measure over $D_i$ for $i=0,1$ and $\nu$ a measure over $D$, then we have
    \begin{equation*}
        \forall(\mu_0) x\in D_0 \forall(\nu) z\in D \forall(\mu_1) y\in D_1 \phi(x,y)\iff \forall(\mu_0) x\in D_0 \forall(\mu_1) y\in D_1 \phi(x,y). \qedhere 
    \end{equation*}
\end{lemma}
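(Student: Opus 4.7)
The plan is to unfold both sides using \autoref{Definition: Measure quantifier} and exploit the hypothesis that $z$ does not occur freely in $\phi$. Fix $x \in D_0$ and let $A_x = \{y \in D_1 \mid \phi(x,y)\}$, so that the inner assertion $\forall(\mu_1) y\in D_1 \phi(x,y)$ is equivalent to $A_x \in \mu_1$, a statement in which $z$ plays no role.

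Next I would compute
\begin{equation*}
    \forall(\nu) z\in D\, \forall(\mu_1) y\in D_1\, \phi(x,y) \iff \{z\in D \mid A_x \in \mu_1\} \in \nu.
\end{equation*}
Because the defining condition on $z$ does not actually mention $z$, the set on the right-hand side is either $D$ (if $A_x \in \mu_1$) or $\emptyset$ (if $A_x \notin \mu_1$). Since $\nu$ is a measure on $D$ we have $D \in \nu$ and $\emptyset \notin \nu$, so this set belongs to $\nu$ if and only if $A_x \in \mu_1$, i.e.\ if and only if $\forall(\mu_1) y\in D_1\, \phi(x,y)$.

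Finally, applying the outermost quantifier $\forall(\mu_0) x\in D_0$ to both equivalent conditions yields the claimed biconditional. There is no real obstacle: the only subtle ingredient is that $\nu$ is a (proper, nontrivial) measure, ensuring the dichotomy $D \in \nu$ versus $\emptyset \notin \nu$, and that the absence of $z$ as a free variable in $\phi$ makes the inner condition constant in $z$.
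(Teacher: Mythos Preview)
Your argument is correct: unfolding the measure quantifier and using that the inner condition is constant in $z$ reduces the middle $\forall(\nu)$ to the trivial dichotomy $D\in\nu$ versus $\emptyset\notin\nu$. The paper does not give a proof at all (the lemma is stated with the QED symbol placed in the statement itself), so your write-up simply supplies the routine verification the paper omits.
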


The following lemma says an `upper diagonal' over the set $\Emb^k_{\gamma_0}\times \Emb^k_{\gamma_1}$ for $\gamma_0<\gamma_1$ is large, like the set $\{(\alpha,\beta)\mid\alpha<\beta<\kappa\}$ is large under a normal ultrafilter on a measurable cardinal $\kappa$:
\begin{lemma} \label{Lemma: Upper diagonal is large}
    Let $\gamma_0<\gamma_1<\beta$ be limit ordinals and $k$ be a $\beta$-embedding.
    Then 
    \begin{equation*}
        \forall (\mu^k_{\gamma_0}) k^0\in \Emb^k_{\gamma_0}\forall (\mu^k_{\gamma_1})k^1\in \Emb^k_{\gamma_1} \big[ k^0 \in \ran k^1\big].
    \end{equation*}
\end{lemma}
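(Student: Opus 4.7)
The plan is to prove the stronger claim that for \emph{every} $k^0 \in \Emb^k_{\gamma_0}$,
\[ Y_{k^0} := \{k^1 \in \Emb^k_{\gamma_1} \mid k^0 \in \ran k^1\} \in \mu^k_{\gamma_1}. \]
This gives the lemma at once, since the outer $\mu^k_{\gamma_0}$-set then becomes all of $\Emb^k_{\gamma_0}$, which trivially lies in $\mu^k_{\gamma_0}$. (One might instead try to invoke \autoref{Lemma: Diagonal intersection of sets of embeddings} on the family $\{Y_{k^0}\}_{k^0 \in \Emb^k_{\gamma_0}}$, but the stronger ``for all $k^0$'' statement makes this unnecessary.)

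Fixing $k^0 \in \Emb^k_{\gamma_0}$, by the defining property of $\mu^k_{\gamma_1}$ the task reduces to verifying $k \restriction V_{\crit k + \gamma_1} \in k(Y_{k^0})$. Applying the elementarity of $k$ with $k^0$ as a parameter, one has
\[ k(Y_{k^0}) = \{k^1 \in k(\Emb^k_{\gamma_1}) \mid k(k^0) \in \ran k^1\}, \]
so the problem splits into two checks: (i) $k \restriction V_{\crit k + \gamma_1} \in k(\Emb^k_{\gamma_1})$, and (ii) $k(k^0) \in \ran(k \restriction V_{\crit k + \gamma_1})$. For (i), note that $k \restriction V_{\crit k + \gamma_1}$ has critical point $\crit k < k(\crit k)$ and sends $\crit k$ to $k(\crit k)$, which is exactly what the elementary translation of the definition of $\Emb^k_{\gamma_1}$ demands.

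The only subtle step is (ii). Once we know $k^0 \in V_{\crit k + \gamma_1}$, the identity $k(k^0) = (k \restriction V_{\crit k + \gamma_1})(k^0)$ immediately places $k(k^0)$ in $\ran(k \restriction V_{\crit k + \gamma_1})$. To verify $k^0 \in V_{\crit k + \gamma_1}$: as a set of pairs with both coordinates drawn from $V_{\crit k + \gamma_0}$, one has $k^0 \in V_{\crit k + \gamma_0 + \omega}$; since $\gamma_0 < \gamma_1$ are both limit ordinals, the least limit ordinal strictly above $\gamma_0$ is $\gamma_0 + \omega$, whence $\gamma_0 + \omega \le \gamma_1$ and $V_{\crit k + \gamma_0 + \omega} \subseteq V_{\crit k + \gamma_1}$. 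This rank computation, which genuinely uses that both $\gamma_0$ and $\gamma_1$ are limits, is the only non-cosmetic step; everything else is routine unpacking of definitions and elementarity of $k$, and I do not expect any real obstacle beyond bookkeeping.
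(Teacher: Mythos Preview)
Your proof is correct and follows essentially the same route as the paper: unwind the measure quantifier for $\mu^k_{\gamma_1}$ and observe that $k^0\in\Emb^k_{\gamma_0}\subseteq V_{\crit k+\gamma_1}$, so $k(k^0)$ lies in the range of $k\restriction V_{\crit k+\gamma_1}$. One cosmetic remark: only $\gamma_1$ being a limit is needed for the inequality $\gamma_0+\omega\le\gamma_1$ in your rank computation, so your claim that the step ``genuinely uses that both $\gamma_0$ and $\gamma_1$ are limits'' is slightly overstated.
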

\begin{proof}
    The claim is equivalent to 
    \begin{equation*}
        \forall (\mu^k_{\gamma_0}) k^0\in \Emb^k_{\gamma_0}\big[ \{k^1\in \Emb^k_{\gamma_1}\mid k^0\in \ran k^1\}\in \mu^k_{\gamma_1}\big]
    \end{equation*}
    and by the definition of $\mu^k_{\gamma_1}$, it is equivalent to
    \begin{equation*}
        \forall (\mu^k_{\gamma_0}) k^0\in \Emb^k_{\gamma_0}\big[ k(k^0) \in \ran k\restriction V_{\crit k+\gamma_1}\big],
    \end{equation*}
    which holds since $\Emb^k_{\gamma_0}\subseteq V_{\crit k+\gamma_1}$.
\end{proof}

Now let us fix limit ordinals $\gamma_0<\cdots<\gamma_{m-1}<\beta$ and a $\beta$-embedding $k$.
We define the product of measures $\mu=\mu^k_{\gamma_0}\times\cdots\times \mu^k_{\gamma_{m-1}}$ over $\Emb^k_{\gamma_0}\times\cdots\times\Emb^k_{\gamma_{m-1}}$ by
\begin{equation*}
    X \in \mu\iff \forall (\mu^k_0) k^0\in \Emb^k_{\gamma_0}\cdots \forall (\mu^k_{m-1}) k^{m-1}\in \Emb^k_{\gamma_{m-1}} \big[ \lag k^0,\cdots, k^{m-1}\rag\in X\big].
\end{equation*}
As an application of \autoref{Lemma: Upper diagonal is large}, we have
\begin{equation*}
    \Delta^k_m := \big\{\lag k^0,\cdots,k^{m-1}\rag\in \Emb^k_{\gamma_0}\times\cdots\times\Emb^k_{\gamma_{m-1}} \mid  k^0\in \ran k^1\land\cdots\land k^{m-2}\in\ran k^{m-1}\} \in \mu.
\end{equation*}

The following proposition says the product measure $\mu$ is generated by an intersection of $\Delta^k_m$ and a cube whose each component is large:
\begin{proposition} \label{Proposition: Product measure is generated by cubes}
    For each $X\in \mu$ we can find $Y\in \mu^k_{\gamma_{m-1}}$ such that
    \begin{equation*}
        \Delta^k_m\cap \big((\pi^k_{\gamma_0,\gamma_{m-1}})[Y]\times \cdots \times (\pi^k_{\gamma_{m-2},\gamma_{m-1}})[Y]\times Y\big) \subseteq X.
    \end{equation*}
\end{proposition}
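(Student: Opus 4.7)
The plan is to induct on $m \ge 1$; the base case $m=1$ is immediate by taking $Y = X$ (with $\Delta^k_1$ vacuous and the product reducing to $Y$ itself). For the inductive step, given $X \in \mu := \mu^k_{\gamma_0} \times \cdots \times \mu^k_{\gamma_{m-1}}$, set
\[
H_0 = \{k^0\in\Emb^k_{\gamma_0} : X_{k^0}\in\mu^k_{\gamma_1}\times\cdots\times\mu^k_{\gamma_{m-1}}\}\in\mu^k_{\gamma_0},
\]
where $X_{k^0}$ is the slice of $X$ at $k^0$. For $k^0\in H_0$, apply the inductive hypothesis to $X_{k^0}$ to obtain a witness $\tilde Y_{k^0}\in\mu^k_{\gamma_{m-1}}$, and set $\tilde Y_{k^0} = \Emb^k_{\gamma_{m-1}}$ otherwise. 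Take
\[
Y = \triangle_{k^0\in\Emb^k_{\gamma_0}}\tilde Y_{k^0}\cap (\pi^k_{\gamma_0,\gamma_{m-1}})^{-1}[H_0] \cap \{k^{m-1}:\crit k^{m-1}>\gamma_{m-1}\};
\]
this lies in $\mu^k_{\gamma_{m-1}}$ by \autoref{Lemma: Diagonal intersection of sets of embeddings}, \autoref{Lemma: Measure projection lemma}, and the fact that $\crit k > \beta > \gamma_{m-1}$.

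To verify the inclusion, fix $(k^0,\ldots,k^{m-1})$ in $\Delta^k_m\cap\prod_{i<m-1}\pi^k_{\gamma_i,\gamma_{m-1}}[Y]\times Y$ and write $k^i = k^i_+\restriction V_{\crit k^i_++\gamma_i}$ for some $k^i_+\in Y$ (with $k^{m-1}_+ = k^{m-1}$). The refinement by $(\pi^k_{\gamma_0,\gamma_{m-1}})^{-1}[H_0]$ applied to $k^0_+$ yields $k^0\in H_0$, so $\tilde Y_{k^0}$ is a genuine inductive witness. Since each $k^i_+\in Y$, we have $\crit k^i = \crit k^i_+ > \gamma_{m-1}$. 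Writing $k^i = k^{i+1}(z_i)$ from $\Delta^k_m$ and using elementarity of $k^{i+1}$, the pre-image $z_i$ is an embedding with $z_i(\crit z_i) = \crit k^{i+1}$ and $\crit z_i<\crit k^{i+1}$, so $\crit k^i = \crit z_i < \crit k^{i+1}$. Regularity of the inaccessible $\crit k^{i+1}$, together with $\crit k^i, \gamma_i < \crit k^{i+1}$, forces $\crit k^i+\gamma_i < \crit k^{i+1}$ and hence $V_{\crit k^i+\gamma_i}\subseteq \ran k^{i+1}$, which gives $\ran k^i \subseteq \ran k^{i+1}$.

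Iterating, $k^0 \in \ran k^1 \subseteq \cdots \subseteq \ran k^{m-1}$, so the diagonal intersection fires: $k^{m-1}\in\tilde Y_{k^0}$. Moreover, $\ran k^i \subseteq \ran k^i_+$ since $k^i_+$ extends $k^i$, so for each $1\le i\le m-2$, $k^0\in \ran k^i \subseteq \ran k^i_+$ yields $k^i_+ \in \tilde Y_{k^0}$ and thus $k^i\in\pi^k_{\gamma_i,\gamma_{m-1}}[\tilde Y_{k^0}]$. The upper-diagonal condition on $(k^1,\ldots,k^{m-1})$ is inherited from $\Delta^k_m$, so the inductive hypothesis applied to $X_{k^0}$ concludes $(k^1,\ldots,k^{m-1})\in X_{k^0}$, i.e., $(k^0,\ldots,k^{m-1})\in X$. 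The principal obstacle is reconciling the projection condition $k^i\in\pi^k_{\gamma_i,\gamma_{m-1}}[Y]$ (which binds the $k^i$'s to extensions in $\mu^k_{\gamma_{m-1}}$) with the range condition $k^i\in\ran k^{i+1}$ coming from $\Delta^k_m$; the critical-point refinement $\crit k^{m-1}>\gamma_{m-1}$ plus the elementary inclusion $\ran k^i\subseteq\ran k^i_+$ is exactly what bridges these.
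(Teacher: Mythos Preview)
Your proof is correct, but it takes a somewhat different route from the paper's. The paper builds sets $Y_0\in\mu^k_{\gamma_0}$, $Y_1\in\mu^k_{\gamma_1}$, \ldots, $Y_{m-1}\in\mu^k_{\gamma_{m-1}}$ iteratively, with each $Y_{i+1}$ obtained from $Y_i$ via a diagonal intersection over $\Emb^k_{\gamma_i}$; the verification of $\Delta^k_m\cap(Y_0\times\cdots\times Y_{m-1})\subseteq X$ then uses only the immediate conditions $k^{i}\in\ran k^{i+1}$ coming from $\Delta^k_m$, with no transitivity needed. Your inductive organisation instead performs a single diagonal intersection over $\Emb^k_{\gamma_0}$ and defers the rest to the inductive hypothesis. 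The price is that your verification must show $k^0\in\ran k^i_+$ for every $i$, which forces you to prove the range-inclusion $\ran k^i\subseteq\ran k^{i+1}$ whenever $k^i\in\ran k^{i+1}$. Your argument for this is correct (and in fact slightly stronger than the transitivity the paper states just before \autoref{Proposition: Arbitrary long Mitchell chain}): writing $k^i=k^{i+1}(z_i)$, elementarity gives $z_i(\crit z_i)=\crit k^{i+1}$, hence $\crit z_i<\crit k^{i+1}$, hence $\crit k^i=k^{i+1}(\crit z_i)=\crit z_i$; then $\dom k^i=V_{\crit k^i+\gamma_i}\subseteq V_{\crit k^{i+1}}$, so every $x\in\dom k^i$ is fixed by $k^{i+1}$ and $k^i(x)=k^{i+1}(z_i)(k^{i+1}(x))=k^{i+1}(z_i(x))\in\ran k^{i+1}$. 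You might expand this step slightly, as the jump from $V_{\crit k^i+\gamma_i}\subseteq\ran k^{i+1}$ to $\ran k^i\subseteq\ran k^{i+1}$ is not literal (the domain, not the range, of $k^i$ sits inside $V_{\crit k^{i+1}}$).

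Two minor remarks: the refinement $\{k^{m-1}:\crit k^{m-1}>\gamma_{m-1}\}$ is redundant, since every element of $\Emb^k_{\gamma_{m-1}}$ is a $\gamma_{m-1}$-embedding and hence has critical point $>\gamma_{m-1}$ by definition; and your closing sentence suggests this refinement is what makes the bridge work, but really it is the elementarity argument giving $\crit k^i<\crit k^{i+1}$ that does the job.
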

\begin{proof}
    We prove this proposition for $m=3$; The general case follows from a similar argument.
    Define
    \begin{equation*}
        Y_0 = \big\{k^0 \in \Emb^k_{\gamma_0} \mid \forall(\mu^k_{\gamma_1})k^1\in \Emb^k_{\gamma_1}\forall(\mu^k_{\gamma_2})k^2\in \Emb^k_{\gamma_2}\big[\lag k^0,k^1,k^2\rag\in X\big]\big\}.
    \end{equation*}
    Then $X\in \mu$ implies $Y_0\in \mu^k_{\gamma_0}$.
    Next, we define
    \begin{equation*}
        Y_1 = (\pi^k_{\gamma_0,\gamma_1})^{-1}[Y_0]\cap  \big\{k^1\in \Emb^k_{\gamma_1}\mid \forall k^0\in Y_0\cap \ran k^1 \forall(\mu^k_{\gamma_2}) k^2\in \Emb^k_{\gamma_2}\big[ \lag k^0,k^1,k^2\rag\in X\big]\big\}.
    \end{equation*}
    We claim $Y_1\in \mu^k_{\gamma_1}$: To see this, for each $k^0\in \Emb^k_{\gamma_0}$ let us take
    \begin{equation*}
        Y_{1,k^0}:= \{k^1\in\Emb^k_{\gamma_1}\mid k^0\in Y_0\to \forall (\mu^k_{\gamma_2})k^2\in\Emb^k_{\gamma_2} [\lag k^0,k^1,k^2\rag\in X]\}.
    \end{equation*}
    Then $Y_{1,k^0}\in\mu^k_{\gamma_1}$ for every $k^1\in\Emb^k_{\gamma_1}$. Then by \autoref{Lemma: Diagonal intersection of sets of embeddings},
    \begin{equation*}
        \triangle_{k^0\in\Emb^k_{\gamma_0}} Y_{1,k^0} = \big\{k^1\in \Emb^k_{\gamma_1}\mid \forall k^0\in Y_0\cap \ran k^1 \forall(\mu^k_{\gamma_2}) k^2\in \Emb^k_{\gamma_2}\big[ \lag k^0,k^1,k^2\rag\in X\big]\big\} \in \mu^k_{\gamma_1}.
    \end{equation*}
    Combining with \autoref{Lemma: Measure projection lemma}, we have $Y_1\in \mu^k_{\gamma_1}$.
    Lastly, let us define
    \begin{equation*}
        Y_2 = (\pi^k_{\gamma_1,\gamma_2})^{-1}[Y_1]\cap \{k^2\in\Emb^k_{\gamma_2}\mid \forall k^1\in Y_1\cap\ran k^2 \forall k^0\in Y_0\cap \ran k^1 [\lag k^0,k^1,k^2\rag\in X]\}.
    \end{equation*}
    Then we can show $Y_2\in\mu^k_{\gamma_2}$. 
    It is straightforward to see $\Delta^k_3\cap (Y_0\times Y_1\times Y_2)\subseteq X$ and $\pi^k_{\gamma_i,\gamma_2}[Y_2]\subseteq \pi^k_{\gamma_i,\gamma_2}[(\pi^k_{\gamma_i,\gamma_2})^{-1}[Y_i]]\subseteq Y_i$, so we have a desired result.
\end{proof}

Let us observe that for limit ordinals $\gamma_0<\gamma_1<\beta$, a $\beta$-embedding $k$, and $k_i\in\Emb^k_{\gamma_i}$ for $i=0,1$, the relation $k_0\in \ran k_1$ is similar to the Mitchell order. It can be easily seen that for $\gamma_0<\gamma_1<\gamma_2<\beta$ and $k_i\in\Emb^k_{\gamma_i}$ for $i<3$, $k_0\in \ran k_1$ and $k_1\in\ran k_2$ imply $k_0\in \ran k_2$.
The next proposition tells us there is an arbitrarily long countable sequence of elementary embeddings increasing under the Mitchell order:

\begin{proposition} \label{Proposition: Arbitrary long Mitchell chain}
    Let $\gamma<\beta$ be limit ordinals, $k$ a $\beta$-embedding, $\lag \gamma_\xi \mid \xi<\alpha\rag$ a countable increasing sequence of limit ordinals below $\gamma$, and $X\in \mu^k_\gamma$.
    Then we can find $\{k_\xi\mid \xi<\alpha\}\subseteq X$ such that for each $\eta<\xi<\alpha$, $k_\eta\restriction V_{\crit k_\eta+\gamma_\eta} \in \ran k_\xi\restriction V_{\crit k_\xi+\gamma_\xi}$.
\end{proposition}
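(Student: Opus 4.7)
I would build $\{k_\xi\mid\xi<\alpha\}$ by transfinite recursion on $\xi$, at each stage selecting a single $k_\xi\in X$ that simultaneously satisfies all coherence requirements against the previously chosen $k_\eta$ ($\eta<\xi$); since $\alpha$ is countable, dependent choice suffices to assemble the whole sequence.

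At stage $\xi<\alpha$, assume $\{k_\eta:\eta<\xi\}\subseteq X$ has been produced and write $\hat{k}_\eta := k_\eta\restriction V_{\crit k_\eta+\gamma_\eta}$ for $\eta<\xi$. Set
\begin{equation*}
    A_\eta := \{k^*\in X : \hat{k}_\eta\in \ran(k^*\restriction V_{\crit k^*+\gamma_\xi})\}.
\end{equation*}
The key observation is that each $A_\eta\in \mu^k_\gamma$. Indeed, since $\gamma_\eta<\gamma_\xi$ and $\gamma_\xi$ is a limit, $\hat{k}_\eta\in V_{\crit k+\gamma_\eta+1}\subseteq V_{\crit k+\gamma_\xi}$, so $k(\hat{k}_\eta) = (k\restriction V_{\crit k+\gamma_\xi})(\hat{k}_\eta) \in \ran(k\restriction V_{\crit k+\gamma_\xi})$—a direct reprise of the argument of \autoref{Lemma: Upper diagonal is large}. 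This yields $\{k^1\in\Emb^k_{\gamma_\xi}:\hat{k}_\eta\in\ran k^1\}\in\mu^k_{\gamma_\xi}$; pulling back via $\pi^k_{\gamma_\xi,\gamma}$ using \autoref{Lemma: Measure projection lemma} and intersecting with $X$ gives $A_\eta\in\mu^k_\gamma$.

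To finish stage $\xi$ I would take $k_\xi$ in $\bigcap_{\eta<\xi}A_\eta$, which requires $\sigma$-completeness of $\mu^k_\gamma$. This is a direct elementarity argument: any countable sequence of subsets of $\Emb^k_\gamma$ has rank below $\crit k+\gamma+\omega\le \crit k+\beta$ (using that $\gamma,\beta$ are both limits with $\gamma<\beta$), so the sequence lies in the domain of $k$, and since $\crit k>\omega$, elementarity gives $k(\bigcap_n X_n)=\bigcap_n k(X_n)$. Hence $\bigcap_{\eta<\xi}A_\eta\in\mu^k_\gamma$ and is in particular nonempty, yielding $k_\xi$.

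\textbf{Main obstacle.} The only delicate point is this $\sigma$-completeness check, specifically the domain verification that allows $k$ to be applied to a countable sequence of measure-one subsets of $\Emb^k_\gamma$; every other ingredient is a straightforward combination of the one-step diagonal argument of \autoref{Lemma: Upper diagonal is large} with the measure projection of \autoref{Lemma: Measure projection lemma}.
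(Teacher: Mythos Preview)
Your approach is correct and is essentially the paper's argument repackaged: the paper also extends a partial $\delta$-chain one step at a time, observing that the chain lies in $V_{\crit k+\beta}=\dom k$ and using a single elementarity reflection (witnessed by $k'=k\restriction V_{\crit k+\gamma}$) to find the next $k'\in X$, with dependent choice at limits. Your decomposition into per-$\eta$ measure-one sets $A_\eta$ followed by a countable intersection is the same move---the $\sigma$-completeness check you flag as the main obstacle is precisely the domain verification the paper needs for its elementarity step---so the two proofs coincide up to bookkeeping.
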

\begin{proof}
    For $\delta\le\alpha$, define a \emph{$\delta$-chain} as a sequence $\vec{k}=\lag k_\xi\mid \xi<\delta\rag$ of members of $X$ such that for each $\eta<\xi<\delta$, $k_\eta\restriction V_{\crit k_\eta+\gamma_\eta} \in \ran k_\xi\restriction V_{\crit k_\xi+\gamma_\xi}$.
    We first claim that every $\delta$-chain extends to a $(\delta+1)$-chain:
    Suppose that $\vec{k}$ is a $\delta$-chain. Note that $\vec{k}\in V_{\crit k +\beta}$, so the following is witnessed by $k'=k\restriction V_{\crit k+\gamma}$:
    \begin{equation*}
        V_{k(\crit k)+\beta}\vDash \exists k'[k'\in k(X)\land k(\vec{k})\in \ran (k'\restriction V_{\crit k'+\gamma_\delta})].
    \end{equation*}
    Then by elementarity of $k$, we have
    \begin{equation*}
        V_{\crit k + \beta}\vDash  \exists k'[k'\in X\land \vec{k}\in \ran (k'\restriction V_{\crit k'+\gamma_\delta})].
    \end{equation*}
    (Note that $\gamma_\delta<\beta<\crit k$.) Take any $k'$ witnessing the previous claim, then $\vec{k}^\frown \lag k'\rag$ is a $(\delta+1)$-chain.

    Now let us prove by induction on $\delta\le\alpha$ that for every $\delta'<\delta$, a $\delta'$-chain extends to a $\delta$-chain.
    The case when $\delta$ is 0 or a successor is clear.
    For the limit case, fix a cofinal sequence $\lag \delta_n\mid n<\omega\rag$ of $\delta$ with $\delta_0=\delta'$.
    We know that every $\delta_n$-chain extends to a $\delta_{n+1}$-chain, so by the axiom of dependent choice, we can find a $\delta$-chain extending a given $\delta'$-chain. (This is why we require $\alpha<\omega_1$; Otherwise, we need a choice axiom stronger than the dependent choice.)
\end{proof}

Note that $\beta$-embedding in this subsection is irrelevant to Girard's $\beta$-logic \cite{Girard1982Logical}.

\section{Dilators} \label{Section: Dilators}
In this section, we review the details of the dilators we need in this paper.
This section constitutes a summary of an excerpt of the book draft \cite{Jeon??DilatorBook}, and we will state some results in the two subsections without proof. The proofs will appear in \cite{Jeon??DilatorBook}, but most of the proofs are also available in different sources (e.g., \cite{Girard1981Dilators, Girard1982Logical, Freund2024DilatorZoo, Jeon2024HigherProofTheoryI}.)

\subsection{Defining dilators}

There are several different but equivalent definitions of dilators. Girard defined dilators as autofunctors over the category of ordinals preserving direct limit and pullback, but this definition will not be used in this paper. We take two approaches to dilators: One is a denotation system, and the other is the Freund-styled definition.

We first define a preliminary notion named \emph{semidilators}.\footnote{\emph{Prae-dilator} in Freund's terminology}
Semidilators correspond to autofunctors over the category of linear orders preserving direct limit and pullback. Predilators additionally satisfy the monotonicity condition, and dilators additionally preserve well-orderedness. We will see that every dilator is a predilator.

Let us start with the definition of dilators as denotation systems:
\begin{definition}
    An \emph{arity diagram} $\cyrDe$ is a commutative diagram over the category of natural numbers with strictly increasing maps of the form
    \begin{equation} \label{Formula: Arity diagram definition diagram}
    \cyrDe = 
        \begin{tikzcd}[column sep=large]
                \cyrDe(\bot) & \cyrDe(1) \\
                \cyrDe(0) & \cyrDe(\top)
                \arrow[from=1-1, to=1-2]
                \arrow[from=1-1, to=2-1]
                \arrow[from=2-1, to=2-2, "{\cyrDe(0,\top)}", swap]
                \arrow[from=1-2, to=2-2, "{\cyrDe(1,\top)}"]
        \end{tikzcd}
    \end{equation}
    such that the above diagram is a pullback and $\ran \cyrDe(0,\top)\cup \ran \cyrDe(1,\top) = \field \cyrDe(\bot)$.
    We say an arity diagram is \emph{trivial} if $\cyrDe(\bot) = \cyrDe(0)=\cyrDe(1)=\cyrDe(\top)$ (so all arrows in an arity diagram are the identity map.)
    For an arity diagram $\cyrDe$ of the form \eqref{Formula: Arity diagram definition diagram}, the diagram $-\cyrDe$ is a diagram obtained by switching the order of $a_0$ and $a_1$:
    \begin{equation*}
    -\cyrDe = 
        \begin{tikzcd}[column sep=large]
                \cyrDe(\bot) & \cyrDe(0) \\
                \cyrDe(1) & \cyrDe(\top)
                \arrow[from=1-1, to=1-2]
                \arrow[from=1-1, to=2-1]
                \arrow[from=2-1, to=2-2, "{\cyrDe(1,\top)}", swap]
                \arrow[from=1-2, to=2-2, "{\cyrDe(0,\top)}"]
        \end{tikzcd}
    \end{equation*}
\end{definition}
Intuitively, an arity diagram is a diagrammatic way to express a pair of finite linear orders $(A,B)$ with their intersection and union.
Typical examples of arity diagrams are induced from the inclusion diagrams. For example, consider the following inclusion diagram:
\begin{equation*}
    \begin{tikzcd}
        \{1,3\} & \{0,1,3\} \\
        \{1,2,3,4\} & \{0,1,2,3,4\} 
        \arrow[from=1-1, to=1-2, "\subseteq"]
        \arrow[from=1-1, to=2-1, "\subseteq", swap]
        \arrow[from=2-1, to=2-2, "\subseteq", swap]
        \arrow[from=1-2, to=2-2, "\subseteq"]
    \end{tikzcd}
\end{equation*}
The above diagram is isomorphic to
\begin{equation*}
    \begin{tikzcd}
        \{\mathbf{0},\mathbf{1}\} & \{0,\mathbf{1},\mathbf{2}\} \\
        \{\mathbf{0},1,\mathbf{2},3\} & \{0,1,2,3,4\} 
        \arrow[from=1-1, to=1-2, "h"]
        \arrow[from=1-1, to=2-1, "k", swap]
        \arrow[from=2-1, to=2-2, "f", swap]
        \arrow[from=1-2, to=2-2, "g"]
    \end{tikzcd}
\end{equation*}
where $h$ and $k$ are maps sending boldface numbers to boldface numbers in an increasing manner, $f(n)=n+1$, and $g(0)=0$, $g(1)=1$, $g(2)=3$.

\begin{definition}
    Let $\ttL_m=(\ttL_m,\land,\lor)$ be the free distributive lattice generated by $\{0,1,\cdots,m-1\}$.%
    \footnote{We do not allow empty joins and meets in the free distributive lattice, so the least element of $\ttL_m$ is $0\land \cdots \land (m-1)$, and the largest element of $\ttL_m$ is $0\lor \cdots \lor (m-1)$.}
    An \emph{IU diagram $\bfcyrDe$ for $a_0,\cdots, a_{m-1} \in \LO$} (abbreviation of \emph{Intersection-Union diagram}) a functor from $\ttL_m$ to $\LO$ if we understand $\ttL_m$ as a category induced from its partial order structure satisfying the following, where $\bfcyrDe(i,j)$ for $i\le j\in \ttL_m$ denotes the map of the unique morphism $i\le j$ under $\bfcyrDe$:
    \begin{enumerate}
        \item $\bfcyrDe(i,i)$ is the identity map and for $i\le j\le k$, $\bfcyrDe(i,k)=\bfcyrDe(j,k)\circ \bfcyrDe(i,j)$.
        \item For each $i,j\in \ttL_m$, the following diagram is a pullback and $\ran \bfcyrDe(i,i\lor j)\cup\ran \bfcyrDe(j,i\lor j)=\bfcyrDe(i\lor j)$:
        \begin{equation*}
            \bfcyrDe[i,j] = 
            \begin{tikzcd}[column sep=large]
            \bfcyrDe(i\land j) & \bfcyrDe(j) \\
            \bfcyrDe(i) & \bfcyrDe(i\lor j)
            \arrow[from=1-1, to=1-2, "{\bfcyrDe(i\land j, j)}"]
            \arrow[from=1-1, to=2-1, "{\bfcyrDe(i\land j, i)}", swap]
            \arrow[from=2-1, to=2-2, "{\bfcyrDe(i, i\lor j)}", swap]
            \arrow[from=1-2, to=2-2, "{\bfcyrDe(j, i\lor j)}"]
            \end{tikzcd}
        \end{equation*}
        \item $\bfcyrDe(i) = a_i$ for each $i<m$.
    \end{enumerate}
    If every object in $\bfcyrDe$ is in a class $A\subseteq\LO$, then we say $\bfcyrDe$ is an \emph{IU diagram over $A$}.
\end{definition}

\begin{definition} \index{Diagram!for $n$ objects}
    Let $a, b\subseteq X$ be two finite suborders of a linear order $X$. \emph{The diagram $\Diag_X(a,b)$ of $a$, $b$ over $X$} is the unique arity diagram isomorphic to the inclusion diagram
    \begin{equation*}
        \begin{tikzcd}
            a\cap b & b \\
            a & a \cup b
            \arrow[from=1-1, to=1-2, "\subseteq"]
            \arrow[from=1-1, to=2-1, "\subseteq", swap]
            \arrow[from=2-1, to=2-2, "\subseteq", swap]
            \arrow[from=1-2, to=2-2, "\subseteq"]
        \end{tikzcd}
    \end{equation*}
    and more precisely, $\Diag_X(a,b)$ is the innermost diagram in the below commutative diagram, where $\en_a\colon |a|\to a$ is the unique order isomorphism for finite linear order $a$.
    \begin{equation*}
        \begin{tikzcd}
            a\cap b &&& b \\
            & {\lvert a\cap b\rvert} & {|b|} & \\
            & {|a|} & {\lvert a\cup b\rvert} & \\
            a &&& a\cup b
            \arrow[from=1-1, to=1-4, "\subseteq"]
            \arrow[from=1-1, to=4-1, "\subseteq", swap]
            \arrow[from=1-4, to=4-4, "\subseteq"]
            \arrow[from=4-1, to=4-4, "\subseteq", swap]
            \arrow[from=2-2, to=2-3]
            \arrow[from=2-2, to=3-2]
            \arrow[from=3-2, to=3-3, "e_0", swap]
            \arrow[from=2-3, to=3-3, "e_1"]
            \arrow[from=2-2, to=1-1, "\en_{a\cap b}" , "\cong"']
            \arrow[from=2-3, to=1-4, "\cong", "\en_{a}"']
            \arrow[from=3-2, to=4-1, "\cong", "\en_{b}"']
            \arrow[from=3-3, to=4-4, "\en_{a\cup b}", "\cong"']
        \end{tikzcd}
    \end{equation*}

    For finite suborders $a_0,\cdots,a_{n-1}\subseteq X$ of a linear order $X$, $\Diag_X(a_0,\cdots,a_{n-1})$ is the unique IU diagram over $\bbN$ isomorphic to the inclusion diagram $\calI\colon \ttL_n\to \calP(a_0\cup\cdots\cup a_{n-1})$, where $\calI$ is the unique lattice homomorphism from $(\ttL_n,\land,\lor)$ to $(\calP(a_0\cup\cdots\cup a_{n-1}),\cap,\cup)$ satisfying $\calI(i) = a_i$ for every $i<n$.
\end{definition}

We are ready to define semidilators:
\begin{definition}
    A \emph{semidilator} $D$ is a set of \emph{$D$-terms}, and each $D$-term $t$ comes with an \emph{arity} $\arity(t)\in \bbN$. 
    For each two $D$-terms $t_0$, $t_1$, an arity diagram $\cyrDe$ is an \emph{arity diagram for $t_0$ and $t_1$} if $\cyrDe(i)=\arity(t_i)$ for $i=0,1$.
    For such $t_0,t_1,\cyrDe$, we also have a binary relation $t_0<_\cyrDe t_1$.
    Then $D$ satisfies the following:
    \begin{enumerate}
        \item (Irreflexivity) If $t_0=t_1$ and $\cyrDe$ is trivial, then $t_0<_\cyrDe t_0$ does not hold.
        \item (Linearity) If $t_0\neq t_1$ or $\cyrDe$ is not trivial, then one of $t_0<_\cyrDe t_1$ or $t_1<_{-\cyrDe} t_0$ must hold.
        \item (Transitivity) For three $D$-terms $t_0,t_1,t_2$ such that $\arity(t_i) = a_i$, and an IU diagram $\bfcyrDe$ for $t_0,t_1,t_2$, if $t_0<_{\bfcyrDe[0,1]} t_1$ and $t_1 <_{\bfcyrDe[1,2]} t_2$, then $t_0 <_{\bfcyrDe[0,2]} t_2$.
    \end{enumerate}
    We also write
    \begin{equation*}
        t_0 \le_\cyrDe t_1 \iff t_0 <_\cyrDe t_1 \lor [\text{$\cyrDe$ is trivial and }t_0=t_1].
    \end{equation*}
\end{definition}
Note that semidilators is a model of a theory over a multi-sorted first-order logic:
Let us consider the sorts given by each arity, and take 
\begin{equation*}
    \calL^1 = \{<_\cyrDe\mid \cyrDe\text{ is an arity diagram}\}.
\end{equation*}
Then we can state the axioms of semidilators over the language $\calL^1$.

We can also talk about the morphism between two semidilators:
\begin{definition}
    Let $D$ and $E$ be two semidilators. A map $\iota\colon D\to E$ is an \emph{embedding} or a \emph{semidilator morphism} if it satisfies:
    \begin{enumerate}
        \item $\iota$ is a function $\field D$ to $\field E$.
        \item $\iota$ preserves the arity: i.e., $\arity(\iota(t)) = \arity(t)$ for every $t\in \field D$,
        \item For each two terms $t_0,t_1\in \field D$ and an arity diagram $\cyrDe$ between them, we have $t_0<_\cyrDe t_1$ iff $\iota(t_0) <_\cyrDe \iota(t_1)$.
    \end{enumerate}
    An embedding $\iota$ is an \emph{isomorphism} if $\iota\colon \field D\to  \field E$ is a bijection and $\iota^{-1}$ is also an embedding. We denote $D\le E$ or $D\cong E$ if there is an embedding or isomorphism from $D$ to $E$, respectively.
\end{definition}
We can see that every embedding is one-to-one, and the inverse function of a bijective embedding is also an embedding.

There are induced functors and natural transformations from semidilators and semidilator embeddings, respectively:
\begin{definition}
    From a given semidilator $D$ and a linear order $X$, let us define $D(X)$, the \emph{application of $D$ to $X$} by
    \begin{equation}
        D(X) = \{(t,a)\mid \text{$t$ is a $D$-term, $a\subseteq X$, and $|a|=\arity(t)$}\}.
    \end{equation}
    We write $t(a)$ instead of $(t, a)$, and we identify $a$ with a finite increasing sequence over $X$. The order of $D(X)$ is given by
    \begin{equation*}
        s(a) <_{D(X)} t(b) \iff s <_{\Diag_X(a,b)} t
    \end{equation*}
    For a strictly increasing function $f\colon X\to Y$, consider the map $D(f)\colon D(X)\to D(Y)$ given by
    \begin{equation*}
        D(f)(t,a) = (t,f[a]),
    \end{equation*}
    where $f[a] = \{f(x) \mid x\in a\}$.
\end{definition}

\begin{definition}
    For an embedding $\iota\colon D\to E$ and a linear order $X$, define $\iota_X\colon D(X)\to E(X)$ by
    \begin{equation*}
        \iota_X(t(a)) = (\iota(t),a).
    \end{equation*}
\end{definition}
We can see that $D(X)$ is a linear order if $D$ is a semidilator and $X$ is a linear order. It can also be shown that $\iota_X\colon D(X)\to E(X)$ and $\iota_Y\circ D(f) = E(f)\circ \iota_X$ for an increasing $f\colon X\to Y$ and an embedding $\iota\colon D\to E$. 

Freund (e.g., \cite{Freund2021Pi14}) defined semidilators as autofunctors over the category of linear orders with a support transformation.
\begin{definition}
    An \emph{F-semidilator} is a functor $F\colon \LO\to\LO$ with a support transformation $\supp^F\colon F\to [\cdot]^{<\omega}$ satisfying the \emph{support condition}: 
    For two linear orders $X$, $Y$ and an increasing $f\colon X\to Y$,
    \begin{equation*}
        \{\sigma\in D(Y) \mid \supp_Y(\sigma)\subseteq \ran(f)\} \subseteq \ran(D(f)).
    \end{equation*}
\end{definition}

The denotation system and the Freund-styled definition give different categories, but we can construct category equivalences between these two.

\begin{theorem}
    Let $\SDil$ be the category of semidilators with semidilator morphisms and $\SDil_\sfF$ be a category of F-semidilators with natural transformations.
    Then there are category equivalences $\fraka\colon \SDil\to \SDil_\sfF$ and $\frakf\colon \SDil_\sfF \to \SDil$ given by
    \begin{enumerate}
        \item For a semidilator $D$, $\fraka(D)(X) = D(X)$, $\fraka(\iota)_X(t(a))=\iota(t)(a)$ for $\iota\colon D\to E$, $X\in \LO$, $t\in \field(D)$, and $a\in [X]^{\arity t}$.
        \item For an F-semidilator $F$, $\frakf(F)$ has field 
        \begin{equation*}
            \Tr(F) = \{\sigma\in F(\omega)\mid \supp^F_n(\sigma)\in\omega\}.
        \end{equation*}
        We define $\arity^{\frakf(F)}(\sigma)=\supp^F_\omega(\sigma)$, and for two $\sigma,\tau\in \Tr(F)$ and an arity diagram $\cyrDe$ for these two terms, $\sigma <_\cyrDe \tau$ iff
        \begin{equation*}
            F(\omega)\vDash F(\cyrDe(0,\top))(\sigma) < F(\cyrDe(1,\top))(\tau).
        \end{equation*}
        Also for a natural transformation $\iota\colon F\to G$, we have $\frakf(\iota)\colon \frakf(F)\to\frakf(G)$ given by $\frakf(\iota)(\sigma) = \iota(\sigma)$.
    \end{enumerate}
\end{theorem}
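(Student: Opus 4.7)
The plan is to verify that $\fraka$ and $\frakf$ are well-defined functors, then construct natural isomorphisms $\Id_{\SDil}\cong \frakf\circ\fraka$ and $\fraka\circ\frakf\cong\Id_{\SDil_\sfF}$ witnessing the equivalence.

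Well-definedness of $\fraka$ is straightforward. For a semidilator $D$ I would equip $\fraka(D)(X)=D(X)$ with the support transformation $\supp_X(t(a))=a$; the support condition then holds because any $t(b)\in D(Y)$ with $b\subseteq \ran(f)$ is visibly $D(f)(t(f^{-1}[b]))$, using that strictly increasing maps are injective. Functoriality of $D(-)$ on the base is immediate from the definition of $D(f)$, and naturality of $\fraka(\iota)_X(t(a))=\iota(t)(a)$ in $X$ is a one-line unwinding. For $\frakf(F)$, the three semidilator axioms on $\Tr(F)$ reduce to properties of the linear orders $F(\cyrDe(\top))$, since each comparison $\sigma<_\cyrDe\tau$ is by definition a comparison inside $F(\cyrDe(\top))$; irreflexivity and linearity are inherited immediately. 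For transitivity I would use that the three arity diagrams $\bfcyrDe[0,1]$, $\bfcyrDe[1,2]$, $\bfcyrDe[0,2]$ all sit inside the ambient IU diagram $\bfcyrDe$, so applying $F$ to the appropriate morphisms moves all three comparisons into the single linear order $F(\bfcyrDe(0\lor 1\lor 2))$, where transitivity finishes. That $\frakf(\iota)$ is a semidilator morphism follows from the naturality of $\iota$.

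For the equivalence, the unit sends each $D$-term $t$ to $(t,\{0,\ldots,\arity(t)-1\})\in \Tr(\fraka(D))$, which is manifestly a bijection on fields and preserves every $<_\cyrDe$ after unwinding the definition of the order on $D(\cyrDe(\top))$. The counit $\epsilon_{F,X}\colon \fraka(\frakf(F))(X)\to F(X)$ sends $(\sigma,a)$ to $F(e_a)(\sigma)$, where $e_a\colon |a|\to X$ is the order-preserving enumeration of $a$; bijectivity uses the support condition, which asserts that every $\tau\in F(X)$ lies in the range of $F(e_{\supp^F_X(\tau)})$, yielding a unique preimage in $\Tr(F)$ after identifying $F(|a|)$ with its image under the inclusion $|a|\hookrightarrow\omega$.

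The main technical burden will be confirming that $\epsilon_F$ is order-preserving and natural in $X$. Order-preservation amounts to recognizing that $\sigma<_{\Diag_X(a,b)}\tau$ in $\frakf(F)$ is by definition a comparison inside $F(\Diag_X(a,b)(\top))$, and pushing forward via the canonical order-isomorphism $\Diag_X(a,b)(\top)\cong a\cup b\hookrightarrow X$ together with functoriality of $F$ translates this exactly into $F(e_a)(\sigma)<_{F(X)} F(e_b)(\tau)$. Naturality reduces to the identity $F(f)\circ F(e_a)=F(e_{f[a]})$, which holds since both $f\circ e_a$ and $e_{f[a]}$ are the unique order-preserving enumeration of $f[a]$ in $Y$. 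The only real subtlety throughout is careful bookkeeping around the arity diagrams.
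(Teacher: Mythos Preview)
The paper does not actually prove this theorem: the surrounding section is explicitly flagged as a summary that ``state[s] some results \ldots\ without proof,'' deferring the argument to the book draft \cite{Jeon??DilatorBook} and other sources. So there is no proof in the paper to compare against.

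That said, your outline is the standard and correct argument. The only places worth tightening are bookkeeping points you already flag: in the transitivity check for $\frakf(F)$ you should say a word about why the images of $\sigma$, $\tau$, $u$ in $F(\bfcyrDe(0\lor 1\lor 2))$ via the different routes agree (this uses functoriality of $F$ and commutativity of the IU diagram), and in the counit you should spell out that the ``unique preimage in $\Tr(F)$'' really lands in $\Tr(F)$ because the support of the preimage is exactly $|a|$ (this is where the pullback/support condition does work, not just surjectivity). None of this is a gap, just the expected verifications your outline promises.
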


In this paper, we conflate two different notions of dilators, so we pretend we get a (pre)dilator even when we actually get an F-(pre)dilator. We identify an F-(pre)dilator $F$ with a (pre)dilator $\frakf(F)$.
Note that the definition of the trace $\Tr(F)$ is slightly different from that of other materials (like \cite{Freund2021Pi14}), where $\Tr(F)$ is the set of $(\supp^F_\omega(\sigma),\sigma)$ for $\sigma \in F(\omega)$ such that $\supp^F_\omega(\sigma)\in\omega$.
Note that every F-semidilator is determined by its restriction to the category of natural numbers (i.e., the full subcategory of $\LO$ whose objects are natural numbers):
\begin{proposition}
    Let $F\colon \mathsf{Nat}\to \LO$ be a \emph{coded semidilator}; i.e., $F$ is a functor with the support function satisfying the support condition. Then $F$ extends to a semidilator $F\colon \LO\to\LO$ unique up to isomorphism. More precisely, we can define $\frakf(F)$, and it gives the desired extension.
    Moreover, suppose $\iota\colon F\to G$ is a natural transformation from a coded semidilator $F$ to another coded semidilator $G$. In that case, we can define $\frakf(\iota)$ and it gives a semidilator embedding from $\frakf(F)$ to $\frakf(G)$.
\end{proposition}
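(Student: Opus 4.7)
My plan is to reduce the proposition to the category equivalence $\fraka\colon \SDil\to \SDil_\sfF$ and $\frakf\colon \SDil_\sfF\to \SDil$ already established, by first promoting the coded functor $F\colon\mathsf{Nat}\to \LO$ to a genuine F-semidilator on all of $\LO$. Every linear order $X$ is the directed union of its finite suborders $a\in [X]^{<\omega}$, so I would define $F(X)$ as the direct limit of the system $\{F(|a|) : a\in [X]^{<\omega}\}$, where the transition maps are induced by inclusions $a\subseteq b$ via the order-isomorphisms $\en_a,\en_b$. The support transformation $\supp^F_X$ sends the image of $\sigma\in F(|a|)$ in the colimit to $\en_a[\supp^F_{|a|}(\sigma)]\subseteq X$, and $F(f)$ for an increasing $f\colon X\to Y$ is obtained from the system maps via the universal property.

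The next step is to verify that this extension is an F-semidilator. Functoriality and naturality of $\supp^F$ follow from the universal property of direct limits, and the support condition on $\LO$ reduces to the finite case: if $\tau \in F(Y)$ has support contained in $\ran f$, then one picks a finite $a\subseteq X$ with $f[a]\supseteq \supp^F_Y(\tau)$ and observes that $\tau$ already lies in the image of $F(f\circ \en_a)$ by the support condition on $\mathsf{Nat}$. Applying $\frakf$ then yields a semidilator, and tracing the definitions shows that $\fraka\circ \frakf(F)$ restricted to $\mathsf{Nat}$ is naturally isomorphic to $F$, so $\frakf(F)$ is the desired extension. For uniqueness: if $\tilde F$ is any other semidilator extending $F$, then the support condition applied to each $\sigma\in \tilde F(X)$ yields a canonical preimage $\tilde\sigma \in \tilde F(\lvert\supp^{\tilde F}_X(\sigma)\rvert) = F(\lvert\supp^{\tilde F}_X(\sigma)\rvert)$, and this assignment is a natural isomorphism to the direct-limit construction.

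For the morphism statement, given a natural transformation $\iota\colon F\to G$ of coded semidilators, the components $\iota_{|a|}\colon F(|a|)\to G(|a|)$ form a compatible system over the directed set $[X]^{<\omega}$ by naturality, and the universal property of direct limits produces a natural transformation between the extensions; applying $\frakf$ then yields a semidilator embedding $\frakf(\iota)\colon \frakf(F)\to \frakf(G)$ matching the formula in part (2) of the equivalence theorem. The main technical obstacle I anticipate lies in the verification that the pullback condition implicit in the denotation-system definition transfers through the colimit: for an arity diagram built from finite suborders of an arbitrary $X$, one needs the corresponding square in $F(X)$ to remain a pullback with the union-of-ranges condition intact. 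This should follow because the support of any colimit element is attained at a finite stage, so that any relation witnessed in $F(X)$ descends to some $F(|a|)$ where the finite pullback condition applies directly.
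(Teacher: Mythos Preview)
The paper states this proposition without proof; it falls under the blanket disclaimer at the start of Section~\ref{Section: Dilators} that results in these subsections are stated without proof, with details deferred to \cite{Jeon??DilatorBook} and other sources. So there is no proof in the paper to compare against.

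Your approach is correct and standard: build $F(X)$ as a directed colimit over $[X]^{<\omega}$, check functoriality and the support condition, and invoke the equivalence $\frakf$. One remark: the paper's phrasing ``we can define $\frakf(F)$'' hints at a slightly more direct route. The definition of $\frakf$ only needs the trace $\Tr(F)\subseteq F(\omega)$ and the comparison maps $F(\cyrDe(i,\top))$ for arity diagrams between natural numbers. Since $\sigma\in\Tr(F)$ has $\supp^F_\omega(\sigma)=n\in\omega$, the support condition gives a unique preimage in $F(n)$, so one can identify $\Tr(F)$ with the set of pairs $(n,\sigma)$ where $\sigma\in F(n)$ and $\supp^F_n(\sigma)=n$. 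This lets you write down $\frakf(F)$ as a denotation system straight from the coded data, without first extending $F$ to all of $\LO$; the extension to $\LO$ then comes for free via $\fraka$. Your route and this one are equivalent, but the latter avoids checking that the colimit construction satisfies the support condition on infinite $X$.

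Your closing paragraph about pullbacks is a red herring here: the F-semidilator axioms you need to verify for the extension are functoriality and the support condition, not pullback preservation. Pullback preservation is part of Girard's original definition, which the paper explicitly sets aside in favor of the Freund-style support formulation, and the equivalence between the two is packaged into the $\fraka$/$\frakf$ theorem you are already invoking.
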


\subsection{Structure of predilators}
The following theorem characterizes prime dilators; i.e., a (pre)dilator with a unique term:
\begin{theorem}
    Let $D$ be a predilator. For $t\in \field(D)$, we have the \emph{priority permutation} $\Sigma^D_t$ over $\arity t$ such that for every linear order $X$ and $a,b\in [X]^{\arity t}$, $D(X)\vDash t(a)<t(b)$ if and only if
    \begin{equation*}
        \exists j<\arity t \big[a(\Sigma^D_t(j)) < b(\Sigma^D_t(j)) \land \forall i<j [a(\Sigma^D_t(i)) = b(\Sigma^D_t(i))]\big].
    \end{equation*}
\end{theorem}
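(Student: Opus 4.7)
The plan is to extract $\Sigma^D_t$ from pairwise priority comparisons between coordinates, then verify that the resulting lexicographic rule matches the order on $D(X)$.

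First, I would observe that for each $t\in\field(D)$ the truth of $D(X)\vDash t(a)<t(b)$ depends only on the arity diagram $\Diag_X(a,b)$: this is immediate from the definition of the order on $D(X)$ via $<_{\Diag_X(a,b)}$. Set $n=\arity(t)$. For distinct $i,j\in\{0,\dots,n-1\}$ choose a \emph{probe}: sequences $a,b\in[\omega]^{n}$ that coincide on every coordinate outside $\{i,j\}$ and satisfy $a(i)<b(i)$ while $a(j)>b(j)$, arranged so both remain sorted. Declare $i\prec j$ iff $D(\omega)\vDash t(a)<t(b)$ for such a probe; intuitively, $i\prec j$ says coordinate $i$ outranks coordinate $j$. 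The diagram remark makes this value independent of the particular probe, and the linearity axiom of semidilators applied to the non-trivial diagram between $a$ and $b$ forces exactly one of $i\prec j$ or $j\prec i$.

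Next I would show $\prec$ is transitive, hence a strict linear order on $\{0,\dots,n-1\}$. Given $i\prec j$ and $j\prec k$, I would build $a,b,c$ in a common linear order so that $(a,b)$ is a probe for $(i,j)$, $(b,c)$ is a probe for $(j,k)$, and the values of $a$ and $c$ at coordinate $j$ coincide while $b(j)$ lies strictly below both. A direct check shows $(a,c)$ is then a probe for $(i,k)$ with $a(i)<c(i)$ and $a(k)>c(k)$. The unique lattice homomorphism from $\ttL_3$ into the powerset of $a\cup b\cup c$ that sends the three generators to $a,b,c$ supplies an IU diagram for the triple $(t,t,t)$ whose pairwise restrictions equal $\Diag_X(a,b)$, $\Diag_X(b,c)$, $\Diag_X(a,c)$; the transitivity axiom then upgrades $t(a)<t(b)<t(c)$ to $t(a)<t(c)$, so $i\prec k$. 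Define $\Sigma^D_t$ by enumerating $\{0,\dots,n-1\}$ in $\prec$-increasing order, so $\Sigma^D_t(0)$ is the $\prec$-minimum, i.e., the most dominant coordinate.

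For the characterization itself, fix $a\neq b$ in $[X]^{n}$, let $j$ be least with $a(\Sigma^D_t(j))\neq b(\Sigma^D_t(j))$, and assume without loss of generality $a(\Sigma^D_t(j))<b(\Sigma^D_t(j))$. I would interpolate a finite chain $a=c_0,c_1,\dots,c_m=b$ in which each consecutive pair $(c_\ell,c_{\ell+1})$ differs either by a single-coordinate strict increase (handled by predilator monotonicity) or by an inverse swap at two coordinates $(i',j')$ with $i'\prec j'$ (handled by the probe together with the diagram-invariance from the first paragraph). Chaining the resulting strict inequalities through the transitivity axiom, using the IU diagram generated by $\{c_0,\dots,c_m\}$ inside a sufficiently rich ambient order, yields $t(a)<t(b)$. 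The opposite direction is symmetric, and $a=b$ is covered by irreflexivity.

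The main obstacle is constructing the interpolation $c_0,\dots,c_m$ uniformly: each intermediate sequence must remain sorted inside $X$ (or inside an enlargement, justified by diagram-invariance), each step must fall cleanly into either the monotonicity case or a probe case, and the stitched IU diagram must reproduce $\Diag_X(a,b)$ overall. The natural recipe makes $c_\ell$ agree with $b$ on the top $\ell$ priority coordinates and with $a$ elsewhere, updating each coordinate either upward or through an inverse swap. This is combinatorial bookkeeping rather than a new conceptual step, but it is where the proof needs the most care.
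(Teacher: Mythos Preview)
The paper does not actually prove this theorem: the opening of Section~3 says the results there are stated without proof, with references to Girard's papers and the author's book draft. So there is no in-paper argument to compare against; your proposal has to stand on its own.

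Your probe construction and the transitivity argument are sound. One point you gloss over but which is true and worth stating: for fixed $(i,j)$ any two probes have the \emph{same} arity diagram, not merely the same truth value, because the sortedness of $a$ and $b$ together with the sign conditions forces the order type of $a\cup b$ completely (shared coordinates below $\min(i,j)$, then the two values at the lower index, then shared coordinates in between, then the two values at the upper index, then shared coordinates above). Your appeal to monotonicity for single-coordinate increases is also correct, since that is precisely the extra axiom separating predilators from semidilators.

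The real gap is the interpolation. Your ``natural recipe'' takes $c_\ell$ to agree with $b$ on the top $\ell$ priority coordinates and with $a$ elsewhere, but such a $c_\ell$ need not lie in $[X]^n$ at all: with $n=2$, $\Sigma^D_t=\mathrm{id}$, $a=(0,1)$, $b=(2,3)$ you get $c_1=(2,1)$. Passing to a dense enlargement does not fix this, because the obstruction is that $b(\sigma(j))$ can overshoot $a$'s neighbouring coordinate in the \emph{natural} indexing. A scheme that does work is to leave $\sigma(j)$ alone at first and instead process the low-priority coordinates $k$ with $a(k)>b(k)$ in increasing \emph{natural} index order, each time performing a probe that decreases $k$ to $b(k)$ while nudging $\sigma(j)$ upward by an amount small enough to stay below $b(\sigma(j))$; one then finishes by a single monotonicity step. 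Verifying that every intermediate tuple stays sorted and that every probe step realises the canonical diagram is a genuine case analysis (the position of $\sigma(j)$ relative to $k$, whether the neighbour of $k$ has already been processed, etc.), and it is where the proof actually lives. Calling this ``combinatorial bookkeeping rather than a new conceptual step'' undersells it.
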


The next question is how to compare two terms in a predilator.
Let us introduce subsidiary notions for the comparison:
\begin{definition}
    Let $D$ be a predilator and $s,t\in \field(D)$.
    Let $\hat{p}_{s,t} = \hat{p}^D_{s,t}\le \min(\arity s,\arity t)$ be the largest natural number such that 
    \begin{equation*}
        \forall i,j<\hat{p}_{s,t} [\Sigma^D_s(i) < \Sigma^D_s(j) \iff \Sigma^D_t(i) < \Sigma^D_t(j)].
    \end{equation*}
    For $p\le\hat{p}_{s,t}$ and an arity diagram $\cyrDe$ for $s$ and $t$ of the form 
    \begin{equation}
        \cyrDe =
        \begin{tikzcd}
            n_\cap & n_1 \\
            n_0 & n_\cup 
            \arrow[from=1-1, to=1-2] \arrow[from=2-1, to=2-2, "e_0"']
            \arrow[from=1-1, to=2-1] \arrow[from=1-2, to=2-2, "e_1"]
        \end{tikzcd}
    \end{equation}
    we say $\cyrDe$ is \emph{$p$-congruent (relative to $s$ and $t$)} if $e_0(\Sigma^D_s(i)) = e_1(\Sigma^D_t(i))$ holds for every $i<p$,

    Then we say $p\le \hat{p}_{s,t}$ is \emph{secure (relative to $s$ and $t$)} if $D\vDash s<_\cyrDe t$ holds for some $p$-congruent $\cyrDe$, then $D\vDash s<_\cyrDe t$ holds for every $p$-congruent $\cyrDe$. In other words, the validity of $D\vDash s<_\cyrDe t$ does not depend on the choice of a $p$-congruent $\cyrDe$.
\end{definition}

Note that a secure number between two terms always exists:
\begin{proposition}
    Let $D$ be a predilator and $s,t\in \field(D)$.
    Then $\hat{p}_{s,t}$ is secure relative to $s$ and $t$.
\end{proposition}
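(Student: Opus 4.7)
The plan is to prove the result by contradiction using the transitivity axiom of predilators applied to a carefully built IU diagram for three terms. First I would dispose of the trivial case $s = t$: here $\hat{p}_{s,t} = \arity s$, so $\hat{p}_{s,t}$-congruence forces $e_0 = e_1$ via the permutation $\Sigma^D_s = \Sigma^D_t$, making every $\hat{p}_{s,t}$-congruent arity diagram trivial; irreflexivity then rules out $s <_\cyrDe s$ for any such $\cyrDe$, and security is automatic.

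For $s \neq t$, suppose towards contradiction that $\cyrDe_1, \cyrDe_2$ are $\hat{p}_{s,t}$-congruent arity diagrams for $s, t$ with $s <_{\cyrDe_1} t$ but $\neg(s <_{\cyrDe_2} t)$; by linearity this gives $t <_{-\cyrDe_2} s$. The core step is to construct an IU diagram $\bfcyrDe\colon \ttL_3 \to \LO$ for the three terms $s, t, s$ (arities $\arity s, \arity t, \arity s$ at the three singletons of $\ttL_3$) such that $\bfcyrDe[0, 1] = \cyrDe_1$, $\bfcyrDe[1, 2] = -\cyrDe_2$, and the induced arity diagram $\bfcyrDe[0, 2]$ between the two copies of $s$ satisfies $\neg(s <_{\bfcyrDe[0, 2]} s)$. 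Transitivity applied to $s <_{\cyrDe_1} t$ and $t <_{-\cyrDe_2} s$ will then force $s <_{\bfcyrDe[0, 2]} s$, giving the desired contradiction.

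To build $\bfcyrDe$, I would amalgamate the unions $n_\cup^1, n_\cup^2$ of $\cyrDe_1, \cyrDe_2$ along their shared $t$-positions (identifying $\ran e_1^1$ with $\ran e_1^2$) into a single linear order $N$, then assign each $l \in \ttL_3$ the corresponding Boolean combination of the three image sets in $N$; this produces a legitimate IU diagram since set-theoretic intersection/union meets the pullback condition. The $\hat{p}_{s,t}$-congruence automatically forces the two copies of $s$ to agree on the positions $\Sigma^D_s(0), \ldots, \Sigma^D_s(\hat{p}_{s,t}-1)$, while the remaining $s$-positions can be interleaved within each interval delimited by consecutive $t$-positions in $N$. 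Invoking the priority permutation theorem for $\Sigma^D_s$, I would arrange the interleaving so that at the least priority $j \ge \hat{p}_{s,t}$ for which the two copies' $\Sigma^D_s(j)$-th positions land in distinct points of $N$, the first copy lies strictly above the second. It is likely cleanest to first reduce $\cyrDe_1, \cyrDe_2$ to minimal $\hat{p}_{s,t}$-congruent refinements whose intersections have size exactly $\hat{p}_{s,t}$, verifying via the functorial monotonicity of $D$ that the comparisons $s <_{\cyrDe_i} t$ are preserved under such refinements.

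The main obstacle is showing that the required interleaving is always achievable. The essential input is the maximality of $\hat{p}_{s,t}$: beyond priority $\hat{p}_{s,t}$, the permutations $\Sigma^D_s$ and $\Sigma^D_t$ genuinely diverge on the ordering of their values, and this divergence supplies the structural slack needed to place the ``extra'' $s$-positions so as to produce the desired priority inequality between the two $s$-copies in $N$ while respecting the pullback constraints of the IU diagram. I expect verifying this combinatorial flexibility in full generality to be the technical crux of the proof.
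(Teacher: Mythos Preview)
The paper states this proposition without proof, deferring to the book draft \cite{Jeon??DilatorBook}, so there is no paper argument to compare against directly. Your overall strategy---assuming $s<_{\cyrDe_1}t$ and $t<_{-\cyrDe_2}s$, amalgamating along the $t$-positions to build an IU diagram for $s,t,s$, and deriving $s<_{\bfcyrDe[0,2]}s$ in contradiction with the priority-permutation description of $s(A)$ versus $s(B)$---is the standard route and is sound in outline.

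There are, however, two genuine gaps in the execution. First, the reduction to ``minimal'' $\hat p_{s,t}$-congruent diagrams is not justified. When you split an extra identification $e_0(k)=e_1(l)$ into two separate points, you obtain \emph{two} refinements (depending on which of the separated points is larger), and functoriality of $D$ does not tell you that $s<_\cyrDe t$ is preserved under \emph{either} of them; in effect this step presupposes a form of the very security you are trying to prove. Second, and more seriously, the interleaving argument does not cover the case where the comparison at the first relevant priority $j\ge\hat p_{s,t}$ is \emph{forced}. In your amalgamation $N$, the $\Sigma^D_s(j)$-th position of the first $s$-copy lies in a $C$-interval determined entirely by $\cyrDe_1$, and that of the second copy in an interval determined by $\cyrDe_2$. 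When these intervals differ, their relative order in $N$ is fixed independently of your interleaving choices, and nothing in your sketch rules out that this forced order is $a_{\Sigma_s(j)}<b_{\Sigma_s(j)}$---which is perfectly consistent with the conclusion of transitivity and yields no contradiction. The appeal to ``structural slack from the maximality of $\hat p_{s,t}$'' does not address this: maximality controls how $\Sigma^D_s(\hat p_{s,t})$ sits relative to the \emph{congruent} positions, not how the two copies' extra positions sit relative to the \emph{non-congruent} $t$-positions, which is what determines the forced orders here. Closing this gap requires either a more careful amalgamation that also identifies matching $A$- and $B$-positions (so that the first non-equal priority is always one where you retain freedom), or an entirely separate argument handling the forced case.
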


\begin{definition}
    Let $D$ be a predilator.
    Define $\bfp^D(s,t)$ be the least secure number between $s$ and $t$.
    We also define $\varepsilon^D_{s,t}\in \{+1,-1\}$ by $\varepsilon^D_{s,t}=+1$ if and only if $D\vDash s<_\cyrDe t$ for every $\bfp^D(s,t)$-congruent arity diagram $\cyrDe$ between $s$ and $t$.
\end{definition}

$\bfp^D$ and $\varepsilon^D$ determine the structure of a predilator in the following sense:
\begin{theorem} \label{Theorem: Fundamental theorem of predilator comparsion}
    For every linear order $X$ and $a\in [X]^{\arity s}$, $b\in [X]^{\arity t}$,
    $D(X)\vDash s(a)<t(b)$ if and only if either
    \begin{enumerate}
        \item $a(\Sigma^D_s(i))= b(\Sigma^D_t(i))$ for every $i<\bfp^D(s,t)$ and $\varepsilon^D_{s,t}=+1$, or
        \item There is $j<\bfp^D(s,t)$ such that $a(\Sigma^D_s(j)) < b(\Sigma^D_t(j))$ and $a(\Sigma^D_s(i))= b(\Sigma^D_t(i))$ for every $i<j$.
    \end{enumerate}
\end{theorem}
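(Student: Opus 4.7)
The plan is to unpack the definition of the order on $D(X)$ and reduce everything to the structure of the single arity diagram $\cyrDe=\Diag_X(a,b)$. By construction of $D(X)$, we have $D(X)\vDash s(a)<t(b)$ iff $s<_\cyrDe t$. Writing $e_0=\cyrDe(0,\top)$ and $e_1=\cyrDe(1,\top)$, these embeddings send an index $i$ to the rank of $a(i)$ (respectively $b(i)$) in $a\cup b$, so the condition $a(\Sigma^D_s(i))=b(\Sigma^D_t(i))$ is equivalent to $e_0(\Sigma^D_s(i))=e_1(\Sigma^D_t(i))$. In particular, $\cyrDe$ is $j$-congruent iff $a(\Sigma^D_s(i))=b(\Sigma^D_t(i))$ for every $i<j$, and likewise $a(\Sigma^D_s(j))<b(\Sigma^D_t(j))$ iff $e_0(\Sigma^D_s(j))<e_1(\Sigma^D_t(j))$.

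Under the first clause of the theorem, $\cyrDe$ is $p$-congruent where $p=\bfp^D(s,t)$. Since $p$ is secure by the preceding proposition, the truth value of $s<_\cyrDe t$ depends only on $\cyrDe$ being $p$-congruent, and by the definition of $\varepsilon^D_{s,t}$ this truth value equals $+1$ iff $\varepsilon^D_{s,t}=+1$. Hence the first clause immediately yields $D(X)\vDash s(a)<t(b)$.

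Under the second clause, $\cyrDe$ is $j$-congruent at the minimal witness $j<p$, and additionally $e_0(\Sigma^D_s(j))<e_1(\Sigma^D_t(j))$. The technical heart of the proof is the following \emph{monotonicity lemma}: for every $j<p$ and every $j$-congruent arity diagram $\cyrDe'$ for $s,t$ with $e'_0(\Sigma^D_s(j))<e'_1(\Sigma^D_t(j))$, one has $s<_{\cyrDe'} t$. I would prove this by first fixing a reference $j$-congruent diagram where $<$ is verified directly (using the non-security of $j$ to produce $j$-congruent diagrams realizing each sign, and invoking the linearity axiom to pin down which sign is $<$ and which is $>$) and then transporting the truth value to an arbitrary $\cyrDe'$ of the same sign. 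Transport is effected by transitivity applied to an IU diagram for three terms $s,t,t$ that amalgamates $\cyrDe'$ with the reference so that both comparisons $s<t$ factor through a common middle copy of $t$; the middle copy of $t$ is chosen so that its arity arrows match $\cyrDe'$ on one side and the reference on the other.

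For the converse direction, observe that the two clauses of the theorem, together with their symmetric counterparts (obtained by swapping $(s,a)$ and $(t,b)$), partition every case in which $s(a)\neq t(b)$; combined with the forward direction and the linearity of $D(X)$, this yields the biconditional. The main obstacle is the monotonicity lemma: the three-term IU diagram used in the transport must be constructed so that $j$-congruence and the sign at position $j$ are simultaneously respected on both sides, which requires careful bookkeeping of how the priority permutations $\Sigma^D_s$ and $\Sigma^D_t$ interleave on the first $p$ positions and an appeal to the pullback property of IU diagrams to ensure the middle copy of $t$ really sits between $\cyrDe'$ and the reference.
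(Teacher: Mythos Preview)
The paper does not prove this theorem: it sits in the review section on dilators, which explicitly states that results there are quoted without proof and refers the reader to Girard's work and the author's book draft. So there is no ``paper's own proof'' to compare against; I can only assess your proposal on its merits.

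Your overall architecture is sound and matches the standard approach: reduce $s(a)<t(b)$ to $s<_{\cyrDe}t$ for $\cyrDe=\Diag_X(a,b)$, handle clause~(1) via security of $\bfp^D(s,t)$, handle clause~(2) via a monotonicity lemma, and get the converse for free from linearity. The identification of the monotonicity lemma as the crux is exactly right.

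The gap is in your sketch of that lemma. You write that non-security of $j$ produces $j$-congruent diagrams realising each sign and that linearity then ``pins down which sign is $<$ and which is $>$''. Non-security only guarantees two $j$-congruent diagrams with opposite outcomes for $s<_{\cyrDe}t$; nothing prevents both of them from having $e_0(\Sigma^D_s(j))<e_1(\Sigma^D_t(j))$, or one of them from being $(j{+}1)$-congruent. Linearity alone cannot match the outcome to the sign at position $j$. What actually forces the orientation is the prime-dilator theorem for a \emph{single} term combined with transitivity: realise in a dense order a configuration with agreement below $j$ and $a(\Sigma^D_s(j))<b(\Sigma^D_t(j))<a'(\Sigma^D_s(j))$; then $s(a)<s(a')$ by the prime-dilator theorem, and if one assumed that $<$ at position $j$ yielded $t(b)<s(a)$ while $>$ at position $j$ yielded $s(a')<t(b)$, one would get the cycle $s(a')<t(b)<s(a)<s(a')$. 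Ruling out the remaining degenerate possibility (both signs at position $j$ giving the same outcome) is where non-security is genuinely used, again via a sandwich with a $(j{+}1)$-congruent comparison supplied by the inductive hypothesis. Your IU-diagram transport idea is the right mechanism for showing that all diagrams with the same sign at position $j$ behave alike, but the proposal does not yet explain how to build the amalgamating diagram so that the prime-dilator comparison on the middle copy goes in the needed direction; that is precisely the bookkeeping you flag as the ``main obstacle'', and it is not resolved in the sketch.
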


The next proposition says $\bfp^D$ behaves like an ultrametric and $\varepsilon^D$ gives a linear order:
\begin{proposition}
    For $s,t,u\in \field(D)$, $\bfp^D(s,u) \ge \min(\bfp^D(s,t),\bfp^D(t,u))$.
    If $\varepsilon^D_{s,t}=\varepsilon^D_{t,u}=+1$, then $\varepsilon^D_{s,u}=+1$ and $\bfp^D(s,u) = \min(\bfp^D(s,t),\bfp^D(t,u))$.
\end{proposition}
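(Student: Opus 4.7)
The plan is to establish both claims by assembling IU diagrams for $(s, t, u)$ that extend given arity diagrams along the middle term $t$, and then invoking the transitivity axiom of semidilators. Let $p_1 = \bfp^D(s,t)$, $p_2 = \bfp^D(t,u)$, and $p = \min(p_1, p_2)$. As a preliminary, one verifies $p \le \hat{p}_{s, u}$: since $\Sigma^D_s$ and $\Sigma^D_t$ agree on the order of the first $\hat{p}_{s,t} \ge p_1$ priorities and $\Sigma^D_t, \Sigma^D_u$ on the first $\hat{p}_{t,u} \ge p_2$, composing shows that $\Sigma^D_s$ and $\Sigma^D_u$ agree on the order of the first $p$ priorities, so $p$-congruent arity diagrams for $(s, u)$ are well-defined.

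For the inequality $\bfp^D(s, u) \ge p$, I would argue the contrapositive: no $p' < p$ is secure for $(s, u)$. Since $p' < p_1$, non-security of $p'$ for $(s, t)$ produces $p'$-congruent arity diagrams $\cyrDe_1, \cyrDe_2$ with $s <_{\cyrDe_1} t$ and $t <_{-\cyrDe_2} s$ (the latter from linearity). Similarly from $p' < p_2$ we obtain $\cyrDe_3, \cyrDe_4$ for $(t, u)$ with $t <_{\cyrDe_3} u$ and $u <_{-\cyrDe_4} t$. The key step is to amalgamate $\cyrDe_1$ and $\cyrDe_3$ along their common middle object $\arity t$: embed $\arity s, \arity t, \arity u$ into a common finite linear order $m$ via strictly increasing maps $f_s, f_t, f_u$ so that the pair-faces $(f_s, f_t)$ and $(f_t, f_u)$ reproduce $\cyrDe_1$ and $\cyrDe_3$. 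The resulting IU diagram $\bfcyrDe$ on $\ttL_3$ has $\bfcyrDe[0,1] \cong \cyrDe_1$ and $\bfcyrDe[1,2] \cong \cyrDe_3$, and its $(0,2)$-face is automatically $p'$-congruent for $(s, u)$ because the first $p'$ priorities of $s$ and of $u$ are both identified with the first $p'$ priorities of $t$ inside $m$. Transitivity then yields $s <_{\bfcyrDe[0,2]} u$. Performing the analogous amalgamation of $-\cyrDe_2$ and $-\cyrDe_4$ produces a second $p'$-congruent arity diagram for $(s, u)$ with the opposite truth value, so $p'$ is not secure.

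For the second claim, assume $\varepsilon^D_{s, t} = \varepsilon^D_{t, u} = +1$ and, without loss of generality, $p_1 \le p_2$. Given any $p$-congruent arity diagram $\cyrDe$ for $(s, u)$, I would extend its codomain by inserting the priorities of $t$ so that the first $p_2$ priorities of $t$ coincide with those of $u$ (which in particular aligns the first $p_1 = p$ priorities of $t$ with those of $s$), placing the remaining priorities of $t$ on fresh elements. Consistency with $\Sigma^D_t$'s order follows from the agreement of the priority permutations on the relevant prefixes. The resulting IU diagram $\bfcyrDe$ satisfies that $\bfcyrDe[0,1]$ is $p_1$-congruent and $\bfcyrDe[1,2]$ is $p_2$-congruent; by security of $p_1, p_2$ together with $\varepsilon^D_{s, t} = \varepsilon^D_{t, u} = +1$, we get $s <_{\bfcyrDe[0,1]} t$ and $t <_{\bfcyrDe[1,2]} u$. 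Transitivity then gives $s <_\cyrDe u$ for every $p$-congruent $\cyrDe$, whence $p$ is secure with $\varepsilon^D_{s, u} = +1$, so $\bfp^D(s, u) \le p$; combined with the first claim, $\bfp^D(s, u) = p$.

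The main technical obstacle is the amalgamation construction: given two arity diagrams sharing $\arity t$, one must embed the three linear orders into a common $m$ so that both specified faces are recovered and the induced functor on $\ttL_3$ fulfils the pullback and surjective-cover conditions of an IU diagram. This reduces to ordering the fresh elements of $\arity s \setminus \arity t$ and $\arity u \setminus \arity t$ relative to the image of $\arity t$ in $m$, which is always possible because the data in $\cyrDe_1, \cyrDe_3$ (and the $p$-congruent $\cyrDe$ for the second claim) impose only linear-order constraints on $f_t(\arity t)$ that can be extended consistently.
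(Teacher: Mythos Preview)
Your proposal is sound. The paper itself does not prove this proposition: it is stated in the survey Section~3, which explicitly defers all proofs to \cite{Jeon??DilatorBook} and the references \cite{Girard1981Dilators, Girard1982Logical}. Your route via amalgamating arity diagrams along the middle term into an IU diagram and invoking the transitivity axiom is the standard argument (essentially Girard's), and the technical points you flag---that any two arity diagrams sharing a common middle object can be realized as faces of a single $\Diag_X(C_s,C_t,C_u)$ by taking a linear-order pushout along $C_t$, and that the resulting $(0,2)$-face inherits $p'$-congruence because the first $p'$ priorities of $s$ and $u$ are both pinned to those of $t$---are exactly what needs to be checked and are correct as stated.
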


Hence if we define $<^D$ by $s<^D t$ iff $\varepsilon^D_{s,t}=+1$, then $<^D$ is a linear order over $\field(D)$ satisfying the following:
For $s,t,u\in \field(D)$, if $s \le^D t\le^D u$, then $\bfp^D(s,u)=\min(\bfp^D(s,t),\bfp^D(t,u))$.
It turns out that $\Sigma^D$, $\bfp^D$, and $<^D$ completely determine the structure of a predilator as Girard \cite{Girard1982Logical} proved:
\begin{theorem}[Abstract construction of predilators]
    Every predilator is characterized by the following data:
    \begin{enumerate}
        \item The domain set $X$ with a linear order $<^X$.
        \item A function $\bfp\colon X\times X\to \bbN$ such that 
        \begin{itemize}
            \item $\bfp(x,y)=\bfp(y,x)$ for $x,y\in X$.
            \item $\bfp(x,z)=\min(\bfp(x,y),\bfp(y,z))$ for $x,y,z\in X$ such that $x\le^X y\le^X z$.
        \end{itemize}
        \item For each $x\in X$, a permutation $\Sigma_x$ over $\bfp(x,x)$ such that 
        \begin{equation*}
            \forall i,j<\bfp(x,y) [\Sigma_x(i)<\Sigma_x(j) \iff \Sigma_y(i)<\Sigma_y(j)].
        \end{equation*}
    \end{enumerate}
\end{theorem}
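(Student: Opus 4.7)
The theorem asserts an equivalence between predilators and triples $(X, <^X, \bfp, \Sigma)$ satisfying the listed conditions, so the plan has three parts: extract such a triple from a given predilator, reconstruct a predilator from any such triple, and verify that the two constructions are mutually inverse up to predilator isomorphism.

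The extraction step is essentially a repackaging of results already in hand. Given a predilator $D$, take $X = \field(D)$, use $<^D$ for $<^X$, and set $\bfp = \bfp^D$, $\Sigma_x = \Sigma^D_x$. The symmetry $\bfp^D(s,t) = \bfp^D(t,s)$ holds because replacing $\cyrDe$ by $-\cyrDe$ preserves $p$-congruence, and the ultrametric identity along a $<^D$-chain is exactly the content of the proposition immediately preceding the theorem. The compatibility condition on the $\Sigma^D_x$'s below $\bfp^D(s,t)$ follows because $\bfp^D(s,t) \le \hat{p}^D_{s,t}$ and $\hat{p}^D_{s,t}$ is by definition the largest integer for which precisely that condition holds.

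For the reconstruction, given data $(X, <^X, \bfp, \Sigma)$, I build a denotation system whose terms are the elements of $X$, with $\arity(x) = \bfp(x,x)$ so that $\Sigma_x$ is a permutation of $\arity(x)$. For terms $s,t$ and an arity diagram $\cyrDe$ with $e_i = \cyrDe(i,\top)$, declare $s <_\cyrDe t$ according to the recipe of the theorem two steps above the target: either $e_0(\Sigma_s(i)) = e_1(\Sigma_t(i))$ for all $i < \bfp(s,t)$ and $s <^X t$, or there is a least $j < \bfp(s,t)$ at which the two sequences disagree and $e_0(\Sigma_s(j)) < e_1(\Sigma_t(j))$. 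Irreflexivity and linearity then follow from a direct inspection of the finite pair of sequences $e_0 \circ \Sigma_s$ and $e_1 \circ \Sigma_t$ on their first $\bfp(s,t)$ positions.

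The main obstacle is the transitivity axiom across an IU diagram $\bfcyrDe$ for three terms $s, t, u$: given $s <_{\bfcyrDe[0,1]} t$ and $t <_{\bfcyrDe[1,2]} u$, derive $s <_{\bfcyrDe[0,2]} u$. The plan is a case split on whether each hypothesis uses the equality clause or the strict clause of the recipe. The key ingredients are the ultrametric inequality $\bfp(s,u) \ge \min(\bfp(s,t), \bfp(t,u))$, which controls how many initial positions of $\Sigma_s$ and $\Sigma_u$ we must compare, together with the pullback property of each inner square of $\bfcyrDe$, which lets us transport position data from one pair of coordinates to the next via a common factorization through $\bfcyrDe(1)$. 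The predilator (monotonicity) axiom is built into the recipe by design, so that $\Sigma_x$ recovers as the priority permutation $\Sigma^D_x$ in the resulting structure. Inverseness is then bookkeeping: the preceding theorems ensure that the data extracted from the reconstructed predilator coincide with the given $(X, <^X, \bfp, \Sigma)$, and conversely that a predilator is determined up to isomorphism by these data.
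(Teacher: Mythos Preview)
The paper does not actually prove this theorem: it is stated as a background result attributed to Girard \cite{Girard1982Logical}, and the surrounding section explicitly says that results there are stated without proof, with details deferred to \cite{Jeon??DilatorBook} and other sources. So there is no ``paper's own proof'' to compare your proposal against.

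That said, your outline follows the natural strategy and is essentially the one Girard uses. The extraction step is indeed just repackaging; the reconstruction step via the explicit comparison recipe is the right move; and you have correctly identified transitivity across an IU diagram as the one place where real work happens. Your sketch of that argument (ultrametric inequality to bound how many positions matter, pullback property of the IU squares to transport positions through the middle term $t$) is the right shape, though in a full writeup the case analysis is somewhat delicate: one has to track carefully which of $\bfp(s,t)$, $\bfp(t,u)$ is smaller and handle the mixed case where one hypothesis is by the equality clause and the other by the strict clause. Your remark that monotonicity is ``built into the recipe'' is correct but deserves one more sentence: the point is that the recipe forces $t(a) < t(b)$ to be decided lexicographically along the permutation $\Sigma_t$, which is exactly what makes $\Sigma_t$ the priority permutation of the resulting structure and hence guarantees the predilator axiom rather than merely the semidilator axioms.
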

Note that the above characterization does not apply to general semidilators. See \cite[8.G.2]{Girard1982Logical} for the corresponding theorem for semidilators.

\subsection{Flowers} \label{Subsection: Flowers}
We may understand dilators as functions `expanding' a given ordinal. However, a dilator may add new elements in the middle of the ordinal, which makes the dilator not `continuous': For example, $D(X)=X+X$ is a dilator, and $D(n)=n+n$ is finite. However, $D(\omega)=\omega+\omega$.
A `continuous' dilator has a crucial role in this paper, and Girard named it a \emph{flower}:
\begin{definition}
    A semidilator $D$ is a \emph{semiflower} if for every linear order $Y$ and its initial segment $X\subseteq Y$ (i.e., a downward closed suborder), $D(X)$ is also an initial segment of $D(Y)$.
    A semiflower $D$ is a \emph{(pre)flower} if $D$ is a (pre)dilator.
\end{definition}

(Semi)flowers add new elements at the end of a linear order and not in the middle, so we may ask if (semi)flowers take the form of the sum $D(\alpha)=\gamma+\sum_{\beta<\alpha} E(\beta)$. We will show that it really is, but let us first define the `sum' $\sum_{\beta<\alpha} E(\beta)$ as a semidilator as follows:

\begin{definition}
    For a semidilator $D$, let us define $\int D$ as follows: The set of $\int D$-terms is $\{t^{\int}\mid t\in \field(D)\}$ with $\arity^{\int D} t^{\int} = \arity^D t + 1$. For an arity diagram $\cyrDe$ of the form 
    \begin{equation} \label{Ch3: Formula: Arity diagram in Chapter 3-02}
        \cyrDe =
        \begin{tikzcd}
            n_\cap & n_1 \\
            n_0 & n_\cup 
            \arrow[from=1-1, to=1-2] \arrow[from=2-1, to=2-2, "e_0"']
            \arrow[from=1-1, to=2-1] \arrow[from=1-2, to=2-2, "e_1"]
        \end{tikzcd}
    \end{equation}
    let us define the comparison rule $t_0^{\int} <_\cyrDe t_1^{\int}$ if and only if
    \begin{enumerate}
        \item Either $\max e_0 < \max e_1$, or
        \item If $\max e_0 = \max e_1$, and if $\cyrDe^- = \Diag_{n_\cup}(\ran e_0\setminus \{\max e_0\}, \ran e_1\setminus \{\max e_1\})$, then $D\vDash t_0<_{\cyrDe^-} t_1$.
    \end{enumerate}
\end{definition}

The main idea of the definition is using the largest component of the new term $t^{\int}$ as an indicator of where the term comes from among copies of $D(\beta)$ for some $\beta<\alpha$.
We can see that for a linear order $X$ and a semidilator $D$, $(\int D)(X)$ is isomorphic to $\sum_{x\in X} D(X\restriction x)$, where $X\restriction x = \{y\in X\mid y<x\}$. We can also see that $\int D$ is a semiflower:
\begin{proposition}
    For a semidilator $D$, $\int D$ is a semiflower. For a semidilator embedding $f\colon D\to E$, if we define $\int f\colon \int D\to \int E$ by $(\int f)(t)=f(t)^{\int}$, then $\int f$ is also a semidilator morphism.
    Furthermore, if $D$ is a (pre)dilator, then $\int D$ is a (pre)flower.
\end{proposition}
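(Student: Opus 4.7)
The plan is to verify the three claims (semiflower, morphism, and (pre)dilator preservation) separately, exploiting the fact that the comparison rule for $\int D$ always reduces either to a strict inequality of the two maxima $\max e_0,\max e_1$ or, when those maxima coincide, to a $D$-comparison on the stripped diagram $\cyrDe^-$. First I would check the semidilator axioms for $\int D$. For irreflexivity, if $\cyrDe$ is trivial then $e_0 = e_1$, the maxima agree, and $\cyrDe^-$ is again trivial, so irreflexivity of $\int D$ reduces to irreflexivity of $D$. Linearity splits on whether $\max e_0$ is less than, greater than, or equal to $\max e_1$; the two strict cases are immediate from the comparison rule, while the equality case reduces to $D$'s linearity on $\cyrDe^-$. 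Transitivity, given an IU diagram $\bfcyrDe$ for three terms, is a case analysis on the ordering of the three relevant top-positions; when at least one strict inequality appears, the conclusion is read off directly from the definition, using that the IU structure of $\bfcyrDe$ makes these top-positions compatible across the three pairwise sub-diagrams $\bfcyrDe[0,1],\bfcyrDe[1,2],\bfcyrDe[0,2]$, and when all three maxima agree the conclusion follows from transitivity of $D$ applied to the IU sub-diagram obtained by deleting that common top element.

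Next I would establish the semiflower property. If $X$ is an initial segment of $Y$ and $s^\int(b) <_{(\int D)(Y)} t^\int(a)$ with $t^\int(a) \in (\int D)(X)$, then by the comparison rule either $\max b < \max a$ or $\max b = \max a$, so $\max b \le \max a \in X$ in both cases; since $X$ is downward closed, every element of $b$ lies in $X$, hence $s^\int(b) \in (\int D)(X)$. The morphism verification for $\int f$ is then routine: arity preservation is immediate from the definition, and order preservation reduces, via the same case split on the maxima, to the order preservation of $f$ on the stripped diagram $\cyrDe^-$.

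For the (pre)dilator claim I would use the natural description $(\int D)(X) \cong \sum_{x \in X} D(X \restriction x)$ recorded just before the statement. The support function for $\int D$ adjoins the chosen maximum of the tuple to the $D$-support, so the support condition and the monotonicity needed to upgrade a semidilator to a predilator transfer directly from $D$ to $\int D$. If $X$ is a well-order then each $X \restriction x$ is a well-order, each $D(X \restriction x)$ is a well-order when $D$ is a dilator, and a well-ordered sum of well-orders indexed by a well-order is again a well-order, so $\int D$ is a dilator whenever $D$ is, and hence a flower. The main obstacle I expect is the transitivity check for $\int D$, since it requires a careful case analysis on the three top-positions and, in the all-equal case, one must confirm that the three stripped sub-diagrams themselves assemble into an IU sub-diagram of the same combinatorial shape as $\bfcyrDe$, so that $D$'s transitivity axiom can legitimately be invoked.
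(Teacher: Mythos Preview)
The paper does not actually give a proof of this proposition: as the author announces at the start of \autoref{Section: Dilators}, the results in this section are stated without proof, with references to \cite{Jeon??DilatorBook} and the literature. So there is nothing to compare against directly.

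On its own merits your outline is sound. The reductions you describe for irreflexivity, linearity, and the semiflower property are exactly right; in particular, your argument that $\max b\le\max a\in X$ forces $b\subseteq X$ is the correct mechanism. For the morphism claim, note that the map should be read as $(\int f)(t^{\int})=f(t)^{\int}$, and then arity and order preservation go through as you say. For the (pre)dilator step, the sum representation $(\int D)(X)\cong\sum_{x\in X}D(X\restriction x)$ gives well-orderedness immediately when $D$ is a dilator. You correctly identify transitivity as the only place requiring real care: when the three top positions coincide, you must check that deleting the common maximum from each of $\bfcyrDe[0,1]$, $\bfcyrDe[1,2]$, $\bfcyrDe[0,2]$ yields the three faces of a genuine IU diagram on the stripped arities, which is a combinatorial verification about how $\Diag$ behaves under removing a common element. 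That step is routine but should be written out.
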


Conversely, we can `differentiate' a semidilator as follows:
\begin{lemma} 
    Let $D$ be a semiflower. Let us define a new structure $\partial D$ as follows:
    The field of $\partial D$ is $\{t^\partial\mid t\in \field(D)\land \arity^D(t)\ge 1\}$, and $\arity^{\partial D}(t^\partial) = \arity^D(t)-1$.
    For each arity diagram $\cyrDe$ for $t_0^\partial$, $t_1^\partial$ of the form \eqref{Ch3: Formula: Arity diagram in Chapter 3-02}, let us define $\cyrDe^+ = \Diag(\ran e_0\cup\{n_\cup\},\ran e_1\cup\{n_\cup\})$.
    Then define $t^\partial_0 <_\cyrDe t^\partial_1$ iff $D\vDash t_0 <_{\cyrDe^+} t_1$.

    Then $\partial D$ is a semidilator. Furthermore, if $D$ and $E$ are semiflowers and $f\colon D\to E$, the map $\partial f\colon \partial D\to \partial E$ given by $(\partial f)(t^\partial) = (f(t))^\partial$ is a semidilator embedding.
\end{lemma}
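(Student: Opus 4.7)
The plan is to verify the three semidilator axioms (irreflexivity, linearity, transitivity) for $\partial D$ by reducing each to the corresponding axiom in $D$, using the operation $\cyrDe \mapsto \cyrDe^+$ on arity diagrams together with a natural extension $\bfcyrDe \mapsto \bfcyrDe^+$ to IU diagrams. For an IU diagram $\bfcyrDe \colon \ttL_m \to \LO$ with $\bfcyrDe(i) = \arity^D(t_i) - 1$, I would set $\bfcyrDe^+(i) = \bfcyrDe(i) + 1$ (adjoining a new top element to each object) and extend each morphism $\bfcyrDe(i, j)$ by sending the new top to the new top. Because this top element is universally shared, the pullback and union conditions transfer from $\bfcyrDe$ to $\bfcyrDe^+$. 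Unwinding the definitions then yields $\bfcyrDe^+[i, j] = (\bfcyrDe[i, j])^+$ and, for arity diagrams, $(-\cyrDe)^+ = -(\cyrDe^+)$.

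With this machinery in place, each axiom is transparent. For irreflexivity: a trivial $\cyrDe$ yields a trivial $\cyrDe^+$, so $t^\partial <_\cyrDe t^\partial$ would force $t <_{\cyrDe^+} t$, contradicting irreflexivity in $D$. For linearity: if $t_0^\partial \neq t_1^\partial$ or $\cyrDe$ is non-trivial, then $t_0 \neq t_1$ or $\cyrDe^+$ is non-trivial (the latter because a non-trivial arity diagram must have some object of strictly larger size than another, a feature preserved under $+$), so applying linearity in $D$ to $\cyrDe^+$ produces $t_0 <_{\cyrDe^+} t_1$ or $t_1 <_{-(\cyrDe^+)} t_0$, which translates to the required dichotomy for $\partial D$. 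For transitivity: the hypotheses $t_0^\partial <_{\bfcyrDe[0,1]} t_1^\partial$ and $t_1^\partial <_{\bfcyrDe[1,2]} t_2^\partial$ unpack to $t_0 <_{\bfcyrDe^+[0,1]} t_1$ and $t_1 <_{\bfcyrDe^+[1,2]} t_2$ in $D$; applying transitivity of $D$ via $\bfcyrDe^+$ yields $t_0 <_{\bfcyrDe^+[0,2]} t_2$, i.e., $t_0^\partial <_{\bfcyrDe[0,2]} t_2^\partial$ in $\partial D$.

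For the morphism claim, $\partial f$ preserves arities directly from the computation $\arity^{\partial E}(f(t)^\partial) = \arity^E(f(t)) - 1 = \arity^D(t) - 1 = \arity^{\partial D}(t^\partial)$. Order preservation is the chain $t_0^\partial <_\cyrDe t_1^\partial \iff t_0 <_{\cyrDe^+} t_1 \iff f(t_0) <_{\cyrDe^+} f(t_1) \iff f(t_0)^\partial <_\cyrDe f(t_1)^\partial$, whose middle equivalence uses that $f$ is a semidilator morphism.

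The only step requiring genuine care is checking that the $+$ operation on IU diagrams really is a functor $\ttL_m \to \LO$ satisfying the pullback and surjectivity-of-unions conditions; this is a routine diagrammatic verification, but it is the place where a bookkeeping error is most likely to hide. Once that is settled, the three axioms and the morphism statement all fall out uniformly by translating through $(-)^+$. The semiflower hypothesis on $D$ ensures that $\partial D$ encodes the intuitive ``derivative'' of $D$ so that $\int$ and $\partial$ are essentially mutually inverse; the axiomatic verification above only invokes that $D$ is a semidilator and the functorial properties of the $+$ operation.
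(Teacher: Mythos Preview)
The paper does not actually prove this lemma; it is one of the results in Section~3 stated without proof and deferred to \cite{Jeon??DilatorBook}. So there is no proof in the paper to compare against.

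Your argument is correct and is the natural way to handle this. The key observation---that $\cyrDe \mapsto \cyrDe^+$ amounts to adjoining a common top element to each object of the diagram and extending morphisms accordingly---is exactly right, and once you lift this to IU diagrams the identities $(-\cyrDe)^+ = -(\cyrDe^+)$ and $\bfcyrDe^+[i,j] = (\bfcyrDe[i,j])^+$ follow by the routine bookkeeping you describe. Each semidilator axiom then reduces cleanly to the corresponding axiom in $D$. Your remark that the verification uses only the semidilator structure of $D$, not the semiflower hypothesis, is also correct; the semiflower assumption matters for the subsequent decomposition theorem $D \cong \mathsf{Init}(D) + \int(\partial D)$, not for the well-definedness of $\partial D$ as a semidilator.
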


The following theorem is the promised characterization for a semiflower:
\begin{theorem}
    Let $D$ be a semiflower. If $\mathsf{Init}(D)$ is the linear order given by the nullary $D$-terms, then $D\cong \mathsf{Init}(D) + \int(\partial D)$.
\end{theorem}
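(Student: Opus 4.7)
The plan is to exhibit an explicit isomorphism $\Phi\colon \mathsf{Init}(D)+\int(\partial D)\to D$. On the field level, the bijection is the obvious one: a term of the left summand corresponding to a nullary $D$-term $s$ is sent to $s$, and a term of the form $(t^\partial)^{\int}$ (for $t\in\field(D)$ with $\arity(t)\ge 1$) is sent to $t$. Arity preservation is immediate since $\arity((t^\partial)^{\int})=\arity(t^\partial)+1=(\arity(t)-1)+1=\arity(t)$, and the arity of a nullary term is preserved trivially.

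The verification that $\Phi$ preserves the comparison relation $<_\cyrDe$ breaks into three cases according to whether the two input terms are nullary or have arity at least $1$. If $s_0,s_1$ are both nullary, then any arity diagram between them is trivial, and the comparison on both sides is governed by the common linear order $D(\emptyset)=\mathsf{Init}(D)$. If $s$ is nullary and $t$ has arity $n\ge 1$, then for any arity diagram $\cyrDe$ with $\cyrDe(\top)=n_\cup$, one has $s(\emptyset)\in D(\emptyset)$ while $t(\ran\cyrDe(1,\top))\notin D(\emptyset)$ since $\ran\cyrDe(1,\top)\neq\emptyset$. Because $\emptyset$ is an initial segment of $n_\cup$ and $D$ is a semiflower, $D(\emptyset)$ is an initial segment of $D(n_\cup)$, hence $D\vDash s<_\cyrDe t$, which matches the ordered-sum comparison on the left.

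The main third case compares two terms $s,t$ of arity $m,n\ge 1$. Given an arity diagram $\cyrDe$ for $s,t$ with maps $e_0\colon m\to n_\cup$ and $e_1\colon n\to n_\cup$, the comparison $(s^\partial)^{\int}<_\cyrDe (t^\partial)^{\int}$ in $\int(\partial D)$ unfolds by definition either to $\max e_0<\max e_1$, or to $\max e_0=\max e_1$ together with $\partial D\vDash s^\partial<_{\cyrDe^-} t^\partial$, where $\cyrDe^-=\Diag_{n_\cup}(\ran e_0\setminus\{\max e_0\},\ran e_1\setminus\{\max e_1\})$. Unfolding further via the definition of $\partial D$, the latter becomes $D\vDash s<_{(\cyrDe^-)^+} t$. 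A direct combinatorial check, using that $\max e_0=\max e_1$ together with $\ran e_0\cup\ran e_1=n_\cup$ forces this common maximum to equal $n_\cup-1$, shows $(\cyrDe^-)^+=\cyrDe$, so this subcase reduces to $D\vDash s<_\cyrDe t$. For the asymmetric subcase $\max e_0<\max e_1$, I apply the semiflower property again: with $X=\max e_0+1$, an initial segment of $n_\cup$, one has $s(\ran e_0)\in D(X)$ while $t(\ran e_1)\notin D(X)$, so $s(\ran e_0)<t(\ran e_1)$ in $D(n_\cup)$ and $D\vDash s<_\cyrDe t$. The symmetric subcase $\max e_0>\max e_1$ follows by exchanging roles, producing $D\vDash t<_{-\cyrDe} s$ and hence the failure of $D\vDash s<_\cyrDe t$ by linearity. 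The main obstacle is the bookkeeping identity $(\cyrDe^-)^+=\cyrDe$ in the balanced subcase; once this is verified, the three cases assemble into the desired semidilator isomorphism.
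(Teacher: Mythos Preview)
The paper does not include a proof of this theorem; it is among the results in the dilator section that are explicitly stated without proof, with details deferred to the cited book draft. Your argument is the natural direct verification and is correct.

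The bookkeeping identity $(\cyrDe^-)^+=\cyrDe$ you flag as the main obstacle indeed holds: since $\max e_0=\max e_1$ and $\ran e_0\cup\ran e_1=n_\cup$, the common maximum is $n_\cup-1$, and then $(\ran e_0\cup\ran e_1)\setminus\{n_\cup-1\}$ is literally the set $n_\cup-1$. Hence the enumeration isomorphism used in forming $\cyrDe^-=\Diag_{n_\cup}(\ran e_0\setminus\{n_\cup-1\},\ran e_1\setminus\{n_\cup-1\})$ is the identity, so the bottom maps of $\cyrDe^-$ have ranges $\ran e_i\setminus\{n_\cup-1\}$ as subsets of $n_\cup-1$. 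Applying $(\cdot)^+$ adjoins the top element $n_\cup-1$ back to each range, recovering $\ran e_0$, $\ran e_1$ and hence $\cyrDe$ itself. This is the same calculation underlying the companion statement $D\cong\partial(\int D)$, so your proof is fully in line with the intended development.
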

Then by the definition of $\int D$, we have the following different characterization of a semiflower in terms of the denotation system:
\begin{corollary} \label{Corollary: First order characterization of a semiflower}
    A semidilator $D$ is a semiflower iff for every $s,t\in \field(D)$ and an arity diagram $\cyrDe$ for $s,t$ of the form \eqref{Ch3: Formula: Arity diagram in Chapter 3-02}, we have
    \begin{enumerate}
        \item If $\arity s=0<\arity t$, then $s<_\cyrDe t$.
        \item If $\arity s,\arity t>0$ and $\max e_0<\max e_1$, then $s<_\cyrDe t$.
    \end{enumerate}
\end{corollary}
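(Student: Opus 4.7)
The plan is to read both directions off the definition of semiflower directly, with the forward direction also available as an instant consequence of the decomposition $D \cong \mathsf{Init}(D) + \int(\partial D)$ from the previous theorem combined with the definition of $\int$.

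For the forward direction, suppose $D$ is a semiflower and let $s,t \in \field(D)$ together with an arity diagram $\cyrDe$ of the form \eqref{Ch3: Formula: Arity diagram in Chapter 3-02}. By unwinding the definition of $<_\cyrDe$, what must be shown is that $s(\ran e_0) <_{D(n_\cup)} t(\ran e_1)$. For condition (1), with $\arity s = 0 < \arity t$, the element $s(\emptyset)$ lies in the initial segment $D(\emptyset) \subseteq D(n_\cup)$ (which is an initial segment by the semiflower property applied to $\emptyset \subseteq n_\cup$), whereas $t(\ran e_1)$ has nonempty support and hence does not. For condition (2), with both arities positive and $\max e_0 < \max e_1$, let $I := \{k \in n_\cup \mid k < \max e_1\}$; this is an initial segment of $n_\cup$ containing $\ran e_0$ but not $\ran e_1$, so $s(\ran e_0) \in D(I)$ while $t(\ran e_1) \notin D(I)$, and the semiflower property gives $s(\ran e_0) <_{D(n_\cup)} t(\ran e_1)$.

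For the reverse direction, assume conditions (1) and (2) and fix linear orders $X \subseteq Y$ with $X$ an initial segment. I would pick arbitrary $u = s(a) \in D(X)$ (so $a \subseteq X$) and $v = t(b) \in D(Y) \setminus D(X)$ (so $b \not\subseteq X$), and show $u <_{D(Y)} v$. Because $X$ is an initial segment of $Y$, any element of $b \setminus X$ is strictly greater than every element of $X$; in particular it exceeds every element of $a$. If $a = \emptyset$, then $\arity s = 0 < \arity t$ and condition (1) applied to the arity diagram $\Diag_Y(a,b)$ yields $u <_{D(Y)} v$. Otherwise $a \ne \emptyset$ and $\max a < \max b$, so in $\Diag_Y(a,b)$ we have $\max e_0 < \max e_1$, and condition (2) gives the same conclusion.

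No step looks delicate: both directions reduce to the observation that the semiflower property is exactly the statement that no new terms appear in the middle of $D(Y)$, which in the language of arity diagrams is precisely controlled by whether the top of the combined support comes from $s$ or from $t$. The only notational care needed is to remember that $\ran e_0 \cup \ran e_1 = n_\cup$, so that $\max e_1 = n_\cup - 1$ whenever $\max e_0 < \max e_1$, making the choice of initial segment $I$ above legitimate.
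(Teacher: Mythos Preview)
Your proof is correct. The paper does not give an explicit argument for this corollary but derives it (as the text indicates) from the decomposition $D \cong \mathsf{Init}(D) + \int(\partial D)$ together with the comparison rule of $\int$; you instead argue both directions directly from the initial-segment definition of semiflower. Your route is more elementary and self-contained, avoiding any appeal to the structure theorem, while the paper's route has the advantage that both conditions drop out mechanically from the $\mathsf{Init}(D) + \int(\partial D)$ form and the $\max e_0 < \max e_1$ clause in the definition of $<_\cyrDe$ for $\int$. The only minor point worth making explicit in your write-up is that in the reverse direction $b \not\subseteq X$ forces $b \neq \emptyset$, so $\arity t > 0$ is available when you invoke condition (2).
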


The following proposition shows that $\int$ and $\partial$ are inverses of each others:
\begin{proposition}
    Let $D$ be a semidilator. Then $D\cong \partial(\int D)$.
\end{proposition}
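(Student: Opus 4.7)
The plan is to write down the obvious candidate bijection and then trace the definitions of $\int$ and $\partial$ on arity diagrams to see that they cancel.

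First I would define $\iota\colon D\to \partial(\int D)$ on fields by
\begin{equation*}
    \iota(t) = (t^{\int})^\partial.
\end{equation*}
Because $\arity^{\int D}(t^{\int})=\arity^D(t)+1\ge 1$ for every $t\in\field(D)$, the term $(t^{\int})^\partial$ is always defined in $\partial(\int D)$, and conversely every term of $\partial(\int D)$ has this form for a unique $t$. Moreover $\arity^{\partial(\int D)}((t^{\int})^\partial)=(\arity^D(t)+1)-1=\arity^D(t)$, so $\iota$ is arity-preserving. Hence $\iota$ is a bijection of fields that respects arities, and it only remains to show that $\iota$ and $\iota^{-1}$ preserve the order relations $<_\cyrDe$.

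Fix $t_0,t_1\in\field(D)$ and an arity diagram $\cyrDe$ for them of the form in \eqref{Ch3: Formula: Arity diagram in Chapter 3-02}. Unfolding the definition, $(t_0^{\int})^\partial <_\cyrDe (t_1^{\int})^\partial$ in $\partial(\int D)$ is equivalent to $\int D\vDash t_0^{\int} <_{\cyrDe^+} t_1^{\int}$, where $\cyrDe^+ = \Diag(\ran e_0\cup\{n_\cup\},\,\ran e_1\cup\{n_\cup\})$. Here the bottom-right of $\cyrDe^+$ has size $n_\cup+1$, and the two adjoined edges $e_0^+, e_1^+$ satisfy $\max e_0^+ = \max e_1^+ = n_\cup$ (the newly added top element). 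So in the comparison rule for $\int D$, case (1) never fires and case (2) is always in force, reducing to $D\vDash t_0 <_{(\cyrDe^+)^-} t_1$ where $(\cyrDe^+)^-=\Diag_{n_\cup+1}(\ran e_0^+\setminus\{n_\cup\},\ran e_1^+\setminus\{n_\cup\})=\Diag_{n_\cup+1}(\ran e_0,\ran e_1)$.

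Finally I would observe that $\Diag_{n_\cup+1}(\ran e_0,\ran e_1)$ and the original $\cyrDe$ are isomorphic as arity diagrams (the only difference is an unused top point in $n_\cup+1$ that is not in the range of either edge, so removing it yields $\cyrDe$), and arity diagrams are identified up to isomorphism. Thus $(\cyrDe^+)^-=\cyrDe$, and we obtain the equivalence $(t_0^{\int})^\partial <_\cyrDe (t_1^{\int})^\partial \iff D\vDash t_0<_\cyrDe t_1$, which establishes that $\iota$ is an isomorphism. The only mildly delicate point is the bookkeeping that identifies $(\cyrDe^+)^-$ with $\cyrDe$; everything else is definitional unfolding.
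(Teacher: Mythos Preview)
The paper states this proposition without proof (as announced at the start of Section~\ref{Section: Dilators}, results in these subsections are summarized from \cite{Jeon??DilatorBook} and not proved), so there is nothing to compare against. Your argument is correct: the bijection $t\mapsto (t^{\int})^\partial$ is arity-preserving, and the two-step unwinding of $\partial$ then $\int$ on an arity diagram $\cyrDe$ gives back $\cyrDe$ because $\Diag_X(a,b)$ depends only on the inclusion diagram of $a,b,a\cap b,a\cup b$, not on the ambient $X$---so the extra top point $n_\cup$ added by $(\cdot)^+$ and then stripped by $(\cdot)^-$ is harmless. Your identification $(\cyrDe^+)^- = \cyrDe$ is exactly right and is the only nontrivial step.
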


\subsection{Dendrograms}
A dendroid is a tree-like structure representing $D(\alpha)$ for a dilator $D$ and a well-order $\alpha$. The original notation of a dendroid given by Girard \cite{Girard1981Dilators} is a mixture of a dilator $D$ and a well-order $\alpha$, and its definition does not directly allow its pre- notion.
A dendrogram can be viewed as separating the dilator part from Girard's dendroid, so it solely captures the structure of a dilator. It appears during the construction of a measure family for the Martin flower, and the author believes a dendrogram is the best way to construct dilators by hand. Note that a relevant notion appeared in \cite{AguileraFreundPakhomov?Shallow} under the name \emph{cell decomposition}.
\begin{definition}
    A \emph{predendrogram} is a structure $C=(C,<,\multimap,\bfe)$ such that the following holds:
    \begin{enumerate}
        \item $(C,\multimap)$ is a forest with the immediate successor relation $\multimap$.
        Moreover, if $\multimap^*$ is the transitive closure of $\multimap$, then for each $x\in C$, $\{y\in C\mid y\multimap^* x\}$ is finite and well-ordered by $\multimap^*$. We call the size of $\{y\in C\mid y\multimap^* x\}$ the \emph{length of $x$} and denote it by $\lh(x)$.

        \item For every $x\in C$, either $x$ is a terminal point (i.e., $x$ has no immediate successor) or there is $y$ such that $x\multimap^* y$ and $y$ is a terminal point.

        \item $<$ is a partial order over $C$. Moreover, the following sets are linearly ordered by $<$:
        \begin{itemize}
            \item The set of \emph{roots} of $C$, i.e. elements with no immediate predecessor.
            \item The set immediate successors of $x$ for each $x\in C$.
        \end{itemize}
        
        \item $\bfe$ is a partial function from $C$ to $\bbN$ such that $\bfe(x)$ is defined if and only if $x$ is not a terminal point, and $\bfe(x)\le \lh(x)$.
    \end{enumerate}
    We denote the set of terminal points of $C$ by $\term(C)$. We also define the \emph{sequence $\pred(x)$ of predecessors of $x$} as the $\multimap$-increasing enumeration of $\{y\in C\mid y\multimap^* x\} \cup\{x\}$. $\pred(x)$ is a sequence of length $\lh(x)+1$.
\end{definition}

Each predendrogram induces a predilator in the following way:

\begin{definition} \label{Definition: Predendrogram to an alpha-predendroid}
    Let $C$ be a predendrogram, $\alpha$ a linear order.
    Let us define $C(\alpha)$ by the set of all $\lag x_0,\xi_0,\cdots,\xi_{m-1},x_m\rag$ such that there is $x=x_m\in \term(C)$ such that $\pred(x) = \lag x_0,\cdots, x_m\rag$ and for each $i<m$, $\xi_i$ is the $\bfe(i)$th least element over $\{\xi_0,\cdots,\xi_i\}$. We impose $C(\alpha)$ on the Kleene-Brouwer order, where we compare $x_i$ by the $C$-order, and $\xi_i$ by the $\alpha$-order. For $f\colon \alpha\to\beta$, we define $C(f)$ by
    \begin{equation*}
        C(f)(\lag x_0,\xi_0,\cdots,\xi_{m-1},x_m\rag) = \lag x_0,f(\xi_0),\cdots,f(\xi_{m-1}),x_m\rag.
    \end{equation*}
\end{definition}
We can see that $\alpha\mapsto (C(\alpha),<_\KB)$ is a predilator (more precisely, an F-predilator.) We will see later how to `decode' a predendrogram into a predilator as a denotational system.
Like predendroids, predendrograms also admit morphisms:
\begin{definition}
    For two predendrograms $C$ and $D$, a function $f\colon C\to D$ is a \emph{predendrogram morphism} if it preserves $<$, $\multimap$, and $\bfe$. That is, for $x,y\in C$
    \begin{enumerate}
        \item $C\vDash x\multimap y$ iff $D\vDash f(x)\multimap f(y)$.
        \item $C\vDash x<y$ iff $D\vDash f(x)<f(y)$, and
        \item $f(\bfe^C(x)) = \bfe^D(f(x))$.
    \end{enumerate}
\end{definition}

We can turn a predendrogram into a predilator as follows, which also gives a functor from the category of predendrograms to the category of predilators:
\begin{definition}
    Let $C$ be a predendrogram. Let us define $\Dec(C)$: Its field equals $\term(C)$. We define the comparison rule of $\Dec(C)$ in a way that the following holds:
    \begin{enumerate}
        \item $<^{\Dec(C)}$ is equal to the Kleene-Brouwer order over $C$.
        \item For $x,y\in C$, $\bfp^{\Dec(C)}(x,y)=m$, where $m$ is the least natural number such that $\pred(x)\restriction (m+1)\neq \pred(y)\restriction (m+1)$. 
        \item For $x\in C$, $\Sigma^{\Dec(C)}_\sigma$ is a permutation over $m=\lh(x)$ satisfying the following:
        For $\pred(x) = \lag x_0,x_1,\cdots,x_m\rag$ and $e_i = \bfe(x_i)$,
        $\Sigma^{\Dec(C)}_\sigma(i)$ is the $e_i$th least element of $\{\Sigma^{\Dec(C)}_\sigma(j)\mid j\le i\}$ for every $i<m$.
    \end{enumerate}
    For a predendrogram morphism $f\colon C\to D$, let us define $\Dec(f)=f$.
\end{definition}

Conversely, from a predilator $D$, we can get the corresponding predendrogram $\Cell(D)$, called the \emph{cell decomposition} of $D$. Its construction is similar to that of the Branching functor in \cite{Girard1981Dilators}, which is given as follows. First, let us find the field of $\Cell(D)$:

\begin{definition}
    For a predilator $D$ and $n\in\bbN$, let us define a equivalence relation $\equiv^D_n$ over $\field(D)$ as follows:
    \begin{equation*}
        s\equiv^D_n t \iff \bfp^D(s,t)>n \lor s=t.
    \end{equation*}
\end{definition}
Let $<^D$ be a linear order derived from the abstract construction of predilators. Then we can see that $\equiv^D_n$ is an interval over $(D,<^D)$:

\begin{lemma}
    An $\equiv^D_n$-equivalence class is an interval on $(D,<^D)$.
\end{lemma}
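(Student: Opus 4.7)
The plan is to reduce this to the ultrametric-like property of $\bfp^D$ stated in the proposition just before the definition of $<^D$. Concretely, I want to show that if $s, u$ lie in a common $\equiv^D_n$-equivalence class and $s \le^D t \le^D u$, then $t$ is in the same class.

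First I would dispatch the trivial degenerate cases: if $s = u$ then $t = s$ by antisymmetry of $<^D$, so $t \equiv^D_n s$; and if $s = t$ or $t = u$, then $t$ is clearly equivalent to one of $s, u$ and hence to both. So the interesting case is $s <^D t <^D u$ together with $\bfp^D(s,u) > n$.

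In this case I apply the quoted proposition: since $\varepsilon^D_{s,t} = \varepsilon^D_{t,u} = +1$, we get
\begin{equation*}
    \bfp^D(s,u) = \min(\bfp^D(s,t), \bfp^D(t,u)).
\end{equation*}
Combined with $\bfp^D(s,u) > n$, this forces $\bfp^D(s,t) > n$ and $\bfp^D(t,u) > n$, so $t \equiv^D_n s$ (and $t \equiv^D_n u$). This shows the equivalence class is convex in $<^D$, which is exactly the assertion that it is an interval.

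I do not expect any genuine obstacle; the only thing to be careful about is that the definition of $\equiv^D_n$ has the disjunct $s = t$, which is needed precisely because $\bfp^D(s,s)$ is the arity of $s$ and so might be $\le n$. This is what makes the $s = u$ degenerate case require a separate (one-line) argument rather than falling out of the ultrametric inequality. Everything else is an immediate invocation of the previously established proposition.
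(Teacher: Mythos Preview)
Your proposal is correct and follows essentially the same approach as the paper: both reduce to the equality $\bfp^D(s,u)=\min(\bfp^D(s,t),\bfp^D(t,u))$ for $s\le^D t\le^D u$ and read off that $\bfp^D(s,u)>n$ forces both $\bfp^D(s,t)>n$ and $\bfp^D(t,u)>n$. The paper simply jumps straight to the strict case $s<^D t<^D u$ without spelling out the degenerate cases, but your extra care there is harmless and the core argument is identical.
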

\begin{proof}
    Let $s,t,u\in \field(D)$, be such that $s<^D t<^D u$ and $\bfp^D(s,u)>n$.
    From $\min(\bfp^D(s,t),\bfp^D(t,u))=\bfp^D(s,u)>n$, we have that $s,t,u$ are all $\equiv^D_n$-equivalent.
\end{proof}
Hence we can define $(D,<^D)/\equiv^D_n$. Now, let us define the cell decomposition of $D$ as follows:
\begin{definition}
    For a predilator $D$, $\Cell(D)$ is the set of $[t]_{\equiv^D_m}$ for every $t\in \field(D)$ and $m\le \arity(t)$.
    We define relations over $\Cell(D)$ as follows: For $x,y\in \Cell(D)$,
    \begin{enumerate}
        \item $x\multimap y$ iff there is $t\in \field(D)$ and $m<\arity(t)$ such that $x=[t]_{\equiv^D_m}$ and $y=[t]_{\equiv^D_{m+1}}$.
        \item $x<y$ iff there are $s,t\in\field(D)$ and $m\le \arity s,\arity t$ such that $x=[s]_{\equiv^D_m}$, $y=[t]_{\equiv^D_m}$, and $s<^D t$.
        \item $\bfe([t]_{\equiv^D_m})$ is the natural $e$ number such that $\Sigma^D_t(m)$ is the $e$th least member over the set $\{\Sigma^D_t(0),\cdots,\Sigma^D_t(m)\}$.
    \end{enumerate}
    For an embedding $f\colon D\to E$, let us define $\Cell(f)$ by $\Cell(f)([t]_{\equiv^D_m}) = [f(t)]_{\equiv^E_m}$.
\end{definition}

Then we can see that $\Cell$ and $\Dec$ form category equivalences:
\begin{theorem}
    $\Cell$ and $\Dec$ are category equivalences between the category of predendrograms and the category of predilators.
\end{theorem}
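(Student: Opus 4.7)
The plan is to break the equivalence claim into four tasks: (a) $\Dec(C)$ is a predilator for every predendrogram $C$; (b) $\Code(D)$ is a predendrogram for every predilator $D$; (c) $\Code$ and $\Dec$ extend to functors (they act correctly on morphisms and preserve identities and composition); (d) there are natural isomorphisms $\eta\colon \Id \Rightarrow \Dec\circ\Code$ and $\varepsilon\colon \Id \Rightarrow \Code\circ\Dec$. Once (a)--(d) are established, the theorem follows from the standard characterization of category equivalences.

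For (a), I use the abstract characterization of predilators: it suffices to equip $\field(\Dec(C)) = C$ with a linear order, a symmetric function $\bfp$ satisfying the ultrametric property along $\le^{\Dec(C)}$-ordered triples, and, for each $\sigma$, a permutation $\Sigma_\sigma$ compatible across pairs up to the value of $\bfp$. The Kleene--Brouwer order is a linear order on any antichain of $[\bbN\cup X]^{<\omega}$, and for $\sigma,\tau\in C$ the quantity $\bfp(\sigma,\tau) = \min\{m : \sigma\restriction(2m+1)\ne\tau\restriction(2m+1)\}$ is manifestly symmetric; the ultrametric condition along the $<_{\KB}$-order is a direct consequence of how $<_{\KB}$ branches at the first point of disagreement. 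The coherence of the permutations $\Sigma_\sigma$ follows from the predendrogram requirement that parameter parts along $C^\ast$ are unique on odd-length extensions: if $\sigma,\tau$ share an initial segment of length $2m+1$, then the first $m$ parameter entries agree, so the $\Sigma$'s agree up to position $m$. For (b), the four clauses defining a predendrogram are immediate from the construction except for the uniqueness clause on $\Code(D)^*$: here I argue that for any $t\in\field(D)$ of arity $\ge m+1$, the $(m+1)$st parameter $e_m$ is, by the definition of $\Sigma^D_t$, determined by the relative position of $\Sigma^D_t(m)$ among $\{\Sigma^D_t(j)\mid j\le m\}$; and this depends only on the $\equiv^D_{m+1}$-class of $t$, which is the final entry of the extended sequence.

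Functoriality in (c) is routine: a predendrogram morphism $f$ preserves the Kleene--Brouwer order, $\bfp$, and the $\Sigma$-data by its very definition, so $\Dec(f)$ is a predilator embedding; conversely a predilator morphism $g$ descends to a well-defined map on equivalence classes that preserves parameter parts and the linear order on the $\equiv^D_0$-classes, which are exactly the conditions required of a predendrogram morphism on $\Code(D)$. Preservation of identities and composition is then immediate because both constructions act as the identity on the underlying data.

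For (d), I construct $\eta_D\colon D\to \Dec(\Code(D))$ as the map sending $t$ to its code sequence; this is bijective onto $\field(\Dec(\Code(D)))=\Code(D)$, and the abstract data match by construction: $<^{\Dec(\Code(D))}$ is the Kleene--Brouwer order on the codes, which coincides with $<^D$; $\bfp^{\Dec(\Code(D))}$ on two codes is the length of the longest common odd-length prefix, which equals $\bfp^D(s,t)$ since the $m$th equivalence-class entries agree iff $\bfp^D(s,t)>m$; and $\Sigma^{\Dec(\Code(D))}$ is read off the parameter parts, recovering $\Sigma^D$. Naturality in $D$ is immediate from the formulas defining $\Code$ and $\Dec$ on morphisms. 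The comparison $\varepsilon_C\colon C\to \Code(\Dec(C))$ sends a node $\sigma = \lag x_0,e_0,\ldots,e_{m-1},x_m\rag\in C$ to the sequence of $\equiv^{\Dec(C)}_i$-classes of $\sigma$ paired with the same parameter entries; this is well-defined and bijective because the $\equiv^{\Dec(C)}_i$-class of $\sigma$ is canonically identified with the $(2i+1)$-initial segment $\sigma\restriction(2i+1)$ in $C^\ast$, and the parameter parts $e_i$ recovered from $\Sigma^{\Dec(C)}_\sigma$ are exactly the original $e_i$ by the definition of $\Dec$.

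The main obstacle is the last step: verifying that the equivalence classes appearing in $\Code(\Dec(C))$ are in canonical bijection with the initial segments in $C^*$, and that the parameter parts recovered through $\Sigma^{\Dec(C)}$ agree with the original parameter parts of $C$. This hinges on checking that the linear order on the quotient $(\Dec(C),<^{\Dec(C)})/{\equiv^{\Dec(C)}_i}$ is induced by the Kleene--Brouwer order on the length-$(2i+1)$ prefixes, so that the canonical identification is an order isomorphism. Once this bookkeeping is in place, naturality of $\varepsilon$ in $C$ reduces to the fact that predendrogram morphisms commute with both the prefix operation and the parameter extraction.
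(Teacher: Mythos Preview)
The paper states this theorem without proof (the surrounding section explicitly defers proofs to the book draft \cite{Jeon??DilatorBook}), so there is no argument in the paper to compare your sketch against. Your four-step plan---verify that $\Dec$ and $\Code$ land in the right categories, check functoriality, and build the two natural isomorphisms---is exactly the expected route, and the bookkeeping you outline for $\eta_D$ and $\varepsilon_C$ is correct.

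There is one small slip in your argument for the uniqueness clause in (b). You write that $e_m$ ``depends only on the $\equiv^D_{m+1}$-class of $t$, which is the final entry of the extended sequence.'' But uniqueness requires that $e_m$ be determined by the odd-length sequence $\sigma$ \emph{before} the extension, whose last entry is $[t]_{\equiv^D_m}$, not $[t]_{\equiv^D_{m+1}}$. Two terms $s,s'$ whose codes share the length-$(2m+1)$ prefix satisfy $[s]_{\equiv^D_m}=[s']_{\equiv^D_m}$, hence $\bfp^D(s,s')>m$; the compatibility condition on $\Sigma$ then gives that $\Sigma^D_s$ and $\Sigma^D_{s'}$ have the same order type on $\{0,\dots,m\}$, so $e_m^s=e_m^{s'}$. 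Your argument as written only uses $\bfp^D(s,s')>m+1$, which is not available. Once you replace $m+1$ by $m$ the argument goes through, and the rest of your sketch is sound.
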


Let us provide the characterization of preflowers in terms of predendrograms:
\begin{proposition} \label{Proposition: Dendrogram characterization of a flower}
    A predilator $D$ is a preflower if and only either every $x\in \Cell(D)$ has length 0, or if there is $x^*\in \Cell(D)$ of length 0 such that for every $x\in\Cell(D)$, we have either
    \begin{enumerate}
        \item $\lh(x)=0$ and $\Cell(D)\vDash x<x^*$, or
        \item $\lh(x)>0$, $x^*$ occurs in the sequence of predecessors of $x$, and $\bfe^{\Cell(D)}(x)<\lh(x)$ if $\bfe^{\Cell(D)}(x)$ is defined.
    \end{enumerate}
\end{proposition}
\begin{proof}[Sketch of proof]
    We only consider the case when $D$ has a non-nullary term.
    For one direction, suppose that $D$ is a preflower and $t\in \field(D)$.
    If $\arity t>0$, then $\Sigma^D_t(0) = \arity t -1$. Hence $\Sigma^D_t(i)$ is an $<i$th least element of $\{\Sigma^D_t(j)\mid j\le i\}$.
    Also, if $\arity s=0$ and $s\in \field(D)$, then $D\vDash s<_\cyrDe t$ for every arity diagram $\cyrDe$ between $s$ and $t$, which implies $[s]_{\equiv^D_0} < [t]_{\equiv^D_0}$ by \autoref{Corollary: First order characterization of a semiflower}. If $t'\in\field(D)$ is another term of arity $>0$, then again \autoref{Corollary: First order characterization of a semiflower} implies $\bfp^D(t,t')>0$, so we can take $x^*=[t]_{\equiv^D_0}$.
    For the other direction, from the given assumption, we have that $\Cell(\Dec(D))\cong D$ satisfies the following:
    \begin{enumerate}
        \item If $\arity^{\Cell(\Dec(D))}(x)=0<\arity^{\Cell(\Dec(D))}(y)$, then $x <^{\Cell(\Dec(D))} y$.
        \item If $\arity^{\Cell(\Dec(D))}(x)>0$, then $\Sigma^{\Cell(\Dec(D))}_s(0) = \arity(x)-1$ for every $x\in \Cell(\Dec(D))$.
    \end{enumerate}
    Hence $D$ is a preflower by \autoref{Corollary: First order characterization of a semiflower}.
\end{proof}

In the latter part of the paper, we iterate a measure over a finite dendrogram. Dendrograms are not linear, so we need to specify the order over the dendrogram before the iteration.
It turns out that the following type of dendrogram includes a correct iteration order:
\begin{definition} \label{Definition: Trekkable dendrogram}
    A predendrogram $D$ is \emph{trekkable} if
    \begin{enumerate}
        \item The field of $D$ is an ordinal. % There is an ordinal $\alpha$ such that $D$ is the set of non-limit ordinals below $\alpha$. ($D$ may contain zero.)
        \item For each $x,y\in D$, if $D\vDash x<y$ or $D\vDash x\multimap y$, then $x$ is less than $y$ as ordinals. That is, the predecessor relation or a comparison relation over $D$ respects an ordinal order.
    \end{enumerate}
    A function $f\colon D\to D'$ between two trekkable predendrogram is a \emph{trekkable predendrogram morphism} if $f$ is an ordinal order-preserving predendrogram morphism.
\end{definition}
In particular, the domain of a finite dendrogram is a natural number. Most trekkable predendrograms we care about are finite, although we will see a countable trekkable dendrogram in \autoref{Subsection: omega1 completeness of the measure family}.

We may think of a dendrogram as a tree structure also showing `hidden terms' (or non-terminal terms) of a dilator. We will later associate a measurable dilator term to each node in a finite dendrogram, even for non-terminal ones.
Hence, it is convenient to consider the `closure' of a dendrogram exhibiting every hidden term.
\begin{definition}
    Let $d$ be a predendrogram. We define a predilator $\Dec^\bullet(d)$ as follows: For each linear order $\alpha$, $\Dec^\bullet(d)(\alpha)$ is the set of all $\lag x_0,\xi_0,\cdots,\xi_{m-1},x_m\rag$ such that there is $x=x_m\in d$ such that $\pred(x) = \lag x_0,\cdots, x_m\rag$ and for each $i<m$, $\xi_i$ is the $\bfe(i)$th least element over $\{\xi_0,\cdots,\xi_i\}$. We impose $\Dec^\bullet(d)(\alpha)$ on the Kleene-Brouwer order, where we compare $x_i$ by the $d$-order, and $\xi_i$ by the $\alpha$-order. For $f\colon \alpha\to\beta$, we define $\Dec^\bullet(d)(f)$ by
    \begin{equation*}
        \Dec^\bullet(d)(f)(\lag x_0,\xi_0,\cdots,\xi_{m-1},x_m\rag) = \lag x_0,f(\xi_0),\cdots,f(\xi_{m-1}),x_m\rag.
    \end{equation*}
\end{definition}
The previous definition is the same as \autoref{Definition: Predendrogram to an alpha-predendroid}, except that in $\Dec^\bullet(d)$, we also allow non-terminal $x$.
The following definition will give a dendrogram representation for the closure of $d$:
\begin{definition}
    Let $d$ be a predendrogram. We define $d^\bullet$ by the disjoint union of $\{x^\bullet \mid x\in d\}$ and the set of all non-terminal points of $d$.
    We define $\multimap$, $<$, and $\bfe$ over $d^\bullet$ as follows:
    \begin{enumerate}
        \item For $x,y\in d$, we have $d^\bullet \vDash x\multimap y$ (if $y$ is not terminal in $d$) and $d^\bullet \vDash x\multimap y^\bullet$. 
        \item For $x,y\in d$ with $d\vDash x<y$, we have $d^\bullet \vDash x<x^\bullet<y<y^\bullet$. We ignore undefined elements from the defining inequality.
        \item For a non-terminal $x\in d$, $\bfe^{d^\bullet}(x)=\bfe^d(x)$.
    \end{enumerate}
\end{definition}
We can describe the construction of $d^\bullet$ as follows: Starting from $d$, we put all terminal nodes with a bullet. Then, for every intermediate node $x$, let us add a new node $x^\bullet$ with the same immediate predecessor just to the right side of $x$. The new nodes represent unraveled intermediate nodes in $d$. See \autoref{Figure: Bullet dendrogram construction} for an example of the construction.
$x^\bullet$ is always a terminal in $d$, so $\bfe(x^\bullet)$ is undefined. 

\begin{figure}
    \centering
    \begin{center}
    \adjustbox{valign=c}{
    \begin{forest} for tree={fit=tight, for children={l sep-=0.8em,l-=0.8em}}
    [$\circ$
        [$\circ$[$\circ$[$\circ$]]] 
        [$\circ$]
        [$\circ$[$\circ$]]
        ]
    \end{forest}}
    $\implies$
    \adjustbox{valign=c}{
    \begin{forest} for tree={fit=tight, for children={l sep-=1em,l-=1em}}
    [$\circ$
        [$\circ$[$\circ$[$\bullet$]][$\star$]] [$\star$]
        [$\bullet$]
        [$\circ$[$\bullet$]] [$\star$]
        ]
    \end{forest}}
    \end{center}
    \caption{The construction of $d^\bullet$ from $d$. Filled circle nodes represent terminal nodes in $d$, and filled starred nodes represent unraveled intermediate nodes in $d$.}
    \label{Figure: Bullet dendrogram construction}
\end{figure}

\begin{proposition}
    For a predendrogram $d$, $\Dec^\bullet(d)$ and $\Dec(d^\bullet)$ are isomorphic.
\end{proposition}
\begin{proof}
    For a linear order $\alpha$, we can see that $\iota_\alpha\colon \Dec^\bullet(d)(\alpha)\to \Dec(d^\bullet)(\alpha)$ defined by
    \begin{equation*}
        \iota_\alpha (\lag x_0,\xi_0,\cdots, \xi_{m-1},x_m\rag) = \lag x_0,\xi_0,\cdots, \xi_{m-1},x_m^\bullet\rag
    \end{equation*}
    {} is an isomorphism natural in $\alpha$.
\end{proof}

The following theorem says we can decompose comparison relations over a dilator into simpler ones.

\begin{theorem}[Elementary comparison decomposition theorem] \label{Theorem: Elementary comparison decomposition}
    Let $d$ be a predendrogram.
    Every comparison relation $s<_{\cyrDe} t$ over $\Dec^\bullet(d)$ is decomposed into the following \emph{elementary comparison rules}:
    If we fix $\pred(s) = \lag s_0,\cdots, s_{\lh s}\rag$ and $\pred(t) = \lag t_0,\cdots, t_{\lh t}\rag$,
    \begin{enumerate}
        \item[(A)] $t<_{\Diag_\bbN(b,a)}s$ for $s\multimap t$ and $a\subseteq b\subseteq \bbN$.
        
        \item[(B)] $s <_{\Diag_\bbN(a,a)} t$ when $s$ and $t$ have the same predecessor and $s < t$.
        
        \item[(C)] $s <_{\Diag_\bbN(a,b)} t$ when $s$ and $t$ have the same predecessor, $a_i = b_i$ for every $i<\lh s-1$, and $a_{\lh s-1} < b_{\lh t-1}$ (Note that $\lh s= \lh t$.)
        
        \item[(D)] $s <_{\Diag_\bbN(a,b)} t$ when $\lh s + 1 = \lh t$, $\pred(t) \restriction \lh s = \pred s$, $a\subseteq b\subseteq\bbN$, $a_i=b_i$ for $i<\lh s$, and $s < t_{\lh s}$. 
    \end{enumerate}
    Here we enumerate $a=\{a_0,\cdots,a_{\lh s-1}\}$ with respect to $\prec_s$, i.e., in the way that $a_i < a_j$ iff $i \prec_s j$ and similar to $b=\{ b_0,\cdots, b_{\lh t-1}\}$.
    In particular, if the map $f\colon \Dec^\bullet(d) \to \Omega$ preserves every elementary comparison relation, then $f$ preserves every comparison relation.
    
    \begin{figure}
        \centering
        \tikzset{
            solid node/.style={circle,draw,inner sep=1.2,fill=black}
        }
        \setlength{\tabcolsep}{2em}
        \begin{tabular}{ c c c c }
        {\begin{forest}
            [$\vdots$
                [,for tree=solid node, edge label={node[left, midway] {$a_{l-2}$}}, label = {right:$s$}
                    [,for tree=solid node, edge label={node[left, midway] {$a_{l-1}$}}, label = {right:$t$}]
                ]
            ]
        \end{forest}}&
        {\begin{forest}
            [$\vdots$
                [,for tree={s sep=3em, solid node}, edge label={node[left, midway] {$a_{l -2}$}}, label = {[label distance=1.9em]below:$<$}
                    [,for tree=solid node, edge label={node[left, midway] {$a_{l -1}$}}, label = {left:$s$}]
                    [,for tree=solid node, edge label={node[right, midway] {$a_{l -1}$}}, label = {right:$t$}]
                ]
            ]
        \end{forest}} &
        {\begin{forest}
            [$\vdots$ 
                [,for tree={s sep=3em, solid node}, edge label={node[left, midway] {$a_{l -2}$}}, label = {[label distance=0.6em]below:$<$}
                    [,for tree=solid node, edge label={node[left, midway] {$a_{l -1}$}}, label = {left:$s$}]
                    [,for tree=solid node, edge label={node[right, midway] {$b_{l -1}$}}, label = {right:$t$}]
                ]
            ]
        \end{forest}} &
        {\begin{forest}
            [$\vdots$ 
                [,for tree={s sep=3em, solid node}, edge label={node[left, midway] {$a_{l -2}$}}, label = {[label distance=1.9em]below:$<$}
                    [,for tree=solid node, edge label={node[left, midway] {$a_{l -1}$}}, label = {left:$s$}]
                    [,for tree=solid node, edge label={node[right, midway] {$a_{l -1}$}}
                        [,for tree=solid node, edge label={node[right, midway] {$b_{l}$}}, label = {right:$t$}
                        ]
                    ]
                ]
            ]
        \end{forest}} \\
        \textnormal{Type (A)} & \textnormal{Type (B)} & \textnormal{Type (C)} & \textnormal{Type (D)}
        \end{tabular}
        \caption{Elementary comparison relations}
    \end{figure}
    \label{Figure: Elementary comparison relations}
\end{theorem}
\begin{proof}
    Suppose that $\pred(s) = \lag s_0,s_1,\cdots,s_{\lh s}\rag$ and $\pred(t) = \lag t_0,t_1,\cdots,t_{\lh t}\rag$, and $a\in [\bbN]^{\lh s}$, $b\in [\bbN]^{\lh t}$. Now suppose that $d^*(\bbN)\vDash s(a)<t(b)$: We have the following three possible cases:
    \begin{enumerate}
        \item $\pred(s) \supsetneq \pred(t)$ and $a\supsetneq b$.
        \item There is $m\le \min(\lh s,\lh t)$ such that for every $i<m$, $a_i=b_i$ and $s_i=t_i$, but $s_m<t_m$.
        \item There is $m\le \min(\lh s,\lh t)$ such that for every $i<m-1$, $a_i=b_i$ and $s_i=t_i$, $s_{m-1}=t_{m-1}$, but $a_{m-1}<b_{m-1}$.
    \end{enumerate}
    The first case is easily decomposed into a series of Type (A) comparison relations.
    In the latter two cases, let us observe that $s(a) < s_m(\{a_0,\cdots,a_{m-1}\}) < t(b)$, and the first comparison is Type (A). Hence, we may assume $m=\lh s$ in the latter two cases.
    
    Now let us consider the second case with $m=\lh s$. We have Type (B) if $m=\lh t$. We claim that if $m<\lh t$, then we can decompose the comparison $s(a)<t(b)$ into comparisons of Type (D) by induction on $\lh t\ge m$:
    The case $\lh t = m+1$ is Type (C). 
    Now consider the case $\lh t > m+1$.
    Since $a$ and $b$ describe the relative position of parameters, we may assume that every component of $a$ and $b$ is a non-zero even number.
    If we take $b'=\{b_0,b_1,\cdots, b_{\lh t - 2}-1)\}$, then $s(a) < t_{\lh t-1}(b') < t(b)$. The first comparison $s(a) < t_{\lh t-1}(b')$ is the second case with $\lh(t_{\lh t-1}) < \lh(t)$, which is decomposed into comparisons of type (D) by the inductive hypothesis. The second comparison $t_{\lh t-1}(b') < t(b)$ is of Type (D). 

    Similarly, let us consider the third case with $m=\lh s$. We have Type (C) if $m=\lh t$. For $m<\lh t$, we claim that we can decompose $s(a)<t(b)$ into Type (D) and (C) by induction on $\lh t\ge m$: 
    We may also assume that every component of $a$ and $b$ is a non-zero even number. Let us take $b' = \{b_0,\cdots, b_{m-1}-1\}$.
    Then $s(a) < t_{\lh s}(b') < t(b)$, and the first comparison $s(a) < t_{\lh s}(b')$ is of Type (C).
    If $\lh t = m+1$, the second comparison $t_{\lh s}(b') < t(b)$ is of type (D). If $\lh t>m+1$, we can further decompose $t_{\lh s}(b') < t(b)$ into a comparison of type (D) and (C) by the inductive hypothesis.
\end{proof}

We finish this section with the following lemma we will apply:
\begin{lemma} \label{Lemma: Lemma 3.39}
    Suppose that $d$ is a dendrogram and $x,y\in d$. If $\lh x=\lh y=m$and $\Dec^\bullet(d)\vDash x(m)<y(m)$, then either $(x'\multimap y \land x<y)$ or $\Dec^\bullet(d)(\omega)\vDash x'(m\setminus \{\bfe(x')\}) < y(m)$.
\end{lemma}
\begin{proof}
    Let $\pred(x)=\lag x_0,\cdots, x_m\rag$ and $\pred(y)=\lag y_0,\cdots, y_m\rag$. If $q<m$ is the least number such that $x_q\neq y_q$, then $q=\bfp^{\Dec^\bullet(d)}(x,y)$ and one of the following holds by \autoref{Theorem: Fundamental theorem of predilator comparsion}:
    \begin{enumerate}
        \item There is $p<q$ such that $\Sigma^{\Dec^\bullet(d)}_x(p) < \Sigma^{\Dec^\bullet(d)}_y(p)$.
        \item $\Sigma^{\Dec^\bullet(d)}_x(p) = \Sigma^{\Dec^\bullet(d)}_y(p)$ for every $p<q$ and $x_q<y_q$.
    \end{enumerate}
    If $q=m$, then $x$ and $y$ have the same immediate predecessor, and the second case holds, so $x<y$.
    Otherwise, if we let $a = m\setminus \{\bfe(x')\}$, then $a\big(\Sigma^{\Dec^\bullet(d)}_{x'}(p)\big) = \Sigma^{\Dec^\bullet(d)}_x(p)$ for every $p<m-1$.    
    Hence, each case implies the following 
    \begin{enumerate}
        \item There is $p<q$ such that $a\big(\Sigma^{\Dec^\bullet(d)}_{x'}(p)\big) < \Sigma^{\Dec^\bullet(d)}_y(p)$, or 
        \item $a\big(\Sigma^{\Dec^\bullet(d)}_{x'}(p)\big) = \Sigma^{\Dec^\bullet(d)}_y(p)$ for every $p<q$ and $x_q<y_q$.
    \end{enumerate}
    But in either case, we have $\Dec^\bullet(d)(\omega)\vDash x'(a)<y(m)$.
\end{proof}

\section{Measurable dilator} \label{Section: Measurable dilator}

A measurable dilator is a dilator analogue of a measurable cardinal defined by Kechris \cite{KechrisUnpublishedDilators}. Like the existence of a measurable cardinal proves $\bfPi^1_1$-determinacy, the existence of a measurable dilator proves $\bfPi^1_2$-determinacy.
In this section, we define and examine the properties of a measurable dilator. 

\subsection{Universal dilator and measurable dilator}
Let us start with the following question: Do we have a dilator embedding every countable dilator? It is like asking if there is a dilator analogue of $\omega_1$, a well-order that embeds every countable well-order. 

\begin{definition}
    A dilator $D$ is \emph{universal} if $D$ embeds every countable dilator.
\end{definition}

$\ZFC$ proves there is a universal dilator: Let us enumerate $\{D_\alpha\mid \alpha<\frakc\}$ of every dilator whose field is $\bbN$, and take the ordered sum $\sum_{\alpha<\frakc} D_\alpha$.
However, the resulting universal dilator is far from being definable. Indeed, Kechris proved that $\ZFC$ does not prove there is an ordinal definable universal dilator:

\begin{proposition}[Kechris {\cite{KechrisUnpublishedDilators}}]
    $\ZFC$ does not prove there is an ordinal definable universal dilator.
\end{proposition}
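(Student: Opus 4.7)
The plan is to derive a strong definability consequence from the hypothesized existence of an OD universal dilator, and then invoke the consistency of its failure. First I would verify that a universal dilator $D$ characterizes dilatorhood among countable predilators: for any countable predilator $E$, $E$ is a dilator if and only if $E$ embeds into $D$. The forward direction is just universality; for the backward direction, if $\iota\colon E\to D$ is an embedding and $\alpha$ is a well-order, then $\iota_\alpha\colon E(\alpha)\to D(\alpha)$ embeds $E(\alpha)$ into the well-order $D(\alpha)$, so $E(\alpha)$ is itself well-ordered, and hence $E$ is a dilator.

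Combining this with \autoref{Lemma: A continuous family of finite dilators}, for any lightface $\Pi^1_2$-formula $\phi(x)$ there is a recursive assignment $x\mapsto E_\phi(x)$ of $x$-recursive predilators such that $\phi(x)$ holds iff $E_\phi(x)$ is a dilator iff $E_\phi(x)$ embeds into $D$. The last condition is $\Sigma^1_1$ in the parameter $D$; if $D$ is OD, it is $\Sigma^1_1$ in an ordinal parameter. Hence the hypothesis forces every lightface $\Pi^1_2$-set of reals to be $\Sigma^1_1$ from ordinal parameters. With some further care, one promotes this collapse to a projectively definable (roughly $\Sigma^1_3$) well-ordering of the reals: for each real $x$, select a canonical embedding of $E_x$ into $D$ (for instance the lexicographically least one, using $D$'s OD structure to well-order the candidate embedding codes), then compare reals by comparing their canonical embeddings inside $D$.

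To exhibit a ZFC model without an OD universal dilator, I would invoke the consistency of ``no ordinal definable well-ordering of the reals,'' which holds in a Cohen extension $V = L[G]$ where $\mathrm{HOD}^V = L$ and $\mathbb{R}^V\setminus \mathbb{R}^L\neq\emptyset$. In such a $V$, any OD well-ordering of the reals would lie in $L$ and hence fail to enumerate the new Cohen reals, a contradiction. Consequently no OD universal dilator exists in $V$, so $\ZFC$ cannot prove the existence of one.

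The main obstacle is the passage from ``every $\Pi^1_2$-set of reals is $\Sigma^1_1$ from an ordinal parameter'' to a genuine projective well-ordering of the reals. The difficulty is that embeddings of countable dilators into $D$ are far from unique, so one must be careful in pinning down a canonical embedding and in verifying that this canonical choice separates reals carrying distinct $\Pi^1_2$-content. If this canonicalization proves fragile, an alternative is to bypass well-orderings entirely and instead construct, inside $V = L[G]$, a specific $\Pi^1_2$-set whose extension in $V$ cannot be $\Sigma^1_1$ from any $L$-parameter, using a diagonal argument against the $|D|^L$-many $\Sigma^1_1(D)$-templates attached to each candidate $D \in L$ and exploiting the genericity of the added Cohen reals.
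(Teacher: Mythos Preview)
Your setup matches the paper's: both observe that for a countable predilator $E$, ``$E$ is a dilator'' is equivalent to ``$E$ embeds into the universal $D$,'' so every $\Pi^1_2$ set becomes the projection of a tree $T$ on $\omega\times\field(D)$ that is ordinal definable whenever $D$ is. Where you diverge is in extracting the contradiction. You aim for an OD well-ordering of the reals via ``canonical embeddings,'' and you correctly flag this as the main obstacle: distinct reals $x,y$ can yield isomorphic predilators $E_x\cong E_y$, so your comparison-by-embedding map collapses them; and even when it does not, comparing embeddings inside $D$ gives no obvious well-order on reals. The diagonal alternative you sketch is too vague to assess.

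The paper bypasses well-orderings entirely. From the OD tree $T$, the leftmost branch of $T_x$ is OD (uniformly in the $\Pi^1_2$ definition), so every nonempty $\Pi^1_2$ set has an OD element. One then exhibits, in a Cohen extension of $L$, a nonempty $\Pi^1_2$ set with no OD element: the set
\[
X=\{r\in\bbR\mid \forall M\,[\text{$M$ transitive model of }\ZFC^-+(V=L)\to r\text{ is Cohen over }M]\}.
\]
This is $\Pi^1_2$, nonempty in $L[c]$, and has no OD member since $\mathrm{HOD}^{L[c]}=L$ while no real in $L$ can be Cohen over $L$ itself. So rather than promoting the Suslin representation to a well-ordering, just use it to produce a single OD point; that is where your argument should be redirected.
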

\begin{proof}
    Suppose that $\ZFC$ proves there is an ordinal definable universal dilator $\Omega$, and let us work over $\ZFC$. Suppose that $D_x$ is a recursive dilator with a real parameter $x$.
    Now let us consider the tree $T$ trying to construct a real and an embedding $D_x\to \Omega$ as follows: $T$ is a tree over $\omega\times \field(\Omega)$, and 
    \begin{equation*}
        \lag (s_0,t_0),\cdots,(s_{m-1},t_{m-1})\rag \in T
    \end{equation*}
    if the following holds: For $i,j<m$, suppose that $\vec{s}=\lag s_0,\cdots, s_{m-1}\rag$ is long enough to determine $i,j\in \field(D_{\underline{\vec{s}}})$, the arity of $i$ and $j$ as $D_{\underline{\vec{s}}}$-terms.
    Furthermore, assume that $\cyrDe$ is an arity diagram between $i$ and $j$ and $D_{\underline{\vec{s}}}$ can also see $i<_\cyrDe j$ holds. Then $t_i <_\cyrDe t_j$.
    (Note that we may turn $T$ into a ptyx, but it is unnecessary in our context.)

    It is clear that if $(x,\vec{t})$ is an infinite branch, then $D_x\le \Omega$. Conversely, we can turn an embedding $D_x\le \Omega$ into an infinite branch of $T$.
    Hence $D_x\le \Omega$ if and only if
    \begin{equation*}
        T_x = \{\lag t_0,\cdots, t_{m-1}\rag\mid \lag (x(0),t_0),\cdots, \lag (x(m-1),t_{m-1}) \rag\in T\}
    \end{equation*}
    has an infinite branch. Since $\Omega$ is universal and $D_x$ is always countable, we have $\Dil(D_x)$ if and only if $D_x$ embeds $\Omega$.
    Therefore $\Dil(D_x)$ if and only if $x\in p[T]$, and $T$ is ordinal definable since $\Omega$ is. Hence, every $\Pi^1_2$-set has an ordinal definable element, by taking the leftmost branch of $T$.
    
    However, the previous statement consistently fails over a generic extension of $L$ obtained by adding a Cohen real since the set
    \begin{equation*}
        X = \{r\in\bbR \mid \forall M [\text{$M$ is a transitive model of $\ZFC^- + (V=L)$ $\to$ $r$ is Cohen over $M$}]\}
    \end{equation*}
    is a $\Pi^1_2$ set of reals without an ordinal definable element.
\end{proof}

Meanwhile, we can find an ordinal definable universal dilator under `every real has a sharp.' We sketch its construction in the next subsection.
The next definition is what we promised at the beginning of the section:
\begin{definition}
    A universal dilator $\Phi$ is \emph{measurable} if for each finite dilator $d$ there is a countably complete measure $\mu_d$ over $\Phi^d$
    satisfying the following:
    \begin{enumerate}
        \item (Coherence) For each $f\colon d\to d'$, let $f^*\colon \Phi^{d'}\to \Phi^d$ be $f^*(p)=p\circ f$. ($\Phi^d$ is the set of embeddings from $d$ to $\Phi$.) Then we have
        \begin{equation*}
            X\in \mu_d \iff (f^*)^{-1}[X]\in \mu_{d'}.
        \end{equation*}
        
        \item ($\omega_1$-completeness) For a given countable dilator $D$ and a countable family $\{d_n\mid n<\omega\}$ of finite subdilators of $D$, if we have $X_n\subseteq \Phi^{d_n}$ and $X_n \in \mu_{d_n}$ for each $n<\omega$, then we can find an embedding $e\colon D\to \Phi$ such that $e\restriction d_n\in X_n$ for every $n<\omega$.
    \end{enumerate}
    We say $\Phi$ is \emph{half-measurable} if the measure $\mu_d$ is defined only for sets in $\bigcup_{x\in\bbR} \calP(\Phi^d)\cap A_x$, where $A_x$ is the least admissible set containing $\Phi$ and $x$.
\end{definition}

$\omega_1$-completeness has the following equivalent formulation:
\begin{proposition}
    Let $\Phi$ be a universal dilator with a measure family $\{\mu_d\}_d$ satisfying coherence. Then the following are equivalent:
    \begin{enumerate}
        \item $\Phi$ satisfies $\omega_1$-completeness.
        \item For a given family $\{d_n\mid n<\omega\}$ of finite dilators, embeddings $f_n\colon d_n\to D$, and $X_n\in \Phi^{d_n}$, we can find $e\colon D\to \Phi$ such that $e\circ f_n\in X_n$ for each $n<\omega$.
    \end{enumerate}
\end{proposition}
\begin{proof}
    For one direction, let $d_n'$ be the range of $f_n$. Then $\lag d_n'\mid n<\omega\rag$ is the sequence of subdilators of $D$. Then by the $\omega_1$-completeness, we can find $e\colon D\to \Phi$ such that $e\restriction d_n' \in (f_n^*)^{-1}[X_n]$ for every $n<\omega$. Hence $e\circ f_n\in X_n$ for each $n$, as desired. 
    The other direction follows by taking $f_n$ to be the inclusion map.
\end{proof}

\subsection{From a measurable flower to a measurable dilator}
It will turn out that the most natural way to define a universal dilator iby s iterating an ultrapower. The resulting `dilator' is a flower, so it cannot embed every countable dilator. However, the resulting flower still embeds every countable flower, so we can think of it as a \emph{universal flower}. We can extract a universal dilator from it by `differentiating' the universal flower.

\begin{proposition}
    Let $\Omega$ be a universal flower, i.e., $\Omega$ is a flower and embeds every countable flower. Then $\partial \Omega$ is a universal dilator.
\end{proposition}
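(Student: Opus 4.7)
The plan is to prove this as a direct corollary of the inverse relationship between $\partial$ and $\int$ together with the functoriality of both constructions, all of which are established in \autoref{Subsection: Flowers}.

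First I will verify that $\partial\Omega$ is indeed a dilator. By the lemma defining $\partial$, $\partial\Omega$ is a semidilator, and in fact a predilator since $\Omega$ is. To see it preserves well-orderedness, let $\alpha$ be a well-order. The characterization $\Omega \cong \mathsf{Init}(\Omega) + \int(\partial\Omega)$ gives
\begin{equation*}
    \Omega(\alpha) \;\cong\; \mathsf{Init}(\Omega) \;+\; \sum_{\beta<\alpha} (\partial\Omega)(\alpha\restriction\beta),
\end{equation*}
so $(\partial\Omega)(\alpha\restriction\beta)$ embeds as a suborder of the well-order $\Omega(\alpha)$ for each $\beta<\alpha$, and in particular $(\partial\Omega)(\alpha)$ is a well-order.

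Next I establish universality. Let $D$ be any countable dilator. Since $\int$ sends dilators to flowers, $\int D$ is a countable flower. By the assumed universality of $\Omega$, there is a flower embedding $f \colon \int D \to \Omega$. Applying the functor $\partial$ (which is well-defined on morphisms between semiflowers) yields a semidilator embedding
\begin{equation*}
    \partial f \colon \partial(\int D) \;\longrightarrow\; \partial\Omega.
\end{equation*}
By the proposition asserting $D \cong \partial(\int D)$, composing with this isomorphism produces an embedding $D \to \partial\Omega$, as required.

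The argument is essentially formal given the machinery already assembled: the only conceptual content is that the universal flower $\Omega$ encodes enough data, via its derivative, to embed every countable dilator. The potential pitfall to watch is that $\partial$ must genuinely be functorial on morphisms (not merely on objects) so that a flower embedding $\int D\to \Omega$ produces a dilator embedding between the derivatives; but this is precisely the content of the lemma introducing $\partial f$ in \autoref{Subsection: Flowers}, so there is no further obstacle.
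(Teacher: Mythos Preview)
Your proof is correct and follows essentially the same route as the paper: apply $\int$ to a countable dilator $D$, use universality of $\Omega$ to get $f\colon \int D\to\Omega$, then apply $\partial$ and invoke $D\cong\partial(\int D)$. One small imprecision: from the decomposition of $\Omega(\alpha)$ you only get well-orderedness of $(\partial\Omega)(\alpha\restriction\beta)$ for $\beta<\alpha$, not of $(\partial\Omega)(\alpha)$ itself; simply apply the same argument to $\Omega(\alpha+1)$ instead, whose last summand is $(\partial\Omega)(\alpha)$.
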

\begin{proof}
    Let $D$ be a countable dilator, so $\int D$ is a countable flower.
    By universality, there is an embedding $f\colon \int D\to \Omega$, and we have $\partial f\colon \partial(\int D)\to \partial\Omega$.
    Since $D\cong \partial(\int D)$, we have the desired result.
\end{proof}

From the previous proposition, let us sketch how to construct an ordinal definable universal dilator from sharps of reals:
\begin{example}
    Let us work over $\ZFC$ with `every real has a sharp.'
    Let $F_x$ be a sharp flower for $x^\sharp$ defined in \cite{AguileraFreundRathjenWeiermann2022}.
    By \cite[Proposition 12]{AguileraFreundRathjenWeiermann2022}, every countable flower in $L[x]$ embeds into $F_x$. Now let us consider the system of flowers $\{F_x\mid x\in\bbR\}$ with a natural choice of embeddings provided in \cite[Lemma 11]{AguileraFreundRathjenWeiermann2022}, and consider its direct limit. 
    The resulting flower $F$ is ordinal definable and embeds every countable flower. Then consider $\partial F$, which is an ordinal definable universal dilator.
\end{example}

The next example does not precisely give an ordinal definable universal flower without an additional assumption (like $V=L[U]$), but let us include it to illustrate how the iterated ultrapower reveals the structure of a universal flower.
\begin{example}
    Let $\kappa$ be a measurable cardinal with a normal measure $U$.
    Then we can define the $\alpha$th iterate $\Ult^\alpha(V,U)$ with an embedding $j_\alpha\colon V\to \Ult^\alpha(V,U)$.
    Then let us define the flower $F$ by $F(\alpha) = \kappa_\alpha := j_\alpha(\kappa)$. To define $F(f)$ for an increasing $f\colon \alpha\to\beta$, let us observe the following fact \cite[Lemma 19.6]{Kanamori2008}, which can be thought of as that $\Ult^\alpha(V,U)$ can be decomposed into a `term part' and `indiscernibles':
    For every ordinal $\alpha$ and $x\in \Ult^\alpha(V,U)$, we have $m<\omega$, $h\colon [\kappa]^m\to V$, and $\gamma_0<\cdots<\gamma_{m-1}<\alpha$ such that $x=j_\alpha(h)(\kappa_{\gamma_0},\cdots,\kappa_{\gamma_{m-1}})$.
    Combining with \cite[Lemma 19.9]{Kanamori2008},
    we can see that if we define $\Ult^f(V,U)\colon \Ult^\alpha(V,U)\to\Ult^\beta(V,U)$ by
    \begin{equation*}
        \Ult^f(V,U)(j_\alpha(h)(\kappa_{\gamma_0},\cdots,\kappa_{\gamma_{m-1}})) =
        j_\beta(h)(\kappa_{f(\gamma_0)},\cdots,\kappa_{f(\gamma_{m-1})})
    \end{equation*}
    then $\Ult^f(V,U)$ is well-defined. Then set $F(f) = \Ult^f(V,U)\restriction F(\alpha)$.
    
    We need an appropriate support transformation to turn $F$ into a dilator. For $\xi<\kappa_\alpha$, let $m$ be the least natural number such that there are $h\colon [\kappa]^m\to V$ and $\gamma_0<\cdots<\gamma_{m-1}<\alpha$ such that $\xi=j_\alpha(h)(\kappa_{\gamma_0},\cdots,\kappa_{\gamma_{m-1}})$.
    From \cite[Lemma 19.9]{Kanamori2008} we can prove that $\gamma_0<\cdots<\gamma_{m-1}$ are uniquely determined from $\xi$, so we can define $\supp^F_\alpha(\xi) = \{\gamma_0,\cdots,\gamma_{m-1}\}$.
    The support condition is easy to verify.

    We have defined $F$ only for ordinals, but we can easily extend $F$ to other linear orders. To see $F$ is a flower, observe that $F(\alpha)$ is an initial segment of $F(\beta)$ if $\alpha<\beta$.
    To see $F$ is universal, let us prove the following stronger claim: If $D\in V_\kappa$ is a flower, then $D$ embeds into $F$.
    If $D\in V_\kappa$, then $D(\kappa)$ is isomorphic to $\kappa$. (See the proof of \autoref{Proposition: Martin Flower is Universal} for the reason.)
    Fix an isomorphism $g\colon D(\kappa)\to\kappa$, and consider $\iota_\alpha\colon D(\alpha)\to \kappa_\alpha$ given by
    \begin{equation*}
        \iota_\alpha(t(\xi_0,\cdots,\xi_{m-1})) = j_\alpha(g)(t(\kappa_{\xi_0},\cdots,\kappa_{\xi_{m-1}})).
    \end{equation*}
    Then we can see that $\iota\colon D\to F$ is a natural transformation.
\end{example}

So far we know that a universal flower induces a universal dilator. Is it the same for a `measurable flower?' The answer is affirmative:
\begin{proposition}
    Let us say a universal flower $\Omega$ is a \emph{measurable flower} if, for every finite flower $d$ with no nullary terms, there is a countably complete measure $\mu_d$ over $\Omega^d$ satisfying the coherence and $\sigma$-completeness for flowers.
    Then $\partial \Omega$ is a measurable dilator.
\end{proposition}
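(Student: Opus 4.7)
The plan is to transport the measures on $\Omega$ to $\partial\Omega$ through the $\int\dashv\partial$ correspondence established in Subsection~\ref{Subsection: Flowers}. For each finite dilator $d$, the flower $\int d$ is finite and, since every term $t^{\int}$ has arity $\arity(t)+1\ge 1$, has no nullary terms; hence $\mu_{\int d}$ is available. I will define a natural bijection $\Psi_d\colon (\partial\Omega)^d \to \Omega^{\int d}$ and declare
\begin{equation*}
    X\in \nu_d \iff \Psi_d[X]\in \mu_{\int d}.
\end{equation*}

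To construct $\Psi_d$, fix the canonical embedding $\iota\colon \int(\partial\Omega)\hookrightarrow \Omega$ supplied by the decomposition $\Omega\cong \mathsf{Init}(\Omega)+\int(\partial\Omega)$, and set $\Psi_d(f) = \iota\circ \int f$ for $f\colon d\to\partial\Omega$. For the inverse, given $g\colon \int d\to \Omega$: since embeddings preserve arity and every term of $\int d$ has positive arity, no term of $\int d$ can be sent by $g$ into $\mathsf{Init}(\Omega)$; hence $g$ factors uniquely as $g=\iota\circ g'$ with $g'\colon \int d\to \int(\partial\Omega)$. Differentiating and using the natural isomorphism $d\cong\partial\int d$ yields an embedding $d\to\partial\Omega$. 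Functoriality of $\int$ and $\partial$ together with the equivalence between the categories of dilators and the categories of flowers-without-nullary-terms show that $\Psi$ is a bijection which is natural in $d$: for every $f\colon d\to d'$ one has $\Psi_d\circ f^* = (\int f)^* \circ \Psi_{d'}$.

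Coherence of $(\nu_d)$ follows immediately from this commuting square together with the coherence of $(\mu_{\int d})$. For $\sigma$-completeness, suppose $D$ is a countable dilator, $\{d_n\mid n<\omega\}$ is a countable family of finite subdilators of $D$, and $X_n\in\nu_{d_n}$. Form the countable flower $\int D$ with finite subflowers $\int d_n$, none of which have nullary terms, and note that $\Psi_{d_n}[X_n]\in\mu_{\int d_n}$ by definition of $\nu_{d_n}$. The $\sigma$-completeness assumption on $\Omega$ produces $e'\colon \int D\to \Omega$ with $e'\restriction \int d_n\in \Psi_{d_n}[X_n]$ for every $n$. Because $\int D$ has no nullary terms, $e'$ factors as $\iota\circ e''$ for some $e''\colon \int D\to \int(\partial\Omega)$, and then $e:=\partial e''\colon D\to\partial\Omega$ (after transporting along $D\cong\partial\int D$) is the desired embedding; naturality of $\Psi$ yields $e\restriction d_n\in X_n$.

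The main obstacle I anticipate is the bookkeeping around the natural isomorphism $D\cong\partial\int D$: one must check that the round trip $f\mapsto \int f\mapsto \partial\int f$ recovers $f$ on the nose (relative to the fixed isomorphism) and that this choice is compatible with restriction to subdilators, so that the square $\Psi_d\circ f^* = (\int f)^* \circ \Psi_{d'}$ genuinely commutes rather than merely commuting up to an ambient isomorphism. Once this compatibility is pinned down, both coherence and $\sigma$-completeness are formal consequences of transport along $\Psi$.
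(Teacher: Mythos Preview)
Your approach is correct and essentially the same as the paper's: both transport the flower measures through the $\int/\partial$ correspondence. The only difference is packaging---the paper defines $\nu_d$ directly as the pullback of $\mu_{\int d}$ along the map $p\mapsto \partial p\circ\phi_d$ from $\Omega^{\int d}$ to $(\partial\Omega)^d$, which is exactly your $\Psi_d^{-1}$, so it never needs to factor through $\iota$ or argue that $\Psi_d$ is a bijection; your naturality square $\Psi_d\circ f^*=(\int f)^*\circ\Psi_{d'}$ corresponds to the paper's identity $\partial(q\circ\int f)\circ\phi_d=\partial q\circ\phi_{d'}\circ f$, and the $\sigma$-completeness arguments are identical once this is in place.
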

\begin{proof}
    Let $d$ be a finite dilator. Then $\int d$ is a finite flower with no nullary terms. Then let us define a measure $\nu_d$ over $(\partial\Omega)^d$ by
    \begin{equation*}
        X \in \nu_d \iff \{p\in \Omega^{\int d}\mid \partial p \circ \phi_d\in X\}\in \mu_{\int d},
    \end{equation*}
    where $\phi_d\colon d\to \partial(\int d)$ is the isomorphism natural in $d$.
    For coherence, suppose that $d,d'$ are finite dilators and $f\colon d\to d'$ is a dilator embedding.
    Then for $X\subseteq (\partial\Omega)^d$,
    \begin{align*} 
        X\in \nu_d & \textstyle \iff \{p\in \Omega^{\int d}\mid \partial p\circ \phi_d\in X\}\in \mu_d 
        \iff (\int f^*)^{-1}[\{p\in \Omega^{\int d}\mid \partial p\circ \phi_d\in X\}] \in \mu_{d'}\\
        &\textstyle \iff 
        \{q\in \Omega^{\int d'}\mid \partial(q\circ \int f)\circ \phi_d \in X\} = \{q\in \Omega^{\int d'}\mid \partial q\circ \phi_{d'}\circ f \in X\} \in \mu_{\int d'}\\
        &\textstyle \iff 
        \{q\in \Omega^{\int d'}\mid \partial q\circ \phi_{d'}\in \{r\in (\partial\Omega)^{d'}\mid r\circ f\in X\}\} \in \mu_{\int d'} \\
        &\iff \{r\in (\partial \Omega)^{d'} \mid r\circ f\in X\}\in \nu_{d'} \iff 
        (f^*)^{-1}[X]\in \nu_{d'}.
    \end{align*}
    Note that $\partial(q\circ \int f)\circ \phi_d = (\partial q)\circ \partial(\int f)\circ \phi_d = \partial q \circ \phi_{d'}\circ f$.
    For $\sigma$-completeness, suppose that $D$ is a countable dilator and $\{d_n\mid n<\omega\}$ is a countable family of finite subdilators of $D$.
    Then $\int d_n\subseteq \int D$ is a finite flower with no nullary terms for each $n<\omega$.
    Now suppose that we are given $X_n \in \nu_{d_n}$ for each $n<\omega$, so
    \begin{equation*}
        Y_n := \{p\in \Omega^{\int d_n}\mid \partial p\circ \phi_{d_n}\in X_n\} \in \mu_{\int d_n}.
    \end{equation*}
    Hence by the $\sigma$-completeness of $\Omega$, there is an embedding $e\colon \int D\to \Omega$ such that $e\restriction \int d_n \in Y_n$ for each $n<\omega$.
    Now observe that the following diagram commutes:
    \begin{equation*}
        \begin{tikzcd}
            d_n & \partial(\int d_n) & \\
            D & \partial(\int D) & \partial \Omega 
            \arrow[from=1-1, to=1-2, "\phi_{d_n}"]
            \arrow[from=2-1, to=2-2, "\phi_D"']
            \arrow[from=1-1, to=2-1, "\subseteq"']
            \arrow[from=1-2, to=2-2, "\subseteq"]
            \arrow[from=2-2, to=2-3, "\partial e"']
            \arrow[from=1-2, to=2-3, "\partial (e\restriction \int d_n)"]
        \end{tikzcd}
    \end{equation*}
    Hence $e\restriction \int d_n \in Y_n$ implies $(\partial e\circ \phi_D)\restriction d_n = \partial e \circ \phi_{d_n}\in X_n$.
\end{proof}
Hence, we have a measurable dilator if we construct a universal flower with a measure family $\mu_d$ for a finite flower with no nullary terms satisfying coherence and $\sigma$-completeness. Thus, we construct a measurable flower instead of constructing a measurable dilator directly.

\subsection{Measurable cardinal and $\bfPi^1_1$-determinacy} \label{Subsection: Measurable cardinal and Pi 1 1 Det}
In this subsection, we review a proof of $\bfPi^1_1$-determinacy from the existence of a measurable cardinal. We will see later that almost the same proof carries over the proof of $\bfPi^1_1$-determinacy from the existence of a measurable dilator.
Let us start with the following lemma:
\begin{lemma} \label{Lemma: A continuous family of finite linear orders}
    For every $\Pi^1_1[R]$-formula $\phi(x)$ we can find an $R$-recursive family of linear orders $\{\prec_s \mid s\in \omega^{<\omega}\}$ such that 
    \begin{enumerate}
        \item $\prec_s$ is a linear order over $|s|$,
        \item $s\subseteq t\implies \prec_s \subseteq \prec_t$, 
        \item $\phi(x)$ iff $\prec_x=\bigcup_{n<\omega}\prec_{x\restriction n}$ is well-ordered.
    \end{enumerate}
\end{lemma}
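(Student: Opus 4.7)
The standard Kleene normal form theorem produces a primitive recursive (in $R$) tree $T\subseteq (\omega\times\omega)^{<\omega}$ such that for each $x$, $\phi(x)$ holds iff the section $T^x := \{\sigma\in\omega^{<\omega} : (x\restriction |\sigma|, \sigma)\in T\}$ is well-founded. The Kleene-Brouwer order $<_{\KB}$ on $T^x$ is then a linear order that is a well-order precisely when $T^x$ is well-founded. My task reduces to re-indexing this order so that it lives on $\omega$, so that its restriction to $\{0,\ldots,n-1\}$ can be read off from $x\restriction n$, and so that this is monotone in $s$.

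The plan is to fix a recursive bijection $\pi\colon\omega\to\omega^{<\omega}$ satisfying $|\pi(i)|\le i$ for every $i$; such a $\pi$ is obtained by enumerating $\omega^{<\omega}$ in increasing order of the level $|\sigma|+\max\sigma$ (with $\langle\rangle$ at level $0$). Given $s$ of length $n$, I would then define the ``confirmed-in-tree'' set $A_s := \{i<n : \pi(i)\in T^s\}$, where $\pi(i)\in T^s$ means $(s\restriction|\pi(i)|,\pi(i))\in T$. Because $|\pi(i)|\le i<n$, this membership is decided by $s$ alone and, crucially, is stable under extending $s$: if $s\subseteq t$ then $A_s = A_t\cap\{0,\ldots,|s|-1\}$.

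Next I would define $\prec_s$ on $\{0,\ldots,n-1\}$ by placing every element of $A_s$ strictly before every element of its complement, ordering $A_s$ internally by the Kleene-Brouwer order pulled back along $\pi$, and ordering the complement by the usual order on indices. Items (1) and (2) then follow: $\prec_s$ is an $R$-recursive linear order on $|s|$, and $\prec_s\subseteq\prec_t$ holds because the stability of $A$ together with the fact that $\pi(i)<_{\KB}\pi(j)$ depends only on $\pi(i)$ and $\pi(j)$ ensures that every comparison made at stage $s$ agrees with the corresponding comparison at stage $t$.

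For (3), observe that the limit order $\prec_x$ places $A_x = \{i : \pi(i)\in T^x\}$, ordered by Kleene-Brouwer, strictly before its complement, ordered naturally. A $\prec_x$-descending sequence cannot pass from $A_x$ back out to the complement, so after a finite initial segment of ``out'' indices it must remain in $A_x$, where it becomes an infinite Kleene-Brouwer-descending sequence in $T^x$ and hence yields an infinite branch. Conversely an infinite branch of $T^x$ gives such a descending sequence, so $\prec_x$ is well-ordered iff $T^x$ is well-founded iff $\phi(x)$. The main technical point is the choice of $\pi$ with $|\pi(i)|\le i$: without this control on the bijection, membership of $\pi(i)$ in $T^s$ for $i<|s|$ need not be decidable from $s$, and the monotonicity clause would break.
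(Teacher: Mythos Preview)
Your proof is correct and follows essentially the same approach as the paper: both pull back the Kleene--Brouwer order on the section tree along a recursive enumeration $\pi$ of $\omega^{<\omega}$ with $|\pi(i)|\le i$, using this bound to make membership in the tree stable under extension of $s$. The only cosmetic difference is that you place the in-tree block before the out-of-tree block (yielding order type $(T^x,<_\KB)+\alpha$), whereas the paper does the reverse (yielding $\alpha+(T^x,<_\KB)$); either choice works since the complement block is a suborder of $\omega$ and hence always well-ordered.
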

\begin{proof}
    Let $T$ be an $R$-recursive tree over $\omega\times\omega$ such that $\lnot \phi(x)$ iff there is a real $y$ such that $\lag x,y\rag$ form an infinite branch of $T$.
    Fix a recursive enumeration $\lag\bfs_i\mid i<\omega\rag$ of $\omega^{<\omega}$ such that if $\bfs_i\subsetneq \bfs_j$ then $i< j$. (It implies $|\bfs_i|\le i$ for every $i$.)
    Then let us define $\prec_s$ of field $|s|$ as follows: We have $i\prec_s j$ if and only if either
    \begin{enumerate}
        \item $\lag s\restriction|\bfs_i|,\bfs_i\rag\in T$, $\lag s\restriction|\bfs_j|,\bfs_j\rag\in T$, and $\bfs_i <_\KB \bfs_j$, or
        \item $\lag s\restriction|\bfs_i|,\bfs_i\rag\notin T$ and $\lag s\restriction|\bfs_j|,\bfs_j\rag\in T$, or 
        \item $\lag s\restriction|\bfs_i|,\bfs_i\rag\notin T$, $\lag s\restriction|\bfs_j|,\bfs_j\rag\notin T$, and $i<j$.
    \end{enumerate}
    Then we can see that the first two conditions hold. Furthermore, $\prec_x$ satisfies the definition of $\prec_s$ in which $s$ is replaced by $x$, so $\prec_x$ has ordertype $\alpha+(T_x,<_\KB)$ for some $\alpha\le \omega$, where $T_x = \{t\in \omega^{<\omega}\mid (x\restriction |t|,t)\in T\}$.
    Hence $\prec_x$ is well-founded iff $(T_x,<_\KB)$ is well-founded iff $T_x$ has no infinite branch iff $\phi(x)$.
\end{proof}

Let $\kappa$ be a measurable cardinal with a normal measure $\calU$. By iterating a measure, we have
\begin{lemma} \label{Lemma: Countably complete family of measures for linear orders}
    Suppose that $c$ is a countable well-order such that $c=\bigcup_{n<\omega} c_n$ for finite $c_n\subseteq c$. Then we can find an $\omega_1$-complete measure $\calU_{c_n}$ over the set $\kappa^{c_n}$ of embeddings $c_n\to\kappa$ such that if $\{X_n\mid n<\omega\}$ is a family of sets such that $X_n\in \calU_{c_n}$ for each $n<\omega$, then there is $f\colon c\to \kappa$ such that $f\restriction c_n\in X_n$ for each $n<\omega$.
\end{lemma}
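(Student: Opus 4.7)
The plan is to define $\calU_{c_n}$ as an iterated product of $\calU$ along the $c$-ordering of $c_n$, and then obtain the countable-witness property by a transfinite recursion diagonalizing the measures. Enumerate $c_n = \{x_0 <^c \cdots <^c x_{k-1}\}$ in its $c$-order and set $X \in \calU_{c_n}$ iff
\[
    \forall(\calU)\, \alpha_0 < \kappa \cdots \forall(\calU)\, \alpha_{k-1} < \kappa\, [\{x_i \mapsto \alpha_i\}_{i<k} \in X],
\]
using the notation of \autoref{Definition: Measure quantifier}. Uniformity of $\calU$ forces the iterated measure to concentrate on strictly increasing tuples, so $\calU_{c_n}$ is a well-defined ultrafilter on the set of embeddings $c_n \to \kappa$, and its $\omega_1$-completeness follows from that of $\calU$ by finite iteration. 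Coherence under inclusions $c_n \subseteq c_m$, namely $X \in \calU_{c_n}$ iff $\pi^{-1}(X) \in \calU_{c_m}$ for the restriction $\pi\colon \kappa^{c_m} \to \kappa^{c_n}$, follows from \autoref{Lemma: Eliminating unused measure quantifiers} applied to the indices in $c_m \setminus c_n$, which do not appear in the predicate.

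For the countable-witness property, replace $c_n$ by $d_n := \bigcup_{m \le n} c_m$, finite and $\subseteq$-increasing with union $c$. By coherence each $X_m$ lifts to $\tilde X_n^{(m)} \in \calU_{d_n}$ for $m \le n$, and $Y_n := \bigcap_{m \le n} \tilde X_n^{(m)} \in \calU_{d_n}$ by finite intersection; these sets cohere under restriction, as $g \in Y_{n'}$ implies $g \restriction d_n \in Y_n$ for $n \le n'$. Let $\beta := \mathrm{ot}(c) < \omega_1 < \kappa$, identify $c$ with $\beta$, and construct $f \colon \beta \to \kappa$ by transfinite recursion in $c$-order. The invariant at stage $\eta$ will be: for every $n$ with $d_n \not\subseteq \eta$, writing $g_\eta^n := \{\xi \mapsto \alpha_\xi : \xi \in d_n \cap \eta\}$, the conditional set of completions of $g_\eta^n$ to an element of $Y_n$ is in $\calU_{d_n \setminus \eta}$.

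Given the invariant at $\eta$, I choose $\alpha_\eta$ as follows: for each of the countably many $n$ with $\eta \in d_n$, unpacking the first $\forall(\calU)$ quantifier in the invariant yields a $\calU$-large set $B_n^\eta \subseteq \kappa$ of $\alpha$'s for which the fiber at $\alpha$ remains large. By $\omega_1$-completeness, $\bigcap_n B_n^\eta$ is in $\calU$, and combined with the cofinal tail $\{\alpha > \sup_{\xi<\eta}\alpha_\xi\} \in \calU$ (the sup is below $\kappa$ since $\eta < \omega_1$ and $\kappa$ is regular), any $\alpha_\eta$ in the total intersection preserves both the invariant at $\eta+1$ and order-preservation. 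At a limit $\eta$, the invariant at $\eta$ is inherited from earlier stages: each $d_n$ is finite, so $d_n \cap \xi$ stabilizes at $d_n \cap \eta$ for $\xi > \max(d_n \cap \eta)$, and the invariant at such $\xi$ is identical to the one at $\eta$. Once $\eta > \max d_n$ the invariant collapses to $f \restriction d_n \in Y_n$, hence $f \restriction c_n \in X_n$ as required.

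The main obstacle is maintaining order-preservation of $f$ simultaneously with all countably many measure constraints at every stage. Recursing in $c$-order converts order-preservation into the single cofinal-tail condition on $\alpha_\eta$, and $\omega_1$-completeness of $\calU$ collapses the countable measure conditions into a single $\calU$-large set, so the total intersection is nonempty at every stage and the recursion completes.
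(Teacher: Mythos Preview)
Your argument is correct, but it takes a different route from the paper's. The paper exploits normality of $\calU$ via the Rowbottom-type fact that every $X\in\calU^{|c_n|}$ contains a cube $[Y_n]^{|c_n|}$ for some $Y_n\in\calU$; intersecting the countably many $Y_n$'s to a single $Y\in\calU$ and taking any order-embedding $f\colon c\to Y$ then finishes the proof in one line. Your approach instead runs a transfinite recursion along the $c$-order, at each stage intersecting countably many $\calU$-large fibers $B_n^\eta$ together with a tail above $\sup_{\xi<\eta}\alpha_\xi$ and picking $\alpha_\eta$ there. This is longer but more elementary: it never invokes the cube-generation property, and in fact uses only $\omega_1$-completeness and uniformity of $\calU$ rather than normality. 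The paper's version is cleaner and makes the structural role of normality visible; yours trades that for a direct diagonalization that would go through for any $\omega_1$-complete uniform ultrafilter on $\kappa$.
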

\begin{proof}
    Let us consider 
    \begin{equation*}
        \calU_{c_n} = \{X \subseteq \kappa^{c_n} \mid \epsilon_n^*[X] := \{p\circ \epsilon_n \colon |c_n|\to\kappa\mid p\in X\} \in \calU^{|c_n|}\}
    \end{equation*}
    where $\epsilon_n\colon |c_n| \to c_n$ is the unique order isomorphism.
    We claim that $\calU_{c_n}$ is the desired ultrafilter. Suppose that $X_n\in \calU_{c_n}$ for each $n$. Then we can find $Y_n\in \calU$ such that 
    \begin{equation*}
        [Y_n]^{|c_n|} \subseteq \epsilon_n^*[X_n] = \{p\circ \epsilon_n \colon |c_n|\to\kappa\mid p\in X_n\}.
    \end{equation*}
    Then let $Y=\bigcap_{n<\omega} Y_n\in \calU$ and choose an embedding $f\colon c\to Y$. 
    Then clearly
    \begin{equation*}
        (f\restriction c_n)\circ \epsilon_n \in [Y]^{|c_n|} \subseteq \epsilon^*_n[X_n],
    \end{equation*}
    so $f\restriction c_n\in X_n$.
\end{proof}

For a coanalytic game $G$, let us associate the family $\{\prec_s \mid s\in \omega^{<\omega}\}$ such that 
\begin{equation*}
    \text{Player I wins $G$ in the play $x$} \iff \WO(\prec_x).
\end{equation*}
Then consider the subsidiary game $G'$
\begin{equation*}
    \begin{array}{cccccc}
     \mathrm{I} & x_0,\eta_0 & & x_2,\eta_1 & & \cdots  \\
     \mathrm{II} & & x_1 & & x_3 & \cdots
\end{array}
\end{equation*}
for $\eta_i<\kappa$. Player I wins $G'$ iff
\begin{equation*}
    \forall i,j<\omega (i \prec_x j) \iff \eta_i < \eta_j.
\end{equation*}

Clearly, if Player I has a winning strategy in $G'$, then taking the projection gives a winning strategy for Player I in $G$.
\begin{proposition}
    If Player II has a winning strategy in $G'$, then Player II has a winning strategy in $G$.
\end{proposition}
\begin{proof}
    Let $\sigma'$ be a winning strategy for Player II in $G'$.
    For each partial play $s=\lag x_0,x_1,\cdots, x_{2n-2}\rag$ and $p\in \kappa^{\prec_s}$, define
    \begin{equation*}
        f_s(p) = \sigma'
        \begin{pmatrix}
            x_0,p(0) & & \cdots & x_{2n-2},p(n-1) \\
            & x_1 & \cdots & 
        \end{pmatrix}
    \end{equation*}
    Then define
    \begin{equation*}
        \sigma(s)=a \iff \{p\in \kappa^{\prec_s} \mid f_s(p)=a\}\in \calU^n.
    \end{equation*}
    It is well-defined because $f_s\colon \kappa^{\prec_s}\to\omega$ and $\calU_{\prec_s}$ is $\omega_1$-complete.
    Then take $Z_s = \{p\in[\kappa]^n\mid f_s(p)=\sigma(s)\}$.

    We claim that $\sigma$ is a winning strategy for Player II in $G$. Suppose not, let $x$ be a play respecting $\sigma$ but Player I wins. By the lemma, we can find an embedding $e\colon (\omega,\prec_x)\to \kappa$ such that $e\restriction (2n-1,\prec_{x\restriction(2n-1)}) \in Z_{x\restriction (2n-1)}$ for each $n$.
    Hence we have
    \begin{equation*}
        f_{x\restriction(2n-1)}(e\restriction d_{x\restriction(2n-1)}) = \sigma(x\restriction(2n-1))=x_{2n-1},
    \end{equation*}
    so the game
    \begin{equation*}
        \begin{matrix}
            x_0,e(0) & & \cdots & x_{2n-2},e(n-1) & \\
            & x_1 & \cdots & &  x_{2n-1} & 
        \end{matrix}
    \end{equation*}
    becomes a valid play respecting $\sigma'$. Since the play respects $\sigma'$, Player II wins. However, $e\colon (\omega,\prec_x)\to \kappa$ is an embedding, so Player I wins, a contradiction.
\end{proof}

\subsection{Measurable dilator and $\bfPi^1_2$-determinacy} \label{Subsection: Measurable dilator and Pi 1 2 Det}
In this subsection, we prove $\bfPi^1_2$-determinacy from the existence of a measurable dilator. We can also see that the following proof is more or less similar to that of $\bfPi^1_1$-determinacy from a measurable dilator.
\begin{lemma} \label{Lemma: A continuous family of finite dilators}
    Let $\phi(x)$ be a $\Pi^1_2[R]$-formula. Then there is a recursive family of finite dilators  $\{d_s \mid s\in\omega^{<\omega}\}$ such that
    \begin{enumerate}
        \item $|s| = \field(d_s)$.
        \item $s\subseteq t \implies d_s\subseteq d_t$.
        \item $\phi(x)$ iff $d_x = \bigcup_{n<\omega} d_{x\restriction n}$ is a dilator.
    \end{enumerate}
\end{lemma}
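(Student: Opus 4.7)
The plan is to adapt the $\Pi^1_1$-proof above with one extra quantifier. Write $\phi(x)$ as $\forall y\,\psi(x,y)$ with $\psi$ a $\Pi^1_1[R]$-formula, and apply the preceding lemma to $\psi(u,v)$ (packing the pair $(u,v)$ into a single real by recursive pairing). This yields a recursive family $\{\prec^{x,y}_n\mid n<\omega\}$ of finite linear orders, $\subseteq$-monotone in $n$, with $\prec^{x,y}_n$ a linear order on $\{0,\dots,n-1\}$ depending only on $(x\restriction n,\,y\restriction n)$, such that $\psi(x,y)$ iff $\prec^{x,y}:=\bigcup_n\prec^{x,y}_n$ is well-ordered. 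Hence $\phi(x)$ iff $\prec^{x,y}$ is well-ordered for every $y\in\omega^\omega$.

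For $s\in\omega^n$, I would construct $d_s$ as a finite predilator via the predendrogram formalism of the preceding subsection. Fix a recursive enumeration $(\mathbf{y}_i,\tau_i)_{i<\omega}$ of pairs in $\omega^{<\omega}\times\omega$ satisfying $\tau_i<|\mathbf{y}_i|\le i$ and respecting initial segments. Put one term $t_i$ into $\field(d_s)$ for each $i<n$, with $\arity(t_i)=|\mathbf{y}_i|$. Informally, applying $d_s$ to a linear order $\alpha$ and plugging an increasing tuple $\vec a\in\alpha^{|\mathbf{y}_i|}$ into $t_i$ represents ``the candidate counterexample $y$ that sends the $k$-th position of $\vec a$ to $\mathbf{y}_i(k)$, together with $\tau_i$ as a putative step of a $\prec^{x,y}$-descending sequence.'' The comparison rule $t_i<_\cyrDe t_j$ for an arity diagram $\cyrDe$ is defined as follows: form the merged positional assignment dictated by $\cyrDe$; if this assignment is consistent with a common $\mathbf{y}\supseteq\mathbf{y}_i\cup\mathbf{y}_j$, compare $\tau_i,\tau_j$ in $\prec^{x,\mathbf{y}}$; otherwise fall back to a lexicographic comparison on the merged-position data breaking the consistency. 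Since $\tau_i,|\mathbf{y}_i|\le i<n$, the comparison only depends on $s=x\restriction n$, so the monotonicity $d_s\subseteq d_t$ for $s\subseteq t$ and the cardinality condition $|\field(d_s)|=n$ follow by construction.

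For the biconditional, suppose first that $\phi(x)$ and that $d_x(\alpha)$ admits an infinite descending sequence for some well-order $\alpha$. Well-foundedness of $\alpha$ together with the boundedness of arities force the position tuples of the descending elements to stabilize onto a single $y\in\omega^\omega$, after which the $\tau$-parts exhibit a $\prec^{x,y}$-descending sequence, contradicting $\psi(x,y)$. Conversely, if $\lnot\phi(x)$, pick $y^*$ with $\lnot\psi(x,y^*)$ and a descending sequence $\tau'_0,\tau'_1,\dots$ in $\prec^{x,y^*}$; taking the term corresponding to $(y^*\restriction k,\tau'_k)$ at positions $(0,1,\dots,k-1)\in\omega^k$ for each $k$ yields a descending sequence in $d_x(\omega)$.

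The principal obstacle is producing a coherent predendrogram encoding $\lag a_0,e_0,\dots,e_{|\mathbf{y}_i|-1},a_{|\mathbf{y}_i|}\rag$ --- in which the $a_k$'s record the values $\mathbf{y}_i(k)$ and $\tau_i$, and the $e_k$'s record positional data --- whose Kleene--Brouwer decoding really does produce the intended comparison rule. Once the encoding is set up correctly, the predilator axioms (irreflexivity, linearity, and transitivity under arbitrary arity diagrams) come for free from the predendrogram framework, and the biconditional reduces to a standard extraction argument on descending sequences of dendrogram codes. The delicate points include the treatment of arity diagrams that only partially identify positions of $t_i$ and $t_j$ in a way compatible with only part of the common-$\mathbf{y}$ requirement, and ensuring that the final component $a_{|\mathbf{y}_i|}$ codes $\tau_i$ so that the $\prec^{x,\mathbf{y}}$-comparison emerges correctly from the Kleene--Brouwer order.
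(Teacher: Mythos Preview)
Your normal form is wrong, and this is not a cosmetic slip: writing $\phi(x)\equiv\forall y\,\psi(x,y)$ with $\psi\in\Pi^1_1$ makes $\phi$ itself $\Pi^1_1$ (the two universal second-order quantifiers collapse), so your construction only treats $\Pi^1_1$ formulas. A genuine $\Pi^1_2$ formula has the shape $\forall y\,\exists z\,\theta$, i.e.\ $\forall y\,\lnot\psi(x,y)$ with $\psi\in\Pi^1_1$; this is exactly what the paper uses. Under the correct normal form one has $\phi(x)$ iff $\prec_{x,y}$ is \emph{ill}-founded for \emph{every} $y$, so $\lnot\phi(x)$ is witnessed by some $y^*$ with $\prec_{x,y^*}$ \emph{well}-founded. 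Your ``descent witness'' encoding --- pairing a $y$-prefix with an element $\tau_i$ meant to be a step of a $\prec^{x,y}$-descending chain --- is tailored to the opposite situation and cannot be repaired by a sign change: once the direction is corrected, the backward implication asks you to manufacture a descending sequence in $d_x(\alpha)$ from a \emph{well}-ordered $\prec_{x,y^*}$, and your $\tau$-slots have nothing to record.

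The paper's construction resolves this by using the argument positions for a different purpose: the term attached to a finite string $\mathbf{s}$ has arity $|\mathbf{s}|$, and the tuple $(\xi_0,\dots,\xi_{m-1})$ records an attempted order-embedding of $(\,|\mathbf{s}|,\prec_{x,\mathbf{s}}\,)$ into $\alpha$, with the priority permutation $\Sigma$ of the term given by $\prec_{x,\mathbf{s}}$ and the metric $\bfp$ given by the longest common prefix of the associated strings. The resulting F-predilator is the Shoenfield tree $\hat T_x(\alpha)$ with Kleene--Brouwer order; it is ill-founded for some well-order $\alpha$ precisely when some $\prec_{x,y}$ is well-founded (so embeds into a large enough $\alpha$), i.e.\ exactly when $\lnot\phi(x)$. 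In your sketch the variables $\vec a$ never acquire a comparable semantic role, which is also why the forward direction (``boundedness of arities forces the position tuples to stabilize onto a single $y$'') remains unjustified --- arities are unbounded, and nothing in your comparison rule forces the $\mathbf{y}_i$'s along a descending sequence to be compatible.
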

\begin{proof}
    Let $\phi(x) \equiv \forall y\in\bbR \lnot\psi(x,y)$ for some $\Pi^1_1[R]$-formula $\psi(x,y)$.
    By modifying \autoref{Lemma: A continuous family of finite linear orders}, for a $\Pi^1_1[R]$-formula $\psi(x,y)$ we can find an $R$-recursive family of linear orders $\{\prec_{s,t} \mid s,t\in \omega^{<\omega},\ |s|=|t|\}$ such that 
    \begin{enumerate}
        \item $\prec_{s,t}$ is a linear order over $|s|$,
        \item $s\subseteq s',\ t\subseteq t'\implies \prec_{s,t}\subseteq \prec_{s',t'}$, 
        \item $\psi(x,y)$ iff $\prec_{x,y}=\bigcup_{n<\omega}\prec_{x\restriction n, y\restriction n}$ is well-ordered.
    \end{enumerate}
    Again, fix a recursive $\lag\bfs_i\mid i<\omega\rag$ of $\omega^{<\omega}$ provided in the proof of \autoref{Lemma: A continuous family of finite linear orders}. Note that $\bfs_0=\lag\rag$, which we will exclude in constructing a dilator family.

    Then let us define a dilator $d_s$ of the field $|s|$.
    Before starting the main construction, let us state the motivation:
    We want to define a family of $R$-recursive finite dilators approximating the tree
    \begin{align*}
        \hat{T}_x(\alpha) &= \{\lag r_0,\xi_0,\cdots, r_{m-1},\xi_{m-1}\rag\mid r_0,\cdots,r_{m-1}\in \omega,\ \xi_0,\cdots,\xi_{m-1}\in \alpha\\ 
        & \hspace{10em} \land i\mapsto \xi_i\text{ is an increasing map }\prec_{x\restriction m,\lag r_0,\cdots,r_{m-1}\rag}\to \alpha\}.
    \end{align*}
    equipped with the Kleene-Brouwer order.
    This tree occurs in the proof of Shoenfield absoluteness theorem, and we can see that $\lnot\phi(x)$ holds iff there is $\alpha<\omega_1$ such that $\hat{T}_x(\alpha)$ has an infinite branch.
    We can also see that $\alpha\mapsto (\hat{T}_x(\alpha),<_\KB)$ is a predilator, so $\hat{T}_x$ can be seen as a functorial Shoenfield tree. 

    Let us consider the following finite F-semidilator:
    \begin{align*}
        \hat{d}_s(\alpha) &= (\{\lag \bfs_{i+1}(0),\xi_0,\cdots, \bfs_{i+1}(|\bfs_{i+1}|-1),\xi_{|\bfs_{i+1}|-1}\rag\mid \xi_0,\cdots,\xi_{|\bfs_{i+1}|-1}\in \alpha,\ i<|s|,\\ 
        & \hspace{14em} \land k\mapsto \xi_k\text{ is an increasing map }\prec_{s\restriction |\bfs_{i+1}|,\bfs_{i+1}}\to \alpha\},<_\KB).
    \end{align*}
    We also define for $f\colon \alpha\to\beta$,
    \begin{itemize}
        \item $\hat{d}_s(f)(\lag s_0,\xi_0,\cdots, s_m,\xi_m\rag) = \lag s_0,f(\xi_0),\cdots, s_m,f(\xi_m)\rag$,
        \item $\supp^{\hat{d}_s}_\alpha (\lag s_0,\xi_0,\cdots, s_m,\xi_m\rag) = \{\xi_0,\cdots,\xi_{m-1}\}$.
    \end{itemize}
    We use $\bfs_{i+1}$ instead of $\bfs_i$ to exclude the empty sequence, which will be the topmost of the linear order. It also makes $\hat{d}_s$ a finite flower, but this observation is unnecessary in this proof.
    Then we can see that $\hat{d}_s$ is an F-predilator and
    \begin{align*}
        \Tr(\hat{d}_s) & = \Big\{\lag \bfs_{i+1}(0),\sigma(0),\cdots, \bfs_{i+1}(|\bfs_{i+1}|-1),\sigma(|\bfs_{i+1}|-1)\rag\mid \\ & \hspace{20em} i<|s| \land \sigma \colon \!\!\prec_{s\restriction|\bfs_{i+1}|,\bfs_{i+1}}\to |\bfs_{i+1}| \text{ increasing}\Big\}
    \end{align*}
    Note that for each $i<|s|$ there is a unique increasing map $\prec_{s\restriction|\bfs_{i+1}|,\bfs_{i+1}}\to |\bfs_{i+1}|$.
    For notational convenience, write
    \begin{equation*}
        \tau_i = \lag \bfs_{i+1}(0),\sigma(0),\cdots, \bfs_{i+1}(|\bfs_{i+1}|-1),\sigma(|\bfs_{i+1}|-1)\rag
    \end{equation*}
    for the unique increasing map $\sigma \colon \!\!\prec_{x\restriction|\bfs_{i+1}|,\bfs_{i+1}}\to |\bfs_{i+1}|$.
    Then $\frakf(\hat{d}_s)$ is a finite dilator satisfying
    \begin{enumerate}
        \item $\field(\frakf(\hat{d}_s)) = \{\tau_i\mid i<|s|\}$.
        \item For each $i<|s|$, $\arity^{\frakf(\hat{d}_s)}(\tau_i) = |\bfs_{i+1}|$.
        \item $\Sigma^{\frakf(\hat{d}_s)}_{\tau_i}(j) = k$ when $j$ is the $k$th least element over $(|\bfs_{i+1}|,\prec_{s\restriction\bfs_{i+1},\bfs_{i+1}})$.
        \item $\bfp^{\frakf(\hat{d}_s)}(i,j)$ is the least natural $m$ such that $\bfs_{i+1}(m)\neq\bfs_{j+1}(m)$.
    \end{enumerate}
    
    We want to turn $\frakf(\hat{d}_s)$ into a dilator of the field $|s|$ with all desired properties pertaining. Thus we define a new dilator $d_s$ from $\hat{d}_s$ by `replacing $\tau_i$ with $i$';
    More precisely, we define $d_s$ in a way that
    \begin{enumerate}
        \item $\field(d_s) = |s|$.
        \item For each $i<|s|$, $\arity^{d_s}(i) = |\bfs_{i+1}|$.
        \item $\Sigma^{d_s}_i(j) = k$ when $j$ is the $k$th least element over $(|\bfs_{i+1}|,\prec_{s\restriction\bfs_{i+1},\bfs_{i+1}})$.
        \item $\bfp^{d_s}(i,j)$ is the least natural $m$ such that $\bfs_{i+1}(m)\neq\bfs_{j+1}(m)$.
    \end{enumerate}
    Then clearly $d_s\cong \frakf(\hat{d}_s)$, and it is easy to see that $d_s$ satisfies the first two conditions.
    For the last condition, observe that $\hat{T}_x(\alpha) =  \bigcup_{n<\omega} \hat{d}_{x\restriction n}(\alpha)$.
\end{proof}

Let us fix a measurable dilator $\Omega$ with a family of measures $\{\calU_d\mid d\in\Dil_{<\omega}\}$.
Similarly, let $G$ be a $\bfPi^1_2$-game, and fix a family of finite dilators $\{d_s \mid s\in\omega^{<\omega}\}$ such that
\begin{enumerate}
    \item $\field(d_s)=|s|$.
    \item $s\subseteq t \implies d_s\subseteq d_t$.
    \item Player I wins in $G$ by the play $x$ iff $d_x = \bigcup_{n<\omega} d_{x\restriction n}$ is a dilator.
\end{enumerate}
Then consider the subsidiary game $G'$
\begin{equation*}
    \begin{array}{cccccc}
     \mathrm{I} & x_0,\tau_0 & & x_2,\tau_1 & & \cdots  \\
     \mathrm{II} & & x_1 & & x_3 & \cdots
    \end{array}
\end{equation*}
where $\tau_i \in \field(\Omega)$. Player I wins $G'$ iff $i\mapsto \tau_i$ forms an embedding, i.e.,
\begin{equation*}
    \forall \cyrDe (d_x \vDash i<_\cyrDe j \iff \Omega\vDash \tau_i<_\cyrDe \tau_j).
\end{equation*}
\begin{proposition}
    If Player II has a winning strategy in $G'$, then Player II has a winning strategy in $G$.
\end{proposition}
\begin{proof}
    Again, let $\sigma'$ be a winning strategy for Player II in $G'$.
    For each partial play $s=\lag x_0,x_1,\cdots, x_{2n-2}\rag$ and $p\in \Omega^{d_s}$, define
    \begin{equation*}
        f_s(p) = \sigma'
        \begin{pmatrix}
            x_0,p(0) & & \cdots & x_{2n-2},p(n-1) \\
            & x_1 & \cdots & 
        \end{pmatrix}
    \end{equation*}
    Then define
    \begin{equation*}
        \sigma(s)=a \iff \{p\in \Omega^{d_s} \mid f_s(p)=a\}\in \calU_{d_s}.
    \end{equation*}
    Again, take $Z_s = \{p\in[\kappa]^n\mid f_s(p)=\sigma(s)\}\in\calU_{d_s}$.
    Now suppose the contrary that $\sigma$ is not a winning strategy for Player II, and let $x$ be a play on $G$ respecting $\sigma$ but Player I wins.
    Since $\Omega$ is measurable, we can find an embedding $e\colon d_x\to \Omega$ such that $e\restriction d_{x\restriction(2n-1)}\in Z_{x\restriction(2n-1)}$ for each $n$.
    It means for each $n$,
    \begin{equation*}
        f_{x\restriction(2n-1)}(e\restriction d_{x\restriction(2n-1)}) = \sigma(x\restriction(2n-1))=x_{2n-1},
    \end{equation*}
    so the game
    \begin{equation*}
        \begin{matrix}
            x_0,e(0) & & \cdots & x_{2n-2},e(n-1) & \\
            & x_1 & \cdots & &  x_{2n-1} & 
        \end{matrix}
    \end{equation*}
    becomes a valid play respecting $\sigma'$. Since the play respects $\sigma'$, Player II wins. However, $e\colon d_x\to \Omega$ is an embedding, so Player I wins, a contradiction.
\end{proof}

\section{The Martin Flower} \label{Section: Martin Flower}
In this section, we define a flower from an iterable cardinal that will be a measurable flower, which will be called the \emph{Martin flower} $\Omega^1_\sfM$ ($\sfM$ denotes Martin.) We define the Martin flower only for ordinals, which is enough to establish its properties, and we can extend the Martin flower to every linear order with known machinery.
We define the Martin flower for natural numbers and increasing maps between them first, then we define the Martin flower for ordinals. We will see that the two definitions cohere. Lastly, we prove that the Martin flower embeds every countable flower.

\subsection{The Martin flower for natural numbers}
We will define the Martin flower $\Omega^1_\sfM$ from $j\colon V_\lambda\to V_\lambda$, and let us define $\Omega^1_\sfM$ for natural numbers first:
\begin{equation*}
    \Omega^1_\sfM(n) = \kappa_n := \crit j_n.
\end{equation*}
and for $f\colon m\to n$,
\begin{equation*}
    \Omega^1_\sfM(f)(x) = j_{f(m-2)+1,f(m-1)}\circ\cdots\circ j_{f(0)+1,f(1)}\circ j_{0,f(0)}(x).
\end{equation*}
For notational convenience, let us write
\begin{equation*}
    j_f(x) = j_{f(m-2)+1,f(m-1)}\circ\cdots\circ j_{f(0)+1,f(1)}\circ j_{0,f(0)}(x).
\end{equation*}

Note that we will show that $\Omega^1_\sfM$ is a flower. So far, we only defined $\Omega^1_\sfM$ over the category of natural numbers with strictly increasing maps. To see $\Omega^1_\sfM$ is a functor, we need it to preserve the function composition:
\begin{lemma} \label{Lemma: Martin flower composition preservation}
    Let $f\colon m\to n$, $g\colon n\to k$ for $m\le n\le k$. Then $\Omega^1_\sfM(g\circ f) =  \Omega^1_\sfM(g)\circ \Omega^1_\sfM(f)$.
\end{lemma}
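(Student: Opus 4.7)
The proof proceeds by induction on $m$, establishing simultaneously the compositionality on $\kappa_m$ together with the auxiliary bound $\Omega^1_\sfM(f)(x) < \kappa_{f(m-1)+1}$ for every $x<\kappa_m$. The base case $m=0$ is immediate: $\kappa_0$ lies below the critical point $\kappa_\alpha$ of every iteration embedding $j_{\alpha,\beta}$ appearing in either composition, so both sides restrict to the identity on $\kappa_0$.

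For the inductive step with $f\colon m\to n$ and $g\colon n\to k$, set $f' = f\restriction (m-1)$ and use the decompositions
\[
\Omega^1_\sfM(f) = j_{f(m-2)+1,\,f(m-1)}\circ \Omega^1_\sfM(f'),\qquad \Omega^1_\sfM(g\circ f) = j_{g(f(m-2))+1,\,g(f(m-1))}\circ \Omega^1_\sfM(g\circ f').
\]
For $x<\kappa_{m-1}$, the auxiliary bound (from the induction hypothesis applied to $f'$ and to $g\circ f'$) guarantees that $\Omega^1_\sfM(f')(x) < \kappa_{f(m-2)+1}$ and $\Omega^1_\sfM(g\circ f')(x)<\kappa_{g(f(m-2))+1}$, so the two outer iteration embeddings fix their respective arguments by the critical-point property, and the equation reduces directly to the inductive hypothesis $\Omega^1_\sfM(g\circ f')(x) = \Omega^1_\sfM(g)(\Omega^1_\sfM(f')(x))$.

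For $\kappa_{m-1}\le x < \kappa_m$, the monotonicity of $\Omega^1_\sfM(f')$ (as a composition of elementary embeddings) together with the identity $\Omega^1_\sfM(f')(\kappa_m) = \kappa_{f(m-2)+2}$, obtained by iteratively applying $j_{\alpha,\beta}(\kappa_{\alpha+n}) = \kappa_{\beta+n}$ to each factor of $\Omega^1_\sfM(f')$, shows $\Omega^1_\sfM(f')(x) \in [\kappa_{f(m-2)+1}, \kappa_{f(m-2)+2})$. Applying $j_{f(m-2)+1,f(m-1)}$ places $\Omega^1_\sfM(f)(x)$ in $[\kappa_{f(m-1)}, \kappa_{f(m-1)+1})$, establishing the auxiliary bound for $m$. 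Equality of the two sides on this slice reduces to the commutation
\[
\Omega^1_\sfM(g)\bigl(j_{f(m-2)+1,f(m-1)}(y)\bigr) = j_{g(f(m-2))+1,g(f(m-1))}\bigl(\Omega^1_\sfM(g)(y)\bigr),
\]
which is proved by iteratively sliding $j_{f(m-2)+1,f(m-1)}$ from right to left through each factor of $\Omega^1_\sfM(g) = j_{g(n-2)+1,g(n-1)}\circ\cdots\circ j_{0,g(0)}$ using the corollary $j_{\alpha,\beta}\circ j_{\alpha+n,\alpha+\ell} = j_{\beta+n,\beta+\ell}\circ j_{\alpha,\beta}$ of \autoref{Lemma: Basic facts on iterating elementary embeddings}, and recombining the resulting adjacent factors via the transitivity relation $j_{\beta,\gamma}\circ j_{\alpha,\beta} = j_{\alpha,\gamma}$ (a consequence of item 4 of the same lemma).

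The hardest part will be the careful bookkeeping of indices through the iterated commutations: one must verify that after the slide traverses all factors of $\Omega^1_\sfM(g)$ the leading embedding that emerges is precisely $j_{g(f(m-2))+1,g(f(m-1))}$, which requires showing that the shift introduced at each step telescopes correctly, and that any extra factors produced along the way have critical points large enough to act trivially on the image of $\Omega^1_\sfM(f')\restriction \kappa_m$, hence do not affect the resulting value.
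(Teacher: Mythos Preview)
Your commutation identity as displayed is false. Take $m=2$, $f=\mathrm{id}\colon 2\to 2$, and $g\colon 2\to 3$ with $g(0)=0$, $g(1)=2$. Then $j_{f(m-2)+1,f(m-1)}=j_{1,1}=\mathrm{id}$ while $j_{g(f(m-2))+1,g(f(m-1))}=j_{1,2}=j_1$, and $j_g=j_1$, so your identity reads $j_1(y)=j_1(j_1(y))$, which fails for every $y\ge\kappa_1$. The sliding you describe does not produce the embedding $j_{g(f(m-2))+1,g(f(m-1))}$ on the nose: after pushing $j_{p,q}$ (with $p=f(m-2)+1$, $q=f(m-1)$) through the first $p$ factors of $j_g$ you get $j_{g(f(m-2))+1,\,g(f(m-2))+f(m-1)-f(m-2)}$, and the second index is not $g(f(m-1))$ in general.

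Even granting your caveat that extra factors might act trivially, the induction does not close. After applying your commutation you still need $j_g(j_{f'}(x))=j_{g\circ f'}(x)$ for $x\in[\kappa_{m-1},\kappa_m)$; the inductive hypothesis only gives this on $\kappa_{m-1}$, and the extension genuinely fails---in the example above $j_{f'}=j_{g\circ f'}=\mathrm{id}$ and $j_g=j_1$, so you would need $j_1(x)=x$ for $x\ge\kappa_1$. The paper's proof avoids this trap by not inducting on $m$: it applies the factors of $j_g$ one at a time, from the innermost $j_{0,g(0)}$ outward, to the \emph{entire} expression $j_f(\xi)$, using the commutation $j_{\alpha,\beta}\circ j_{\alpha+n,\alpha+\ell}=j_{\beta+n,\beta+\ell}\circ j_{\alpha,\beta}$ to rewrite all factors of $j_f$ simultaneously at each step. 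After the factor $j_{g(f(m-1)-1)+1,g(f(m-1))}$ has been applied the expression is exactly $j_{g\circ f}(\xi)$, and the remaining outer factors of $j_g$ have critical point $\ge\kappa_{g(f(m-1))+1}>j_{g\circ f}(\xi)$, hence act trivially.
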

\begin{proof}
    \allowdisplaybreaks
    We always assume $\xi\in \Omega^1_\sfM(m) = \kappa_m$ throughout this proof.
    To illustrate the idea of the proof, let us consider the case $m=0$ first: Then we can see that
    \begin{align*}
        j_g\circ j_{0,f(0)}(\xi) & = j_{g(n-2)+1,g(n-1)}\circ\cdots \circ j_{g(0)+1,g(1)} \circ j_{0,g(0)}\circ j_{0,f(0)}(\xi) \\
        & = j_{g(n-2)+1,g(n-1)}\circ\cdots \circ j_{g(0)+1,g(1)} \circ j_{g(0),g(0)+f(0)} \circ j_{0,g(0)}(\xi) \\
        & = j_{g(n-2)+1,g(n-1)}\circ\cdots \circ j_{g(0)+1,g(1)} \circ j_{g(0)+1,g(0)+f(0)} \circ j_{g(0)} \circ j_{0,g(0)}(\xi) \\
        & = j_{g(n-2)+1,g(n-1)}\circ\cdots \circ j_{g(1),g(1)-1+f(0)} \circ j_{g(0)+1,g(1)} \circ j_{g(0)} \circ j_{0,g(0)}(\xi) \\
        & = j_{g(n-2)+1,g(n-1)}\circ\cdots \circ j_{g(1),g(1)-1+f(0)} \circ  j_{0,g(1)}(\xi) \\
        & = j_{g(n-2)+1,g(n-1)}\circ\cdots \circ j_{g(2),g(2)-2+f(0)} \circ  j_{0,g(2)}(\xi) \\
        & \hspace{15em}\vdots \\
        & = j_{g(n-2)+1,g(n-1)}\circ\cdots \circ j_{g(f(0)),g(f(0))-f(0)+f(0)} \circ  j_{0,g(f(0))}(\xi) \\
        & = j_{g(n-2)+1,g(n-1)}\circ\cdots \circ j_{g(f(0))+1, g(f(0)+1)} \circ j_{0,g(f(0))}(\xi)
    \end{align*}
    and $j_{0,g(f(0))}(\xi)<\kappa_{g(f(0))+1}$ since $\xi<\kappa_1$. Hence all other elementary embeddings do not change $j_{0,g(f(0))}(\xi)$, so we have $j_g\circ j_{0,f(0)}(\xi) = j_{0,g(f(0))}(\xi)$.

    For a general case, let us start from
    \begin{equation*}
        j_f(\xi) = j_{f(m-2)+1,f(m-1)}\circ\cdots \circ j_{f(0)+1,f(1)} \circ j_{0,f(0)} (\xi).
    \end{equation*}
    Applying $j_{0,g(0)}$ gives
    \begin{equation*}
        j_{g(0)+f(m-2)+1,g(0)+f(m-1)}\circ\cdots \circ j_{g(0)+f(0)+1,g(0)+f(1)} \circ j_{g(0),g(0)+f(0)} (j_{0,g(0)}(\xi)),
    \end{equation*}
    which is equal to
    \begin{equation*}
        j_{g(0)+f(m-2)+1,g(0)+f(m-1)}\circ\cdots \circ j_{g(0)+f(0)+1,g(0)+f(1)} \circ j_{g(0)+1,g(0)+f(0)} (j_{0,g(0)+1}(\xi)).
    \end{equation*}
    Applying $j_{g(0)+1,g(1)}$ gives
    \begin{equation*}
        j_{g(1)+f(m-2),g(1)+f(m-1)-1}\circ\cdots \circ j_{g(1)+f(0),g(1)+f(1)-1} \circ j_{g(1),g(1)+f(0)-1} (j_{0,g(1)}(\xi)).
    \end{equation*}
    Applying $j_{g(1)+1,g(2)}$ then gives
    \begin{equation*}
        j_{g(2)+f(m-2)-1,g(2)+f(m-1)-2}\circ\cdots \circ j_{g(2)+f(0)-1,g(2)+f(1)-2} \circ j_{g(2),g(2)+f(0)-2} (j_{0,g(2)}(\xi)).
    \end{equation*}
    By repetition, we have
    \begin{multline*}
        j_{g(f(0))+f(m-2)-f(0)+1,g(f(0))+f(m-1)-f(0)}\circ \\ \cdots \circ j_{g(f(0))+f(0)-f(0)+1,g(f(0))+f(1)-f(0)} \circ j_{g(f(0)),g(f(0))+f(0)-f(0)} (j_{0,g(f(0))}(\xi)).
    \end{multline*}
    $j_{g(f(0)),g(f(0))+f(0)-f(0)}$ is the identity, so the above is equal to
    \begin{equation*}
        j_{g(f(0))+f(m-2)-f(0)+1,g(f(0))+f(m-1)-f(0)}\circ \cdots \circ j_{g(f(0))+1,g(f(0))+f(1)-f(0)} (j_{0,g(f(0))}(\xi)).
    \end{equation*}
    Then apply $j_{g(f(0))+1,g(f(0)+1)}$, so we get
    \begin{multline*}
        j_{g(f(0)+1)+f(m-2)-f(0),g(f(0)+1)+f(m-1)-f(0)-1}\circ \cdots \circ j_{g(f(0)+1),g(f(0)+1)+f(1)-f(0)-1} \\ (j_{g(f(0))+1,g(f(0)+1)}(j_{0,g(f(0))}(\xi))).
    \end{multline*}
    Applying $j_{g(f(0)+1)+1,g(f(0)+2)}$, $\cdots$, $j_{g(f(1)-1)+1,g(f(1))}$ consecutively gives
    \begin{equation*}
        j_{g(f(1))+f(m-2)-f(1)+1,g(f(1))+f(m-1)-f(1)}\circ \cdots \circ j_{g(f(1)),g(f(1))+f(1)-f(1)} (j_{g(f(0))+1,g(f(1))}(j_{0,g(f(0))}(\xi))),
    \end{equation*}
    which is equal to
    \begin{equation*}
        j_{g(f(1))+f(m-2)-f(1)+1,g(f(1))+f(m-1)-f(1)}\circ \cdots (j_{g(f(0))+1,g(f(1))}(j_{0,g(f(0))}(\xi))).
    \end{equation*}
    Then we can see that applying $j_{g(f(1))+1,g(f(1)+1)}$, $\cdots$, $j_{g(f(m-1)-1)+1, g(f(m-1))}$ gives
    \begin{equation} \label{Formula: Composition rule for embeddings 00}
         j_{g(f(m-2))+1,g(f(m-1))}(\cdots (j_{g(f(0))+1,g(f(1))}(j_{0,g(f(0))}(\xi)))\cdots) = j_{g\circ f}(\xi).
    \end{equation}
    Since $\xi<\kappa_m$, \eqref{Formula: Composition rule for embeddings 00} is $<\kappa_{g(f(m-1))+1}$. Hence, applying $j_{g(f(m-1))+1,g(f(m-1)+1)}$ or all other remaining embeddings of $j_g$ does not change the value of \eqref{Formula: Composition rule for embeddings 00}. In sum, we have $j_g(j_f(\xi)) = j_{g\circ f}(\xi)$.
\end{proof}

Now, let us define the support function to ensure a semidilator structure.
For a strictly increasing $f\colon m\to n$, define
\begin{equation*}
    I_f = \{j_f(\xi)\mid \xi<\kappa_m\}.
\end{equation*}
Throughout this paper, we identify an increasing map $f\colon m\to n$ to a finite subset $a\subseteq n$ of size $m$. 
Under this convention, we can identify $f$ with its range. We want to understand $I_a$ as members of $\Omega^1_\sfM(\omega)$ whose support is a subset of $a$.
To see this idea working, we need a lemma:
\begin{lemma} \label{Lemma: Support lemma}
    For two finite subsets $a,b\subseteq \omega$, we have
    \begin{enumerate}
        \item $a\subseteq b$ implies $I_a\subseteq I_b$.
        \item $I_a\cap I_b = I_{a\cap b}$.
    \end{enumerate}
\end{lemma}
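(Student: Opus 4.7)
The plan is to prove (1) using the Composition Lemma (\autoref{Lemma: Martin flower composition preservation}), then deduce (2) from (1) together with a reduction and an inductive argument. For (1): given $a \subseteq b$, let $h \colon |a| \to |b|$ be the strictly increasing map such that $a = b \circ h$ when $a, b$ are viewed as enumerations into $\omega$. By the Composition Lemma, $j_a = j_b \circ j_h$, and since $j_h$ maps $\kappa_{|a|}$ into $\kappa_{|b|}$ (as $\max h \le |b|-1$), this yields $I_a = j_b[j_h[\kappa_{|a|}]] \subseteq j_b[\kappa_{|b|}] = I_b$.

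For (2), the inclusion $I_{a \cap b} \subseteq I_a \cap I_b$ is immediate from (1). For the reverse, I first reduce to the setting where $a \cup b$ is an initial segment of $\omega$. Setting $d = a \cup b$, $n = |d|$, and letting $h_a, h_b, h_{a \cap b}$ be the enumerations of $a, b, a \cap b$ as subsets of $d$, the Composition Lemma gives $j_a = j_d \circ j_{h_a}$ and similarly for $b$ and $a \cap b$. Injectivity of $j_d$ (as a composition of elementary embeddings) lets me cancel it from both sides, and since $h_a(|a|) \cup h_b(|b|) = n$ and $h_a(|a|) \cap h_b(|b|) = h_{a \cap b}(|a \cap b|)$, it suffices after renaming to prove $I_a \cap I_b \subseteq I_{a \cap b}$ for $a, b \subseteq n$ with $a \cup b = n$.

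A useful preliminary is the normal form $j_a = j_{k_{r-1}} \circ \cdots \circ j_{k_0}$ with $\{k_0 < \cdots < k_{r-1}\} = \{0, \ldots, \max a\} \setminus a$, obtained by unfolding each $j_{\alpha,\beta}$ as $j_{\beta-1} \circ \cdots \circ j_\alpha$ in the definition of $j_a$. With this normal form, $j_a$ involves exactly the $j_k$'s whose indices lie outside $a$ and below $\max a$; under the assumption $a \cup b = n$, the two compositions $j_a$ and $j_b$ use disjoint families of indices (drawn from $b \setminus a$ and $a \setminus b$ respectively), while $j_{a \cap b}$ uses (essentially) their union. I would then prove the reverse inclusion by induction on $n$, peeling off outer embeddings from both sides of $j_a(\xi) = j_b(\eta)$ via the commutation identities $j_\alpha \circ j_{\alpha + n} = j_{\alpha + n + 1} \circ j_\alpha$ from \autoref{Lemma: Basic facts on iterating elementary embeddings} together with the injectivity of each $j_k$. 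Schlutzenberg's lemma (\autoref{Lemma: Schlutzenberg's lemma for value comparison}) is the key tool for ruling out `incompatible' matchings: if $\xi$ fails to lie in the range of a relevant embedding $j_k$, the strict inequality $(j_k \cdot j_k)(\xi) < j_k(\xi)$ obstructs the equality of the two compositions, forcing $\xi$ to factor through the common $j_h$.

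The main obstacle will be this inductive step: turning a single equation $j_a(\xi) = j_b(\eta)$ between two long compositions of non-commuting elementary embeddings into the statement that $\xi$ must factor through $j_h$ for an appropriate $h$ realizing $a \cap b$. The bookkeeping of indices and critical points, together with the systematic invocation of Schlutzenberg's lemma at each stripping stage to force matched preimages, is where the technical weight of the argument lies.
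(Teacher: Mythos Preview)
Your argument for (1) via the Composition Lemma is correct and cleaner than the paper's: the paper instead adds one element $l$ to $a$ at a time and computes $j_{a\cup\{l\}}(\cdot)$ explicitly in three cases according to the position of $l$ relative to $a$. Your reduction in (2) to $a\cup b = n$ by factoring through $j_d$ for $d=a\cup b$ is also valid, as is the normal form $j_a = j_{k_{r-1}}\circ\cdots\circ j_{k_0}$ with $\{k_0<\cdots<k_{r-1}\}=\{0,\ldots,\max a\}\setminus a$; the paper does not make this reduction explicitly but works with the common top part $c$ of $a$ and $b$ directly, which amounts to the same thing.

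Where your sketch goes astray is in naming Schlutzenberg's lemma as ``the key tool'' for the inductive step. The paper removes the largest element $l$ of the symmetric difference (say $l\in a$), writes $a=a'\cup\{l\}\cup c$, $b=b'\cup c$, and after cancelling the injective $c$-part obtains (when $c\neq\varnothing$) an equation placing $j_{\max a'+1,l}(j_{a'}(\xi))$ in $\ran j_l=\ran(j_{l-1}\cdot j_{l-1})$. The decisive move is then \emph{elementarity}: writing the left side as $j_{l-1}(z)$ and $\ran j_l$ as $j_{l-1}[\ran j_{l-1}]$, one pulls back to $z\in\ran j_{l-1}$ and iterates down to $\xi\in\ran j_{|a'|}$. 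When $c=\varnothing$ the argument is instead a critical-point bound: $\ran j_{l-1}\restriction\Ord\subseteq[0,\kappa_{l-1})\cup[\kappa_l,\lambda)$ forces the relevant embeddings to act as the identity. Schlutzenberg's inequality $j_{m+1}(\xi)<j_m(\xi)$ for $\xi\notin\ran j_m$ does not by itself contradict an equation of the form $j_a(\xi)=j_b(\eta)$ with $\eta\neq\xi$; to turn it into a contradiction you would first have to manipulate the equation (via commutation and injectivity) into the form $j_m(\xi)=j_{m+1}(\xi)$, which is essentially the elementarity argument in disguise. So your inductive plan is right, but the engine driving it should be elementarity and critical-point structure rather than Schlutzenberg.
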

\begin{proof}
    To prove the first statement, it suffices to show it when $|b|=|a|+1$. Let $\{a(0),\cdots,a(m-1)\}$ be an increasing enumeration of $a$, $b=a\cup\{l\}$. 
    \begin{enumerate}
        \item Consider the case $l<a(0)$. We have
        \begin{equation*}
            j_a(\xi) = j_{a(m-2)+1,a(m-1)}\circ\cdots\circ j_{a(0)+1,a(1)}\circ j_{l+1,a(0)} \circ j_{0,l+1} (\xi)
        \end{equation*}
        Also, $j_{0,l+1} = j_0^{l+1} = j_0^l\circ j_0 = j_{0,l}\circ j_0$.
        Hence we get
        \begin{equation*}
            j_a(\xi) = j_{a(m-2)+1,a(m-1)}\circ\cdots\circ j_{a(0)+1,a(1)}\circ j_{l+1,a(0)} \circ j_{0,l} (j_0(\xi)).
        \end{equation*}
        If $\xi<\kappa_m$, then $j_0(\xi)<\kappa_{m+1}$. Hence $j_a(\xi) = j_{a\cup\{l\}}(j_0(\xi)) \in I_{a\cup \{l\}}$.
        
        \item Now let $a(i) < i < a(i+1)$ for some $i < i+1 < m$. We have
        \begin{equation*}
            j_a(\xi) = j_{a(m-2)+1,a(m-1)}\circ\cdots j_{l+1,a(i+1)}\circ j_l \circ j_{a(i)+1,l}\circ\cdots\circ j_{0,a(0)} (\xi)
        \end{equation*}
        Observe that $j_l\circ j_{a(i)+1,l} = j_{a(i)+1,l} \circ j_{a(i)+1}$, so we have
        \begin{equation*}
            j_a(\xi) = j_{a(m-2)+1,a(m-1)}\circ\cdots j_{l+1,a(i+1)} \circ j_{a(i)+1,l}\circ j_{a(i)+1} \circ\cdots\circ j_{0,a(0)} (\xi)
        \end{equation*}
        By repeating a similar computation, we have
        \begin{align*}
            j_a(\xi) & = j_{a(m-2)+1,a(m-1)}\circ\cdots j_{l+1,a(i+1)}\circ \fbox{$j_l$} \circ j_{a(i)+1,l}\circ\cdots\circ j_{0,a(0)} (\xi) \\ 
            & = j_{a(m-2)+1,a(m-1)}\circ\cdots j_{l+1,a(i+1)} \circ j_{a(i)+1,l}\circ \fbox{$j_{a(i)+1}$} \circ j_{a(i-1)+1,a(i)}\circ \cdots\circ j_{0,a(0)} (\xi) \\
            & = j_{a(m-2)+1,a(m-1)}\circ \cdots\circ j_{a(i)+1,l}\circ j_{a(i-1)+1,a(i)} \circ \fbox{$j_{a(i-1)+2}$} \circ\cdots\circ j_{0,a(0)} (\xi) \\ 
            & \hspace{35ex}\vdots \\
            & = j_{a(m-2)+1,a(m-1)}\circ\cdots \circ \fbox{$j_{a(0)+(i+1)}$}\circ j_{0,a(0)} (\xi) \\
            & = j_{a(m-2)+1,a(m-1)}\circ\cdots \circ j_{0,a(0)} (\fbox{$j_{i+2}$}(\xi)) = j_{a\cup\{l\}}(j_{i+2}(\xi)).
        \end{align*}
        Since $\xi<\kappa_m$, we have $j_{i+2}(\xi)<\kappa_{m+1}$. Hence $j_a(\xi) = j_{a\cup\{l\}}(j_{i+2}(\xi)) \in I_{a\cup\{l\}}$.

        \item Consider the case $l>a(m-1)$. By definition, every member of $I_a$ has the form
        \begin{equation*}
            j_a(\xi) = j_{a(m-2)+1,a(m-1)}\circ\cdots\circ j_{a(0)+1,a(1)}\circ j_{0,a(0)} (\xi)
        \end{equation*}
        for some $\xi<\kappa_m$. Then $j_{a(0),0}(\xi) < \kappa_{m+a(0)}$, $j_{a(0)+1,a(1)}\circ j_{0,a(0)} < j_{a(0)+1,a(1)}(\kappa_{m+a(0)}) = \kappa_{(m-1)+a(1)}$, and so on. Hence we have $j_a(\xi) < \kappa_{1+a(m-1)}$.
        This implies 
        \begin{equation*}
            j_a(\xi) = j_{a(m-1)+1,l}(j_a(\xi)) = j_{a\cup\{l\}}(\xi) \in I_{a\cup\{l\}}. 
        \end{equation*}
    \end{enumerate}

    It implies the first clause of the lemma, and we immediately have $I_{a\cap b}\subseteq I_a\cap I_b$.
    For the remaining direction, it suffices to show the following: Suppose that $l$ is the largest element of $(a\setminus b)\cup (b\setminus a)$, and assume that $l\in a$. Then $I_a\cap I_b\subseteq I_{a\setminus \{l\}}\cap I_b$.
    Suppose that $a=a'\cup\{l\}\cup c$, $b=b'\cup c$ for some $a',b',c$ such that $\max a',\max b'<l<\min c$. Also, assume that we are given $\xi<\kappa_{|a|}$, $\eta<\kappa_{|b|}$ such that $j_a(\xi)=j_b(\eta)$. Now let us divide the cases:
    \begin{enumerate}
        \item $c=\varnothing$: Then we have $j_{a'\cup \{l\}}(\xi) = j_{b'}(\eta)=j_b(\eta)<\kappa_{\max b+1}$. Also, we have $\max b + 1 \le l$. Hence
        \begin{equation*}
            j_{l-1} \circ j_{\max a'+1,l-1}\circ j_{a'}(\xi) = j_{\max a'+1,l}\circ j_{a'}(\xi) < \kappa_l. 
        \end{equation*}
        However, $\ran j_{l-1} \restriction \Ord \subseteq [0,\kappa_{l-1})\cup [\kappa_l,\lambda)$, so we get
        \begin{equation*}
            j_{l-1} (j_{\max a'+1,l-1}\circ j_{a'}(\xi)) < \kappa_{l-1}. 
        \end{equation*}
        This is possible only when $j_{l-1} (j_{\max a'+1,l-1}\circ j_{a'}(\xi)) = j_{\max a'+1,l-1}\circ j_{a'}(\xi)$. By repeating the same argument sufficiently many times, we have
        \begin{equation*}
            j_{\max a'+1,l}\circ j_{a'}(\xi) = j_{\max a'+1,l-1}\circ j_{a'}(\xi) = \cdots = j_{a'}(\xi),
        \end{equation*}
        so $j_{a'\cup\{l\}}(\xi) = j_{a'}(\xi)\in I_{a\setminus\{l\}}\cap I_b$.

        \item $c\neq\varnothing$: By canceling the $c$-part of the elementary embeddings from $j_a(\xi) = j_b(\eta)$, we have
        \begin{equation*}
            j_{\max a'+1,l}\circ j_{a'}(\xi) = j_l\circ j_{\max b'+1, l} \circ j_{b'}(\eta).
        \end{equation*}
        Hence $j_{\max a'+1,l}\circ j_{a'}(\xi) \in \ran j_l = \ran (j_{l-1}\cdot j_{l-1})$. We may view this equality as
        \begin{equation*}
            j_{l-1}(j_{\max a'+1,l-1}\circ j_{a'}(\xi)) \in \ran j_{l-1}(j_{l-1}),
        \end{equation*}
        so $j_{\max a'+1,l-1}\circ j_{a'}(\xi) \in \ran j_{l-1}$. Repeating the same manipulation several times, we have $j_{a'}(\xi)\in \ran j_{\max a'+1}$. Now, let us prove that
        \begin{equation} \label{Formula: Support lemma - Elementary embedding reduction}
            j_{a'}(\xi)\in \ran j_{\max a'+1} \implies \xi \in \ran j_{|a'|}.
        \end{equation}
        Let $m=|a'|$. Then
        \begin{equation*}
            j_{a'}(\xi) = j_{a'(m-2)+1,a'(m-1)} (j_{a'\setminus \{a'(m-1)\}}(\xi)) \in \ran j_{a'(m-1)+1} = \ran (j_{a'(m-1)-1}\cdot j_{a'(m-1)}).
        \end{equation*}
        Thus
        \begin{equation*}
            j_{a'(m-2)+1,a'(m-1)-1} (j_{a'\setminus \{a'(m-1)\}}(\xi)) \in \ran j_{a'(m-1)}.
        \end{equation*}
        By repeating the previous manipulation, we have
        \begin{equation*}
            j_{a'\setminus \{a'(m-1)\}}(\xi) \in \ran j_{a'(m-2)+2}
        \end{equation*}
        Thus, the tedious repetition gives \eqref{Formula: Support lemma - Elementary embedding reduction}.
        Now let $m=|a'|$, $n=|c|$, $\xi=j_m(\xi')$, and
        \begin{equation*}
            k = j_{c(n-2)+1,c(n-1)}\circ\cdots\circ j_{l,c(0)}.
        \end{equation*}
        Then
        \begin{align*}
            j_a(\xi) & = k \circ j_{a'(m-1)+1,l}\circ j_{a'}(\xi) = k \circ j_{a'(m-1)+1,l}\circ j_{a'}(j_m(\xi'))\\
            &= k \circ j_{a'(m-1)+1,l} \circ j_{a'(m-2)+1,a'(m-1)}\circ\cdots j_{0,a'(0)} \circ j_m (\xi') \\
            &= k \circ j_{a'(m-1)+1,l} \circ j_{a'(m-2)+1,a'(m-1)}\circ\cdots \circ j_{a'(0)-1,a'(1)}\circ j_{a'(0)+m}\circ  j_{0,a'(0)}  (\xi') \\
            &= k \circ j_{a'(m-1)+1,l} \circ j_{a'(m-2)+1,a'(m-1)}\circ\cdots \circ j_{a'(1)+(m-1)} \circ j_{a'(0)-1,a'(1)}\circ  j_{0,a'(0)}  (\xi') \\
            & \hspace{20em} \vdots \\
            &= k \circ j_{a'(m-1)+1,l} \circ j_{a'(m-1)+1}  \circ j_{a'(m-2)+1,a'(m-1)}\circ\cdots \circ j_{a'(0)-1,a'(1)}\circ  j_{0,a'(0)}  (\xi') \\
            &=  k \circ j_l \circ j_{a'(m-1)+1,l} \circ j_{a'(m-2)+1,a'(m-1)}\circ\cdots \circ j_{a'(0)-1,a'(1)}\circ  j_{0,a'(0)}  (\xi') \\
            &= j_{c(n-2)+1,c(n-1)}\circ\cdots\circ j_{l,c(0)} \circ j_l \circ j_{a'(m-1)+1,l} \circ j_{a'}(\xi') \\
            &= j_{c(n-2)+1,c(n-1)}\circ\cdots\circ j_{a'(m-1)+1,c(0)} \circ j_{a'}(\xi') = j_{a'\cup c}(\xi').
        \end{align*}
        Hence $j_a(\xi) = j_{a\setminus \{l\}}(\xi)\in I_{a\setminus\{l\}}\cap I_b$. \qedhere 
    \end{enumerate}
\end{proof}

Then the following claim is immediate:
\begin{proposition}
    For every $\xi<\lambda$, a finite subset $a\subseteq \omega$ satisfying $\xi\in I_a$ with the least cardinality uniquely exists. Furthermore, if $\xi<\kappa_n$, then the corresponding $a$ is a subset of $n$.
\end{proposition}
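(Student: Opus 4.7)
The plan is to read off both clauses directly from \autoref{Lemma: Support lemma}, especially its second clause $I_a\cap I_b = I_{a\cap b}$, together with the observation that the "identity" sets give an easy initial witness.

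First, I would establish existence. Given $\xi<\lambda = \sup_{n<\omega}\kappa_n$, pick any $n$ with $\xi<\kappa_n$ and let $a_0 = \{0,1,\ldots,n-1\}$, viewed as the identity map $n\to n$. Unpacking the definition of $j_{a_0}$,
\begin{equation*}
    j_{a_0}(x) = j_{n-1,n-1}\circ\cdots\circ j_{1,1}\circ j_{0,0}(x) = x,
\end{equation*}
since each $j_{k,k}$ is the identity. Thus $I_{a_0} = \kappa_n$, so $\xi\in I_{a_0}$. In particular, $\{|a|\mid a\subseteq\omega\text{ finite}, \xi\in I_a\}$ is a nonempty subset of $\bbN$ and has a least element, yielding a (not yet unique) minimal-cardinality $a$.

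Next, I would prove uniqueness. Suppose $a$ and $b$ are both finite subsets of $\omega$ with $\xi\in I_a\cap I_b$ and $|a| = |b| = m$ minimal. By \autoref{Lemma: Support lemma}(2), $\xi\in I_a\cap I_b = I_{a\cap b}$, so by minimality $|a\cap b|\ge m$. Combined with $|a\cap b|\le|a| = |b| = m$, this forces $|a\cap b| = m$, and since $a\cap b\subseteq a$ and both are finite of the same size, we conclude $a\cap b = a$. Symmetrically $a\cap b = b$, hence $a = b$.

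Finally, for the second clause, suppose $\xi<\kappa_n$ and let $a$ be the unique minimum-cardinality set with $\xi\in I_a$. The existence argument shows $\xi\in I_n$, where again $n = \{0,1,\ldots,n-1\}$. Hence by \autoref{Lemma: Support lemma}(2) again, $\xi\in I_a\cap I_n = I_{a\cap n}$, so by the minimality of $|a|$ we have $|a\cap n|\ge |a|$, forcing $a\cap n = a$, i.e., $a\subseteq n$. The only potential obstacle is verifying that \autoref{Lemma: Support lemma}(2) applies verbatim here (its statement is formulated for two finite subsets of $\omega$, which is exactly our setting), but no further work is needed. The entire proof reduces to invoking the intersection identity twice, once to get uniqueness and once to localize the support inside $n$.
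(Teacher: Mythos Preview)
Your proof is correct and follows essentially the same approach as the paper: both use $j_{\mathsf{Id}_n}=\mathsf{Id}_n$ to get $\xi\in I_n$ for existence, and both derive uniqueness (and, in your case, the inclusion $a\subseteq n$) from the intersection identity $I_a\cap I_b=I_{a\cap b}$ of \autoref{Lemma: Support lemma}. Your write-up is more explicit about the final clause, but the content is the same.
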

\begin{proof}
    For the uniqueness, if $a\neq b$ satisfies $\xi \in I_a\cap I_b$, then $\xi\in I_{a\cap b}$, and $|a\cap b|<|a|, |b|$. Now let us prove the existence and the last claim.
    If $\xi<\kappa_n$, then $\xi\in I_n$ (Recall that $j_{\mathsf{Id}_n} = \mathsf{Id}_n$.) That is, we have
    \begin{equation*}
        n \in \{a\in[\omega]^{<\omega}\mid \xi\in I_a\}.
    \end{equation*}
    Clearly, we can find a member of $\{a\in[\omega]^{<\omega}\mid \xi\in I_a\}$ of the least cardinality.
\end{proof}
Now, let us define the support function as follows:
\begin{definition}
    For $\xi<\kappa_n$, $\supp_n(\xi)$ is the unique subset $a\subseteq n$ of the least cardinality such that $\xi\in I_a$. Alternatively, $\supp_n(\xi) = \bigcap \{a \subseteq n\mid \xi \in I_a\}$.
\end{definition}

\begin{lemma} \label{Lemma: Support condition for Martin Flower}
    $\supp_n$ is a natural transformation from $\Omega^1_\sfM$ to $[\cdot]^{<\omega}$: That is, for every $f\colon m\to n$ and $\xi\in \Omega^1_\sfM(m)=\kappa_m$,
    \begin{equation*}
        \supp_n(\Omega^1_\sfM(f)(\xi)) = f^"[\supp_m(\xi)].
    \end{equation*}
    
    Furthermore, $\supp$ satisfies the support condition: That is, for every $f\colon m\to n$, 
    \begin{equation*}
        \{\xi < \kappa_n \mid \supp_n(\xi)\subseteq \ran f\} \subseteq \ran \Omega^1_\sfM(f).
    \end{equation*}
\end{lemma}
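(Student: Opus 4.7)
The plan is to reduce both claims to two already-proven tools: the composition law $j_{g\circ f}=j_g\circ j_f$ of \autoref{Lemma: Martin flower composition preservation}, and the intersection identity $I_a\cap I_b=I_{a\cap b}$ of \autoref{Lemma: Support lemma}. The pivotal auxiliary observation I will extract first is the equivalence
\[
\xi\in I_c \iff j_f(\xi)\in I_{f[c]} \qquad (c\subseteq m,\ \xi<\kappa_m),
\]
where $c$ is viewed as the increasing map $|c|\to m$ with range $c$ and $f\colon m\to n$. The forward direction is immediate from $j_f\circ j_c=j_{f\circ c}$, and the reverse direction uses that each $j_\alpha$ (and hence every composition $j_f$) is injective: if $j_f(\xi)=j_{f[c]}(\theta)=j_f(j_c(\theta))$, then cancelling $j_f$ gives $\xi=j_c(\theta)\in I_c$.

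For naturality, fix $\xi<\kappa_m$ and set $a:=\supp_m(\xi)$. Since $\xi\in I_a$, the forward direction gives $j_f(\xi)\in I_{f[a]}$, so $b:=\supp_n(j_f(\xi))\subseteq f[a]\subseteq\ran f$. Because $f$ is injective I may write $b=f[c]$ for $c:=f^{-1}[b]\subseteq a$; then the reverse direction yields $\xi\in I_c$, and minimality of $a$ forces $c=a$. Hence $b=f[a]=f[\supp_m(\xi)]$, as desired.

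For the support condition, suppose $\xi<\kappa_n$ has $b:=\supp_n(\xi)\subseteq\ran f$. Write $b=f[c]$ with $c:=f^{-1}[b]\subseteq m$ and pick $\eta<\kappa_{|c|}$ with $\xi=j_b(\eta)$. Applying the composition law to $f\circ c$ gives $j_b=j_{f\circ c}=j_f\circ j_c$, so $\xi=\Omega^1_\sfM(f)(j_c(\eta))$. It remains only to check that $j_c(\eta)\in\kappa_m=\Omega^1_\sfM(m)$; a short induction on $|c|$ using $j_{\alpha,\beta}(\kappa_{\alpha+n})=\kappa_{\beta+n}$ shows successively that after the $k$-th factor of $j_c$ the running value lies below $\kappa_{c(k)+|c|-k}$, so the final value satisfies $j_c(\eta)<\kappa_{\max c+1}\le\kappa_m$ since $c\subseteq m$.

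The only fiddly step, as already encountered in the proof of \autoref{Lemma: Support lemma}, is the bookkeeping of critical-point bounds that yields $j_c(\eta)<\kappa_m$. Once that routine tracking is carried out, both naturality and the support condition follow at once from the composition law, the intersection formula for $I_a$, and the elementary fact that the iterates $j_\alpha$ are injective.
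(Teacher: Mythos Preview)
Your proof is correct and follows essentially the same approach as the paper. Both arguments rest on the identity $I_{f[a]}=j_f[I_a]$ (equivalently your biconditional $\xi\in I_c\iff j_f(\xi)\in I_{f[c]}$), which follows from the composition law, and then use minimality of the support. The only cosmetic difference is in the support condition: you take $c=f^{-1}[\supp_n(\xi)]$ and then verify $j_c(\eta)<\kappa_m$ by tracking critical-point bounds, whereas the paper implicitly takes $a=m$ (so $j_a$ is the identity and $\eta<\kappa_m$ is immediate), avoiding that bookkeeping.
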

\begin{proof}
    First, let us observe that for two increasing $f\colon m\to n$, $g\colon n\to k$,
    \begin{equation*}
        I_{g\circ f} = \{j_{g\circ f}(\xi)\mid \xi<\kappa_m\} = \{j_g(j_f(\xi))\mid \xi<\kappa_m\} = j_g[I_f].
    \end{equation*}
    Hence $I_{f[a]}=j_f[I_a]$.
    Now suppose that $\supp_m(\xi)=a$. Then $\xi\in I_a$ and so
    \begin{equation*}
        j_f(\xi) \in I_{f[a]} \implies \supp_n j_f(\xi) \subseteq f[a].
    \end{equation*}
    If $\supp_n j_f(a)\neq f[a]$, then there is $a'\subsetneq a$ such that $j_f(\xi)\in I_{f[a']}$. It implies $\xi\in I_{a'}$, contradicting with that $\supp_m(\xi)=a$.
    For the support condition, $\supp_n(\xi)\subseteq\ran f$ implies there is $a\subseteq m$ such that $\supp_n(\xi)\subseteq f[a]$.
    Hence there is $\eta<\kappa_m$ such that $\xi=j_{f[a]}(\eta)=j_f(j_a(\eta))$, so $\xi\in \ran j_f$.
\end{proof}

\subsection{The Martin flower for ordinals}

So far, we have defined $\Omega^1_\sfM$ only for natural numbers. We want to define it to other ordinals, and the natural choice should be
\begin{equation*}
    \Omega^1_\sfM(\alpha) = \kappa_\alpha.
\end{equation*}
However, many parts of the definition of $\Omega^1_\sfM$ for natural numbers do not work smoothly: For example, for $f\colon \alpha\to\beta$, the naive definition for $\Omega^1_\sfM(f)$ will introduce infinitely long embedding composition that is unclear to formulate. But we still need to define $\Omega^1_\sfM(f)$. To address this issue, we define the support of an ordinal first, then define $\Omega^1_\sfM(f)$.

\begin{proposition}
    Let $\alpha\ge\omega$. Then every element of $\kappa_\alpha$ has the form
    \begin{equation} \label{Formula: Normal form for elements of kappa alpha}
        j_{\alpha_{n-2}+1,\alpha_{n-1}}\circ\cdots\circ j_{\alpha_0+1,\alpha_1}\circ j_{0,\alpha_0}(\xi)
    \end{equation}
    for some $n<\omega$, $\xi<\kappa_n$, $\alpha_0<\cdots<\alpha_{n-1}<\alpha$.
\end{proposition}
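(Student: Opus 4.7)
The plan is to proceed by transfinite induction on $\alpha \ge \omega$, splitting into base, limit, and successor cases. For the base case $\alpha = \omega$, any $\eta < \kappa_\omega = \sup_{n<\omega} \kappa_n$ lies below some $\kappa_n$; I take $\xi = \eta$ and $\alpha_i = i$ for $i < n$, so that every $j_{\alpha_i+1,\alpha_{i+1}} = j_{i+1,i+1}$ and $j_{0,\alpha_0} = j_{0,0}$ is the identity and the composed expression collapses to $\eta$.

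For the limit case, the key preliminary step is to establish $\kappa_\alpha = \sup_{\beta<\alpha} \kappa_\beta$: from the direct limit presentation of $M_\alpha$, an arbitrary $\eta < \kappa_\alpha$ is realized as $[\beta,y]_\sim = j_{\beta,\alpha}(y)$ for some $\beta < \alpha$ and $y \in M_\beta$; elementarity of $j_{\beta,\alpha}$ applied to $\eta < \kappa_\alpha = j_{\beta,\alpha}(\kappa_\beta)$ yields $y < \kappa_\beta$, and since $\crit j_{\beta,\alpha} = \kappa_\beta$ fixes $y$, one gets $\eta = y < \kappa_\beta$. Choosing such a $\beta$ and invoking the inductive hypothesis (or the base case if $\beta < \omega$) produces a representation with $\alpha_{n-1} < \beta < \alpha$.

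For the successor case $\alpha = \gamma+1$ with $\gamma \ge \omega$, the sub-range $\eta < \kappa_\gamma$ is handled immediately by the inductive hypothesis on $\gamma$, and the interesting sub-range is $\kappa_\gamma \le \eta < \kappa_{\gamma+1} = j_\gamma(\kappa_\gamma)$. When $\gamma$ is itself a limit, the direct limit decomposition of $M_\gamma$ gives $\eta = j_{\beta,\gamma}(y)$ for some $\beta < \gamma$, $y \in M_\beta$; elementarity yields $y < \kappa_{\beta+1}$, while $\eta \ge \kappa_\gamma > \kappa_\beta = \crit j_{\beta,\gamma}$ forces $y \ge \kappa_\beta$. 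Applying the inductive hypothesis to $\beta+1 < \alpha$ gives $y = j_b(\xi)$ with $b \in [\beta+1]^{<\omega}$ and $\xi < \kappa_{|b|}$; the inequality $y \ge \kappa_\beta$ further forces $\beta \in b$, and the functoriality identity $j_{\beta,\gamma} \circ j_{c+1,\beta} = j_{c+1,\gamma}$ obtained from \autoref{Lemma: Basic facts on iterating elementary embeddings} then rewrites $\eta$ as $j_{b'}(\xi)$ with $b' = (b \setminus \{\beta\}) \cup \{\gamma\} \subseteq \gamma+1$.

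The main obstacle is the sub-case where $\gamma = \delta+1$ is itself a successor: the direct limit decomposition of $M_\gamma$ is unavailable, and a quick critical-point argument shows $[\kappa_\gamma, \kappa_{\gamma+1}) \cap \ran j_\gamma = \varnothing$, so one cannot naively write $\eta = j_\gamma(y)$. I would handle this by an auxiliary induction on the successor depth of $\gamma$ above the nearest limit ordinal below, using the identity $M_\gamma = M_\delta$ together with the commutation rules of \autoref{Lemma: Basic facts on iterating elementary embeddings}(3) and (5) to push a representation of $\eta$ through the single-step embedding $j_\delta$ and eventually reduce to the limit-$\gamma$ case already treated.
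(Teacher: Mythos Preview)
Your overall strategy --- direct-limit decomposition combined with the commutation identities of \autoref{Lemma: Basic facts on iterating elementary embeddings} --- is the same as the paper's. The paper, however, organizes the induction more cleanly: rather than inducting on all $\alpha\ge\omega$ and splitting into limit and successor cases, it inducts only over \emph{limit} $\alpha$ and proves the claim simultaneously for every $\kappa_{\alpha+m}$, $m<\omega$. This single reformulation absorbs your entire successor analysis (including the successor-of-successor subcase and your proposed ``auxiliary induction on successor depth'') into one uniform step.

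There is a genuine gap in your successor-of-successor plan. The phrase ``push a representation of $\eta$ through the single-step embedding $j_\delta$'' does not correspond to an actual operation: an $\eta\in[\kappa_{\delta+1},\kappa_{\delta+2})$ need not lie in $\ran j_\delta$, so there is nothing to pull back step by step. What must actually happen --- and what the paper carries out --- is to go directly to the nearest limit $\alpha_0\le\gamma$, write $\eta=j_{\beta,\alpha_0}(y)$ via the direct-limit structure of $M_{\alpha_0}=M_\gamma$, obtain $y<\kappa_{\beta+m}$ (where $\alpha=\alpha_0+m$), apply the induction hypothesis to get $y=j_b(\xi)$ with $b\subseteq\beta+m$, and then compute $j_{\beta,\alpha_0}(j_b(\xi))$. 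This last computation is the heart of the argument: when $b$ contains elements of the form $\beta+e$ with $0\le e<m$, one must repeatedly invoke item (5) of \autoref{Lemma: Basic facts on iterating elementary embeddings} to commute $j_{\beta,\alpha_0}$ past each factor $j_{\beta+e'+1,\beta+e}$, turning it into $j_{\alpha_0+e'+1,\alpha_0+e}$. Your limit-$\gamma$ case is exactly this computation for $m=1$ (where at most the single element $\beta$ can appear in $b\cap[\beta,\beta+m)$), but your plan gives no indication of how to handle general $m$.
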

\begin{proof}
    We prove the following by induction on $\alpha$: If $\alpha$ is limit and for every $m<\omega$, every element of $\kappa_{\alpha+m}$ has the form \eqref{Formula: Normal form for elements of kappa alpha} for some $n<\omega$, $\xi<\kappa_n$, $\alpha_0<\cdots<\alpha_{n-1}<\alpha+m$.

    The previous claim holds for $\alpha=0$ trivially. For a general limit $\alpha>0$, observe that every ordinal in $M_\alpha$ has the form $j_{\beta,\alpha}(\eta)$ for some limit $\beta<\alpha$ (including 0) and $\eta\in M_\beta$.
    Then
    \begin{equation*}
        j_{\beta,\alpha}(\eta) < \kappa_{\alpha+m} = j_{\beta,\alpha}(\kappa_{\beta+m})
        \implies \eta < \kappa_{\beta+m}.
    \end{equation*}
    Hence by the inductive hypothesis, there is $n<\omega$, $\xi<\kappa_n$ and $\alpha_0<\cdots<\alpha_{n-1}<\beta+m$ such that $\eta = j_{\alpha_{n-2}+1,\alpha_{n-1}}\circ\cdots\circ j_{\alpha_0+1,\alpha_1}\circ j_{0,\alpha_0}(\xi)$.
    Now let $l$ be the least natural number such that $\alpha_l\ge \beta$. Then for every $k\ge l$, $\alpha_k$ takes the form $\beta+e_k$ for some $e_k<\omega$. Hence
    \begin{align*}
        j_{\beta,\alpha}(\eta) &= 
        j_{\beta,\alpha}(j_{\alpha_{n-2}+1,\alpha_{n-1}}\circ\cdots\circ j_{\alpha_0+1,\alpha_1}\circ j_{0,\alpha_0}(\xi)) \\
        &= (j_{\beta,\alpha}\circ j_{\beta+e_{k-2}+1,\beta+e_{k-1}}\circ\cdots\circ j_{\alpha_0+1,\alpha_1}\circ j_{0,\alpha_0})(\xi) \\
        &= ( j_{\alpha+e_{k-2}+1,\alpha+e_{k-1}}\circ j_{\beta,\alpha}\circ\cdots\circ j_{\alpha_0+1,\alpha_1}\circ j_{0,\alpha_0})(\xi) \\
        & \hspace{15em} \vdots \\
        &= (j_{\alpha+e_{k-2}+1,\alpha+e_{k-1}}\circ \cdots \circ j_{\alpha+e_l+1,\alpha+e_{l+1}} \circ j_{\beta,\alpha} \circ j_{\alpha_{l-1},\alpha_l}\circ\cdots\circ j_{0,\alpha_0})(\xi) \\
        &= (j_{\alpha+e_{k-2}+1,\alpha+e_{k-1}}\circ \cdots \circ j_{\alpha+e_l+1,\alpha+e_{l+1}} \circ j_{\beta,\alpha} \circ j_{\beta,\beta+e_l}\circ j_{\alpha_{l-1},\beta}\circ\cdots\circ j_{0,\alpha_0})(\xi) \\
        &= (j_{\alpha+e_{k-2}+1,\alpha+e_{k-1}}\circ \cdots \circ j_{\alpha+e_l+1,\alpha+e_{l+1}} \circ j_{\alpha,\alpha+e_l}\circ j_{\beta,\alpha}  \circ j_{\alpha_{l-1},\beta} \circ\cdots\circ j_{0,\alpha_0})(\xi) \\
        &= (j_{\alpha+e_{k-2}+1,\alpha+e_{k-1}}\circ \cdots \circ j_{\alpha+e_l+1,\alpha+e_{l+1}} \circ j_{\alpha_{l-1},\alpha+e_l} \circ\cdots\circ j_{0,\alpha_0})(\xi) \\
    \end{align*}
    and $\alpha+e_{k-1} < \alpha+m$. This finishes the proof.
\end{proof}

Now, let us use the notation
\begin{equation*}
    j_{\{\alpha_0,\cdots,\alpha_{n-1}\}}(\xi) = j_{\alpha_{n-2}+1,\alpha_{n-1}}\circ\cdots\circ j_{\alpha_0+1,\alpha_1}\circ j_{0,\alpha_0}(\xi).
\end{equation*}
Similar to the finite case, define
\begin{equation*}
    I_{\{\alpha_0,\cdots,\alpha_{n-1}\}} = \{j_{\{\alpha_0,\cdots,\alpha_{n-1}\}}(\xi) \mid \xi<\kappa_n\}.
\end{equation*}
Then we can see that the proof for \autoref{Lemma: Support lemma} also works for general $I_a$, so we can define the support for ordinals in $\kappa_\alpha$. 
We can also prove that if $\xi\in\kappa_\alpha$ has support $a$, then there is a unique $t_\xi < \kappa_{|a|}$ such that $\xi = j_a(t_\xi)$.
Moreover, we have the following:
\begin{lemma} \label{Lemma: Support for general Martin Flower}
    Let $\xi<\kappa_m$ be such that $\supp(\xi) = m$. For $\eta_0<\cdots<\eta_{m-1}<\alpha$, we have $\supp (j_{\{\eta_0,\cdots,\eta_{m-1}\}}(\xi)) = \{\eta_0,\cdots,\eta_{m-1}\}$ and $t_{j_{\{\eta_0,\cdots,\eta_{m-1}\}}(\xi)}=\xi$.
\end{lemma}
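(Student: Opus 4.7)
The plan is to reduce both claims to two ingredients: an ordinal-indexed generalization of the naturality of support from Lemma~\ref{Lemma: Support condition for Martin Flower}, and the injectivity of the tower embedding $j_a$.

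For the support equality, the inclusion $\supp(j_a(\xi)) \subseteq a$, where $a = \{\eta_0,\ldots,\eta_{m-1}\}$, is immediate because $j_a(\xi) \in I_a$ by construction. For the reverse inclusion, I would first extend naturality to ordinal targets, showing that for every strictly increasing $f\colon m\to\alpha$ (identified with its range $a\subseteq\alpha$) and every $\xi<\kappa_m$,
\[
    \supp(j_f(\xi)) = f[\supp(\xi)].
\]
Applied with $\supp(\xi) = m$, this yields $\supp(j_a(\xi)) = a[m] = a$ as desired.

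To establish the generalized naturality I would mirror the finite argument. The inclusion $\supp(j_f(\xi)) \subseteq f[\supp(\xi)]$ follows from the transport identity $I_{f[a]} = j_f[I_a]$, which in turn reduces to a transfinite analogue of the composition law $j_{g\circ f} = j_g\circ j_f$ of Lemma~\ref{Lemma: Martin flower composition preservation}; the required identities are direct consequences of the commutation rules in Lemma~\ref{Lemma: Basic facts on iterating elementary embeddings}. The reverse inclusion rests on the ordinal-indexed intersection identity $I_a\cap I_b = I_{a\cap b}$, which the paper already notes carries over from the finite case, combined with minimality of $\supp(\xi)$: if $\supp(j_a(\xi))$ were strictly smaller than $a$, then there would be $a'\subsetneq m$ with $j_a(\xi)\in I_{a[a']} = j_a[I_{a'}]$, and injectivity of $j_a$ would force $\xi \in I_{a'}$, contradicting $\supp(\xi)=m$.

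For the second claim, the remark just before the lemma established that whenever an ordinal has support exactly $a$, its canonical parameter is the unique element of $\kappa_{|a|}$ whose $j_a$-image equals that ordinal. Since $\xi<\kappa_m$ trivially satisfies $j_a(\xi)=j_a(\xi)$, uniqueness forces $t_{j_a(\xi)}=\xi$.

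The main technical obstacle is bookkeeping the transfinite composition law $j_{g\circ f} = j_g\circ j_f$ for increasing maps into arbitrary ordinals: the finite version (Lemma~\ref{Lemma: Martin flower composition preservation}) is already combinatorially intricate, and handling limit stages requires unwinding the direct-limit definition of $j_{\beta,\gamma}$ carefully. Structurally, however, the argument is the same symbol-pushing with the commutation identities.
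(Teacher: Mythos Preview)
Your proposal is correct and follows essentially the same approach as the paper: the paper's proof simply invokes the proof of Lemma~\ref{Lemma: Support condition for Martin Flower} with $f(i)=\eta_i$ and $a=m$ (i.e., the finite naturality argument run with ordinal targets) and then appeals to the definition of $t_\xi$, which is exactly the structure you outline. Your write-up is more explicit about which ingredients need to be checked in the ordinal setting (the composition law and the transport identity $I_{f[a]}=j_f[I_a]$), but the underlying strategy is identical.
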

\begin{proof}
    By the proof of \autoref{Lemma: Support condition for Martin Flower} with $f(i)=\eta_i$ and $a=m$, we have $\supp (j_{\{\eta_0,\cdots,\eta_{m-1}\}}(\xi)) = \{\eta_0,\cdots,\eta_{m-1}\}$.
    The remaining equality follows from the definition of $t_{j_{\{\eta_0,\cdots,\eta_{m-1}\}}(\xi)}$.
\end{proof}

Now let us define $\Omega^1_\sfM(f)$ and shows that $\Omega^1_\sfM$ preserves function composition and satisfies the support condition:
\begin{proposition}
    Let $f\colon\alpha\to\beta$. Define $\Omega^1_\sfM(f)$ by
    \begin{equation*}
        \Omega^1_\sfM(f)(\xi) = j_{f[\supp \xi]}(t_\xi).
    \end{equation*}
    Then we have the following:
    \begin{enumerate}
        \item $\Omega^1_\sfM(f) = j_f$ for $f\colon m\to n$, $m\le n<\omega$.
        \item $\Omega^1_\sfM(g\circ f) = \Omega^1_\sfM(g) \circ \Omega^1_\sfM(f)$.
        \item $\supp (\Omega^1_\sfM(f)(\xi)) = f[\supp \xi]$.
    \end{enumerate}
\end{proposition}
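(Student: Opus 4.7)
The plan is to dispatch the three clauses in the order (1), (3), (2), since the third follows once the second is in place and the second depends on the third through the computation of $t_{\Omega^1_\sfM(f)(\xi)}$. Throughout, I would write $a = \supp \xi$, so that by the construction of $t_\xi$ one has $\xi = j_a(t_\xi)$ with $t_\xi < \kappa_{|a|}$, and I would identify an increasing $f\colon m\to n$ with the finite set $\ran f \subseteq n$ (so that $j_f$ depends only on $\ran f$).

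For clause (1), the goal is $j_{f[a]}(t_\xi) = j_f(\xi) = j_f(j_a(t_\xi))$. Viewing $a \subseteq m$ as the increasing enumeration $|a|\to m$, so that $f \circ a$ is an increasing map $|a|\to n$ with range $f[a]$, this reduces to $j_{f \circ a}(t_\xi) = j_f(j_a(t_\xi))$, which is exactly the composition identity proved in \autoref{Lemma: Martin flower composition preservation}.

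For clause (3), I first need to argue that $t_\xi$ has \emph{full} support, i.e., $\supp(t_\xi) = |a|$. Otherwise there would be a proper $b \subsetneq |a|$ with $t_\xi \in I_b$, and a short computation using clause (1) (applied to the embedding $|a|\to m$ enumerating $a$, which is already settled by the previous step) exhibits $\xi \in I_{a[b]}$ for the proper subset $a[b] \subsetneq a$, contradicting the minimality defining $\supp \xi$. Once $t_\xi$ has full support, \autoref{Lemma: Support for general Martin Flower} applied with $\eta_0 < \cdots < \eta_{|a|-1}$ the increasing enumeration of $f[a]$ immediately gives $\supp (j_{f[a]}(t_\xi)) = f[a]$ and also $t_{\Omega^1_\sfM(f)(\xi)} = t_\xi$.

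For clause (2), unpack both sides: the left is $j_{(g\circ f)[a]}(t_\xi) = j_{g[f[a]]}(t_\xi)$. The right is $\Omega^1_\sfM(g)(j_{f[a]}(t_\xi))$, and using the byproduct of clause (3) that $\supp(\Omega^1_\sfM(f)(\xi)) = f[a]$ and $t_{\Omega^1_\sfM(f)(\xi)} = t_\xi$, this becomes $j_{g[f[a]]}(t_\xi)$ as well. The main obstacle I foresee is the book-keeping around extending \autoref{Lemma: Support lemma} to arbitrary $I_a$ for finite $a \subseteq \Ord$ (only asserted informally in the excerpt) and justifying the ``full support'' step cleanly without circular reference to clause (3); both are conceptually routine but need careful formulation so that clause (1) can legitimately be invoked inside the support argument.
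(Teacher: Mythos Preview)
Your proposal is correct and follows essentially the same route as the paper: clause (1) via \autoref{Lemma: Martin flower composition preservation}, and clauses (2) and (3) via \autoref{Lemma: Support for general Martin Flower}, which simultaneously yields $\supp(j_{f[a]}(t_\xi))=f[a]$ and $t_{j_{f[a]}(t_\xi)}=t_\xi$. The only cosmetic difference is that the paper handles (2) before (3) and leaves the ``$t_\xi$ has full support'' step implicit in the statement of \autoref{Lemma: Support for general Martin Flower}, whereas you make it explicit; your concern about circularity is unfounded since that step only needs \autoref{Lemma: Martin flower composition preservation} (or rather its evident extension to finite $a\subseteq\Ord$, which the paper also takes for granted).
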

\begin{proof}
    \begin{enumerate}[wide, labelwidth=!, labelindent=0pt]
        \item Let $\xi<\kappa_m$ and $g\colon k \to \supp\xi$ be an increasing enumeration of the support of $\xi$. Then $\xi = j_g(t_\xi)$, and $\Omega^1_\sfM(f)(\xi) = j_{f\circ g}(t_\xi)=j_f(j_g(t_\xi)) = j_f(\xi)$ by \autoref{Lemma: Martin flower composition preservation}.
        
        \item Let $f\colon\alpha\to\beta$ and $g\colon \beta\to\gamma$. For $\xi<\kappa_\alpha$, we have $\Omega^1_\sfM(g\circ f)(\xi) = j_{(g\circ f)[\supp\xi]}(t_\xi)$ and
        \begin{equation*}
             (\Omega^1_\sfM(g) \circ \Omega^1_\sfM(f))(\xi) = j_{g[\supp j_{f[\supp \xi]}(t_\xi)]}(t_{j_{f[\supp \xi]}(t_\xi)}).
        \end{equation*}
        Then by \autoref{Lemma: Support for general Martin Flower}, we have that the right-hand side is equal to $j_{g[f[\supp \xi]]}(t_\xi)$.

        \item Follows from \autoref{Lemma: Support for general Martin Flower}.
        \qedhere 
    \end{enumerate}
\end{proof}

 Furthermore, we can see that it is a preflower:
\begin{proposition}
    $\Omega^1_\sfM$ is a preflower. In particular, if $j$ is iterable, then $\Omega^1_\sfM$ is a flower.
\end{proposition}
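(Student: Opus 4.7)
The plan is to verify the two clauses of \autoref{Corollary: First order characterization of a semiflower}. The extension of $\Omega^1_\sfM$ from $\mathsf{Nat}$ to $\LO$ via the coded-semidilator construction produces an F-semidilator whose trace, under the correspondence $\frakf$, consists of the ordinals $\xi < \kappa_\omega$ with $\supp_\omega(\xi) \in \omega$. The support analysis culminating in \autoref{Lemma: Support for general Martin Flower} shows that the trace terms of arity $n$ are precisely those $\xi < \kappa_n$ whose support equals $\{0,1,\dots,n-1\}$; for any such $\xi$ we have $t_\xi = \xi$, and for a general increasing $f$ we have $\Omega^1_\sfM(f)(\xi) = j_{f[\supp\xi]}(t_\xi)$, whose support is $f[\supp\xi]$.

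For the first clause, assume $\arity s = 0 < \arity t$ and fix an arity diagram $\cyrDe$ with legs $e_0, e_1$. Then $s < \kappa_0$ has empty support and $\Omega^1_\sfM(e_0)(s) = j_\varnothing(s) = s$, which remains below $\kappa_0$. On the other side $\Omega^1_\sfM(e_1)(t)$ has nonempty support $e_1[\supp t]$, so by the uniqueness-of-minimal-support proposition it cannot lie below $\kappa_0$, giving $s <_\cyrDe t$. For the second clause, assume both arities are positive and $\max e_0 < \max e_1$. The maximum of $\supp \Omega^1_\sfM(e_0)(s) = e_0[\supp s]$ is $\max e_0$, and the computation at the end of case~(3) of the proof of \autoref{Lemma: Support lemma}, extended telescopically to arbitrary finite sets of ordinals, yields $j_a(\xi) < \kappa_{\max a + 1}$ whenever $\xi < \kappa_{|a|}$; hence $\Omega^1_\sfM(e_0)(s) < \kappa_{\max e_0 + 1}$. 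Conversely $\max e_1 \in \supp \Omega^1_\sfM(e_1)(t)$, and any ordinal below $\kappa_{\max e_1}$ has support contained in $\max e_1$, so $\Omega^1_\sfM(e_1)(t) \ge \kappa_{\max e_1}$. Since $\max e_0 + 1 \le \max e_1$, we conclude $\Omega^1_\sfM(e_0)(s) < \Omega^1_\sfM(e_1)(t)$, as required.

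Combining the two clauses with the functoriality and support condition verified in the earlier propositions of this subsection (monotonicity being automatic as each $j_f$ is strictly increasing), $\Omega^1_\sfM$ is a preflower. If $j$ is iterable, then $M_\alpha$ is well-founded for every ordinal $\alpha$, so each $\kappa_\alpha = \Omega^1_\sfM(\alpha)$ is a genuine well-ordered ordinal; by functoriality this forces $\Omega^1_\sfM(X)$ to be well-ordered for every well-order $X$, upgrading the preflower to a flower. The only delicate bookkeeping point is transferring the bound $j_a(\xi) < \kappa_{\max a + 1}$ from the natural-number case treated in \autoref{Lemma: Support lemma} to arbitrary finite $a \subseteq \Ord$, but this follows by the same telescoping through critical points used throughout the subsection.
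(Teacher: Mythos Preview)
Your argument is correct, but it takes a considerably longer route than the paper's. You verify the first-order characterization of semiflowers from \autoref{Corollary: First order characterization of a semiflower}, which forces you to analyze the trace terms of $\Omega^1_\sfM$ and compare images $\Omega^1_\sfM(e_0)(s)$ and $\Omega^1_\sfM(e_1)(t)$ via the bounds $j_a(\xi) < \kappa_{\max a + 1}$ and the support-containment argument $\max e_1 \in \supp \Omega^1_\sfM(e_1)(t) \Rightarrow \Omega^1_\sfM(e_1)(t) \ge \kappa_{\max e_1}$. All of this checks out.

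The paper instead uses the \emph{original} definition of semiflower: $D$ is a semiflower iff $D$ sends initial segments to initial segments. Since $\Omega^1_\sfM$ has been defined explicitly on all ordinals with $\Omega^1_\sfM(\alpha) = \kappa_\alpha$ and $\Omega^1_\sfM(f)(\xi) = j_{f[\supp\xi]}(t_\xi)$, one simply observes that for the inclusion $\iota\colon \alpha \to \beta$ we have $\iota[\supp\xi] = \supp\xi$ and hence $\Omega^1_\sfM(\iota)(\xi) = j_{\supp\xi}(t_\xi) = \xi$. Thus $\Omega^1_\sfM(\iota)$ is itself the inclusion $\kappa_\alpha \hookrightarrow \kappa_\beta$, and ordinals are automatically initial segments of larger ordinals. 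This is a one-line argument that bypasses arity diagrams entirely. Your approach has the mild advantage of working purely at the level of the coded semidilator on $\mathsf{Nat}$ without ever invoking the extension to general ordinals, but since the paper has already built that extension, the direct verification is the natural move.
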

\begin{proof}
    Let $\alpha\le\beta$ and $\iota\colon\alpha\to\beta$ be the insertion map (i.e., $\iota(\xi)=\xi$.) 
    Then $\Omega^1_\sfM(\iota)(\xi) = j_{\supp\xi}(t_\xi)=\xi$, so $\Omega^1_\sfM(\iota)\colon\kappa_\alpha\to\kappa_\beta$ is also an insertion map.
\end{proof}

\subsection{The universality of the Martin flower}
In this section, we prove the Martin flower embeds every countable flower, whose proof is motivated by \cite[2.1(a)]{Kechris2012HomoTreeProjScales}. In fact, we have something stronger:
\begin{proposition} \label{Proposition: Martin Flower is Universal}
    $\Omega^1_\sfM$ is universal. In fact, if $F\in V_\kappa$ is a flower, then $F$ embeds to $\Omega^1_\sfM$.
\end{proposition}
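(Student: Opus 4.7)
The plan is to mirror the construction sketched in the preceding example for the iterated ultrapower of a measurable cardinal. Given a flower $F\in V_\kappa$, I fix an order-preserving map $g\colon F(\kappa)\to\kappa$ and define $\iota\colon F\to\Omega^1_\sfM$ by
\begin{equation*}
    \iota_\alpha(t(\xi_0,\ldots,\xi_{m-1})) \;=\; j_{0,\alpha}(g)\bigl(t(\kappa_{\xi_0},\ldots,\kappa_{\xi_{m-1}})\bigr),
\end{equation*}
treating the ordinals $\kappa_\xi=j_{0,\xi}(\kappa)$ as indiscernibles inside $\kappa_\alpha$.

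The map $g$ exists because $F\in V_\kappa$ has fewer than $\kappa$ terms, each of finite arity, so $|F(\beta)|<\kappa$ for every $\beta<\kappa$ by inaccessibility of $\kappa$; since $F$ is a flower, each $F(\beta)$ is an initial segment of $F(\kappa)$, whence $F(\kappa)$ has ordertype at most $\kappa$. Moreover, $j_{0,\alpha}(F)=F$ because $\crit j_{0,\alpha}=\kappa$ and $F\in V_\kappa$, so by elementarity $j_{0,\alpha}(g)$ is an order-preserving map $F(\kappa_\alpha)\to\kappa_\alpha$ in $M_\alpha$. Each $\iota_\alpha$ is then order-preserving as it factors as $j_{0,\alpha}(g)\circ F(\psi_\alpha)$, where $\psi_\alpha\colon\xi\mapsto\kappa_\xi$ is a strictly increasing map $\alpha\to\kappa_\alpha$.

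Naturality and support preservation are the main technical work. For $f\colon\alpha\to\beta$ I must show $\iota_\beta\circ F(f)=\Omega^1_\sfM(f)\circ\iota_\alpha$, which combined with the definition $\Omega^1_\sfM(f)(x)=j_{f[\supp x]}(t_x)$ reduces to the indiscernibility identity
\begin{equation*}
    j_{0,\gamma}(g)\bigl(t(\kappa_{\zeta_0},\ldots,\kappa_{\zeta_{m-1}})\bigr) \;=\; j_{\{\zeta_0,\ldots,\zeta_{m-1}\}}\bigl(j_{0,m}(g)(t(\kappa_0,\ldots,\kappa_{m-1}))\bigr)
\end{equation*}
for increasing $\zeta_0<\cdots<\zeta_{m-1}<\gamma$, together with an appeal to \autoref{Lemma: Support for general Martin Flower} to see that the support of the right-hand side is exactly $\{\zeta_0,\ldots,\zeta_{m-1}\}$. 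The latter uses that $\vec a\mapsto g(t(\vec a))$ is injective on $[\kappa]^m$ when $m\ge 1$, forcing $w_m:=j_{0,m}(g)(t(\kappa_0,\ldots,\kappa_{m-1}))\in\kappa_m$ to have full support $m$: otherwise $w_m\in I_a$ for a proper $a\subsetneq m$ would make $j_{0,\gamma}(g)(t(\kappa_{\vec\zeta}))$ invariant under replacing some $\kappa_{\zeta_k}$ by another ordinal, contradicting injectivity.

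The indiscernibility identity itself I would prove by induction on $\gamma$: the base case $\gamma=m$, $\vec\zeta=(0,\ldots,m-1)$ is trivial since every factor of $j_{\{0,\ldots,m-1\}}$ is $j_{k,k}=\mathsf{id}$, and the inductive step commutes one embedding $j_\delta$ at a time past $j_{0,m}(g)(t(\kappa_0,\ldots,\kappa_{m-1}))$ using \autoref{Lemma: Basic facts on iterating elementary embeddings}, mimicking the long symbol-pushing arguments of \autoref{Lemma: Martin flower composition preservation} and \autoref{Lemma: Support lemma}. This bookkeeping is the main obstacle: unlike for iterated ultrapowers one cannot invoke Kanamori's unique representation of $\Ult^\alpha(V,U)$-elements as a black box but must verify the analogue directly from the $j_a$-notation.
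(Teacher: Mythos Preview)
Your approach is correct and follows the same skeleton as the paper's proof: fix a map $F(\kappa)\to\kappa$, push it forward by $j_{0,\alpha}$, and evaluate on the indiscernibles $\kappa_{\xi_i}$. Two technical choices differ and are worth noting. First, the paper splits into constant versus nonconstant $F$; in the nonconstant case it observes $F(\kappa)\cong\kappa$ and takes $c_0$ to be an \emph{isomorphism}, which later lets it invert $c_m$ in the support argument: if $c_m(t(\kappa_0,\dots,\kappa_{m-1}))\in\ran j_e$, then $t(\kappa_0,\dots,\kappa_{m-1})\in\ran j_e$ directly, whence $\kappa_e\in\ran j_e$, contradiction. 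Your embedding $g$ avoids the case split but forces the more roundabout invariance argument you sketch; this works, though you should note that it relies on the indiscernibility identity being established \emph{first}, independently of the support computation, so that you can vary $\zeta_e$ and invoke injectivity of $j_{0,\gamma}(g)$. Second, the paper defines $\iota$ only on natural numbers, which suffices since a semidilator morphism is determined by its restriction to $\mathsf{Nat}$; your general-ordinal formulation is not wrong but incurs extra bookkeeping (and the paper's key computation $j_f(c_m)=c_{f(m-1)+1}$ is stated only for finite $f$, so you would indeed need to redo that symbol-pushing at the ordinal level, as you anticipate).
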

\begin{proof}
    Suppose that $F$ is constant (i.e., every $F$-term is nullary.)
    Since $\Omega^1_\sfM(0)=\kappa_0$ and every constant dilator in $V_\kappa=V_{\kappa_0}$ has ordertype $<\kappa_0$, we have the desired embeddability result.
    
    Now suppose that $F$ is nonconstant (i.e., there is a non-nullary term in $F$).
    Since $F\in V_\kappa$, $F(\alpha)\in V_\kappa$ for each $\alpha<\kappa$. Moreover, $F(\kappa)=\bigcup_{\alpha<\kappa} F(\alpha)$ and each $F(\alpha)$ is an initial segment of $F(\kappa)$. Hence the ordertype of $F(\kappa)$ is $\le\kappa$.
    Since $F$ is not nullary, we have that $F(\xi)$ is a \emph{proper} initial segment of $F(\eta)$ for $\eta>\xi\ge\omega$. This shows $F(\kappa)\cong\kappa$.
    
    Now let us fix $c_0\colon F(\kappa_0)\cong \kappa_0$, and define $c_n = j_{0,n}(c_0)\colon F(\kappa_n)\to\kappa_n$.
    Then define $\iota_n\colon F(n)\to \Omega^1_\sfM(n)$ by
    \begin{equation*}
        \iota_n(t(e_0,\cdots, e_{l-1})) = c_n(t(\kappa_{e_0},\cdots,\kappa_{e_{l-1}})),
    \end{equation*}
    where $t\in \field(F)$ has arity $l$ and $e_0<\cdots<e_{l-1}<n$.
    We first claim that $\iota\colon F\restriction\bbN\to \Omega^1_\sfM\restriction \bbN$ gives a natural transformation:
    For an increasing map $f\colon m\to n$, we have
    \begin{align*}
        j_f(c_m) & = j_f(j_{0,m}(c_0)) = j_{f(m-2)+1,f(m-1)}\circ\cdots\circ j_{f(0)+1,f(1)}\circ j_{0,f(0)}\circ j_{0,m}(c_0) \\
        &= j_{f(m-2)+1,f(m-1)}\circ\cdots\circ j_{f(0)+1,f(1)} \circ j_{f(0),f(0)+m} \circ j_{0,f(0)}(c_0) \\
        &= j_{f(m-2)+1,f(m-1)}\circ\cdots\circ j_{f(0)+1,f(1)} \circ j_{f(0)+1,f(0)+m} \circ j_{0,f(0)+1}(c_0) \\
        &= j_{f(m-2)+1,f(m-1)}\circ\cdots\circ j_{f(1),f(1)+m-1} \circ j_{f(0)+1,f(1)} \circ j_{0,f(0)+1}(c_0) \\
        &= j_{f(m-2)+1,f(m-1)}\circ\cdots\circ j_{f(1)+1,f(1)+m-1} \circ j_{f(0)+1,f(1)+1} \circ j_{0,f(0)+1}(c_0) \\
        & \hspace{15em} \vdots \\
        & = j_{f(m-1),f(m-1)+1} \circ j_{f(m-2)+1,f(m-1)}\circ\cdots\circ j_{f(0)+1,f(1)}\circ j_{0,f(0)+1} (c_0) \\
        & = j_{0,f(m-1)+1}(c_0) = c_{f(m-1)+1}.
    \end{align*}
    Hence
    \begin{equation} \label{Formula: Omega 1 M universality 00}
        \Omega^1_\sfM(f)(\iota_m(t(e_0,\cdots,e_{l-1}))) = j_f(c_m(t(\kappa_{e_0},\cdots,\kappa_{e_{l-1}}))) = c_{f(m-1)+1}(t(\kappa_{f(e_0)},\cdots,\kappa_{f(e_{l-1})})).
    \end{equation}
    Here $t$ is fixed by $j_f$ since $F\in V_\kappa$. Also,
    \begin{equation} \label{Formula: Omega 1 M universality 01}
        \iota_n(\Omega^1_\sfM(f)(t(e_0,\cdots,e_{l-1}))) = \iota_n(t(f(e_0),\cdots,f(e_{l-1}))) = c_n(t(\kappa_{f(e_0)},\cdots,\kappa_{f(e_{l-1})})).
    \end{equation}
    But observe that for $m<n$ and $x\in F(\kappa_m)\subseteq V_{\kappa_m}$, $c_m(x) = j_{m,n}(c_m(x)) = j_{m,n}(c_m)(j_{m,n}(x)) = c_n(x)$. Hence, theright-hand sidee of \eqref{Formula: Omega 1 M universality 00} and that of \eqref{Formula: Omega 1 M universality 01} are the same.

    We finalize the proof by showing that the transformation $\iota$ also preserves the support transformation.
    By letting $l=m$ and $e_i=i$ in the previous proof, we have
    \begin{equation*}
        \iota_n(t(f(0),\cdots,f(m-1))) = c_{f(m-1)+1}(t(\kappa_{f(0)},\cdots,\kappa_{f(m-1)}))
        =j_f(c_m(t(\kappa_0,\cdots,\kappa_{m-1}))).
    \end{equation*}
    By \autoref{Lemma: Support condition for Martin Flower}, it suffices to show that 
    \begin{equation*}
        \supp_m(c_m(t(\kappa_0,\cdots,\kappa_{m-1}))) = m.
    \end{equation*}
    $c_m(t(\kappa_0,\cdots,\kappa_{m-1})) <\kappa_m$ implies $c_m(t(\kappa_0,\cdots,\kappa_{m-1}))\in I_{\{0,1,\cdots,m-1\}}$.
    Now suppose that $c_m(t(\kappa_0,\cdots,\kappa_{m-1}))\in I_{m\setminus \{e\}}$ for some $e<m$, so there is $x<\kappa_{m-1}$ such that
    \begin{equation*}
        c_m(t(\kappa_0,\cdots,\kappa_{m-1})) = j_e(x).
    \end{equation*}
    Then we have $t(\kappa_0,\cdots,\kappa_{m-1}) = c_m^{-1}(j_e(x))=j_e(c_{m-1}^{-1}(x))$, so $t(\kappa_0,\cdots,\kappa_{m-1})\in \ran j_e$. However, we defined $t(\kappa_0,\cdots,\kappa_{m-1})$ as a pair $(t,\{\kappa_0,\cdots,\kappa_{m-1}\})$, so we have $\kappa_e\in \ran j_e$, a contradiction.
\end{proof}

\begin{remark} \label{Remark: An embedding to a Martin Flower can only take limit value}
    In the proof of \autoref{Proposition: Martin Flower is Universal}, observe that for each $t\in\field(F)$ of arity $n$, $\iota_n(t(0,1,\cdots,n-1)) = c_n(t(\kappa_0,\cdots,\kappa_{n-1}))$. $t(\kappa_0,\cdots,\kappa_{n-1})$ over $F(\kappa_n)$ is a limit ordinal since all of $\kappa_0,\cdots,\kappa_{n-1}$ are limit.
    Hence $\iota$ maps every $F$-term to a limit ordinal, so we have not only an embedding $\iota\colon F\to \Omega^1_\sfM$, but also that $\ran \iota$ is a set of limit ordinals, which we will call a \emph{limit embedding.}
\end{remark}

\section{The measurability of the Martin Flower} \label{Section: Measurability of the Martin Flower}
The main goal of this section is to construct a measure family for the Martin flower to establish its measurability, which is the heart of the paper. Martin's proof of determinacy \cite{Martin1980InfiniteGames} hints at how to construct a measure family, but deciphering the precise construction step is not easy: Martin defined a dependent product of measures, and the product is done along a tree. However, dilators and flowers themselves look like they do not have a tree-like structure.
Here, we need dendrograms, and we define the measure by the dependent product of measures along a dendrogram tree. We also need to specify how to traverse a dendrogram to define a product, and here is where we cast trekkable dendrograms (cf. \autoref{Definition: Trekkable dendrogram}).

\emph{Throughout the remaining part of the paper, a `dendrogram' means a dendrogram for flowers with no nullary terms. In particular, every dendrogram we will see has a unique node of length 0, which is not terminal.
Furthermore, every dendrogram we will consider is finite except in \autoref{Subsection: omega1 completeness of the measure family}, where we also consider countable dendrograms.}

\subsection{Construction of a measure family}
We shall define $\nu^d$ for each trekkable dendrogram $d$, which is a measure over the set of embeddings from $\Dec(d)$ to $\Omega^1_\sfM$.
The construction of a measure family will take the following steps:
\begin{enumerate}
    \item For a trekkable dendrogram $d$ and an embedding $\beta\colon \Dec(d^\bullet)\to \Omega^1_\sfM$, we will define the subsidiary space $D^{d,\beta}$, which is a set of tuples of elementary embeddings, and a measure $\hnu^{d,\beta}$ over $D^{d,\beta}$.
    \item We define $\nu^{d,\beta}$ by projecting $\hnu^{d,\beta}$.
    \item We will show that $\nu^{d,\beta}$ does not depend on the choice of $\beta$. We also show that $\nu^{d,\beta}$ gives the same measure for an isomorphic $d$.
\end{enumerate}

$\hnu^{d,\beta}$ will be a dependent product of measures over $d$. Trees are not linear orders, and this is why we use trekkable dendrograms instead of arbitrary dendrograms: Trekkable dendrograms provide a way to traverse a given dendrogram when we take a product. $\beta\colon \Dec(d^\bullet)\to \Omega^1_\sfM$ associates each node in $d$ an $\Omega^1_\sfM$-term, so we will think of $\beta(x^\bullet)$ an $\Omega^1_\sfM$-term associated with $x\in d$. For that reason, we write $\beta(s^\bullet)$ as $\beta(s)$.

\begin{definition} \label{Definition: Measure for elementary embeddings}
    Let $d$ be a trekkable dendrogram and $\beta\colon \Dec(d^\bullet)\to \Omega^1_\sfM$.
    For $s\in d$, let us define $\hnu^{d,\beta}_s$ and the corresponding domain $D^{d,\beta}_s$ as follows:
    \begin{enumerate}
        \item $D^{d,\beta}_{0}=\{0\}$ and $\hnu^{d,\beta}_{0}$ is the trivial measure.
        \item If $\lh s = 1$, define
        \begin{itemize}
            \item $D^{d,\beta}_s = \{\vec{k}\cup \{(s, k')\} \mid \vec{k}\in D^{d,\beta}_{s-1} \land k'\in \Emb^{j_1\restriction V_{\kappa_1+\beta(s)}}_{\beta(s)}\}.$
            \item $X \in \hnu^{d,\beta}_\sigma \iff
            \Big\{\vec{k}\in D^{d,\beta}_{s-1} \mid \Big\{k'\in \Emb^{j_1\restriction V_{\kappa_1+\beta(s)}}_{\beta(s)}\mid  \vec{k}\cup \{ (s, k')\}\in X\Big\}\in \mu^{j_1\restriction V_{\kappa_1+\beta(s)}}_{\beta(s)}\Big\}\in \hnu^{d,\beta}_{s-1}$.
        \end{itemize}
        \item Suppose that $\lh(s)>1$. Then we have some $t\multimap s$ with $a=\bfe(t)$. Define
        \begin{itemize}
            \item $D^{d,\beta}_s = \{\vec{k}\cup \{ (s, k')\} \mid \vec{k}\in D^{d,\beta}_{s-1} \land k'\in \Emb^{j_a(\vec{k}_t)}_{\beta(s)}\}.$
            \item $X \in \hnu^{d,\beta}_s \iff \Big\{\vec{k}\in D^{d,\beta}_{s-1} \mid \Big\{k'\in \Emb^{j_a(\vec{k}_t)}_{\beta(s)}\mid  \vec{k}\cup \{ (s, k')\}\in X\Big\}\in \mu^{j_a(\vec{k}_t)}_{\beta(s)}\Big\}\in \hnu^{d,\beta}_{s-1}$.
        \end{itemize}
    \end{enumerate}
    Here $\vec{k}_s = \vec{k}(s)$, so $\vec{k}_s$ is the $s$th component of $\vec{k}$. Then define $D^{d,\beta} = D^{d,\beta}_{|d|-1}$ and $\hnu^{d,\beta} = \hnu^{d,\beta}_{|d|-1}$. 
\end{definition}

We can easily see that $\hnu^{d,\beta}$ is a $(\min\beta)$-complete measure over $D^{d,\beta}$. In particular, $\min\beta \ge\kappa_0$ since there is no nullary term in $d$, so $\hnu^{d,\beta}$ is $\kappa_0$-complete.

To improve the readability of the following proofs, we use the measure quantifier notation introduced in \autoref{Definition: Measure quantifier}; For example, we can express the definition of $\hnu^{d,\beta}_s$ for a successor $s$ of $t>0$ as follows:
\begin{equation*}
    X\in \hnu^{d,\beta}_s \iff \forall\big(\hnu^{d,\beta}_{s-1}\big) \vec{k} \in D^{d,\beta}_{s-1} \forall\big(\mu^{j_a(\vec{k}_t)}_{\beta(s)}\big) k'\in \Emb^{j_a(\vec{k}_t)}_{\beta(s)} [\vec{k}\cup \{( s, k')\}\in X].
\end{equation*}
Then we have
\begin{equation} \label{Formula: Decomposing a measure quantifier}
    \forall(\hnu^{d,\beta})\vec{k} \phi(\vec{k}) \iff \forall(\hmu^{d,\beta}_1) k^1 \forall(\hmu^{d,\beta}_2) k^2\cdots \forall(\hmu^{d,\beta}_{m-1}) k^{m-1} \phi(k^1,\cdots,k^{m-1}),
\end{equation}
where $\hmu^{d,\beta}_s$ is a unit measure appearing when we define $\hnu^{d,\beta}_s$, so
\begin{equation} \label{Formula: hmu d beta definition}
    \hmu^{d,\beta}_s = 
    \begin{cases}
        \mu^{j_1\restriction V_{\kappa_1+\beta(s)}}_{\beta(s)}, & \lh s=1, \\
        \mu^{j_a(\vec{k}_{t})}_{\beta(s)}, & \lh s>1,\ d\vDash t\multimap s,\text{ and }a=\bfe^d(t).
    \end{cases}
\end{equation}

Going back to the construction of the measure family of Martin dilator, elements of $\Omega^1_\sfM$ are ordinals and not elementary embeddings, and elements of $D^{d,\beta}$ are tuples indexed by non-zero members of $d$ that can be non-terminal nodes.  Hence, $\hnu^{d,\beta}$ cannot serve as a desired measure.
We can define a `correct' measure by projecting $\hnu^{d,\beta}$:
\begin{definition}
    Let us define a measure $\nu^{d,\beta}$ over $(\Omega^1_\sfM)^{\Dec(d)}$ as follows:
    \begin{equation*}
        X\in \nu^{d,\beta}\iff \big\{\vec{k}\in D^{d,\beta}\mid \{(s,\crit \vec{k}_s) \mid s\in\term(d)\}\in X\big\}\in \hnu^{d,\beta}.
    \end{equation*}
\end{definition}

$\nu^{d,\beta}$ is a projection of $\hnu^{d,\beta}$, so is a $\kappa_0$-complete ultrafilter.
We will see later that $\nu^{d,\beta}$ does not depend on $\beta$, and only depends on the isomorphic type of $d$. Hence, we can write $\nu^{d,\beta}$ as $\nu^d$ and regard it as a measure over the set of embeddings from a finite dilator $d$ to $\Omega^1_\sfM$.

\subsection{The correct concentration of the measure family}
In this subsection, we prove that $\nu^d$ is a measure over the set $(\Omega^1_\sfM)^d$. The following theorem is a major intermediate step to prove that $\nu^d$ concentrates on $(\Omega^1_\sfM)^d$:
\begin{theorem} \label{Theorem: hnu concentrates correctly}
    $\hnu^{d,\beta}$ concentrates to the set 
    $\{\vec{k}\in D^{d,\beta}\mid s^\bullet\mapsto \crit \vec{k}_s \text{ is an embedding from $\Dec(d^\bullet)$ to $\Omega^1_\sfM$}\}$.
\end{theorem}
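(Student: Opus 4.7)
The plan is to proceed by induction along the trekkable order $\prec$, showing that for $\hnu^{d,\beta,\prec}$-almost every $\vec k$ the critical points $\crit\vec k_\sigma$ (over $\sigma\in\Temp(d)$ with $|\sigma|\ge 3$) assemble into a dilator embedding from $\Dec(d^\bullet)$ into $\Omega^1_\sfM$. By the abstract construction of predilators, it suffices to verify almost surely that (i) each $\crit\vec k_\sigma$ has arity $(|\sigma|-1)/2$ as an $\Omega^1_\sfM$-term and its priority permutation matches $\Sigma^{\Dec(d^\bullet)}_{\sigma^\bullet}$, and (ii) for every pair $\sigma^\bullet,\tau^\bullet$, the data $\bfp^{\Omega^1_\sfM}(\crit\vec k_\sigma,\crit\vec k_\tau)$ and $\varepsilon^{\Omega^1_\sfM}_{\crit\vec k_\sigma,\crit\vec k_\tau}$ agree with $\bfp^{\Dec(d^\bullet)}(\sigma^\bullet,\tau^\bullet)$ and $\varepsilon^{\Dec(d^\bullet)}_{\sigma^\bullet,\tau^\bullet}$.

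For (i), in the base case $|\sigma|=3$ the embedding $\vec k_\sigma\in\Emb^{j_1\restriction V_{\kappa_1+\beta(\sigma)}}_{\beta(\sigma)}$ forces $\crit \vec k_\sigma\in[\kappa_0,\kappa_1)$ almost surely, hence $\supp\crit\vec k_\sigma=\{0\}$, matching the arity. In the inductive step $\sigma=\tau^\frown\lag a,x\rag$ with $|\tau|\ge 1$, one has $\vec k_\sigma\in\Emb^{j_a(\vec k_\tau)}_{\beta(\sigma)}$, so $\crit\vec k_\sigma<j_a(\crit\vec k_\tau)$ and $\vec k_\sigma(\crit\vec k_\sigma)=j_a(\crit\vec k_\tau)$. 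Combining \autoref{Lemma: Support for general Martin Flower} with the inductive description of $\supp\crit\vec k_\tau$ determines how $\supp j_a(\crit\vec k_\tau)$ is formed from that of $\crit\vec k_\tau$ by the action of $j_a$, and the measure $\mu^{j_a(\vec k_\tau)}_{\beta(\sigma)}$ concentrates on critical points that introduce exactly one new support element strictly below all existing ones. Reading off the definition of $\Dec$ from a dendrogram then verifies that both the size of $\supp\crit\vec k_\sigma$ and the position of the new element are exactly those prescribed by $\arity^{\Dec(d^\bullet)}(\sigma^\bullet)$ and $\Sigma^{\Dec(d^\bullet)}_{\sigma^\bullet}$, where the parameter $a$ encodes precisely the insertion position.

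For (ii), I would analyze pairs $\sigma^\bullet,\tau^\bullet$ via \autoref{Lemma: Upper diagonal is large} and \autoref{Proposition: Product measure is generated by cubes}. If the longest common initial segment of $\sigma$ and $\tau$ in $\parterm(\Code(d))$ has length $2p+1$, the Mitchell-order-style inclusion provided by \autoref{Lemma: Upper diagonal is large} implies that almost surely the first $p$ elements of $\supp\crit\vec k_\sigma$ and $\supp\crit\vec k_\tau$ coincide, so $\bfp^{\Omega^1_\sfM}(\crit\vec k_\sigma,\crit\vec k_\tau)=p=\bfp^{\Dec(d^\bullet)}(\sigma^\bullet,\tau^\bullet)$. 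The sign $\varepsilon^{\Omega^1_\sfM}$ then reduces to comparing the support elements immediately after the branching point, which is decided by the parameter and term components of $\sigma,\tau$ at that point---precisely the data invoked by the Kleene-Brouwer order on $d^\bullet$.

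The main obstacle will be keeping the bookkeeping synchronized between the trekkable iteration along $\prec$ (which sequences the measures) and the Kleene-Brouwer order on $\Dec(d^\bullet)$ (which governs the target order in $\Omega^1_\sfM$). The $\bullet$-construction is what makes the induction close: each intermediate partial term of $\Temp(d)$ acquires a genuine dendrogram term of matching arity in $d^\bullet$, providing a landing spot inside $\Dec(d^\bullet)$ for the intermediate critical points $\crit\vec k_\sigma$. Once this correspondence is in hand, the normality of the embedding measures (\autoref{Lemma: Diagonal intersection of sets of embeddings}) lets one collect all the requirements across the tree into a single diagonal intersection, producing one $\hnu^{d,\beta,\prec}$-measure-one set on which $\sigma\mapsto\crit\vec k_\sigma$ is the required embedding $\Dec(d^\bullet)\to\Omega^1_\sfM$.
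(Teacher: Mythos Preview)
Your framework---verifying that $\sigma^\bullet\mapsto\crit\vec k_\sigma$ preserves $\Sigma$, $\bfp$, and $\varepsilon$---is in principle a valid way to show the map is a predilator embedding. However, the proposal does not engage with the computational core of the argument, and several of the specific claims are incorrect or misdirected.

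First, in (i) you assert that the measure $\mu^{j_a(\vec k_\tau)}_{\beta(\sigma)}$ concentrates on critical points that ``introduce exactly one new support element strictly below all existing ones.'' This is wrong: the parameter $a$ records the \emph{position} of the new priority entry among the existing ones, and $a$ ranges over $\{0,\dots,(|\tau|-1)/2\}$, not just $0$. Establishing that $\Sigma^{\Omega^1_\sfM}_{\crit\vec k_\sigma}$ matches $\Sigma^{\Dec(d^\bullet)}_{\sigma^\bullet}$ requires showing, for instance, that $j_{a+1}(\crit\vec k_\sigma)<j_a(\crit\vec k_\tau)$ while $j_a(\crit\vec k_\sigma)>$ certain other values; this is exactly the content of the paper's \autoref{Lemma: Immediate predecessor property lemma} and \autoref{Lemma: Arity lemma for dendroid embedding by criticals}, and it does not follow from the generic description you give.

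Second, in (ii) your appeal to \autoref{Lemma: Upper diagonal is large} and \autoref{Proposition: Product measure is generated by cubes} is misplaced. Those results concern the Mitchell-like relation $k^0\in\ran k^1$ between embeddings and are used in the paper for the $\omega_1$-completeness argument (\autoref{Subsection: omega1 completeness of the measure family}), not for the order-preservation of $\crit$. Computing $\bfp^{\Omega^1_\sfM}(\crit\vec k_\sigma,\crit\vec k_\tau)$ and $\varepsilon^{\Omega^1_\sfM}_{\crit\vec k_\sigma,\crit\vec k_\tau}$ amounts to comparing $j_u(\crit\vec k_\sigma)$ with $j_v(\crit\vec k_\tau)$ for varying $u,v$, and this comparison is precisely what the paper carries out via a direct case analysis on the Kleene--Brouwer relation $\sigma^\bullet(u)<_\KB\tau^\bullet(v)$: Case A (proper extension), Case B$'$ (first difference in a term component), and Cases C$'(0)$, C$'(1)$ (first difference in an ordinal component), handled respectively in \autoref{Theorem: Dendroid embeddings by criticals Case A}, \autoref{Theorem: Dendroid embeddings by criticals Case B'}, \autoref{Theorem: Dendroid embeddings by criticals Case C'0}, and \autoref{Theorem: Dendroid embeddings by criticals Case C'1}. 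Each case involves non-trivial manipulations of the iterated embeddings $j_a$ and the $\vec k_\rho$ (e.g.\ \autoref{Lemma: Schlutzenberg's lemma for value comparison}, the commutation identities for $j_{\alpha,\beta}$), and none of this is captured by the Mitchell-order argument you sketch.

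In short: your reduction to the abstract data $(\Sigma,\bfp,\varepsilon)$ is not a shortcut here---unpacking what those quantities mean for terms of $\Omega^1_\sfM$ leads you straight back to the inequalities the paper proves directly in \autoref{Theorem: Critical preserves dilator relation}, and your proposal does not supply any of that work.
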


Then let us prove \autoref{Theorem: hnu concentrates correctly}.
First, let us prove the following lemma, which will have a significant role in the latter arguments:
\begin{lemma} \label{Lemma: Fundamental lemma for dendroid embedding by criticals}
    For $\hnu^{d,\beta}$-almost every $\vec{k} \in D^{d,\beta}$, for $s\in d$ and $n=\lh s$, if $d\vDash r\multimap s$ and $a=\bfe^d(r)<\max(1,n-1)$, $x$, then 
    \begin{equation*}
        \sup_{\xi<\kappa_{n-1}} j_{a+1}(\xi) < \crit\vec{k}_s < \vec{k}_s(\crit \vec{k}_s
        ) \le \kappa_n.
    \end{equation*}
    If $n\ge 2$, then we additionally have
    \begin{equation*}
        \crit\vec{k}_s <\vec{k}_s(\crit \vec{k}_s) = j_a(\crit \vec{k}_r) < \kappa_n.
    \end{equation*}
    In particular, for $\hnu^{d,\beta}$-almost every $\vec{k}$, we have
    \begin{enumerate}
        \item $\kappa_{n-1} < \crit\vec{k}_s < \kappa_n$,
        \item $j_{a+1}(\crit\vec{k}_r) < \crit\vec{k}_s$.
    \end{enumerate}
\end{lemma}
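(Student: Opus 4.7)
My plan is an induction on the $\prec$-rank of $\sigma$ within $\Temp(d)$, with the inductive invariant that the asserted bounds hold for every $\tau\prec\sigma$ on a $\hnu^{d,\beta,\prec}_{\pred(\sigma)}$-measure one set. At each step I analyze only the fresh ultrafilter $\mu$ used to sample $\vec{k}_\sigma$; by \autoref{Lemma: Eliminating unused measure quantifiers} and the decomposition in \eqref{Formula: Decomposing a measure quantifier}, quantifiers attached to other $\prec$-positions drop out. Throughout, I use the basic concentration fact that $\{k'\mid\crit k'>\alpha\}\in\mu^k_\gamma$ whenever $\alpha<\crit k$, since then $k$ fixes $\alpha$ and $\crit(k\restriction V_{\crit k+\gamma})=\crit k>\alpha=k(\alpha)$ witnesses $k\restriction V_{\crit k+\gamma}\in k(\{k'\mid\crit k'>\alpha\})$.

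For the base case $|\sigma|=3$, we have $n=1$ and necessarily $a=0$, and $\vec{k}_\sigma$ is drawn from $\mu^{j_1\restriction V_{\kappa_1+\beta(\sigma)}}_{\beta(\sigma)}$. The definition of $\Emb^k_\gamma$ gives $\crit\vec{k}_\sigma<\kappa_1$ and $\vec{k}_\sigma(\crit\vec{k}_\sigma)=\kappa_1$ directly, yielding the equality and upper bound. Because $\crit j_1=\kappa_1$, the embedding $j_1$ is the identity on ordinals below $\kappa_1$, so $\sup_{\xi<\kappa_0}j_1(\xi)=\kappa_0$; the remaining bound $\kappa_0<\crit\vec{k}_\sigma$ is a direct instance of the concentration fact with $\alpha=\kappa_0<\kappa_1=\crit j_1$.

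For the inductive step, write $\sigma=\rho^\frown\langle a,x\rangle$ with $n\ge 2$ and $a\le n-2$. Applied to $\rho$, the inductive hypothesis yields $\kappa_{n-2}<\crit\vec{k}_\rho<\kappa_{n-1}$ on a $\hnu$-measure one set. Since $a\le n-2$ gives $\crit j_a=\kappa_a\le\kappa_{n-2}<\crit\vec{k}_\rho$, elementarity of $j_a$ implies that $j_a(\vec{k}_\rho)$ is an embedding with critical point $j_a(\crit\vec{k}_\rho)$, and the formula $j_{a,a+1}(\kappa_{a+m})=\kappa_{(a+1)+m}$ from the subsection on iterated critical points places $j_a(\crit\vec{k}_\rho)\in(\kappa_{n-1},\kappa_n)$. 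Sampling $\vec{k}_\sigma$ from $\mu^{j_a(\vec{k}_\rho)}_{\beta(\sigma)}$ then immediately gives $\vec{k}_\sigma(\crit\vec{k}_\sigma)=j_a(\crit\vec{k}_\rho)$ and the upper bound $\vec{k}_\sigma(\crit\vec{k}_\sigma)<\kappa_n$.

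The main obstacle is the lower bound $\sup_{\xi<\kappa_{n-1}}j_{a+1}(\xi)<\crit\vec{k}_\sigma$. I will reduce it to the inequality $\sup_{\xi<\kappa_{n-1}}j_{a+1}(\xi)<j_a(\crit\vec{k}_\rho)$, after which the $j_a(\crit\vec{k}_\rho)$-completeness of $\mu^{j_a(\vec{k}_\rho)}_{\beta(\sigma)}$ lets me intersect the measure-one sets $\{k'\mid\crit k'>\alpha\}$ along a cofinal family of $\alpha$ below $\sup j_{a+1}[\kappa_{n-1}]$. The chief tools for the reduction are \autoref{Lemma: Schlutzenberg's lemma for value comparison}, which gives $j_{a+1}\le j_a$ pointwise, and item 5 of \autoref{Lemma: Basic facts on iterating elementary embeddings}, which forces $j_a=j_{a+1}$ on $\ran j_a$. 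The delicate point I expect to be hardest is that when $a<n-2$ the quantity $\sup j_{a+1}[\kappa_{n-1}]$ strictly exceeds $\kappa_{n-1}$; the strategy is to invoke extra measure concentration at earlier $\prec$-stages to push $\crit\vec{k}_\rho$ arbitrarily close to $\kappa_{n-1}$, so that $j_a(\crit\vec{k}_\rho)$ approaches $\kappa_n$ and eventually dominates the fixed ordinal $\sup j_{a+1}[\kappa_{n-1}]<\kappa_n$.
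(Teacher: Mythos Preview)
Your base case, the upper bound, and the first reduction (``it suffices that $\sup_{\xi<\kappa_{n-1}}j_{a+1}(\xi)<j_a(\crit\vec{k}_\rho)$'') all agree with the paper. The gap is in the last paragraph. The appeal to $j_a(\crit\vec{k}_\rho)$-completeness is superfluous: once the reduced inequality holds, a single use of your concentration fact with $\alpha=\sup j_{a+1}[\kappa_{n-1}]$ already finishes. The actual work is establishing the reduced inequality for $\hnu$-almost all $\vec{k}$, and here your ``strategy'' is not an argument. You cannot ``push $\crit\vec{k}_\rho$ arbitrarily close to $\kappa_{n-1}$'' on a single measure-one set: concentration at the $\rho$-stage only yields $\{\crit\vec{k}_\rho>\alpha\}$ for a \emph{fixed} $\alpha$, and to conclude you would need some $\alpha<\kappa_{n-1}$ with $j_a(\alpha)\ge\sup j_{a+1}[\kappa_{n-1}]$. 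No such $\alpha$ is produced by your outline, and the pointwise bound $j_{a+1}\le j_a$ from Schlutzenberg's lemma points the wrong way for this purpose.

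What the paper does instead---and what your sketch lacks---is an algebraic pull-back through $j_a$. After unwinding once to $j_a(\vec{k}_\rho)\bigl(\sup_{\xi<\kappa_{n-1}}j_{a+1}(\xi)\bigr)<j_a(\crit\vec{k}_\rho)$, one uses $\kappa_{n-1}=j_a(\kappa_{n-2})$ and $j_{a+1}=j_a\cdot j_a$ to recognise the sup as $j_a\bigl(\sup_{\xi<\kappa_{n-2}}j_a(\xi)\bigr)$; then $j_a(\vec{k}_\rho)(j_a(y))=j_a(\vec{k}_\rho(y))$ and the injectivity of $j_a$ reduce the whole inequality to $\vec{k}_\rho\bigl(\sup_{\xi<\kappa_{n-2}}j_a(\xi)\bigr)<\crit\vec{k}_\rho$, a statement one level down. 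This descent from level $n$ to level $n-1$ is the mechanism that makes the induction close; vague talk of ``approaching $\kappa_n$'' does not substitute for it.
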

\begin{proof}
    If $n=1$, then for $\mu^{j_1\restriction V_{\kappa_1+\beta(s)}}_{\beta(s)}$-almost all $\vec{k}$ we have $\vec{k}_s\in \Emb^{j_1\restriction V_{\kappa_1+\beta(s)}}_{\beta(s)}$. Clearly if $k' \in \Emb^{j_1\restriction V_{\kappa_1+\beta(s)}}_{\beta(s)}$ then $\crit k'< \kappa_1$.
    Moreover, $\kappa_0 = \sup_{\xi<\kappa_0} j_1(\xi)$ and
    \begin{equation*}
        \forall\big(\mu^{j_1\restriction V_{\kappa_1+\beta(s)}}_{\beta(s)}\big) k'\in \Emb^{j_1\restriction V_{\kappa_1+\beta(s)}}_{\beta(s)} [\crit k'>\kappa_0]
        \iff \kappa_1= \crit j_1 > j_1(\kappa_0)=\kappa_0.
    \end{equation*}
    Hence for almost all $k'\in \Emb^{j_1\restriction V_{\kappa_1+\beta(s)}}_{\beta(s)}$ we have $\sup_{\xi<\kappa_0} j_1(\xi)<\crit k'$.
    
    If $d\vDash r\multimap s$ and $a=\bfe^d(r)<n-1$, then $\vec{k}_s \in \Emb^{j_a(\vec{k}_r)}_{\beta(s)}$ for $\hnu^{d,\beta}_s$-almost all $\vec{k} \in D^{d,\beta}$.
    Hence for $\hnu^{d,\beta}_{s}$-almost all $\vec{k}$,
    \begin{equation*}
        \crit \vec{k}_s <\crit j_a(\vec{k}_r)  = j_a(\crit \vec{k}_r) < j_a(\kappa_{n-1}) = \kappa_n.
    \end{equation*}
    For the lower bound, let us inductively assume that the lower bound inequality holds for $r$. 
    Observe that 
    \begin{align*}
        \forall\big(\mu^{j_a(\vec{k}_r)}_{\beta(s)}\big) k'\in \Emb^{j_a(\vec{k}_r)}_{\beta(s)} \left[\sup_{\xi<\kappa_{n-1}} j_{a+1}(\xi)<\crit k'\right]
        & \iff 
        j_a(\vec{k}_r)\left(\sup_{\xi<\kappa_{n-1}} j_{a+1}(\xi)\right) < j_a(\crit\vec{k}_r) \\
        & \iff j_a(\vec{k}_r)\left(\sup_{\xi<j_a(\kappa_{n-2})} (j_a\cdot j_a)(\xi)\right) < j_a(\crit\vec{k}_r) \\
        & \iff \vec{k}_r\left(\sup_{\xi<\kappa_{n-2}} j_a(\xi)\right) < \crit \vec{k}_r.
        \addtocounter{equation}{1}\tag{\theequation} \label{Formula: Fund-Ineq-00}
    \end{align*}
    We prove that the last inequality \eqref{Formula: Fund-Ineq-00} holds for $\hnu^{d,\beta}_{s-1}$-almost all $\vec{k}$; Equivalently,
    \begin{equation*}
        \forall\big(\hnu^{d,\beta}_r\big) \vec{k}\in D^{d,\beta}_r \cdots  \Big[\vec{k}_r\Big(\sup_{\xi<\kappa_{n-2}} j_a(\xi)\Big) < \crit \vec{k}_r\Big],
    \end{equation*}
    where we have measure quantifiers for elementary embeddings indexed by $t$ such that $r<t<s$. 
    However, $t$-indexed elementary embeddings for $r<t<s$ do not appear in the bracketed formula, so we can remove them by \autoref{Lemma: Eliminating unused measure quantifiers}.
    Thus what we prove is equivalent to the inequality \eqref{Formula: Fund-Ineq-00} for $\hnu^{d,\beta}_r$-almost all $\vec{k}\in D^{d,\beta}_r$.
    
    The inductive assumption on the lower bound for $r$ implies $\kappa_{n-1}<\crit \vec{k}_r$ for $\hnu^{d,\beta}_r$-almost all  $\vec{k}\in D^{d,\beta}_r$, so
    \begin{equation*}
        \sup_{\xi<\kappa_{n-2}} j_a(\xi) < j_a(\kappa_{n-2}) = \kappa_{n-1} < \crit \vec{k}_r.
    \end{equation*}
    Hence we have for $\hnu^{d,\beta}_r$-almost all $\vec{k}\in D^{d,\beta}_r$,
    \begin{equation*}
        \vec{k}_r\left(\sup_{\xi<\kappa_{n-2}} j_a(\xi)\right) = \sup_{\xi<\kappa_{n-2}} j_a(\xi) < \crit \vec{k}_r. \qedhere 
    \end{equation*}
\end{proof}

\begin{lemma} \label{Lemma: Arity lemma for dendroid embedding by criticals}
    Let $s\in d$ and $\lh s=n\ge 1$. Then for $\hnu^{d,\beta}$-almost all $\vec{k}\in D^{d,\beta}$, 
    \begin{enumerate}
        \item $\crit\vec{k}_s \notin \ran j_m$ for $m<n$.
        \item $j_0(\crit \vec{k}_s) > j_1(\crit \vec{k}_s) > \cdots > j_{n-2}(\crit \vec{k}_s) > j_{n-1}(\crit \vec{k}_s) > j_n(\crit \vec{k}_s)=\crit \vec{k}_s$.
    \end{enumerate}
    In particular, $\supp^{\Omega^1_\sfM}(\crit\vec{k}_s) = n$ and $(n,\crit \vec{k}_s)\in \Tr(\Omega^1_\sfM)$.
\end{lemma}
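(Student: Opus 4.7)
The plan is to establish (1) by induction on $|\sigma|$, derive (2) from (1) via Schlutzenberg's Lemma, and finally deduce the support conclusion by relating $\ran j_m$ to the sets $I_{n\setminus\{m\}}$ entering the definition of $\supp^{\Omega^1_\sfM}$.

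The base case $|\sigma|=3$ (so $n=1$) of (1) is immediate: \autoref{Lemma: Fundamental lemma for dendroid embedding by criticals} yields $\kappa_0 < \crit\vec{k}_\sigma < \kappa_1$, and since $j_0$ is the identity on $[0,\kappa_0)$ and sends $\kappa_0$ to $\kappa_1$, its range misses $(\kappa_0,\kappa_1)$. For the inductive step, write $\sigma=\rho^\frown\lag a,x\rag$ and set $k^*=j_a(\vec{k}_\rho)$, so that $\vec{k}_\sigma\in\Emb^{k^*}_{\beta(\sigma)}$ for $\hnu^{d,\beta,\prec}$-almost all $\vec{k}$, and the Fundamental Lemma supplies $\kappa_{n-1}<\crit k^*<\kappa_n$. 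Let $\calU_{k^*}$ be the normal ultrafilter on $\crit k^*$ derived from $k^*$ (so $A\in\calU_{k^*}$ iff $\crit k^*\in k^*(A)$); it is $\crit k^*$-complete and non-principal. For any $m<n$ the set $\ran j_m\cap\crit k^*$ has cardinality at most $\kappa_{n-1}$: if $\eta\ge\kappa_{n-1}$ then $j_m(\eta)\ge j_m(\kappa_{n-1})=\kappa_n>\crit k^*$ (using $j_{\alpha,\beta}(\kappa_{\alpha+\ell})=\kappa_{\beta+\ell}$), so every element of $\ran j_m\cap\crit k^*$ is $j_m(\eta)$ for some $\eta<\kappa_{n-1}$. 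Since $\kappa_{n-1}<\crit k^*$, $\crit k^*$-completeness and non-principality force $\ran j_m\cap\crit k^*\notin\calU_{k^*}$, hence $\{k'\in\Emb^{k^*}_{\beta(\sigma)}:\crit k'\notin\ran j_m\}\in\mu^{k^*}_{\beta(\sigma)}$, which is (1) for $\sigma$.

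Given (1), for each $m<n$ the Corollary to \autoref{Lemma: Schlutzenberg's lemma for value comparison} gives $j_{m+1}(\crit\vec{k}_\sigma)=(j_m\cdot j_m)(\crit\vec{k}_\sigma)<j_m(\crit\vec{k}_\sigma)$, chaining to the strictly decreasing sequence in (2); the terminal equality $j_n(\crit\vec{k}_\sigma)=\crit\vec{k}_\sigma$ follows from $\crit\vec{k}_\sigma<\kappa_n=\crit j_n$. For the support conclusion, $\crit\vec{k}_\sigma\in I_n=\kappa_n$ is automatic, so it suffices to exclude $\crit\vec{k}_\sigma\in I_a$ for every $a\subsetneq n$. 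Any such $a$ is contained in $n\setminus\{m\}$ for some $m<n$, and direct computation of $j_{n\setminus\{m\}}$ shows $I_{n\setminus\{m\}}\subseteq\ran j_m$ in every case: $j_{n\setminus\{m\}}=j_m$ when $0\le m<n-1$, while $j_{n\setminus\{n-1\}}$ is the identity with $\kappa_{n-1}\subseteq\ran j_{n-1}$. Hence (1) gives $\crit\vec{k}_\sigma\notin I_a$, so $\supp^{\Omega^1_\sfM}(\crit\vec{k}_\sigma)=n\in\omega$ and $\crit\vec{k}_\sigma\in\Tr(\Omega^1_\sfM)$.

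The main obstacle is the inductive step of (1): one must identify the normal ultrafilter derived from $k^*$, produce the cardinality bound $|\ran j_m\cap\crit k^*|\le\kappa_{n-1}<\crit k^*$, and apply $\crit k^*$-completeness. The rest is routine manipulation of the Fundamental Lemma, Schlutzenberg's Lemma, and the explicit structure of the compositions $j_a$.
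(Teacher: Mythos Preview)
Your proof is correct. The overall architecture matches the paper's, but your argument for (1) is genuinely different in the inductive step. The paper unwinds the measure quantifier to obtain the equivalent condition $\crit j_a(\vec{k}_\rho)\notin\ran\big((j_a(\vec{k}_\rho))(j_m\restriction\kappa_{n-1})\big)$, then argues that this range is contained in $\ran j_a(\vec{k}_\rho)$ (since $j_a(\vec{k}_\rho)$ fixes everything below $\kappa_{n-1}$) and invokes the tautology $\crit k'\notin\ran k'$. You instead observe that $\ran j_m\cap\crit k^*$ is the image of $\kappa_{n-1}$ under $j_m$ and hence has measure zero in the $\crit k^*$-complete derived ultrafilter; this is a clean counting argument that sidesteps the computation of $k^*(j_m\restriction\kappa_{n-1})$. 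A minor remark: you frame (1) as an induction on $|\sigma|$, but your step does not actually invoke the inductive hypothesis for $\rho$---only the Fundamental Lemma---so the induction scaffolding is unnecessary (the paper's proof likewise does not use an inductive hypothesis here).

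For (2) you proceed exactly as the paper does, via Schlutzenberg's lemma. Your derivation of the support conclusion is more explicit than the paper's bare ``In particular'': you compute $j_{n\setminus\{m\}}=j_m$ for $m<n-1$ (which follows from $j_{\{0,\ldots,n-1\}}=\mathrm{id}$ and the Element Subtraction Lemma) and handle $m=n-1$ via $I_{\{0,\ldots,n-2\}}=\kappa_{n-1}\subseteq\ran j_{n-1}$, so that $I_{n\setminus\{m\}}\subseteq\ran j_m$ in every case. This is a useful spelling-out of what the paper leaves implicit.
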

\begin{proof}
    \begin{enumerate}
        \item The case $\lh s=1$ is easy since $\kappa_0 < \crit \vec{k}_s < \kappa_1$ for $\hnu^{d,\beta}$.a.e $\vec{k}$.
        
        Now suppose that $d\vDash r\multimap s$ for some $r$ and $a=\bfe^d(r)<n-1$.
        Note that $\crit\vec{k}_s < \kappa_n$ for $\hnu^{d,\beta}$-almost all $\vec{k}\in D^{d,\beta}$ by \autoref{Lemma: Fundamental lemma for dendroid embedding by criticals}, so for $\hnu^{d,\beta}$-almost all $\vec{k}\in D^{d,\beta}$,
        \begin{equation*}
            \crit\vec{k}_s \in \ran j_m \iff \crit\vec{k}_s \in \ran j_m\restriction \kappa_{n-1}.
        \end{equation*}
        Thus we claim $\crit\vec{k}_s \notin \ran j_m\restriction \kappa_{n-1}$ for $\hnu^{d,\beta}$-almost all $\vec{k}\in D^{d,\beta}$, which is equivalent to
        \begin{equation*}
            \forall(\hnu^{d,\beta}_r)\vec{k}\in D^{d,\beta}_r \forall \big(\mu^{j_a(\vec{k}_r)}_{\beta(s)} \big) k'\in \Emb^{j_a(\vec{k}_r)}_{\beta(s)} {\Big[}k'\notin \ran j_m\restriction \kappa_{n-1}{\Big]}.
        \end{equation*}
        Moreover,
        \begin{equation} \label{Equation: Term arity calculation 00}
            \forall \big(\mu^{j_a(\vec{k}_r)}_{\beta(s)} \big) k'\in \Emb^{j_a(\vec{k}_r)}_{\beta(s)} {\Big[}k'\notin \ran j_m\restriction \kappa_{n-1}{\Big]} 
            \iff \crit j_a(\vec{k}_r) \notin \ran \Big(\big(j_a(\vec{k}_r)\big) (j_m\restriction \kappa_{n-1})\Big).
        \end{equation}
        Also, let us observe that \autoref{Lemma: Fundamental lemma for dendroid embedding by criticals} implies the following for $\hnu^{d,\beta}$-almost all $\vec{k}\in D^{d,\beta}$ (so also for $\hnu^{d,\beta}_r$-almost all $\vec{k}\in D^{d,\beta}_r$):
        \begin{equation*}
            \crit j_a(\vec{k}_r) = j_a(\crit \vec{k}_r) > j_a(\kappa_{n-2})=\kappa_{n-1}\ge\kappa_m, 
        \end{equation*}
        so $j_a(\vec{k}_r)(\kappa_{n-1}) = \kappa_{n-1}$ and $j_a(\vec{k}_r)(\kappa_m) = \kappa_m$. Hence the right-hand-side of \eqref{Equation: Term arity calculation 00} is equivalent to
        \begin{equation*}
            \crit j_a(\vec{k}_r) \notin \ran \Big(\big(j_a(\vec{k}_r)\cdot j_m\big) \restriction \kappa_{n-1}\Big).
        \end{equation*}
        Here observe that if $\xi<\kappa_{n-1}$, then $(j_a(\vec{k}_r))(\xi) = \xi$, so
        \begin{equation*}
            \big(j_a(\vec{k}_r)\cdot j_m\big)(\xi) = \big(j_a(\vec{k}_r)\cdot j_m\big)\big( (j_a(\vec{k}_r)(\xi) \big)
            = (j_a(\vec{k}_r))\big( j_m(\xi) \big),
        \end{equation*}
        so $\ran \Big(\big(j_a(\vec{k}_r)\cdot j_m\big) \restriction \kappa_{n-1}\Big) \subseteq \ran j_a(\vec{k}_r)$.
        However, for every elementary embedding $k'$, $\crit k'\notin \ran k'$, so we have the right-hand side of \eqref{Equation: Term arity calculation 00}.

        \item Let $m<n-1$. The previous item and \autoref{Lemma: Schlutzenberg's lemma for value comparison} applied to $\xi=\crit \vec{k}_s$ and $k=j_m\restriction V_{\kappa_n+\omega}$, $\delta=\kappa_n$ gives $j_{m+1}(\crit \vec{k}_s) < j_m(\crit \vec{k}_s)$.
        The last inequality follows from that $\crit \vec{k}_s <\kappa_n$ for $\hnu^{d,\beta}$-almost all $\vec{k}$.
        \qedhere 
    \end{enumerate}
\end{proof}

\begin{lemma} \label{Lemma: Immediate predecessor property lemma}
    The following holds for $\hnu^{d,\beta}$-almost all $\vec{k}$:
    Let $s\in d$ and $d\vDash r\multimap s$, $\lh s>1$, and $a=\bfe^d(r)<\lh r$. Then 
    \begin{equation*}
        \forall \gamma<\crit\vec{k}_r (j_a(\gamma)<\crit\vec{k}_s).
    \end{equation*}
    Hence for $\hnu^{d,\beta}$-almost all $\vec{k}$, $\crit\vec{k}_r$ is the least ordinal $\gamma$ such that $\crit\vec{k}_s \le j_a(\gamma)$.
    Furthermore, we have
    \begin{equation*}
        \sup_{\gamma<\crit\vec{k}_r} j_a(\gamma) < \crit\vec{k}_s.
    \end{equation*}
\end{lemma}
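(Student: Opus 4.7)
The plan is to reduce the statement to a cofinality-based discontinuity property of $j_a$ at $\crit\vec{k}_\rho$, and then extract it via the defining characterization of $\mu^{j_a(\vec{k}_\rho)}_{\beta(\sigma)}$, the measure governing $\vec{k}_\sigma$ inside $\hnu^{d,\beta,\prec}$.

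First I would set $n := (|\sigma|-1)/2$. Since $|\rho|\ge 3$, we have $n\ge 2$, and because $d$ is a flower, the dendrogram characterization of preflowers forces $a = e_{n-1} < n-1$, i.e., $a\le n-2$. By \autoref{Lemma: Fundamental lemma for dendroid embedding by criticals} applied to $\rho$, for $\hnu^{d,\beta,\prec}$-almost every $\vec{k}$ one has $\kappa_{n-2}<\crit\vec{k}_\rho<\kappa_{n-1}$; in particular $\crit\vec{k}_\rho>\kappa_a = \crit j_a$, so $\crit\vec{k}_\rho$ is inaccessible and strictly moved by $j_a$, and its image $j_a(\crit\vec{k}_\rho)$ is again inaccessible. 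Writing $\alpha := \sup_{\gamma<\crit\vec{k}_\rho} j_a(\gamma)$, the key cofinality count is that $|j_a[\crit\vec{k}_\rho]|\le\crit\vec{k}_\rho$, whereas $j_a(\crit\vec{k}_\rho)$ is regular of cofinality $j_a(\crit\vec{k}_\rho)>\crit\vec{k}_\rho$. Hence $j_a[\crit\vec{k}_\rho]$ cannot be cofinal in $j_a(\crit\vec{k}_\rho)$, giving $\alpha < j_a(\crit\vec{k}_\rho) = \crit j_a(\vec{k}_\rho)$.

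With $\alpha$ strictly below $\crit j_a(\vec{k}_\rho)$, the embedding $j_a(\vec{k}_\rho)$ fixes $\alpha$. The defining equivalence $X\in\mu^{j_a(\vec{k}_\rho)}_{\beta(\sigma)}\iff j_a(\vec{k}_\rho)\restriction V_{\crit j_a(\vec{k}_\rho)+\beta(\sigma)}\in (j_a(\vec{k}_\rho))(X)$, applied to the set
\[
X = \{k'\in\Emb^{j_a(\vec{k}_\rho)}_{\beta(\sigma)} : \alpha < \crit k'\},
\]
then reduces, by elementarity and the fixing of $\alpha$, to checking $\alpha < \crit j_a(\vec{k}_\rho)$, which we have just established. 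Thus $X\in\mu^{j_a(\vec{k}_\rho)}_{\beta(\sigma)}$, and unfolding through the definition of $\hnu^{d,\beta,\prec}$ yields $\alpha < \crit\vec{k}_\sigma$ for $\hnu^{d,\beta,\prec}$-almost every $\vec{k}$; this is the ``furthermore'' clause. The pointwise inequality $j_a(\gamma) < \crit\vec{k}_\sigma$ for every $\gamma<\crit\vec{k}_\rho$ follows from $j_a(\gamma)\le\alpha$, and combining this with the upper bound $\crit\vec{k}_\sigma<j_a(\crit\vec{k}_\rho)$ supplied by the Fundamental lemma identifies $\crit\vec{k}_\rho$ as the least ordinal $\gamma$ with $\crit\vec{k}_\sigma\le j_a(\gamma)$.

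I expect the cofinality step to be the main obstacle in exposition: one must verify that $\crit\vec{k}_\rho$ is a genuine inaccessible in the ambient $V_\lambda$ (standard, since it is the critical point of a rank-to-rank style embedding between rank initial segments) and that $j_a(\crit\vec{k}_\rho)$ is again inaccessible (by elementarity of $j_a$ on $V_\lambda$), so that the cardinality-versus-cofinality argument is licensed with only countable choice available. A secondary bookkeeping point is that the parameter $\alpha$, together with any restricted function-valued parameters invoked when citing elementarity of $j_a(\vec{k}_\rho)$, lies inside its domain $V_{\crit j_a(\vec{k}_\rho)+\beta(\rho)}$; this is automatic from the inequality $\beta(\sigma)+1<\beta(\rho)$ already presupposed for $\mu^{j_a(\vec{k}_\rho)}_{\beta(\sigma)}$ to be well-defined.
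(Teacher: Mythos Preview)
Your proof is correct and close in spirit to the paper's, but the two arguments run in opposite directions. The paper first establishes the pointwise inequality: it unwinds the $\mu^{j_a(\vec{k}_\rho)}_{\beta(\sigma)}$-quantifier on the statement $\forall\gamma<\crit\vec{k}_\rho\,(j_a(\gamma)<\crit k')$, applies $j_a(\vec{k}_\rho)$ to both $\crit\vec{k}_\rho$ and the parameter $j_a\restriction V_{\vec{k}_\rho(\crit\vec{k}_\rho)}$, and after cancelling $j_a$ reduces everything to the trivially true $\forall\gamma<\crit\vec{k}_\rho\,(\vec{k}_\rho(\gamma)<\crit\vec{k}_\rho)$. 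Only afterwards does it deduce the sup inequality from the inaccessibility of $\crit\vec{k}_\sigma$. You instead front-load the inaccessibility: you use a cofinality count to get $\alpha=\sup j_a[\crit\vec{k}_\rho]<j_a(\crit\vec{k}_\rho)=\crit j_a(\vec{k}_\rho)$, then push the single ordinal parameter $\alpha$ through the measure quantifier to obtain $\alpha<\crit\vec{k}_\sigma$ directly, and derive the pointwise inequality as a corollary.

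Your route has the minor advantage that the only parameter you feed through $j_a(\vec{k}_\rho)$ is the ordinal $\alpha$, which is automatically below the critical point and hence fixed, whereas the paper manipulates the function-valued parameter $j_a\restriction V_{\vec{k}_\rho(\crit\vec{k}_\rho)}$. The paper's route, on the other hand, never needs to argue separately that $j_a$ is discontinuous at $\crit\vec{k}_\rho$. One small imprecision: the domain of $j_a(\vec{k}_\rho)$ is $V_{\crit j_a(\vec{k}_\rho)+j_a(\beta(\rho))}$ rather than $V_{\crit j_a(\vec{k}_\rho)+\beta(\rho)}$, but since $j_a(\beta(\rho))\ge\beta(\rho)$ this only helps, so your bookkeeping remark still goes through.
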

\begin{proof}
    Observe that
    \begin{align*}
        &
        \forall(\hnu^{d,\beta} )\vec{k}\in D^{d,\beta}
        \big[\forall \gamma<\crit\vec{k}_r (j_a(\gamma)<\crit\vec{k}_s)\big] 
        \\ \iff & 
        \forall(\hnu^{d,\beta}_r) \vec{k}\in D^{d,\beta}_r \forall(\mu^{j_a(\vec{k}_r)}_{\beta(s)}) k'\in \Emb^{j_a(\vec{k}_r)}_{\beta(s)}
        {\Big[}\forall \gamma<\crit\vec{k}_r \big[j_a(\gamma)<\crit k'\big]{\Big]} 
        \\ \iff &
        \forall(\hnu^{d,\beta}_r) \vec{k}\in D^{d,\beta}_r 
        {\Big[} \forall \gamma<j_a(\vec{k}_r)(\crit\vec{k}_r) \big[j_a(\vec{k}_r)\big(j_a\restriction V_{\vec{k}_r(\crit\vec{k}_r)}\big)(\gamma)<\crit(j_a(\vec{k}_r))\big] {\Big]}.
    \end{align*}
    Also, for $\hnu^{d,\beta}$-almost all $\vec{k}\in D^{d,\beta}$,
    \begin{equation*}
        \crit j_a(\vec{k}_r) = j_a(\crit \vec{k}_r) > \crit\vec{k}_r
    \end{equation*}
    since $a\le \lh r-1$. Hence $\crit j_a \le \kappa_{\lh r-1}\le \crit\vec{k}_r$ for $\hnu^{d,\beta}$-almost all almost all $\vec{k}\in D^{d,\beta}$ by \autoref{Lemma: Fundamental lemma for dendroid embedding by criticals}.
    This implies
    \begin{equation*}
        j_a(\vec{k}_r)(\crit\vec{k}_r) = \crit\vec{k}_r.
    \end{equation*}
    Hence
    \begin{align*}
        & \forall \gamma<j_a(\vec{k}_r)(\crit\vec{k}_r) \Big[j_a(\vec{k}_r)\big(j_a\restriction V_{\vec{k}_r(\crit\vec{k}_r)}\big)(\gamma)<\crit(j_a(\vec{k}_r))\Big] \\
        \iff& 
        \forall \gamma<\crit\vec{k}_r \Big[j_a(\vec{k}_r)\big(j_a\restriction V_{\vec{k}_r(\crit\vec{k}_r)}\big)(\gamma)<\crit(j_a(\vec{k}_r))\Big] \\
        \iff &
        \forall \gamma<\crit\vec{k}_r \Big[j_a(\vec{k}_r)(j_a(\gamma))<\crit(j_a(\vec{k}_r))\Big] \\ 
        \iff &
        \forall \gamma<\crit\vec{k}_r \Big[\vec{k}_r(\gamma)<\crit(\vec{k}_r)\Big].
    \end{align*}
    The last condition clearly holds, which finishes the proof.
    The last inequality follows from that  $\crit\vec{k}_r<\crit\vec{k}_s$ are inaccessible cardinals.\footnote{Its proof does not require the axiom of choice since the critical point $\kappa$ of an elementary embedding is inaccessible in the sense that $V_\kappa$ is a model of second-order $\ZF$, so it is closed under a limit of increasing ordinals below $\kappa$ of length $<\kappa$.}
\end{proof}

Now we prove that $s^\bullet\mapsto \crit \vec{k}_s$ is a dilator embedding from $\Dec(d^\bullet)$ to $\Omega^1_\sfM$ for almost all $\vec{k}$:
\begin{theorem} \label{Theorem: Critical preserves dilator relation}
    Let $s,t\in d$, $s,t\neq 0$ and $\cyrDe$ be an arity diagram between $s^\bullet$ and $t^\bullet$ in $\Dec(d^\bullet)$.
    Then for $\hnu^{d,\beta}$-almost all $\vec{k}$,
    \begin{equation*}
        \Dec(d^\bullet) \vDash s^\bullet <_\cyrDe t^\bullet \implies \Omega^1_\sfM\vDash \crit \vec{k}_s <_\cyrDe \crit \vec{k}_t.
    \end{equation*}
\end{theorem}

By the Elementary comparison decomposition theorem \autoref{Theorem: Elementary comparison decomposition}, it suffices to show \autoref{Theorem: Critical preserves dilator relation} for elementary comparison relations $<_\cyrDe$.
The following implies for almost all $\vec{k}$, $s^\bullet\mapsto \crit \vec{k}_s$ preserves elementary comparison relations of type (A):

\begin{proposition} \label{Proposition: Dendroid embeddings by criticals Case A}
    For $\hnu^{d,\beta}$-almost all $\vec{k}$, $s^\bullet\mapsto \crit\vec{k}_s$ preserves elementary comparison relations of type (A).
\end{proposition}
\begin{proof}
    Suppose that $s,t\in d$, $d\vDash t\multimap s$, $\lh t = n>0$ and $e=\bfe^d(t)$. There is only one elementary comparison relation $\cyrDe$ between $s^\bullet$ and $t^\bullet$ of type (A), namely,
    \begin{equation*}
        \Dec(d^\bullet) \vDash s^\bullet<_\cyrDe t^\bullet \iff \Dec(d^\bullet)(n+1) \vDash s^\bullet((n+1)\setminus \{e\}) < t^\bullet(n+1).
    \end{equation*}
    Also, $\Omega^1_\sfM \vDash \crit \vec{k}_s <_\cyrDe \crit\vec{k}_t$ is equivalent to
    \begin{equation*}
        j_{\{0,1,\cdots,n\}\setminus \{e\}}(\crit\vec{k}_s) < j_{\{0,1,\cdots,n\}}(\crit\vec{k}_t).
    \end{equation*}
    Hence it suffices to show the above inequality for $\hnu^{d,\beta}$-almost all $\vec{k}$: Observe that $j_{\{0,1,\cdots,n\}}$ is the identity and $j_{\{0,1,\cdots,n\}\setminus\{e\}} = j_e$. However, \autoref{Lemma: Fundamental lemma for dendroid embedding by criticals} implies $\crit\vec{k}_s <j_e(\crit\vec{k}_t)$ for $\hnu^{d,\beta}$-almost all $\vec{k}$, as desired.
\end{proof}

Then let us turn to the elementary comparison relations of type (B).
\begin{proposition} \label{Proposition: Dendroid embeddings by criticals Case B}
    For $\hnu^{d,\beta}$-almost all $\vec{k}$, $s^\bullet\mapsto \crit\vec{k}_s$ preserves elementary comparison relations of type (B).
\end{proposition}
\begin{proof}
    Let $s,t,r\in d$ be such that $d\vDash r\multimap s,t$ and $d\vDash s<t$. The type (B) comparison relation is $<_\cyrDe$ for the trivial $\cyrDe$, so it suffices to show the following: For $\hnu^{d,\beta}$-almost all $\vec{k}$, $\crit \vec{k}_s<\crit\vec{k}_t$. 
    This follows from the following computation:
    \begin{align*}
        & \forall(\hnu^{d,\beta})\vec{k}\in D^{d,\beta} \big[\crit\vec{k}_s < \crit\vec{k}_t\big]\\
         \iff &
         \forall(\hnu^{d,\beta}_t)\vec{k}\in D^{d,\beta}_t \big[\crit\vec{k}_s < \crit\vec{k}_t\big]\\
        \iff &
        \forall(\hnu^{d,\beta}_r)\vec{k}\in D^{d,\beta}_r
        \forall(\mu^{k'}_{\beta(s)}) k^0 \in \Emb^{k'}_{\beta(s)}
        \forall(\mu^{k'}_{\beta(t)}) k^1 \in \Emb^{k'}_{\beta(t)}
        \big[ \crit k^0 < \crit k^1 \big].\\
        \iff &
        \forall(\hnu^{d,\beta}_r)\vec{k}\in D^{d,\beta}_r
        \forall(\mu^{k'}_{\beta(s)}) k^0 \in \Emb^{k'}_{\beta(s)}
        \big[ (k' \restriction V_{\crit k' + \beta(t)})(\crit (k^0)) < \crit k' \big].\\
        \iff & 
        \forall(\hnu^{d,\beta}_r)\vec{k}\in D^{d,\beta}_r
        \big[ \big(k'\big(k' \restriction V_{\crit k' + \beta(t)}\big)\big)(\crit k') < k'(\crit k')\big]. \addtocounter{equation}{1}\tag{\theequation} \label{Formula: Case B comparison last formula}
    \end{align*}
    Here
    \begin{equation*}
        k' = 
        \begin{cases}
            j_{\bfe^d(r)}(\vec{k}_r), & \text{if } \lh r\ge 1 \\
            j_1\restriction V_{\kappa_1+\beta(t)} & \text{if }\lh r=0
        \end{cases}
    \end{equation*}
    We can see that \eqref{Formula: Case B comparison last formula} follows from \autoref{Lemma: Schlutzenberg's lemma for value comparison} and $\crit k'\notin \ran k'$ for \emph{every} elementary embedding $k'$.
\end{proof}

\begin{proposition}
    For $\hnu^{d,\beta}$-almost all $\vec{k}$, $s^\bullet\mapsto \crit\vec{k}_s$ preserves elementary comparison relations of type (C).
\end{proposition}
\begin{proof}
    Let $s,t\in d$ be two members with a common immediate predecessor $r$, and $a=\bfe^d(r)$, $m=\lh s=\lh t$. The corresponding comparison relation is
    \begin{equation*}
        d^\bullet(\omega) \vDash s^\bullet((m+1)\setminus \{a+1\}) < t^\bullet((m+1)\setminus \{a\}).
    \end{equation*}
    Hence we will prove the following for $\hnu^{d,\beta}$-almost all $\vec{k}$:
    \begin{equation*}
        j_{(m+1)\setminus \{a+1\}} (\crit\vec{k}_s) < j_{(m+1)\setminus \{a\}}(\crit \vec{k}_t),
    \end{equation*}
    which is equivalent to
    \begin{equation*}
        j_{a+1}(\crit \vec{k}_s) < j_a(\crit \vec{k}_t).
    \end{equation*}
    
    If $\lh r=0$, then $a=0$. By applying $j_0$ to $\forall \xi<\kappa_0 (j_0(\xi)<\crit \vec{k}_t)$, we have
    \begin{equation*}
        \forall \xi<\kappa_1 [j_1(\xi)<j_0(\crit \vec{k}_t)],
    \end{equation*}
    and the desired result follows from $\crit\vec{k}_s < \kappa_1$.
    If $\lh r\ge 1$, then applying $j_a$ to the inequality in \autoref{Lemma: Immediate predecessor property lemma} gives
    \begin{equation*}
        \forall \gamma < j_a(\crit\vec{k}_r) [j_{a+1}(\gamma) < j_a(\crit\vec{k}_t)],
    \end{equation*}
    and the conclusion follows from $\crit\vec{k}_s < j_a(\crit\vec{k}_r)$.
\end{proof}

\begin{proposition}
    For $\hnu^{d,\beta}$-almost all $\vec{k}$, $s^\bullet\mapsto \crit\vec{k}_s$ preserves elementary comparison relations of type (D).
\end{proposition}
\begin{proof}
    Let $r,s,t',t\in d$ be such that $d\vDash r\multimap s$ and $d\vDash r\multimap t'\multimap t$.
    We also write $a = \bfe^d(t')$, and $m=\lh s$ (so $m+1=\lh t$). If $a_0\ge a_1$, then the comparison relation (D) is equivalent to
    \begin{equation*}
        d^\bullet(\omega)\vDash s^\bullet((m+1)\setminus \{a\}) < t^\bullet(m+1),
    \end{equation*}
    so we need to $j_a(\crit\vec{k}_s) < \crit\vec{k}_t.$ for $\hnu^{d,\beta}$-almost all $\vec{k}$. We can see that
    \begin{align*}
        & \forall(\hnu^{d,\beta})\vec{k}\in D^{d,\beta} \big[j_a(\crit\vec{k}_s) < \crit\vec{k}_t\big]\\
        \iff &
        \forall(\hnu^{d,\beta}_r)\vec{k}\in D^{d,\beta}_r \forall(\hmu^{d,\beta}_s) k^0 \forall(\hmu^{d,\beta}_{t'}) k^1 \forall\big(\mu^{j_a(k^1)}_{\beta(t')}\big) k^2  \big[j_a(\crit k^0) < \crit k^2\big]\\
        \iff &
        \forall(\hnu^{d,\beta}_r)\vec{k}\in D^{d,\beta}_r \forall(\hmu^{d,\beta}_s) k^0 \forall(\hmu^{d,\beta}_{t'}) k^1 \big[j_a(k^1)\big(j_a(\crit k^0)\big) < \crit j_a(k^1)\big]\\
        \iff &
        \forall(\hnu^{d,\beta}_r)\vec{k}\in D^{d,\beta}_r \forall(\hmu^{d,\beta}_s) k^0 \forall(\hmu^{d,\beta}_{t'}) k^1 \big[j_a(\crit k^0) < j_a(\crit k^1)\big]\\
        \iff &
        \forall(\hnu^{d,\beta}_r)\vec{k}\in D^{d,\beta}_r \forall(\hmu^{d,\beta}_s) k^0 \forall(\hmu^{d,\beta}_{t'}) k^1 \big[\crit k^0 < \crit k^1\big]
    \end{align*}
    and the latter holds by $d^\bullet \vDash s<t'$ and \autoref{Proposition: Dendroid embeddings by criticals Case B}.
\end{proof}

\subsection{Independence of $\hnu$ from a trekkable order}
\label{Subsection: Independence from a trekkable order}
We defined $\hnu^{d,\beta}$ for a specific dendrogram $d$, and we want to guarantee the final measure only depends on the isomorphism type of $d$.
In this subsection, we prove that $\hnu^{d,\beta}$ and $\hnu^{d',\beta'}$ are the same if $d$ and $d'$ are isomorphic, and if $h\colon d\to d'$ is the isomorphism then $\beta'\circ h=\beta$.

We first discuss how to transform a given dendrogram into another isomorphic dendrogram. 
The following lemma says we can turn a dendrogram into another isomorphic one by successively exchanging $s$ and $s+1$ in the dendrogram. We include its proof for completeness.
\begin{lemma} \label{Lemma: Turn a trekkable dendrogram to the other}
    Say two trekkable dendrograms $d$ and $d'$ are \emph{adjacent witnessed by $m$} if $m+1<|d|=|d'|$ and the map $h\colon d\to d'$ switching $m$ and $m+1$ and fixing the others is a dendrogam isomorphism.
    For two isomorphic trekkable dendrograms $d$ and $d'$, we have a sequence of trekkable dendrograms
    \begin{equation*}
        d = d_0 \cong d_1 \cong \cdots \cong d_m \cong d'
    \end{equation*}
    such that for each $i$, $d_i$ and $d_{i+1}$ are adjacent.
\end{lemma}
\begin{proof}
    We claim that we can re-enumerate every trekkable dendrogram under the \emph{level-then-value order}: For a dendrogram $d$ and $s,t\in d$, we say $s<_\LV t$ if one of the following holds:
    \begin{enumerate}
        \item $\lh s<\lh t$, or
        \item $\lh s= \lh t = m$ and $\Dec^\bullet(d)(\omega)\vDash s(m)<t(m)$.
    \end{enumerate}
    We can see that $<_\LV$ is a linear order over $d$. Furthermore, we can see that if $d\vDash s\multimap t$ or $d\vDash s<t$, then $d\vDash s<_\LV t$.
    We say a trekkable dendrogram $d$ is \emph{aligned under the level-then-value order} if for $s,t\in d$, $s<_\bbN t$ if and only if $s <_\LV t$.
    Now let us consider the following algorithm: For a given trekkable dendrogram $d$, let us find the least $m$ such that $m >_\LV m+1$.
    If there is such, let us obtain a new dendrogram $d'$ by swapping $m$ and $(m+1)$; That is, we make $d'$ from $d$ in a way that the map $f\colon d\to d'$ such that $f(m)=m+1$, $f(m+1)=m$, $f(t)=t$ for $t\neq m,m+1$ is a dendrogram isomorphism.
    We repeat this process until we get a trekkable dengrogram aligned under the level-then-value order.
    \autoref{Figure: Bubble sorting a trekkable dendrogram} illustrates how it works.
    \begin{figure}
        \centering
        \begin{tabular}{ c c c c c c c c c}
            \begin{forest}
                [0, for tree={s sep=0em} 
                    [1 [\textit{2}] [4]]
                    [\textit{3} [7]]
                    [5 [6] [8]]
                ]
            \end{forest} & \parbox{1em}{\vspace{-6em}\centering$\to$} &
            \begin{forest}
                [0, for tree={s sep=0em} 
                    [1 [3] [\textit{4}]]
                    [2 [7]]
                    [\textit{5} [6] [8]]
                ]
            \end{forest} & \parbox{1em}{\vspace{-6em}\centering$\to$} &
            \begin{forest}
                [0, for tree={s sep=0em} 
                    [1 [3] [5]]
                    [2 [\textit{7}]]
                    [4 [\textit{6}] [8]]
                ]
            \end{forest} & \parbox{1em}{\vspace{-6em}\centering$\to$} &
            \begin{forest}
                [0, for tree={s sep=0em} 
                    [1 [\textit{3}] [5]]
                    [2 [6]]
                    [\textit{4} [7] [8]]
                ]
            \end{forest} & \parbox{1em}{\vspace{-6em}\centering$\to$} &
            \begin{forest}
                [0, for tree={s sep=0em} 
                    [1 [4] [5]]
                    [2 [6]]
                    [3 [7] [8]]
                ]
            \end{forest}
            \end{tabular}
        \caption{Sorting a trekkable dendrogram, with switched numbers italic. We assume $\bfe(s)=0$ for every $s$ in the example.}
        \label{Figure: Bubble sorting a trekkable dendrogram}
    \end{figure}

    We first claim that if $d$ is a trekkable dendrogram, so is $d'$.
    Suppose that we have $d'\vDash s\multimap t$. We have $d\vDash s\multimap t$ $s,t\notin \{m,m+1\}$, so $s<_\bbN t$.
    Now suppose that one of $s$ or $t$ is in $\{m,m+1\}$. Note that both $s$ and $t$ cannot be in $\{m,m+1\}$: Otherwise we have either $d\vDash m\multimap m+1$ or $d\vDash m+1 \multimap m$. The first possibility implies $m<_\LV m+1$, which contradicts the choice of $m$. The second possibility is impossible since $d$ is trekkable.

    Hence, we have four possible cases: $d'\vDash m\multimap t$, $d'\vDash (m+1)\multimap t$, $d'\vDash s\multimap m$, $d'\vDash s\multimap (m+1)$ with $s,t\notin\{m,m+1\}$.
    Each cases with the trekkability of $d$ imply $m+1<_\bbN t$, $m<_\bbN t$, $s<_\bbN m+1$, $s<_\bbN m$ respectively. The first and the fourth imply $m<_\bbN t$ and $s<_\bbN m+1$ respectively. Since $s,t\notin \{m,m+1\}$, the second and the third also imply $m+1<_\bbN t$, $s<_\bbN m$ respectively. This shows half of the trekkability of $d'$. By a similar argument, one can show that $d'\vDash s<t$ implies $s<_\bbN t$, so $d'$ is trekkable.

    We finish the proof by showing that our algorithm terminates: For a trekkable dendrogram $d$, let
    \begin{equation*}
        B(d) = \{(s,t)\in d^2\mid s<_\bbN t \land s>_\LV t\}.
    \end{equation*}
    We claim that $\lvert B(d)\rvert = 1+ \lvert B(d')\rvert$: 
    Observe that there are six types of elements in $B(d)$, namely, $(m,m+1)$, or $(s,m)$ or $(s,m+1)$ for $s<_\bbN m$, $(m,t)$ or $(m+1,t)$ for $t>_\bbN m+1$, or $(s,t)$ for $s<_\bbN t$ and $s,t\notin \{m,m+1\}$. In all cases, the first component is $<_\LV$-larger than the second component.
    The isomorphism from $d$ to $d'$ preserves $<_\LV$, and the isomorphism maps each tuples into $(m+1,m)$, $(s,m+1)$, $(s,m)$, $(m+1,t)$, $(m,t)$, or $(s,t)$ respectively. We can see that the second component is $<_\bbN$-larger for all types of tuples except the first. However, the first component is $<_\LV$-larger in all types of tuples.     
    It shows our algorithm terminates.
\end{proof}
Note that if $d$ and $d'$ are adjacent dendrograms witnessed by $l$, then we must have $l\ge 2$.
The previous algorithm turns a trekkable dendrogram $d$ into another isomorphic trekkable dendrogram $d'$ by turning $d$ into the trekkable dendrogram $d''$ that is aligned under the breadth-first search order, then turning $d''$ into $d'$.
The following proposition follows from examining the proof of \autoref{Lemma: Turn a trekkable dendrogram to the other}, which we record for a later purpose.
\begin{proposition} \pushQED{\qed}
    Let $d$ be a trekkable dendrogram. Then we can find a sequence of trekkable dendrograms
    \begin{equation*}
        d\cong d_0\cong d_1\cong\cdots\cong d_m
    \end{equation*}
    such that for each $k<m$, $d_k$ and $d_{k+1}$ are adjacent, and $d_m$ is aligned under the level-then-value order. Furthermore, if $d_k$ and $d_{k+1}$ are adjacent witnessed by $l$, then $l$ is the least number such that $d\vDash l>_\LV(l+1)$. \qedhere  
\end{proposition}

The next lemma says that the level-then-value order respects the size of the critical points:

\begin{lemma} \label{Lemma: Crit is a monotone map from LV to Omega 1 M}
    Let $d$ be a trekkable dendrogram and $s,t\in d$. If $s<_\LV t$, then $\crit\vec{k}_s < \crit\vec{k}_t$ for $\hnu^{d,\beta}$-almost all $\vec{k}$.
\end{lemma}
\begin{proof}
    If $\lh s<\lh t$, then $\crit\vec{k}_s < \kappa_{\lh s} \le \kappa_{\lh t-1} \le \crit\vec{k}_t$ for $\hnu^{d,\beta}$-almost all $\vec{k}$ by \autoref{Lemma: Fundamental lemma for dendroid embedding by criticals}.
    Now suppose that $\lh s=\lh t=m$. Then $\Dec(d^\bullet)\vDash s^\bullet(m) < t^\bullet (m)$, so $\crit \vec{k}_s <\crit\vec{k}_t$ for $\hnu^{d,\beta}$-almost all $\vec{k}$ by \autoref{Theorem: Critical preserves dilator relation}.
\end{proof}

The following theorem says for two adjacent dendrograms $d$ and $d'$, $\hnu^{d,\beta}$ and $\hnu^{d',\beta}$ are the same modulo permuting components. For a technical reason in the proof, \emph{we will assume in the rest of the paper that $\beta$ is a limit embedding}, that is, $\beta(\sigma)$ is always a limit ordinal for every $s\in d$. See \autoref{Remark: An embedding to a Martin Flower can only take limit value} for a limit embedding.
\begin{theorem} \label{Theorem: Measure independence from an order}
    Let $d$ and $d'$ be adjacent dendrograms witnessed by $l$, and suppose that $l$ is the least number such that $d\vDash l>_\LV (l+1)$. We also fix a limit embedding $\beta\colon \Dec(d^\bullet)\to \Omega^1_\sfM$ and the isomorphism $h_l\colon d\to d'$ switching $l$ and $(l+1)$.\footnote{In particular, we have that $h_l\circ h_l$ is the identity.}
    For $X\subseteq D^{d,\beta}$, we have
    \begin{equation*}
        X\in \hnu^{d,\beta} \iff h_l^*[X]\in \hnu^{d',\beta\circ h_l},
    \end{equation*}
    where $h^*_l[X] = \{p\circ h_l\mid p\in X\}$.
\end{theorem}
\begin{proof}
    Let $S = |d|$. Then
    \begin{equation} \label{Formula: Decomposing a measure quantifier 2}
        \forall(\hnu^{d,\beta})\vec{k} \phi(\vec{k}) \iff \forall(\hmu^{d,\beta}_0) k^0 \forall(\hmu^{d,\beta}_1) k^1\cdots \forall(\hmu^{d,\beta}_{S-1}) k^{S-1} \phi(k^0,\cdots,k^{S-1}),
    \end{equation}
    where $\hmu^{d,\beta}_s$ is as given in \eqref{Formula: hmu d beta definition}. For $m<S$, let us define $X\mid^d s\subseteq D^{d,\beta}_s$ by
    \begin{equation} \label{Formula: Measure decomposition}
        \vec{k}\in X\mid^d s \iff \forall (\hmu^{d,\beta}_{s+1}) \hat{k}^{s+1} \cdots \forall (\hmu^{d,\beta}_{S-1}) \hat{k}^{S-1} \Big[ \vec{k}\cup \big\{(t,\hat{k}^t)\mid s<t<S\big\} \in X\Big]
    \end{equation}
    Note that $\hmu^{d,\beta}_t$ may depend on some of $k^0,\cdots,k^s$. We have
    \begin{equation*}
        X\in \hnu^{d,\beta} \iff \forall (\hmu^{d,\beta}_0) k^0 \cdots \forall(\hmu^{d,\beta}_s) k^s \Big[ \big\{(t,k^t)\mid t\le s\big\}\in X\mid s\Big].
    \end{equation*}
    We assume that $l$ is the least number such that $d\vDash l>_\LV l+1$.
    Note that $\lh^d (l)>1$, otherwise, both $l$ and $(l+1)$ have $0$ as a common immediate predecessor in $d$, so $l$ cannot witness $d$ and $d'$ are adjacent. Moreover, for $s=1,2,\cdots, l-1,l+1$, $\lh^d(s)\le \lh^d(l)$.
    
    Now let $l' = l\restriction (\lh^d(l)-1)$ and $a=\bfe^d(l')$. By the assumption on $l$, we have $\lh^d(s)\le \lh^d(l)$ for every $s<_\bbN l$.
    Then we are tempted to argue
    \begin{align*}
        X\in \hnu^{d,\beta} \iff & 
        \forall (\hmu^{d,\beta}_0) k^0 \cdots \forall (\hmu^{d,\beta}_{l-1}) k^{l-1}
        \forall (\hmu^{d,\beta}_l) k^l \forall (\hmu^{d,\beta}_{l+1}) k^{l+1} \Big[ \{(s,k^s)\mid s\le l+1\big\}\in X\mid^d (l+1)\Big]
        \\ \iff & 
        \addtocounter{equation}{1}\tag{\theequation} \label{Formula: Swapping order 00}
        \forall (\hmu^{d,\beta}_0) k^0 \cdots \forall (\hmu^{d,\beta}_{l-1}) k^{l-1}
        \forall \big(j_a(k^{l'})(\hmu^{d,\beta}_{l+1})\big) k^{l+1}\in j_a(k^{l'})(\dom^{d,\beta}_{l+1}) \\
        & \hspace{0.5em}
        \Big[ \Big\{\big(s,j_a(k^{l'})(k^s)\big)\mid s<l \Big\}\cup 
        \Big\{\big(l,j_a(k^{l'})\big),\big(l+1,k^{l+1}\big)\Big\}
        \in j_a(k^{l'})\big(X\mid^d (l+1)\big)\Big],
    \end{align*}
    where
    \begin{equation*}
        \dom^{d,\beta}_s = \dom^{d,\beta}_s(k^0,\cdots,k^{l-1}) =
        \begin{cases}
            \Emb^{j_1\restriction V_{\kappa_1+\beta(s)}}_{\beta(s)} & \lh^d(s)=1, \\
            \Emb^{j_a(k^{s'})}_{\beta(s)} & \lh^d(s)>1,\ d\vDash s'\multimap s,\ \bfe^d(s')=a.
        \end{cases}
    \end{equation*}

    However, we need to check
    \begin{equation*}
        \forall (\hmu^{d,\beta}_0) k^0 \cdots \forall(\hmu^{d,\beta}_{l-1}) k^{l-1} \forall(\hmu^{d,\beta}_{l+1}) k^{l+1}\Big[\dom^{d,\beta}_{l+1}, \hmu^{d,\beta}_{l+1}, k^0,\cdots, k^{l-1}, X\mid^d (l+1) \in \dom j_a\big(k^{l'}\big)\Big]
    \end{equation*}
    to ensure the equivalence \eqref{Formula: Swapping order 00} works, otherwise, we do not know if we can apply $j_a(k^{l'})$ to the sets above.
    From now on, let us omit the expression `almost all,' which should be clear from context.
    Also, we fix the immediate predecessor $l'$ of $l$, and $a=\bfe^d(l')$.
    \begin{lemma}
        We have $\crit k^m < \crit k^l$ for $m=0,1,\cdots, l-1,l+1$.
    \end{lemma}
    \begin{proof}
        By the assumption on $l$, we have $d\vDash 0,1,\cdots, l-1,l+1<_\LV l$.
        Hence, we have a desired result by \autoref{Lemma: Crit is a monotone map from LV to Omega 1 M}.
    \end{proof}

    Let us recall that
    \begin{equation*}
        \dom j_a(k^{l'}) = V_{j_a(\crit k^{l'}) + j_a(\beta(l'))}.
    \end{equation*}
    In addition, for $s=1,\cdots,N-1,N+1$ and almost all $k^s$, $k^s \in V_{k^s(\crit k^s) + \beta(s) + 99}$ and
    \begin{equation*}
        k^s(\crit k^s) =
        \begin{cases}
            \kappa_1 & \text{If }\lh s=1, \\
            j_b(\crit k^{s'}) & \text{If }\lh^d(s)>1,\ d\vDash s'\multimap s,\ \bfe^d(s')=b.
        \end{cases}
    \end{equation*}
    
    Hence to see $k^s\in \dom j_a(k^{l'})$, it suffices to show:
    \begin{lemma} \label{Lemma: Rank bound for elementary embedding}
        For $s=1,\cdots, l-1,l+1$,
        \begin{equation} \label{Formula: Measure independence from an order 00}
            k^s(\crit k^s) + \beta(s) + 99 < j_a(\crit k^{l'}) + \beta(l).
        \end{equation}
        Furthermore, if $\lh^d(s) < \lh^d(l)$, then we have
        \begin{equation} \label{Formula: Measure independence from an order 01}
            k^s(\crit k^s) + \beta(s) + 99 < j_a(\crit k^{l'}).
        \end{equation}
    \end{lemma}
    \begin{proof}
        We write the immediate predecessor of $s$ in $d$ by $s'$, and $b=\bfe^d(s')$.
        If $\lh^d(s)=1$, then $b=0$ and $\beta(s) < \kappa_1 < \crit k^l < j_a(\crit k^{l'})$.
        Then we have \eqref{Formula: Measure independence from an order 01} since $j_a(\crit k^{l'})$ is inaccessible.

        Now suppose that $\lh^d(s)>1$. By the assumption on $l$, we have $d\vDash s<_\LV l$. If $\lh^d(s)<\lh^d(l)$, then we have
        \begin{equation*}
            j_b(\crit k^{s'}),\ \beta(s) < \kappa_{\lh^d(s)} \le \kappa_{\lh^d(l)-1} \le j_a(\crit k^{l'}).
        \end{equation*}
        Then the inaccessibility of $j_a(\crit k^{l'})$ and the equality $j_b(\crit k^{s'}) = k^s(\crit k^s)$ implies \eqref{Formula: Measure independence from an order 01}.
        Otherwise, we have $\lh^d(s)=\lh^d(l)$ since $\lh^d(s)\le \lh^d(l)$ holds. Following \autoref{Lemma: Lemma 3.39}, we have two possible cases:
        
        \begin{enumerate}
            \item Suppose that $s$ and $l$ have the same immediate predecessor in $d$, i.e., $s'=l'$.
            Since $\beta(s) < \beta(l)$ and $\beta(l)$ is a limit ordinal, we have \eqref{Formula: Measure independence from an order 00}. (Note that $s=l+1$ is impossible in this case; Otherwise, $l$ and $(l+1)$ have the common immediate predecessor in $d$.)
        
            \item Otherwise, \autoref{Lemma: Lemma 3.39} implies $\Dec^\bullet(d)\vDash s'(\lh^d(s)\setminus\{b\}) < l(\lh^d(l))$. Thus for almost all $\vec{k}$, we have $j_b(\crit \vec{k}_{s'})< \crit \vec{k}_l$. This shows the following holds for almost all $\vec{k}$:
            \begin{equation*}
                \vec{k}_s(\crit \vec{k}_s) = j_b(\crit \vec{k}_{s'})< \crit \vec{k}_l < j_a(\crit\vec{k}_{l'}).
            \end{equation*}
            Also, $d\vDash s<_\LV l$ implies $\beta(s) < \beta(l)$. Since $\beta(l)$ is limit, we have $\beta(s)+99<\beta(l)$.
            Combining all of this, we have \eqref{Formula: Measure independence from an order 00} for almost all $\vec{k}$.
            (Note that by \autoref{Lemma: Co-cardinal-small set is measure-large}, we can also derive \eqref{Formula: Measure independence from an order 01} in this case. However, we do not need this strengthened inequality in our purpose.)
            \qedhere 
        \end{enumerate}
    \end{proof}

    \begin{lemma}
        The rank of $\dom^{d,\beta}_{l+1}$ and $\hmu^{d,\mu}_{l+1}$ are less than $j_a(\crit k^{l'})$, so $\dom^{d,\beta}_{l+1}, \hmu^{d,\beta}_{l+1}\in \dom j_a(k^{l'})$ and they are fixed by $j_a(k^{l'})$.
    \end{lemma}
    \begin{proof}
    Let us divide the case:
    \begin{enumerate}
        \item Case $\lh^d(l+1)=1$: Then the rank of $\dom^{d,\beta}_{l+1}$ and $\hmu^{d,\beta}_{l+1}$ are no more than $\kappa_1+\beta(l+1)+99$.
        Observe that $\lh^d(l)>\lh^d(l+1)\ge 1$, so $\lh^d(l)\ge 2$.
        If $\lh^d(l)> 2$, then $j_a(\crit k^{l'}) > \crit k^l \ge \kappa_2$, so we have $\beta(l+1)<\kappa_2 < j_a(\crit k^{l'})$, which implies $\kappa_1+\beta(l+1)+99 < j_a(\crit k^{l'})$.
        If $\lh^d(l)=2$, then $a=0$ and $j_a(\crit k^{l'}) > j_0(\kappa_0) = \kappa_1\ge \beta(l+1)$, so again $\kappa_1+\beta(l+1)+99 < j_a(\crit k^{l'})$.
        
        \item Case $\lh^d(l+1)> 1$: Suppose that $d\vDash (l+1)'\multimap (l+1)$ and $b=\bfe^d((l+1)')$.
        The rank of $\dom^{d,\beta}_{l+1}$ and $\hmu^{d,\beta}_{l+1}$ are no more than $j_b(\crit k^{(l+1)'}) + \beta(l+1)+99$.
        Observe that $\lh^d((l+1)') < \lh^d(l')$, and \autoref{Lemma: Fundamental lemma for dendroid embedding by criticals} implies
        \begin{equation*}
            \crit k^{(l+1)'}<\kappa_{\lh^d (l+1)'} \quad \text{ and } \quad \kappa_{\lh^d(l')} < \crit k^{l'}. 
        \end{equation*}
        Also, note that $b<\lh^d (l+1)'$ and $a<\lh^d(l')$, so we have
        \begin{equation*}
            j_b(\crit k^{(l+1)'})<\kappa_{\lh^d(l+1)'} \le \kappa_{\lh^d(l')} = j_a(\kappa_{\lh^d(l') -1 }) < j_a(\crit k^{l'}).
        \end{equation*}
        By \autoref{Lemma: Co-cardinal-small set is measure-large}, we have 
        \begin{equation*}
            \beta(l+1)< \crit k^{l+1} < \kappa_{\lh^d(l+1)'} \le \kappa_{\lh^d(l')}<\crit k^l<j_a(\crit k^{l'}).
        \end{equation*}
        Then by the inaccessibility of $j_a(\crit k^{l'})$, we have $j_b(\crit k^{(l+1)'}) + \beta(l+1)+99<j_a(\crit k^{l'})$. \qedhere 
    \end{enumerate}
    \end{proof}

    \begin{lemma}
        $X\mid^d (l+1) \in \dom j_a(k^{l'})$.
    \end{lemma}
    \begin{proof}
        It suffices to show that $D^{d,\beta}_{l+1}$ has rank less than $j_a(\crit k^{l'}) + \beta(l)$.
        $D^{d,\beta}_{l+1}$ is a set of tuples of elementary embeddings, and we proved in \autoref{Lemma: Rank bound for elementary embedding} that each component of a tuple has rank less than $j_a(\crit k^{l'})+\beta(l)$.
        Hence the tuple also has rank less than $j_a(\crit k^{l'})+\beta(l)$, so the rank of $D^{d,\beta}_{l+1}$ is also less than $j_a(\crit k^{l'})+\beta(l)$.
    \end{proof}

    Hence \eqref{Formula: Swapping order 00} works, and is equivalent to
    \begin{align*}
        & \forall (\hmu^{d,\beta}_0) k^0 \cdots \forall (\hmu^{d,\beta}_{l-1}) k^{l-1}
        \forall \big(j_a(k^{l'})(\hmu^{d,\beta}_{l+1})\big) k^{l+1}\in j_a(k^{l'})(\dom^{d,\beta}_{l+1}) \\
        & \hspace{0.5em}
        \Big[ \Big\{\big(s,j_a(k^{l'})(k^s)\big)\mid s<l \Big\}\cup 
        \Big\{\big(l,j_a(k^{l'})\big),\big(l+1,k^{l+1}\big)\Big\}
        \in j_a(k^{l'})\big(X\mid^d (l+1)\big)\Big]\\
        \iff&
        \forall (\hmu^{d,\beta}_0) k^0 \cdots \forall (\hmu^{d,\beta}_{l-1}) k^{l-1} \forall (\hmu^{d,\beta}_{l+1}) k^{l+1} \forall (\hmu^{d,\beta}_{l}) k^{l}
        \Big[ \big\{(s,k^s)\mid s\le l+1\big\}\in X\mid^d (l+1)\Big]\\
        \iff&
        \forall (\hmu^{d,\beta}_0) k^0 \cdots \forall (\hmu^{d,\beta}_{l-1}) k^{l-1} \forall (\hmu^{d',\beta}_{l}) k^{l+1} \forall (\hmu^{d',\beta}_{l+1}) k^{l}\\& \hspace{0.5em}
        \Big[ \big\{(s,k^s)\mid s\le l-1\big\}\cup\{(l,k^{l+1}),(l+1,k^l)\}\in h^*_l[X\mid^d (l+1)]\Big].
    \end{align*}
    That is, we can switch the order between $k^{l+1}$ and $k^l$.
\end{proof}

\subsection{Independence of $\nu^{d,\beta}$ from $\beta$}
In this subsection, we prove that $\nu^{d,\beta}$ does not depend on the choice of $\beta$.

\begin{lemma} \label{Lemma: Product measure projection lemma}
    For a finite flower $d$ with no nullary terms and embeddings $\beta,\gamma\colon \Dec(d^\bullet)\to \Omega^1_\sfM$, suppose that $\beta(s)\le \gamma(s)$ for every $s\in d$. 
    If we define $\pi^d_{\beta,\gamma}\colon D^{d,\gamma}\to D^{d,\beta}$ by
    \begin{equation*}
        \pi^d_{\beta,\gamma}(\vec{k})(s) = \vec{k}_s\restriction V_{\crit\vec{k}_s + \beta(s)},
    \end{equation*}
    for $s\in \Dec(d^\bullet)$, then
    \begin{equation*}
        \forall Y\subseteq D^{d,\gamma}\Big[
        Y\in \hnu^{d,\beta} \iff 
        (\pi^d_{\beta,\gamma})^{-1}[Y]\in \hnu^{d,\gamma}\Big].
    \end{equation*}
\end{lemma}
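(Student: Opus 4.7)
The plan is to prove the equivalence by inducting along the BFS-ordered enumeration $\lag \sigma_0,\ldots,\sigma_{S-1}\rag$ of $\Temp(d)$, converting one measure quantifier at a time via \autoref{Lemma: Measure projection lemma}. Expand $\hnu^{d,\beta}$ and $\hnu^{d,\gamma}$ as iterated measure quantifiers as in \eqref{Formula: Decomposing a measure quantifier 2}, writing $\mu^{\sigma_l,\beta}$ for $\mu^{j_1\restriction V_{\kappa_1+\beta(\sigma_l)}}_{\beta(\sigma_l)}$ when $|\sigma_l|=3$ and for $\mu^{j_a(k^{l'})}_{\beta(\sigma_l)}$ when $\sigma_l = \sigma_{l'}^\frown\lag a,x\rag$ with $|\sigma_{l'}|\ge 3$; likewise for $\gamma$. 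The target is to rewrite the $\gamma$-expansion of $(\pi^d_{\beta,\gamma})^{-1}[Y]\in\hnu^{d,\gamma}$ as the $\beta$-expansion of $Y\in\hnu^{d,\beta}$ by peeling the quantifiers from inside out.

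For the conversion at stage $l$, assume the first $l-1$ quantifiers have already been rewritten so that each earlier embedding $k^{l''}$ appears as the $\beta(\sigma_{l''})$-restriction of the original $\gamma(\sigma_{l''})$-embedding. Processing $\sigma_l$ proceeds in two steps: first use the identity $\mu^k_\eta = \mu^{k\restriction V_{\crit k+\beta_0}}_\eta$ (valid when $\eta+1<\beta_0$) to replace $\mu^{j_a(k'^{l'})}_{\gamma(\sigma_l)}$ by $\mu^{j_a(k^{l'})}_{\gamma(\sigma_l)}$, using that elementarity of $j_a$ translates the parent restriction to a restriction of $j_a(k'^{l'})$ of rank $j_a(\crit k'^{l'})+\beta(\sigma_{l'})$; second, apply \autoref{Lemma: Measure projection lemma} to $\mu^{j_a(k^{l'})}_{\gamma(\sigma_l)}$ with $\gamma_0=\beta(\sigma_l)$ and $\gamma_1=\gamma(\sigma_l)$. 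The resulting projection $\pi^{j_a(k^{l'})}_{\beta(\sigma_l),\gamma(\sigma_l)}$ is exactly the $\sigma_l$-component of $\pi^d_{\beta,\gamma}$. The base cases $|\sigma_l|=3$ are handled identically with $j_1\restriction V_{\kappa_1+\gamma(\sigma_l)}$ in place of $j_a(k^{l'})$, using that this base embedding is fixed and independent of $\vec{k}$. Iterating through all $l<S$ yields the full equivalence.

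The main obstacle is the bookkeeping needed to verify, at each stage, that the side condition $\gamma(\sigma_l)+1 < \beta(\sigma_{l'})$ required for the restriction identity genuinely holds; more generally, that all the embeddings produced at intermediate stages still live in the appropriate $\Emb$-sets. This reduces to showing that the image under a limit embedding $\beta\colon \Dec(d^\bullet)\to\Omega^1_\sfM$ of a parent node $\sigma_{l'}^\bullet$ exceeds $\gamma(\sigma_l^\bullet)+1$. This is where the limit-embedding hypothesis plus the already-established structural facts (in the spirit of \autoref{Lemma: Fundamental lemma for dendroid embedding by criticals}) comes in: the arity of $\sigma_{l'}^\bullet$ exceeds that of $\sigma_l^\bullet$'s relevant supports, forcing $\beta(\sigma_{l'}^\bullet)$ to sit above the entire relevant $\gamma$-data, so each measure that appears is well-defined and the two-step replacement goes through.
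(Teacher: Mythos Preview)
Your overall plan—induct along the $\le_\BFS$-enumeration of $\Temp(d)$ and convert one quantifier at a time via \autoref{Lemma: Measure projection lemma} plus the restriction identity—is exactly the paper's approach. But the order in which you carry out the two replacements at each stage creates a side condition that is simply false, and your final paragraph does not rescue it.

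Concretely: you propose to first restrict the parent embedding $k'^{l'}$ to its $\beta(\sigma_{l'})$-truncation $k^{l'}$, and only afterwards project the child's measure from level $\gamma(\sigma_l)$ down to $\beta(\sigma_l)$. For the intermediate object $\mu^{j_a(k^{l'})}_{\gamma(\sigma_l)}$ even to make sense (and to agree with $\mu^{j_a(k'^{l'})}_{\gamma(\sigma_l)}$ via the restriction identity), you need $\gamma(\sigma_l)+1 < j_a(\beta(\sigma_{l'}))$—note the $j_a$ you dropped. This is a \emph{mixed} inequality in $\beta$ and $\gamma$, and it need not hold: the hypothesis $\beta\le\gamma$ gives only $j_a(\beta(\sigma_{l'}))\le j_a(\gamma(\sigma_{l'}))$, and one can easily arrange $\gamma(\sigma_{l'})>\beta(\sigma_{l'})$ with $\gamma(\sigma_l)$ lying in the gap $[\,j_a(\beta(\sigma_{l'})),\,j_a(\gamma(\sigma_{l'}))\,)$. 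Your attempted justification is also backwards: the parent node $\sigma_{l'}^\bullet$ has \emph{smaller} arity than $\sigma_l^\bullet$, so $\beta(\sigma_{l'}^\bullet)<\kappa_{(|\sigma_{l'}|-1)/2}\le\kappa_{(|\sigma_l|-3)/2}<\gamma(\sigma_l^\bullet)$, and no appeal to \autoref{Lemma: Fundamental lemma for dendroid embedding by criticals} will reverse this.

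The fix is to reverse the two steps (this is what the paper is doing, though its displayed $|\sigma|=3$ case obscures the point because there the inner measure is independent of $\vec{k}$). At node $\sigma=\tau^\frown\langle a,x\rangle$: first apply the inductive hypothesis to $\hnu^{d,\cdot}_{\pred(\sigma)}$, so that the outer $\vec{k}$ now ranges over $D^{d,\gamma}_{\pred(\sigma)}$ and $\vec{k}_\tau$ is a $\gamma(\tau)$-embedding while the inner measure still reads $\mu^{j_a((\pi\vec{k})_\tau)}_{\beta(\sigma)}$. Now the restriction identity needs only $\beta(\sigma)+1 < j_a(\beta(\tau))$, and the subsequent application of \autoref{Lemma: Measure projection lemma} needs only $\gamma(\sigma)+1<j_a(\gamma(\tau))$. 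Both are \emph{pure} inequalities (one in $\beta$, one in $\gamma$), and each holds because any dilator embedding $\Dec(d^\bullet)\to\Omega^1_\sfM$ sends $\sigma^\bullet(0,\ldots,m)<_{\KB}\tau^\bullet(f_a[\{0,\ldots,m-1\}])$—which is immediate from $x_m<x_m^\bullet$ in $\sfU(d)+\sfU(d)$—to a strict inequality in $\Omega^1_\sfM(m+1)$; the limit-embedding hypothesis then absorbs the ``$+1$''.
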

\begin{proof}
    We prove it by induction on $s\in d$ as follows: For each $s\in d$ define $\pi^d_{\beta,\gamma,s}\colon D^{d,\gamma}_s\to D^{d,\beta}_s$ by
    \begin{equation*}
        \pi^d_{\beta,\gamma,s}(\vec{k}) = \big\{(t, \vec{k}_t\restriction V_{\crit\vec{k}_t + \beta(t)})\mid  t\in d,\ t\le_\bbN s\}.
    \end{equation*}
    Then we prove the following:
    \begin{equation*}
        \forall Y\subseteq D^{d,\gamma}_s\big[
        Y\in \hnu^{d,\beta}_s \iff 
        (\pi^d_{\beta,\gamma,s})^{-1}[Y]\in \hnu^{d,\gamma}_s\big],
    \end{equation*}
    which is equivalent to
    \begin{equation*}
        \forall Y\subseteq D^{d,\gamma}_s
        \left[
        \forall(\hnu^{d,\beta}_s)\vec{k}\in D^{d,\beta}_s[\vec{k}\in Y] \iff 
        \forall(\hnu^{d,\gamma}_s)\vec{k}\in D^{d,\gamma}_\sigma [\pi^d_{\beta,\gamma,s}(\vec{k})\in Y]\right].
    \end{equation*}
    If $\lh s =1$, then the inductive hypothesis and \autoref{Lemma: Measure projection lemma} implies
    \begin{align*}
        Y\in \hnu^{d,\beta}_s & \iff 
        \forall\Big(\mu^{j_1\restriction V_{\kappa_1+\beta(s)}}_{\beta(s)}\Big) k'\in \Emb^{j_1\restriction V_{\kappa_1+\beta(s)}}_{\beta(s)}
        \left[ \forall(\hnu^{d,\beta}_{s-1}) \vec{k}\in D^{d,\beta}_{s-1}
        [\vec{k}\cup \big\{(s,k')\big\}\in Y]\right]
        \\ (\text{by }\ref{Lemma: Measure projection lemma}) &\iff 
        \forall\Big(\mu^{j_1\restriction V_{\kappa_1+\beta(s)}}_{\beta(s)}\Big) k'\in \Emb^{j_1\restriction V_{\kappa_1+\gamma(\sigma)}}_{\gamma(\sigma)}
        \left[\forall(\hnu^{d,\beta}_{s-1})\vec{k}\in D^{d,\beta}_{s-1} [\vec{k}\cup\big\{ (s,k'\restriction V_{\kappa_1+\beta(s)})\big\}\in Y]\right]
        \\ (\text{Ind.}) &\iff 
        \forall\Big(\mu^{j_1\restriction V_{\kappa_1+\beta(s)}}_{\beta(s)}\Big)k'\in \Emb^{j_1\restriction V_{\kappa_1+\gamma(\sigma)}}_{\gamma(\sigma)}
        \left[\forall(\hnu^{d,\gamma}_{s-1})\vec{k}\in D^{d,\gamma}_{s-1} [\pi^d_{\beta,\gamma,s-1}(\vec{k})\cup\big\{ (s,k'\restriction V_{\kappa_1+\beta(s)})\big\}\in Y]\right]
        \\&\iff
        \forall\Big(\mu^{j_1\restriction V_{\kappa_1+\beta(s)}}_{\beta(s)}\Big) k'\in \Emb^{j_1\restriction V_{\kappa_1+\gamma(s)}}_{\gamma(s)}
        \left[\forall(\hnu^{d,\gamma}_{s-1})\vec{k}\in D^{d,\gamma}_{s-1} \Big[\pi^d_{\beta,\gamma,s}\big(\vec{k}\cup\big\{(s,k')\big\}\big)\in Y\Big]\right]
        \\&\iff \forall(\hnu^{d,\gamma}_s) \vec{k}\in D^{d,\gamma}_s [\pi^d_{\beta,\gamma,s}(\vec{k})\in Y]
        \iff (\pi^d_{\beta,\gamma,s})^{-1}[Y]\in \hnu^{d,\gamma}_s.
    \end{align*}
    The remaining case is similar, so we omit it.
\end{proof}

The following lemma is necessary to prove the next proposition, which roughly says the measure $\hnu^{d,\beta}$ sees an `initial segment of certain conditions' as small.
\begin{lemma} \label{Lemma: Co-cardinal-small set is measure-large}
    Suppose that $d$ is a finite flower without nullary terms and $\beta\colon \Dec(d^\bullet)\to \Omega^1_\sfM$. If we are given a sequence of ordinals $\alpha_s< \kappa_{\lh s}$ for each $s\in d$ satisfying the followng: For every $s\in d$, if $d\vDash s'\multimap s$ and $a=\bfe^d(s')$, then $\alpha_s \le j_a(\alpha_{s'})$.
	Then
	\begin{equation*} \textstyle
		\prod_{s\in d}\{k^s\mid \crit k^s>\alpha_s\}\in \hnu^{d,\beta}.
	\end{equation*}
\end{lemma}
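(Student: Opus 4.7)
The plan is to reduce the statement to checking one coordinate at a time and induct along the $\le_\BFS$ ordering of $\Temp(d)$. Since $\Temp(d)$ is finite and $\hnu^{d,\beta}$ is a measure, the intersection
\begin{equation*}
\prod_{\sigma\in\Temp(d)} \{k^\sigma\mid \crit k^\sigma>\alpha_\sigma\}
= \bigcap_{\sigma\in\Temp(d)} \{\vec{k}\in D^{d,\beta}\mid \crit\vec{k}_\sigma>\alpha_\sigma\}
\end{equation*}
lies in $\hnu^{d,\beta}$ provided each of the finitely many pieces does. So the goal is: for every fixed $\sigma\in\Temp(d)$, show $\{\vec{k}\mid \crit\vec{k}_\sigma>\alpha_\sigma\}\in \hnu^{d,\beta}$.

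Using the decomposition \eqref{Formula: Decomposing a measure quantifier}, and erasing the unused outer quantifiers indexed by $\tau>_\BFS\sigma$ via \autoref{Lemma: Eliminating unused measure quantifiers}, the claim becomes
\begin{equation*}
\forall(\mu^{\sigma_0})k^0\cdots\forall(\mu^{\sigma_l})k^l\, [\crit k^l>\alpha_{\sigma_l}],
\end{equation*}
where $\sigma_l=\sigma$. I plan to prove this by induction on the $<_\BFS$-position $l$ of $\sigma$. The general criterion I will invoke is: for any elementary embedding $k$ and any $\gamma$, the set $\{k'\in\Emb^k_\gamma\mid \crit k'>\alpha\}$ lies in $\mu^k_\gamma$ whenever $\alpha<\crit k$ (because then $k(\alpha)=\alpha$ and so $\crit(k\restriction V_{\crit k+\gamma})=\crit k>\alpha=k(\alpha)$, which by the definition of $\mu^k_\gamma$ is exactly what is needed).

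For the base case $|\sigma|=3$, the innermost measure is $\mu^{j_1\restriction V_{\kappa_1+\beta(\sigma)}}_{\beta(\sigma)}$; the hypothesis gives $\alpha_\sigma<\kappa_{(|\sigma|-1)/2}=\kappa_1=\crit j_1$, so the criterion applies directly. For the inductive step with $\sigma=\tau^\frown\lag a,x\rag$ and $|\tau|\ge 3$, note that $\tau<_\BFS\sigma$, so the induction hypothesis applied to $\tau$ yields $\crit\vec{k}_\tau>\alpha_\tau$ after the $\tau$-quantifier. The $\sigma$-measure is then $\mu^{j_a(\vec{k}_\tau)}_{\beta(\sigma)}$, whose critical point is $\crit j_a(\vec{k}_\tau)=j_a(\crit\vec{k}_\tau)>j_a(\alpha_\tau)\ge \alpha_\sigma$, using the monotonicity assumption $\alpha_\sigma\le j_a(\alpha_\tau)$ on the $\alpha$-family. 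The criterion then gives $\{k'\mid \crit k'>\alpha_\sigma\}\in \mu^{j_a(\vec{k}_\tau)}_{\beta(\sigma)}$, completing the induction.

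There is no real obstacle beyond careful bookkeeping: one must verify that the quantifier-elimination of outer measures is justified (immediate from \autoref{Lemma: Eliminating unused measure quantifiers}), that the induction is driven by $<_\BFS$ (which is legitimate since $|\tau|<|\sigma|$ gives $\tau<_\BFS\sigma$), and that the application of the basic criterion is valid, which uses only the inaccessibility-style fact $k(\alpha)=\alpha$ for $\alpha<\crit k$. The monotonicity hypothesis $\alpha_\sigma\le j_a(\alpha_{\sigma'})$ is exactly what is needed to propagate the inductive bound from $\tau$ to $\sigma$, and the bound $\alpha_\sigma<\kappa_{(|\sigma|-1)/2}$ is needed only to kick off the induction at $|\sigma|=3$.
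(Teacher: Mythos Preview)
Your proof is correct and follows essentially the same approach as the paper. The only structural difference is that you first split the product into a finite intersection and then prove each coordinate $\{\vec{k}\mid\crit\vec{k}_\sigma>\alpha_\sigma\}\in\hnu^{d,\beta}$ separately by induction on the parent, whereas the paper inducts directly on the cumulative product $\prod_{\tau\le_\BFS\sigma}\{k^\tau\mid\crit k^\tau>\alpha_\tau\}\in\hnu^{d,\beta}_\sigma$; the core computation (reducing to $\alpha_\sigma<\crit j_a(\vec{k}_\tau)$ and verifying it via the monotonicity hypothesis and the inductive bound $\alpha_\tau<\crit\vec{k}_\tau$) is identical in both.
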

\begin{proof}
    We prove it by induction on $(d,<_\bbN)$: That is, we prove for each $s\in d$,
	\begin{equation} \textstyle \label{Formula: Product of tails is large - Induction hyp}
		\prod_{t\le_\bbN s}\{k^t\mid \crit k^t> \alpha_{t}\}\in \hnu^{d,\beta}_s.
	\end{equation}
    Suppose that \eqref{Formula: Product of tails is large - Induction hyp} holds for $t<_\bbN s$. Then 
    \begin{align*} 
	& \textstyle 
        \prod_{t\le_\bbN s}\{k^t\mid \crit k^t> \alpha_{t}\}\in \hnu^{d,\beta}_s \\
        \iff & \textstyle \{\vec{k}\in D^{d,\beta}_{s-1}\mid 
        \{k^s\in \Emb^{k'}_{\beta(s)}\mid 
        \forall t\le_\bbN s [\crit\vec{k}_t> \alpha_t]\land \crit k^s> \alpha_s\} \in \mu^{k'}_{\beta(s)}
        \}\in \hnu^{d,\beta}_{s-1} \\
        \iff & \textstyle \{\vec{k}\in D^{d,\beta}_{s-1}\mid 
        \forall t\le_\bbN s [\crit\vec{k}_t> \alpha_t] \land 
        \{k^s\in \Emb^{k'}_{\beta(s)}\mid 
        \crit k^s> \alpha_s \} \in \mu^{k'}_{\beta(s)}
        \}\in \hnu^{d,\beta}_{s-1}
        \addtocounter{equation}{1}\tag{\theequation} \label{Formula: Co-small-00}
    \end{align*}
        Where $d\vDash s'\multimap s$, $a=\bfe^d(s')$, and
    \begin{equation*}
        k' = 
        \begin{cases}
        j_a(\vec{k}_{s'}), & \text{if } \lh^d s'\ge 2, \\
        j_1\restriction V_{\kappa_1+\beta(s)} & \text{if }\lh^d s'=1.
        \end{cases}
    \end{equation*}
        If $\lh^d(s')=1$, then $\{k^s\in \Emb^{j_1\restriction V_{\kappa_1+\beta(s)}}_{\beta(s)}\mid  \crit k^s>\alpha_s \} \in \mu^{j_1\restriction V_{\kappa_1+\beta(s)}}_{\beta(s)}$ holds since $\alpha_s<\kappa_1$.
    If $\lh^d(s')\ge 2$, then
    \begin{equation*}
        \{k^s\in \Emb^{j_a(\vec{k}_{s'})}_{\beta(s)}\mid 
        \crit k^s> \alpha_s \} \in \mu^{j_a(\vec{k}_{s'})}_{\beta(s)} \iff
        j_a(\crit\vec{k}_{s'})> j_a(\vec{k}_{s'})(\alpha_s),
    \end{equation*}
    and the latter inequality holds for $\hnu^{d,\beta}_{s-1}$-almost all $\vec{k}$ since we inductively assumed that $\forall t\le_\bbN s-1[\crit\vec{k}_t> \alpha_\tau]$ holds for $\hnu^{d,\beta}_{s-1}$-almost all $\vec{k}$, and 
    \begin{equation*}
        j_a(\crit\vec{k}_{s'})> j_a(\alpha_{s'}) = j_a(\vec{k}_{s'}(\alpha_{s'}))
        = j_a(\vec{k}_{s'})(j_a(\alpha_{s'})) \ge 
         j_a(\vec{k}_{s'})(\alpha_s),
    \end{equation*}
    where the first equality holds since $\alpha_{s'}<\crit\vec{k}_{s'}$.
\end{proof}

\begin{proposition}
    $\nu^{d,\beta}$ does not depend on the choice of $\beta$.
\end{proposition}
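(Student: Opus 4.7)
The plan is to reduce to the case $\beta(\sigma)\le\gamma(\sigma)$ for every $\sigma\in\Dec(d^\bullet)$, where \autoref{Lemma: Product measure projection lemma} applies directly, and then handle the general case by producing a common upper bound.

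For the comparable case, the key fact is that restricting an elementary embedding to a smaller rank does not change its critical point: for $1\le \beta(\sigma)\le \gamma(\sigma)$, one has $\crit(k\restriction V_{\crit k+\beta(\sigma)}) = \crit k$. Hence for every $\vec{k}\in D^{d,\gamma}$ and every $\sigma$, $\crit\pi^d_{\beta,\gamma}(\vec{k})_\sigma = \crit\vec{k}_\sigma$, so $(\crit\pi^d_{\beta,\gamma}(\vec{k}))\circ \iota_d = (\crit\vec{k})\circ \iota_d$. For any $X\subseteq (\Omega^1_\sfM)^d$ this yields the set equality
\begin{equation*}
\{\vec{k}'\in D^{d,\gamma} \mid (\crit\vec{k}')\circ \iota_d\in X\} = (\pi^d_{\beta,\gamma})^{-1}\big[\{\vec{k}\in D^{d,\beta}\mid (\crit\vec{k})\circ \iota_d\in X\}\big].
\end{equation*}
By \autoref{Lemma: Product measure projection lemma}, the left-hand side lies in $\hnu^{d,\gamma}$ if and only if the inner set on the right lies in $\hnu^{d,\beta}$, so $X\in \nu^{d,\beta}\iff X\in \nu^{d,\gamma}$.

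For arbitrary limit embeddings $\beta,\gamma\colon \Dec(d^\bullet)\to\Omega^1_\sfM$, I would construct a limit embedding $\delta$ with $\delta(\sigma)\ge\max(\beta(\sigma),\gamma(\sigma))$ for every $\sigma\in\Dec(d^\bullet)$; applying the comparable case twice then gives $\nu^{d,\beta}=\nu^{d,\delta}=\nu^{d,\gamma}$. The main obstacle is the construction of $\delta$, since the pointwise maximum of two dilator morphisms is not generally a dilator morphism and $\delta$ must be assembled coherently across all terms. I would argue that the set of limit embeddings of the finite flower $\Dec(d^\bullet)$ into $\Omega^1_\sfM$ is cofinal in the pointwise order: given any bounds $\eta_\sigma<\kappa_{\arity(\sigma)}$ respecting the arity structure, a limit embedding exceeding $\eta_\sigma$ at every $\sigma$ exists. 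A concrete route is to adapt \autoref{Proposition: Martin Flower is Universal}: either embed $\Dec(d^\bullet)$ as the top segment of a larger auxiliary flower whose initial part is an order of sufficient ordertype so that the canonical embedding shifts the values of $\Dec(d^\bullet)$ above the prescribed bounds, or iteratively lift an existing embedding through the critical-point sequence $\kappa_0<\kappa_1<\cdots$ of the Martin flower while preserving arity bands and comparison relations.
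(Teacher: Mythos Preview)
Your comparable case matches the paper's argument and is correct.

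For the general case you correctly identify that a common pointwise upper bound $\delta$ is what is needed, but your two suggested constructions are underspecified and each has a genuine obstacle. Padding $\Dec(d^\bullet)$ with a long initial segment so that the universality embedding shifts values upward may force the auxiliary flower out of $V_{\kappa_0}$ (the bounds $\alpha_\sigma$ can be as large as $\kappa_1$ already for arity-$1$ terms), so \autoref{Proposition: Martin Flower is Universal} no longer applies. Lifting an existing embedding by composing with some $j_{0,\alpha}$ moves an arity-$n$ value from below $\kappa_n$ to below $\kappa_{n+\alpha}$, so the result is no longer a morphism into $\Omega^1_\sfM$ respecting the arity bands; making this precise requires exactly the kind of support-preserving manipulation that is not obvious.

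The paper avoids any ad hoc construction and extracts $\delta$ from the measure apparatus already in place. Take the lexicographic pairing isomorphism $p\colon\Ord\times\Ord\to\Ord$ and set $\alpha_\sigma = p(\beta(\sigma^\bullet),\gamma(\sigma^\bullet))$. Because $\beta,\gamma$ are embeddings of a flower, this family satisfies the hypothesis $\alpha_\sigma\le j_a(\alpha_{\sigma'})$ of \autoref{Lemma: Co-cardinal-small set is measure-large}, so $\{\vec{k}\mid\forall\sigma\,\crit\vec{k}_\sigma>\alpha_\sigma\}\in\hnu^{d,\beta}$. Intersecting with the $\hnu^{d,\beta}$-large set of \autoref{Theorem: hnu concentrates correctly} yields some $\vec{k}$ for which $\delta:=\crit\vec{k}$ is an embedding $\Dec(d^\bullet)\to\Omega^1_\sfM$ pointwise dominating both $\beta$ and $\gamma$. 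This route reuses results you already need elsewhere and sidesteps the rank and arity issues in your proposed constructions.
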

\begin{proof}
    First, we claim that if $\beta(s)\le\gamma(s)$ for every $s\in d$, then $X\in \nu^{d,\beta}\iff X\in \nu^{d,\gamma}$. By \autoref{Lemma: Product measure projection lemma},
    \begin{align*}
        X\in \nu^{d,\beta} &\iff
        \big\{\vec{k}\in D^{d,\beta}\mid \{(s,\crit\vec{k}_s)\mid s\in\term(d)\} \in X\big\}\in \hnu^{d,\beta} \\ &\iff
        (\pi^d_{\beta,\gamma})^{-1} \big\{\vec{k}\in D^{d,\beta}\mid \{(s,\crit\vec{k}_s)\mid s\in\term(d)\} \in X\big\}\in \hnu^{d,\gamma} \\ &\iff
        \big\{\vec{k}\in D^{d,\gamma}\mid \big\{\big(s,\crit \big(\pi^d_{\beta,\gamma}(\vec{k})\big)_s\big)\mid s\in\term(d)\big\} \in X\big\}\in \hnu^{d,\gamma}.
    \end{align*}
    Since $\crit\big(\pi^d_{\beta,\gamma}(\vec{k})\big)_s = \crit\vec{k}_s$, we have that the last formula is equivalent to $X\in \hnu^{d,\gamma}$.
    For a general case, let $p\colon \Ord\times\Ord\to\Ord$ be the order isomorphism, where $\Ord\times \Ord$ follows the lexicographic order.
    If we let $\alpha_s = p(\beta(s),\gamma(s))$, then it satisfies the condition of \autoref{Lemma: Co-cardinal-small set is measure-large}. Hence, the combination of \autoref{Lemma: Co-cardinal-small set is measure-large} and \autoref{Theorem: hnu concentrates correctly} implies there is an embedding $\delta\colon \Dec(d^\bullet)\to\Omega^1_\sfM$ such that $\delta(s)\ge \alpha_s$ for every $s\in d$.
    It is clear that $\alpha_s\ge \beta(s),\gamma(s)$, so we have an embedding $\delta$ pointwise dominating $\beta$, $\gamma$.
\end{proof}

Hence the choice of $\beta$ is irrelevant of $\nu^{d,\beta}$, so we drop $\beta$ and write $\nu^d$ instead of $\nu^{d,\beta}$.
\autoref{Theorem: hnu concentrates correctly} immediately implies
\begin{corollary}
    $\nu^d$ concentrates on $(\Omega^1_\sfM)^{\Dec(d)}$.
\end{corollary}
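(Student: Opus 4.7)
The plan is to deduce this corollary directly from \autoref{Theorem: hnu concentrates correctly} together with the definition of $\nu^d$ as a projection of $\hnu^{d,\beta}$. Concretely, $\nu^d$ concentrates on $(\Omega^1_\sfM)^d$ means that the set of maps $p\colon d\to\Omega^1_\sfM$ which are genuine dilator embeddings has $\nu^d$-measure one. Unpacking the definition of $\nu^d$, this amounts to showing
\begin{equation*}
    \bigl\{\vec{k}\in D^{d,\beta}\bigm| (\crit\vec{k})\circ \iota_d\colon d\to\Omega^1_\sfM\text{ is an embedding}\bigr\} \in \hnu^{d,\beta}.
\end{equation*}

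First, I would invoke \autoref{Theorem: hnu concentrates correctly} to conclude that for $\hnu^{d,\beta}$-almost all $\vec{k}\in D^{d,\beta}$, the map $\sigma\mapsto \crit\vec{k}_\sigma$ is an embedding from $\Dec(d^\bullet)$ into $\Omega^1_\sfM$. Second, recall that by construction of the bullet dendrogram (see \autoref{Definition: Bullet dendrogram} and the discussion afterwards), there is a canonical dilator embedding $\iota_d\colon d\to \Dec(d^\bullet)$, sending $\lag x_0,e_0,\dots,e_{m-1},x_m\rag\in\Code(d)$ to $\lag x_0,e_0,\dots,e_{m-1},x_m^\bullet\rag$. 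Composing the two embeddings yields $(\crit\vec{k})\circ \iota_d\colon d\to \Omega^1_\sfM$, which is an embedding for $\hnu^{d,\beta}$-almost all $\vec{k}$ since a composition of embeddings is an embedding.

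Finally, applying the definition of $\nu^d$ (which is independent of $\beta$ by the previous proposition), this measure-one set in $\hnu^{d,\beta}$ projects to the set of embeddings $d\to\Omega^1_\sfM$ in $(\Omega^1_\sfM)^d$, giving $\{p\in (\Omega^1_\sfM)^d : p \text{ is an embedding}\}\in \nu^d$, which is exactly the assertion that $\nu^d$ concentrates on $(\Omega^1_\sfM)^d$. There is no real obstacle here — all the genuine work is absorbed into \autoref{Theorem: hnu concentrates correctly}; the corollary is essentially a bookkeeping statement relating the measure $\hnu^{d,\beta}$ on tuples of elementary embeddings to its projection $\nu^d$ on embeddings of $d$ into $\Omega^1_\sfM$, via the factorization through $\Dec(d^\bullet)$.
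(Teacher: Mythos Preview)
Your proposal is correct and follows essentially the same approach as the paper: invoke \autoref{Theorem: hnu concentrates correctly} to get that $\crit\vec{k}\colon \Dec(d^\bullet)\to\Omega^1_\sfM$ is an embedding for $\hnu^{d,\beta}$-almost all $\vec{k}$, then compose with $\iota_d$ and unravel the definition of $\nu^d$. If anything, you make the role of $\iota_d$ and the composition-of-embeddings step slightly more explicit than the paper does.
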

\begin{proof}
    Let $\beta\colon \Dec(d^\bullet)\to \Omega^1_\sfM$ be an embedding.
    By \autoref{Theorem: hnu concentrates correctly}, we have
    \begin{equation*}
        \{\vec{k}\in D^{d,\beta}\mid s\mapsto \crit \vec{k}_s \text{ is an embedding from $\Dec(d^\bullet)$ to $\Omega^1_\sfM$}\} \in \hnu^{d,\beta}.
    \end{equation*}
    Hence by the definition of $\nu^{d,\beta}$, we also have
    \begin{equation*}
        \{\gamma\in (\Omega^1_\sfM)^{\Dec(d)} \mid \text{$\gamma$ is a dilator embedding}\} \in \nu^{d,\beta} = \nu^d. \qedhere 
    \end{equation*}
\end{proof}

For a finite flower $F$ with no nullary terms, we can define $\nu^F$ with the help of $\nu^d$:
\begin{definition}
    Let $F$ be a finite flower with no nullary terms. We define $\nu^F$ over the set of embeddings from $F$ to $\Omega^1_\sfM$ as follows:
    \begin{equation*}
        X\in \nu^F \iff \{\gamma\in (\Omega^1_\sfM)^{\Dec(d)} \mid \gamma\circ h\in X\}\in\nu^d,
    \end{equation*}
    where $d$ is a trekkable dendrogram with the isomorphism $h\colon \Dec(d)\to F$.
\end{definition}

\subsection{The coherence of the measure family}
From the remaining part of the paper, we show that $\nu^d$ witnesses the measurability of $\Omega^1_\sfM$. We first verify the coherence of the measure family.

\begin{proposition} \label{Proposition: Coherence for simplest case}
    Let $d,d'$ be a finite trekkable dendrogram with no nullary terms such that $|d'|-|d|=1$, and there is $s<|d'|$ and a trekkable dendrogram morphism $h\colon d\to d'$ satisfying
    \begin{equation*}
        h(t) = 
        \begin{cases}
            t & \text{if }t <_\bbN s,\\
            t+1 & \text{if }t \ge_\bbN s.
        \end{cases}
    \end{equation*}
    For $\beta\colon \Dec((d')^\bullet)\to \Omega^1_\sfM$ and $X\subseteq D^{d,\beta}$, we have
    \begin{equation*}
        X\in \hnu^{d,\beta\circ h} \iff (h^*)^{-1}[X] \in \hnu^{d',\beta},
    \end{equation*}
    where $h^*$ is a map defined over the set of embeddings $\Dec((d')^\bullet)\to\Omega^1_\sfM$ by $h^*(\beta) = \beta\circ h$.
\end{proposition}
\begin{proof}
    Let $|d|=m$.
    Following the notation in the proof of \autoref{Theorem: Measure independence from an order}, \eqref{Formula: Decomposing a measure quantifier 2}, we have
    \begin{align*}
    \forall(\hnu^{d,\beta\circ h})\vec{k} [\vec{k}\in X] \iff \forall(\hmu^{d,\beta\circ h}_1) k^1 \forall(\hmu^{d,\beta\circ h}_2) k^2\cdots \forall(\hmu^{d,\beta\circ h}_{m-1}) k^{m-1} \big[ \{(t,k^t)\mid t<m\}\in X\big].
    \end{align*}
    Observe that in $d'$, $s$ is a terminal node. This means no other measure components $\hmu^{d,\beta}_t$ in defining $\hnu^{d',\beta}$ depends on the $s$th component. 
    Hence for $t\ge_\bbN s$, $\hmu^{d,\beta\circ h}_t = \hmu^{d',\beta}_{t+1}$. We also have $\hmu^{d,\beta\circ h}_t = \hmu^{d',\beta}_{t}$ for $t<_\bbN s$, so \autoref{Lemma: Eliminating unused measure quantifiers} implies
    \begin{align*}
    \forall(\hnu^{d,\beta\circ h})\vec{k} [\vec{k}\in X]\ & \iff \forall(\hmu^{d',\beta}_1) k^1 \forall(\hmu^{d',\beta}_2) k^2\cdots \forall(\hmu^{d',\beta}_{m}) k^m \big[ \{(h^{-1}(t),k^t)\mid t\le m, t\neq s\}\in X\big]. \\
    & \iff \forall(\hnu^{d',\beta}) \vec{k}\big[\vec{k}\in (h^*)^{-1}[X]\big]. \qedhere 
    \end{align*}
\end{proof}

We can derive the coherence by applying \autoref{Proposition: Coherence for simplest case} several times.
\begin{lemma} \label{Lemma: Trekkable morphism decomposition}
    Let $d$ and $d'$ be finite dendrograms, $|d|<|d'|$, and $f\colon d\to d'$ a trekkable dendrogram morphism. Then we can find a sequence of trekkable dendrograms $d_0,d_1,\cdots,d_m$ and $f_l\colon d_l\to d_{l+1}$ ($l<m$) such that each $f_l$ is trekkable, $d_0=d$, $d_m=d'$, $f=f_{m-1}\circ\cdots\circ f_0$, and $|d_{l+1}|-|d_l|=1$ for every $l<m$.
\end{lemma}
\begin{proof}
    Let $d'_0\subseteq d'$ be the range of $f$ and let $\{s_0,\cdots,s_{m-1}\}$ be the $<_\bbN$-increasing enumeration of $d'\setminus d_0'$.
    By the trekkability of $d'$, for each $l<m$, the set $d'_l = d'_0\cup \{s_0,\cdots,s_{l-1}\}$ is a subdendrogram of $d'$.
    Then let us find a trekkable dendrogram $d_l$, a trekkable dendrogram morphism $g_l\colon d'_l\to d_l$, and $f_l$ making the following diagram commutes:
    \begin{equation*}
        \begin{tikzcd}
            d'_0 & d'_1 & \cdots & d'_m \\
            d_0 & d_1 & \cdots & d_m 
            \arrow[from=1-1, to=1-2, "\subseteq"]
            \arrow[from=1-2, to=1-3, "\subseteq"]
            \arrow[from=1-3, to=1-4, "\subseteq"]
            \arrow[from=1-1, to=2-1, "h_0"]
            \arrow[from=1-2, to=2-2, "h_1"]
            \arrow[from=1-4, to=2-4, "h_m"]
            \arrow[from=2-1, to=2-2, "f_0", swap]
            \arrow[from=2-2, to=2-3, "f_1", swap]
            \arrow[from=2-3, to=2-4, "f_{m-1}", swap]
        \end{tikzcd}
    \end{equation*}
    We can see that $d_l$ and $f_l$ satisfy the desired properties.
\end{proof}

\begin{proposition} \label{Proposition: Coherence Dendrogram version}
    Let $d$ and $d'$ be finite dendrograms and $f\colon d\to d'$ a trekkable dendrogram morphism.
    For $\beta\colon \Dec((d')^\bullet)\to \Omega^1_\sfM$ and $X\subseteq D^{d,\beta}$, we have
    \begin{equation*}
        X\in \hnu^{d,\beta\circ f} \iff (f^*)^{-1}[X] \in \hnu^{d',\beta}.
    \end{equation*}
\end{proposition}
\begin{proof}
    We only consider the case $|d|<|d'|$.
    Let $d_0,\cdots, d_m$ and $f_l\colon d_l\to d_{l+1}$ ($l<m$) be the sequence of trekkable dendrograms and morphisms given by \autoref{Lemma: Trekkable morphism decomposition}.
    Then we have
    \begin{equation*}
        X\in \hnu^{d,\beta\circ f} \iff (f_0^*)^{-1}[X]\in \hnu^{d_1,\beta\circ f_{l-1}\circ\cdots \circ f_1} \iff \cdots  \iff (f^*)^{-1}[X]\in \hnu^{d',\beta}. \qedhere 
    \end{equation*}
\end{proof}

Hence we have
\begin{theorem}[Coherence]
    Let $F$, $F'$ be finite flowers with no nullary terms and $f\colon F\to F'$ be an embedding. For $X\subseteq (\Omega^1_\sfM)^F$, we have
    \begin{equation*}
        X\in \nu^F \iff (f^*)^{-1}[X] \in \nu^{F'}.
    \end{equation*}
\end{theorem}
\begin{proof}
    By replacing $F$ and $f$ if necessary, we may assume that $F\subseteq F'$ and $f$ is the inclusion map.
    Let $d'$ be a trekkable dendrogram with an isomorphism $h'\colon F'\cong \Dec(d')$, and consider a subdendrogam $\hat{d}\subseteq d'$ such that $h'[F] = \Dec(\hat{d})$.
    Then we can find a trekkable dendrogram $d$ with an isomorphism $g\colon d\to\hat{d}$ that is also $<_\bbN$-increasing. It is easy to see that $g\colon d\to d'$ is a trekkable dendrogram morphism, and we can find an isomorphism $h\colon F\to \Dec(d)$ making the following diagram commute:
    \begin{equation*}
        \begin{tikzcd}
            F' & \Dec(d') \\
            F & \Dec(d) 
            \arrow[from=1-1, to=1-2, "h'"]
            \arrow[from=2-1, to=1-1, "f"]
            \arrow[from=2-1, to=2-2, "h"']
            \arrow[from=2-2, to=1-2, "\Dec(g)"']
        \end{tikzcd}
    \end{equation*}
    Then by \autoref{Proposition: Coherence Dendrogram version},
    \begin{align*}
        X\in \nu^F\ &\iff \{\gamma\in (\Omega^1_\sfM)^{\Dec(d)} \mid \gamma\circ h \in X \} \in \nu^d \\ 
        &\iff \{\gamma\in (\Omega^1_\sfM)^{\Dec(d')} \mid (\gamma\circ \Dec(g))\circ h\in X \} \in \nu^{d'} \\ 
        &\iff \{\gamma\in (\Omega^1_\sfM)^{\Dec(d')} \mid (\gamma\circ h')\circ f\in X \} \in \nu^{d'}  \\ 
        &\iff \{\gamma\in (\Omega^1_\sfM)^{\Dec(d')} \mid \gamma\circ h'\in (f^*)^{-1}[X] \} \in \nu^{d'} \\
        &\iff  (f^*)^{-1}[X] \in \nu^{F'}. \qedhere 
    \end{align*}
\end{proof}

\subsection{The \texorpdfstring{$\omega_1$}{omega 1}-completeness of the measure family}
\label{Subsection: omega1 completeness of the measure family}

We finish this section by proving that the measure family we have constructed is $\omega_1$-complete.
The main idea of the proof is somewhat similar to that of \autoref{Lemma: A continuous family of finite linear orders}, but the argument is more complicated since we iterate measures along a tree and use a dependent product. As we did in the previous subsection, we handle the trekkable dendrogram version of the $\omega_1$-completeness first and transfer it into the flower version. \emph{We use the axiom of dependent choice in this subsection.}

Let $F$ be a countable flower with no nullary terms. Then its cell decomposition $\Cell(F)$ is a dendrogram. In particular, $\Cell(F)$ is locally well-founded and $\Dec^\bullet(\Cell(F))(n)$ is well-ordered for each $n$. This means the level-then-value order $<_\LV$ over $\Cell(F)$ is a well-order. Hence, we can re-label elements of $\Cell(F)$ into ordinals and form a trekkable dendrogram $C$ isomorphic to $\Cell(F)$.
By \autoref{Proposition: Dendrogram characterization of a flower}, the cell decomposition of $F$ (and also $C$) is a tree with the top node $0$.

Let $D$ be a countable trekkable dendrogram with no nullary terms, and $f_i\colon d_i\to D$ be an increasing dendrogram morphism for each $i<\omega$.
Let us also fix $\beta\colon \Dec(D^\bullet)\to \Omega^1_\sfM$, which will be a uniform bound for $\hnu^{d_i,\beta\circ f_i}$.
We want to find a sequence of elementary embeddings $\lag\tilde{k}_s\mid s\in D\rag$ such that for each $i<\omega$, $\{(s,\tilde{k}_{f_i(s)})\mid s\in d_i\}\in X_i$. We will find the desired sequence `cell-by-cell':
More precisely, from a given $\tilde{k}_s$, we will find $\tilde{k}_t$ for every immediate successor $t$ of $s$.
We need subsidiary notions for the proof: First, we need a `section' of a measure for a given sequence of elementary embeddings $\vec{k}$:

\begin{definition}
    Let $d$ be a trekkable dendrogram and $\beta\colon \Dec(d^\bullet)\to \Omega^1_\sfM$.
    A sequence $\vec{k}$ of elementary embeddings is \emph{$(d,\beta)$-coherent} if 
    \begin{enumerate}
        \item $\dom \vec{k}$ is a subdendrogram of $d$. That is, $\dom\vec{k}\subseteq d$ and is closed under immediate predecessors.
        \item $\dom \vec{k}_s = V_{\crit \vec{k}_s + \beta(s)}$ for every $s\in \dom\vec{k}$.
    \end{enumerate}
\end{definition}

\begin{definition}
    Let $\beta\colon \Dec(d^\bullet)\to \Omega^1_\sfM$ and $\vec{k}$ be a $(d,\beta)$-coherent sequence of elementary embeddings.
    We define $D^{d,\beta}_s[\vec{k}]$ and $\hnu^{d,\beta}_s[\vec{k}]$ similar to \autoref{Definition: Measure for elementary embeddings}, but with `skipping' the embeddings occurring in $\vec{k}$.
    More precisely, we define them as follows:
    \begin{enumerate}
        \item $D^{d,\beta}_{0}[\vec{k}]=\varnothing$ and $\hnu^{d,\beta}_{0}[\vec{k}]$ is the trivial measure.
        \item If $s \in \dom \vec{k}$, $D^{d,\beta}_s[\vec{k}] = D^{d,\beta}_{s-1}[\vec{k}]$ and $\hnu^{d,\beta}_s[\vec{k}] = \hnu^{d,\beta}_{s-1}[\vec{k}]$.
        
        \item If $\lh s = 1$ and $s\notin \dom\vec{k}$, define
        \begin{itemize}
            \item $D^{d,\beta}_s[\vec{k}] = \{\vec{k}'\cup \{(s, k'')\} \mid \vec{k}'\in D^{d,\beta}_{s-1}[\vec{k}] \land k''\in \Emb^{j_1\restriction V_{\kappa_1+\beta(s)}}_{\beta(s)}\}.$
            
            \item $X \in \hnu^{d,\beta}_s[\vec{k}] \iff \{\vec{k}'\in D^{d,\beta}_{s-1}[\vec{k}] \mid \{k''\in \Emb^{j_1\restriction V_{\kappa_1+\beta(s)}}_{\beta(s)}\mid  \vec{k}'\cup \{(s, k'')\}\in X\}\in \mu^{j_1\restriction V_{\kappa_1+\beta(s)}}_{\beta(s)}\}\in \hnu^{d,\beta}_{s-1}[\vec{k}]$.
        \end{itemize}
        \item If $s\notin \dom\vec{k}$, $d\vDash t\multimap s$, $\bfe(t)=a$. Define
        \begin{itemize}
            \item $D^{d,\beta}_s[\vec{k}] = \{\vec{k}'\cup \{(s, k'')\} \mid \vec{k}'\in D^{d,\beta}_{s-1}[\vec{k}] \land k''\in \Emb^{j_a((\vec{k}\cup\vec{k}')_t)}_{\beta(s)}\}.$
            
            \item $X \in \hnu^{d,\beta}_s[\vec{k}] \iff \{\vec{k}'\in D^{d,\beta}_{s-1}[\vec{k}] \mid \{k''\in \Emb^{j_a((\vec{k}            \cup\vec{k}')_t)}_{\beta(s)}\mid  \vec{k}'\cup \{ (s, k'')\}\in X\}\in \mu^{j_a((\vec{k}\cup\vec{k}')_t)}_{\beta(s)}\}\in \hnu^{d,\beta}_{s-1}[\vec{k}]$.
        \end{itemize}
    \end{enumerate}
    $D^{d,\beta}[\vec{k}]$, $\hnu^{d,\beta}[\vec{k}]$ are $D^{d,\beta}_s[\vec{k}]$ and $\hnu^{d,\beta}_s[\vec{k}]$ for the final element $s$ of $d$ respectively.
    
    For $X\subseteq D^{d,\beta}$ and a sequence $\vec{k}$, we define $X[\vec{k}] = \{\vec{k}'\in D^{d,\beta}[\vec{k}]\mid \vec{k}\cup\vec{k}'\in X\}$.
\end{definition}

Every measure we used in the definition of $\hnu^{d,\beta}_s[\vec{k}]$ is countably complete, so $\hnu^{d,\beta}_s[\vec{k}]$ is also countably complete.
We also define a generalization of \eqref{Formula: Measure decomposition}:
\begin{definition}
    Let $d$ be a finite trekkable dendrogram and $\beta\colon \Dec(d^\bullet)\to \Omega^1_\sfM$ an embedding, $\vec{k}$ a $(d,\beta)$-coherent sequence, and $d'\subseteq d$ a subdendrogram of $d$.
    Suppose that $\dom\vec{k}\subseteq d'$ and both $\dom \vec{k}$ and $d'$ are closed under nodes with the same immediate predecessor; That is, if $t\in d'$ and $d\vDash s\multimap t,t'$, then $t'\in d'$.
    
    For $X\in \hnu^{d,\beta}[\vec{k}]$, let us define $X\restriction^{d,\beta,\vec{k}} d'$ by
    \begin{equation*}
        \vec{k}' \in X\restriction^{d,\beta,\vec{k}} d' \iff \forall (\mu^{d,\beta}_{s_0}) k^{s_0} \cdots \forall (\mu^{d,\beta}_{s_{m-1}}) k^{s_{m-1}} \big(\vec{k}'\cup \{(s_i,k^i) \mid i<m\} \in X\big), 
    \end{equation*}
    where $\{s_i\mid i<m\}$ is the increasing enumeration of $d\setminus d'$ and $\mu^{d,\beta}_s$ is a unit measure occurring in the definition of $\hnu^{d,\beta}[\vec{k}]$:
    \begin{equation*}
        \mu^{d,\beta}_s = \mu^{d,\beta}_s(\vec{k}\cup\vec{k}', \lag k^{s_i}\mid s_i<s\rag) =
        \begin{cases}
            \mu^{j_1\restriction V_{\kappa_1+\beta(s)}}_{\beta(s)} & \lh s = 1, \\
            \mu^{j_a(k^{s'})}_{\beta(s)} & d\vDash s'\multimap s,\ \bfe^d(s')=a,\ s'\notin \dom \vec{k}\cup \vec{k}', \\
            \mu^{j_a((\vec{k}\cup\vec{k}')_{s'})}_{\beta(s)} & d\vDash s'\multimap s,\ \bfe^d(s')=a,\ s'\in \dom \vec{k}\cup \vec{k}'.
        \end{cases}
    \end{equation*}
    We also define a measure $\hnu^{d,\beta}[\vec{k}]\restriction d'$ over $D^{d,\beta}[\vec{k}]\restriction^{d,\beta,\vec{k}} d'$ by
    \begin{equation*}
        X \in \hnu^{d,\beta}[\vec{k}]\restriction d' \iff \forall (\mu^{d,\beta}_{t_0}) k^{t_0} \cdots \forall (\mu^{d,\beta}_{t_{p-1}}) k^{t_{p-1}} \big(\{(t_i,k^{t_i}) \mid i<p\} \in X\big), 
    \end{equation*}
    where $\{t_i\mid i<p\}$ is the increasing enumeration of $d'\setminus \dom\vec{k}$.
\end{definition}

\begin{lemma}
    Let $d$ be a finite trekkable dendrogram, $d'\subseteq d$ a subdendrogram, $\beta\colon \Dec(d^\bullet)\to \Omega^1_\sfM$ an embedding, 
    and $\vec{k}$ a $(d,\beta)$-coherent sequence of elementary embeddings such that $\dom\vec{k}\subseteq d'$ and both $d'$ and $\dom\vec{k}$ are closed under nodes with the same immediate predecessor. Then
    \begin{enumerate}
        \item For $X\subseteq D^{d,\beta}[\vec{k}]$, $X \in \hnu^{d,\beta}[\vec{k}] \iff X\restriction^{d,\beta,\vec{k}} d' \in \hnu^{d,\beta}[\vec{k}]\restriction d'$.
        
        \item For $X\subseteq D^{d,\beta}[\vec{k}]$, if $\vec{k}'$ is a $(d,\beta)$-coherent sequence such that $\vec{k}\cup\vec{k}'\in D^{d,\beta}$, then $\vec{k}'\in X\restriction^{d,\beta,\vec{k}} \dom \vec{k}' \iff X[\vec{k}'] \in\hnu^{d,\beta}[\vec{k}\cup\vec{k}']$.
    \end{enumerate}
\end{lemma}
\begin{proof}
    The main idea of the proof is that we can switch the order of measure quantifiers in the definition of $\hnu^{d,\beta}[\vec{k}]$ and $\hnu^{d,\beta}[\vec{k}]\restriction d'$ as long as the measure order is trekkable.
    More precisely, suppose that $d$ and $\hat{d}$ are trekkable dendrograms and $h\colon d\to\hat{d}$ is an isomorphism. Then we have
    \begin{itemize}
        \item For each $X\subseteq D^{d,\beta}[\vec{k}]$,
        \begin{equation*}
            X\in \hnu^{d,\beta}[\vec{k}] \iff \forall \big(\mu^{\hat{d},\beta\circ h}_{h(s_0)}\big) k^{s_0} \cdots \forall \big(\mu^{\hat{d},\beta\circ h}_{h(s_{m-1})}\big) k^{s_{m-1}} \big[ \big\{ \big(s_i,k^{s_i}\big)\mid i<m\big\} \in X\big],
        \end{equation*}
        where $\lag s_i\mid i<m\rag$ is the enumeration of $d\setminus \dom \vec{k}$ such that $\lag h(s_i)\mid i<m\rag$ is increasing.

        \item For $X\subseteq D^{d,\beta}[\vec{k}]$, if $\lag t_i\mid i<p\rag$ is an enumeration of $d\setminus d'$ such that $\lag h(t_i)\mid i<p\rag$ is increasing, we have
        \begin{equation*}
            \vec{k}' \in X\restriction^{d,\beta,\vec{k}} d'\iff  \forall \big(\mu^{\hat{d},\beta\circ h}_{h(t_0)}\big) k^{t_0} \cdots \forall \big(\mu^{\hat{d},\beta\circ h}_{h(t_{p-1})}\big) k^{t_{p-1}} \big[\vec{k}'\cup\{(t_i,k^{t_i}) \mid i<p\} \in X\big], 
        \end{equation*}

        \item For $X\subseteq D^{d,\beta}[\vec{k}]\restriction^{d,\beta,\vec{k}} d'$, if $\lag t_i\mid i<p\rag$ is an enumeration of $d'\setminus \dom \vec{k}$ such that $\lag h(t_i)\mid i<p\rag$ is increasing, we have
        \begin{equation*}
            X\in \hnu^{d,\beta}[\vec{k}]\restriction d' \iff \forall \big(\mu^{\hat{d},\beta\circ h}_{h(t_0)}\big) k^{t_0} \cdots \forall \big(\mu^{\hat{d},\beta\circ h}_{h(t_{p-1})}\big) k^{t_{p-1}} \big[\{(t_i,k^{t_i}) \mid i<p\} \in X\big], 
        \end{equation*}
    \end{itemize}
    Its proof follows from the proof of \autoref{Theorem: Measure independence from an order}, so we omit its details. Let us apply the previous observation to prove the lemma:
    \begin{enumerate}
        \item Let $\hat{d}$ be a dendrogram isomorphic to $d$, whose field is a natural number, but enumerates elements of $d'$ first, then enumerates those of $d\setminus d'$; That is, if $h\colon d\to \hat{d}$ is an isomorphism, $s\in d'$, $t\in d\setminus d'$, then $h(s)<_\bbN h(t)$.
        Such $\hat{d}$ exists and is trekkable by the assumption that $d'$ is closed under nodes with the same immediate predecessor.
        Suppose that $\lag h(s_i)\mid i<m\rag$ increasingly enumerates $d\setminus\dom\vec{k}$, and $\lag h(s_i)\mid i<p\rag$ increasingly enumerates $d'\setminus \dom\vec{k}$.
        Then for $X\subseteq D^{d,\beta}[\vec{k}]$,
        \begin{align*}
            X\in \hnu^{d,\beta}[\vec{k}]\ & \iff \forall \big(\mu^{\hat{d},\beta\circ h}_{h(s_0)}\big) k^{s_0} \cdots \forall \big(\mu^{\hat{d},\beta\circ h}_{h(s_{m-1})}\big) k^{s_{m-1}} \big[ \big\{ \big(s_i,k^{s_i}\big)\mid i<m\big\} \in X\big] \\
            & \iff \forall \big(\mu^{\hat{d},\beta\circ h}_{h(s_0)}\big) k^{s_0} \cdots \forall \big(\mu^{\hat{d},\beta\circ h}_{h(s_{p-1})}\big) k^{s_{p-1}} \big[ \big\{ \big(s_i,k^{s_i}\big)\mid i<p\big\} \in X\restriction^{d,\beta,\vec{k}} d'\big] \\
            &\iff X\restriction^{d,\beta,\vec{k}} d' \in \hnu^{d,\beta}[\vec{k}]\restriction d'.
        \end{align*}
        
        \item Let $d'=\dom \vec{k}'$, and $\hat{d}$ be a dendrogram isomorphic to $d$ whose field is a natural number but enumerates elements of $d'$ first, then enumerates elements of $d\setminus d'$. If $\lag h(s_i)\mid i<m\rag$ increasingly enumerates $d\setminus\dom\vec{k}$, and $\lag h(s_i)\mid i<p\rag$ increasingly enumerates $d'\setminus \dom\vec{k}$, then
        \begin{align*}
            \vec{k}'\in X\restriction^{d,\beta,\vec{k}}d'\ 
            & \iff \forall \big(\mu^{\hat{d},\beta\circ h}_{h(s_p)}\big) k^{s_p} \cdots \forall \big(\mu^{\hat{d},\beta\circ h}_{h(s_{m-1})}\big) k^{s_{m-1}} \big[ \vec{k}'\cup\big\{ \big(s_i,k^{s_i}\big)\mid p\le i<m\big\} \in X\big]
            \\& \iff \forall \big(\mu^{\hat{d},\beta\circ h}_{h(s_p)}\big) k^{s_p} \cdots \forall \big(\mu^{\hat{d},\beta\circ h}_{h(s_{m-1})}\big) k^{s_{m-1}} \big[ \big\{ \big(s_i,k^{s_i}\big)\mid p\le i<m\big\} \in X[\vec{k}']\big] \\
            &\iff X[\vec{k}'] \in \hnu^{d,\beta}[\vec{k}\cup\vec{k}'].\qedhere 
        \end{align*}
    \end{enumerate}
\end{proof}

The following theorem will immediately imply the $\omega_1$-completeness of the measure family.

\begin{theorem} \label{Theorem: Main step for omega 1 completeness}
    Let $D$ be a countable trekkable dendrogram with no nullary terms, $\beta\colon \Dec(D^\bullet)\to \Omega^1_\sfM$ a limit embedding, $\{d_i\mid i<\omega\}$ a sequence of finite trekkable dendrograms, and $f_i\colon d_i\to D$ an increasing dendrogram morphism such that $D=\bigcup_{i<\omega}\ran f_i$.
    If $X_i\in \hnu^{d_i,\beta\circ f_i}$ for each $i<\omega$, then we can find a family of elementary embeddings $\lag\tilde{k}_s\mid s\in D\rag$ such that $\{(s,\tilde{k}_{f_i(s)})\mid s \in d_i\}\in X_i$ for each $i<\omega$ and $s^\bullet\mapsto \crit \tilde{k}_s$ for $s\in D$ is an embedding from $\Dec(D^\bullet)$ to $\Omega^1_\sfM$.
\end{theorem}
\begin{proof}
    By \autoref{Theorem: hnu concentrates correctly}, we may assume that for every $\vec{k}\in X_i$, the map $s^\bullet\mapsto \crit\vec{k}_s$ is an embedding from $\Dec(d_i^\bullet)$ to $\Omega^1_\sfM$.
    Now define $D_0 = \{0\}$, $D_{\alpha+1} = D_\alpha\cup\{\xi\in D\mid \alpha\multimap \xi\}$, and $D_\alpha = \bigcup_{\xi<\alpha} D_\xi$ for a limit $\alpha$. Then each $D_\alpha$ is a subdendrogram of $D$ and $\alpha\in D_\alpha$ for every $\alpha\in D$. Clearly, each $D_\alpha$ is closed under nodes with the same immediate predecessor.
    We first find $\tilde{k}_s$ for $s\in D_1$ satisfying the following: For each $i<\omega$, $\{(s,\tilde{k}_{f_i(s)})\mid f_i(s)\in D_1 \}\in X_i\restriction^{d_i,\beta\circ f_i,\{\}} f_i^{-1}[D_1]$.
    % $s^\bullet \mapsto\crit\tilde{k}_s$ is an embedding from $\Dec(D_1^\bullet)$ to $\Omega^1_\sfM$.
    
    Observe that $X_i\restriction^{d_i,\beta\circ f_i,\{\}} f_i^{-1}[D_1] \in \hnu^{d_i,\beta\circ f_i}\restriction f_i^{-1}[D_1]$ and $\hnu^{d_i,\beta\circ f_i}\restriction f_i^{-1}[D_i]$ is the product of measures of the form $\mu^{j_1\restriction V_{\kappa_1+\beta\circ f_i(s)}}_{\beta\circ f_i(s)}$ for $s\in d_i$ such that $f_i(s)\in D_1$.
    If we take $\gamma(0) = \sup \{\beta(s)\mid D\vDash 0\multimap s\}<\kappa_1$, then \autoref{Proposition: Product measure is generated by cubes} and Countable Choice imply for each $i<\omega$ we can find $Y^1_i\in \mu^{j_1\restriction V_{\kappa_1+\gamma(0)}}_{\gamma(0)}$ such that
    \begin{equation} \label{Formula: omega1 completeness 00} \textstyle 
        \Delta^{j_1\restriction V_{\kappa_1+\gamma(0)}}_{N^1_i}\cap \prod_{D\vDash 0\multimap f_i(s)}\pi^{j_1\restriction V_{\kappa_1+\gamma(0)}}_{\beta\circ f_i(s),\gamma(0)}[Y^1_i] \subseteq X_i\restriction^{d_i,\beta\circ f_i,\{\}} f_i^{-1}[D_1],
    \end{equation}
    where $N^0_i=|\{s\in d_i\mid D\vDash 0\multimap f_i(s)\}|$. We have $Y^1:=\bigcap_i Y^1_i \in \mu^{j_1\restriction V_{\kappa_1+\gamma(1)}}_{\gamma(1)}$,
    so by \autoref{Proposition: Arbitrary long Mitchell chain}, we can choose $\overline{k}_s\in Y^1$ for each $s\in D_1\setminus D_0$ such that $D\vDash s<s'$ implies
    \begin{equation*}
        \overline{k}_s\restriction V_{\crit\overline{k}_s+\beta(s)} \in \ran \left(\overline{k}_s\restriction V_{\crit\overline{k}_{s'}+\beta(s')}\right).
    \end{equation*}
    
    Then take $\tilde{k}_s = \overline{k}_s\restriction V_{\crit \overline{k}_s + \beta(s)}$. Combining with \eqref{Formula: omega1 completeness 00}, we have
    \begin{equation} \label{Formula: omega1 completeness 01}
        \{(s,\tilde{k}_{f_i(s)})\mid f_i(s)\in D_1\} \in X_i\restriction^{d_i,\beta\circ f_i,\{\}} f_i^{-1}[D_1]
    \end{equation}
    for each $i<\omega$. Note that if we write $\vec{\tilde{k}}^{\alpha,i} = \{(s,\tilde{k}_{f_i(s)})\mid f_i(s)\in D_\alpha\}$, then \eqref{Formula: omega1 completeness 01} becomes $\vec{\tilde{k}}^{1,i}\in X_i\restriction^{d_i,\beta\circ f_i,\{\}} \dom \vec{\tilde{k}}^{1,i}$, which is equivalent to $X_i[\vec{\tilde{k}}^{1,i}] \in \hnu^{d_i,\beta\circ f_i}[\vec{\tilde{k}}^{1,i}]$.

    Now, let us inductively assume that we have found $\tilde{k}_s$ for $s\in D_\alpha$ such that for every $i<\omega$, $X_i[\vec{\tilde{k}}^{\alpha,i}] \in \hnu^{d_i,\beta\circ f_i}[\vec{\tilde{k}}^{\alpha,i}]$.
    We will find $\tilde{k}_s$ for $s\in D$ such that $D\vDash\alpha\multimap s$ such that
    \begin{equation*}
        \big\{\big( s,\tilde{k}_{f_i(s)}\big) \mid s\in d_i\land  D\vDash\alpha\multimap  f_i(s)\big\} \in X_i[\vec{\tilde{k}}^{\alpha,i}]\restriction^{d_i,\beta\circ f_i, \vec{\tilde{k}}^{\alpha,i}} \big(f^{-1}_i[D_{\alpha+1}]\big)[\vec{\tilde{k}}^{\alpha,i}].
    \end{equation*}
    Note that $\alpha\in D_\alpha$, so $\tilde{k}_\alpha$ is defined. From the inductive assumption, we have
    \begin{equation}\label{Formula: omega1 completeness 02} 
        X_i[\vec{\tilde{k}}^{\alpha,i}] \restriction^{d_i,\beta\circ f_i,\vec{\tilde{k}}^{\alpha,i}} \big(f_i^{-1}[D_\alpha]\big)[\vec{\tilde{k}}^{\alpha,i}] \in \hnu^{d_i,\beta\circ f_i}[\vec{\tilde{k}}^{\alpha,i}]\restriction^{d_i,\beta\circ f_i,\vec{\tilde{k}}^{\alpha,i}} \big( f_i^{-1}[D_\alpha]\big)[\vec{\tilde{k}}^{\alpha,i}],
    \end{equation}
    The measure in \eqref{Formula: omega1 completeness 02} is a product of measures of the form $\mu^{j_a(\tilde{k}_\alpha)}_{\beta\circ f_i(s)}$ for $s\in d_i$ with $f_i(s)\in D_\alpha$, where $a=\bfe^D(\alpha)$. Now take $\gamma(\alpha) = \sup\{\beta(s)\mid D\vDash \alpha\multimap s\}$, then \autoref{Proposition: Product measure is generated by cubes} and Countable Choice imply for each $i<\omega$, we can find $Y^\alpha_i\in \mu^{j_a(\tilde{k}_\alpha)}_{\gamma(\alpha)}$ such that
    \begin{equation}\label{Formula: omega1 completeness 03} \textstyle
    \Delta^{j_a(\tilde{k}_\alpha)}_{N^{\alpha+1}_i}\cap \prod_{D\vDash \alpha\multimap f_i(s)}\pi^{j_a(\tilde{k}_\alpha)}_{\beta\circ f_i(s),\gamma(\alpha)}[Y^{\alpha+1}_i] \subseteq X_i[\vec{\tilde{k}}^{\alpha,i}]\restriction^{d_i,\beta\circ f_i,\vec{\tilde{k}}^{\alpha,i}} \big(f_i^{-1}[D_\alpha]\big)[\vec{\tilde{k}}^{\alpha,i}],
    \end{equation}
    where $N^{\alpha+1}_i=|\{s\in d_i\mid D\vDash \alpha\multimap f_i(s)\}|$. We have $Y^{\alpha+1}:=\bigcap_i Y^{\alpha+1}_i \in \mu^{j_a(\tilde{k}_\alpha)}_{\gamma(\alpha)}$,
    so by \autoref{Proposition: Arbitrary long Mitchell chain}, we can choose $\overline{k}_s\in Y^{\alpha+1}$ for each $s\in D_{\alpha+1}\setminus D_\alpha$ such that $D\vDash s<s'$ and $\alpha\multimap s,s'$ imply
    \begin{equation*}
        \overline{k}_s\restriction V_{\crit\overline{k}_s+\beta(s)} \in \ran \left(\overline{k}_s\restriction V_{\crit\overline{k}_{s'}+\beta(s')}\right).
    \end{equation*}
    Now take $\tilde{k}_s = \overline{k}_s\restriction V_{\crit \overline{k}_s + \beta(s)}$ for $s\in D_{\alpha+1}\setminus D_\alpha$ as before. 
    Combining with \eqref{Formula: omega1 completeness 03}, we have
    \begin{equation} \label{Formula: omega1 completeness 04}
        \{(s,\tilde{k}_{f_i(s)})\mid f_i(s)\in D_{\alpha+1}\setminus D_\alpha\} \in X_i[\vec{\tilde{k}}^{\alpha,i}]\restriction^{d_i,\beta\circ f_i,\vec{\tilde{k}}^{\alpha,i}} \big(f_i^{-1}[D_\alpha]\big)[\vec{\tilde{k}}^{\alpha,i}]
    \end{equation}
    Note that if we let $\vec{k}'$ be the left-hand-side of \eqref{Formula: omega1 completeness 04}, then \eqref{Formula: omega1 completeness 04} is equivalent to $X_i[\vec{\tilde{k}}^{\alpha,i}][\vec{k}']\in \hnu^{d_i,\beta\circ f_i}[\vec{\tilde{k}}^{\alpha,i}\cup \vec{k}']$, and $X_i[\vec{\tilde{k}}^{\alpha,i}][\vec{k}'] = X_i[\vec{\tilde{k}}^{\alpha,i}\cup\vec{k}']$.
    It shows the inductive hypothesis for $\alpha+1$.
    For limit $\alpha$, observe that for each $i<\omega$ we can find $\xi<\alpha$ such that $X_i[\vec{\tilde{k}}^{\alpha,i}]=X_i[\vec{\tilde{k}}^{\xi,i}]$ and $\hnu^{d_i,\beta\circ f_i}[\vec{\tilde{k}}^{\alpha,i}]=\hnu^{d_i,\beta\circ f_i}[\vec{\tilde{k}}^{\xi,i}]$ since each $d_i$ is finite.
    
    We finalize the proof by showing that $s^\bullet \mapsto \crit \tilde{k}_s$ is an embedding from $\Dec(D^\bullet)$ to $\Omega^1_\sfM$. Let $\alpha$ be the least ordinal such that $D=D_\alpha$. For $s,t\in D$, we can find $i<\omega$ such that $s,t\in \ran f_i$. For an arity diagram $\cyrDe$, we have
    \begin{equation*}
        D\vDash s<_\cyrDe t \implies d_i \vDash f_i^{-1}(s)<_\cyrDe f_i^{-1}(t) \implies \Omega^1_\sfM \vDash \crit\tilde{k}_s <_\cyrDe \crit \tilde{k}_t. 
        \qedhere 
    \end{equation*}
\end{proof}

\begin{theorem}[$\omega_1$-completeness]
    Let $D$ be a countable flower with no nullary terms and $\{d_i\mid i<\omega\}$ be a countable family of finite subflowers of $D$.
    If $X_i \in \nu^d$ for each $i<\omega$, then we can find an embedding $f\colon D\to \Omega^1_\sfM$ such that for each $i<\omega$, $f\restriction d_i\in X_i$.
\end{theorem}
\begin{proof}
    By replacing $d_i$ with a larger dilator and $X_i$ with its pullback if necessary, we may assume that $D=\bigcup_{i<\omega} d_i$. 
    Let $\hat{d}_i$ and $\hat{D}$ be trekkable dendrograms such that $\Dec(\hat{d}_i)\cong d_i$ and $\Dec(\hat{D})\cong D$.
    Let $h\colon \Dec(\hat{D})\to D$ be an isomorphism, and let $f_i\colon \hat{d}_i\to \hat{D}$ be an embedding such that $h\circ\Dec(f_i)$ is an isomorphism from $\Dec(\hat{d}_i)$ to $d_i$. 
    Let us also fix a limit embedding $\beta\colon \Dec(\hat{D}^\bullet)\to \Omega^1_\sfM$, then 
    \begin{equation*}
        \hat{X}_i:=\big\{\vec{k}\in D^{\hat{d}_i,\beta}\mid \{(h\circ\Dec(f_i)(s),\crit \vec{k}_s)\mid s\in \term(\hat{d}_i)\} \in X_i\big\} \in \hnu^{d_i,\beta}.
    \end{equation*}
    Hence by \autoref{Theorem: Main step for omega 1 completeness}, we can find $\lag\tilde{k}_s\mid s\in \hat{D}\rag$ such that 
    \begin{enumerate}
        \item $\{(s,\tilde{k}_{f_i(s)})\mid s\in \hat{d}_i\}\in \hat{X}_i$ for each $i<\omega$, and
        \item The map $s^\bullet\mapsto \crit\tilde{k}_s$ is an embedding from $\Dec(D^\bullet)$ to $\Omega^1_\sfM$, and
    \end{enumerate}
    Hence, the function $s^\bullet\mapsto \crit\tilde{k}_s$ restricted to (the isomorphic copy of) $\term(\hat{D})$ is a function witnessing the $\omega_1$-completeness.
\end{proof}

\section{Final remarks}

We finish this paper with the author's viewpoint and future research directions about the connection between large ptykes (i.e., ptykes with large cardinal properties), determinacy, and homogeneous Suslin representation of projective sets.

We work with an iterable cardinal in this paper due to its simplicity over a Woodin cardinal and a measurable cardinal above. However, the author expects every proof of $\bfPi^1_n$-determinacy from a large cardinal can be decomposed into a construction of a measurable $(n-1)$-ptyx and a proof of $\bfPi^1_n$-determinacy from a measurable $(n-1)$-ptyx.
Hence, the author conjectures we can also construct a measurable dilator from a Woodin cardinal and a measurable cardinal above, and a measurable $n$-ptyx from $n$ many Woodin cardinals and a measurable above.
However, Martin's measurable dilator from an iterable cardinal is expected to have a stronger property than an expected measurable dilator from a Woodin cardinal and a measurable above --- On the one hand, every measure associated with Martin's measurable dilator is $\kappa$-complete for an associated iterable cardinal $\kappa$. On the other hand, the author conjectures that if $\kappa$ is a measurable cardinal with a Woodin cardinal $\delta<\kappa$, then for each $\alpha<\delta$ the associated measurable dilator has a family of $\alpha$-complete measures.

It is well-known that the existence of a measurable cardinal is strictly stronger than $\bfPi^1_1$-determinacy. $\bfPi^1_1$-determinacy is equivalent to the assertion that every real has a sharp.
Similarly, it is reasonable to guess that the existence of a measurable dilator is strictly stronger than $\bfPi^1_2$-determinacy.
It is known by \cite[Corollary 2.2]{MullerSchindlerWoodin2020} that $\bfPi^1_{n+1}$-determinacy is equivalent to ``For every real $r$, $M_n^\sharp(r)$ exists and $\omega_1$-iterable.'' The author guesses that if we know how to construct a measurable $n$-ptykes from $n$ Woodin cardinals and a measurable above, then we should also be able to construct a half-measurable $n$-ptyx from the assertion ``For every real $r$, $M_n^\sharp(r)$ exists and $\omega_1$-iterable.''
Kechris \cite{KechrisUnpublishedDilators} stated without proof that Projective Determinacy is equivalent to `For every $n$, there is a half-measurable $n$-ptyx,' and he noted that the level-by-level equivalence should hold, but `it has not been proved yet.'
The author also conjectures that the existence of half-measurable $n$-ptyx is equivalent to $\bfPi^1_{n+1}$-determinacy.

We finish this paper with a possible connection with the homogeneous Suslin representation of projective sets:
As stated before, a homogeneous Suslin representation of a $\bfPi^1_1$-set into an `effective part' corresponding to a predilator $D$ and a measurable cardinal $\kappa$.
More precisely, we can decompose a homogeneous Suslin representation of a $\Pi^1_1[r]$-set into an `effective part' corresponding to an $r$-recursive predilator $D$ and a measurable cardinal $\kappa$.
The author expects that a homogeneous Suslin representation of a $\Pi^1_n[r]$ can be decomposed into an `effective part' given by an $(n+1)$-preptyx $P$ and a measurable $n$-ptyx $\Omega$, and so $P(\Omega)$ forms a homogeneous Suslin representation. It is interesting to ask if every homogeneous tree representation of a given definable set is decomposed into an `effective object,' which should be a generalization of ptykes and a large-cardinal-like object.

\printbibliography

\end{document}